\newcommand{\sign}{\mathop{sgn}}
\newcommand{\abs}[1]{\left|#1 \right|}
\newcommand{\N}{\mathbb{N}}
\newcommand{\Z}{\mathbb{Z}}
\newcommand{\R}{\mathbb{R}}
\newcommand{\C}{\mathbb{C}}
\newcommand{\D}{\mathbb{D}}
\newcommand{\Dbar}{\overline{\mathbb{D}}}
\newcommand{\indf}[1]{\mathbf{1}_{#1}}
\newcommand{\Nsup}[1]{\left\|#1\right\|_{\sup}}
\newcommand{\Linfty}[1]{L^{\infty}\left( #1 \right)}
\newcommand{\Ninfty}[1]{\left\|#1 \right\|_{\infty}}
\newcommand{\None}[1]{\left|#1 \right|_{1}}
\newcommand{\Ltwo}[1]{L^{2}\left( #1 \right)}
\newcommand{\Ntwo}[1]{\left|#1 \right|_{2}}
\newcommand{\Bs}{\mathcal{S}}
\newcommand{\Ns}[1]{\left\|#1 \right\|_{s}}
\newcommand{\MBs}[1]{Lip_{s}\left(#1 \right)}
\newcommand{\NBs}[1]{\left\|#1 \right\|_{\Bs}}
\newcommand{\Bw}{\mathcal{W}}
\newcommand{\NBw}[1]{\left\|#1 \right\|_{\Bw}}
\newcommand{\ULip}{\mathbf{L}}
\newcommand{\MUL}[1]{Lip_{u}\left(#1\right)}
\newcommand{\NUL}[1]{\left\|#1\right\|_{\ULip}}
\newcommand{\Tl}{\mathcal{L}}
\newcommand{\NTl}[1]{\left\|#1\right\|_{\Tl}}
\newcommand{\Ta}{\mathcal{H}}
\newcommand{\NTa}[1]{\left\|#1\right\|_{\Ta}}
\newcommand{\image}[1]{\text{image}\left( #1 \right)}
\newcommand{\Leb}{Leb}
\newcommand{\floor}[1]{\left\lfloor #1 \right\rfloor}
\numberwithin{equation}{section}
\theoremstyle{plain}
\newtheorem{thm}{Theorem}[section]
\newtheorem{cor}{Corollary}[thm]
\newtheorem{lem}[thm]{Lemma}
\newtheorem{prop}[thm]{Proposition}
\theoremstyle{definition}
\newtheorem{dfn}[thm]{Definition}
\theoremstyle{remark}
\crefname{dfn}{Definition}{Definitions}
\crefname{rmk}{Remark}{Remarks}
\crefname{ex}{Example}{Examples}   
\crefname{thm}{Theorem}{Theorems}
\crefname{prop}{Proposition}{Propositions}
\crefname{lem}{Lemma}{Lemmas}
\crefname{slem}{Sublemma}{Sublemmas}
\crefname{cor}{Corollary}{Corollaries}
\crefname{corl}{Corollary}{Corollaries}
\crefname{apdx}{Appendix}{Appendices}
\crefname{chapter}{Chapter}{Chapters}
\newenvironment{subproof}[1][\proofname]{%
  \begin{proof}[#1]%
  }{%
  \end{proof}%
}
\title{Limit Theorems for Generalized Baker's Transformations}
\author{Seth W. Chart\\ {\small schart@uvic.ca}}
\affil{
  Department of Mathematics\\
  Towson University\\
  78000 York Road\\
  Room 316\\
  Towson, MD, 21252
}
\date{\today}
\begin{document}
\maketitle
\begin{abstract}
In this paper we study decay of correlations and limit theorems for generalized baker's transformations \cite{Bose&Murray2013,Bose1989,AlexanderYorke1984,Tsujii2001,Rams2003}. Our examples are piecewise non-uniformly hyperbolic maps on the unit square that posses two spatially separated lines of indifferent fixed points.\par
We obtain sharp rates of mixing for Lipschitz functions on the unit square and limit theorems for H\"older observables on the unit square. Some of our limit theorems exhibit convergence to non-normal stable distributions for H\"older observables. We observe that stable distributions with any skewness parameter in the allowable range of $[-1,1]$ can be obtained as a limit and derive an explicit relationship between the skewness parameter and the values of the H\"older observable along the lines of indifferent fixed points.\par

This paper is the first application of anisotropic Banach space methods \cite{Blank&Keller&Liverani2002,Baladi&Tsujii2007,Demers&Liverani2008} and operator renewal theory \cite{Sarig2002,Gouezel2004} to generalized baker's transformations. Our decay of correlations results recover the results of \cite{Bose&Murray2013}. Our results on limit theorems are new for generalized baker's transformations.
\end{abstract}
\section{Introduction}
\label{sec:Introduction}
Intermittent baker's transformations (IBTs) are invertible, non-uniformly hyperbolic, and area preserving skew products on the unit square that generalize the classical baker's transformation \cite{Bose&Murray2013,Bose1989,AlexanderYorke1984,Tsujii2001,Rams2003}.\par

If a map $T\colon X \to X$ preserves a probability measure $\mu$, $\psi\colon X \to \R$ is in $L^{\infty}(\mu)$, and $\eta\colon X \to \R$ is in $L^{1}(\mu)$, then we define the correlation function by
\[
  Cor(k;\psi,\eta,T) = \abs{\int \psi \circ T^{k} \,\eta\, d\mu - \int\psi \,d\mu\int\eta\,d\mu}.
\]
If the limit of the correlation function as $k$ tends to infinity is zero for all $\psi \in L^{\infty}$ and $\eta \in L^{1}$, then the map is strongly mixing. If $Cor\left( k;\psi,\eta,T \right) = O\left( \frac{1}{k^{\nu}} \right)$ for some $\nu>0$, then we say that the correlations decay at a polynomial rate. If the rate is independent of the choice of $\psi$ and $\eta$ in some class of functions, then we say that $T$ displays a polynomial rate of decay of correlations for observables in that class. If the class contains functions $\psi$ and $\eta$ such that\footnote{The notation $f(k)\asymp g(k)$ as $k \to \infty$ indicates that $f$ and $g$ are in bounded ratio for $k$ sufficiently large. See \Cref{sec:Escape From Fixed Points} a discussion of asymptotics.} $Cor(k;\psi,\eta,T) \asymp \frac{1}{k^{\nu}}$ as $k \to \infty$, then we say that the rate is sharp. A limit theorem is a statement of the form: If (H) and $\int \psi \, dm =0$, then
\begin{align}
  \label{eqn:Generic Limit Theorem}
  \frac{1}{A_{n}}\sum_{k=0}^{n-1} \psi \circ T^{k} 
  &\xrightarrow{dist} 
  Z, \quad \text{as } n\to \infty.
\end{align}
Where $(H)$ is a dynamical hypothesis, $A_{n}$ is a sequence of real numbers, and $Z$ is a real valued random variable. It is well known \cite{Liverani1996} that if a map displays a summable rate of decay of correlations and mild additional hypotheses (H), then (\ref{eqn:Generic Limit Theorem}) is satisfied with $A_{n} = \sqrt{n}$ and $Z=N(0,\sigma)$ a normal distribution with variance determined by the correlation function. When a map displays a rate of decay of correlations that is not summable it is possible \cite{Gouezel2004-Intermittent} to prove that (\ref{eqn:Generic Limit Theorem}) is satisfied with a different normalizing sequence and $Z$ a stable law, which may not be normal. In this case more delicate hypotheses are required.\par

In \cite{Bose&Murray2013} the authors prove that every IBT displays a sharp polynomial rate of decay of correlations for H\"older observables via the Young tower method \cite{Young1998}. The Young tower method relies on analyzing an expanding factor map of the hyperbolic map in question and obtaining rates of decay of correlations for the factor map. These rates are then lifted to the full hyperbolic map via \emph{a posteriori} arguments. Operator renewal theory \cite{Sarig2002,Gouezel2004,Gouezel2005} has been used to obtain sharp polynomial rates of decay of correlation estimates and convergence to stable laws when the rate of decay of correlations is not summable. The anisotropic Banach space methods of \cite{Blank&Keller&Liverani2002,Baladi&Tsujii2007,Demers&Liverani2008} are used to analyze the transfer operator associated to multidimensional maps directly without the need to pass to one dimensional factors.\par

In this paper we construct anisotropic Banach spaces adapted to IBTs modeled on the work of \cite{Demers&Liverani2008,Liverani&Terhesiu2015}. This allows us to analyze the transfer operator of the two dimensional piecewise non-uniformly hyperbolic IBT directly. IBTs posses lines of indifferent fixed points that obstruct exponential rates of decay of correlations and the Lasota-Yorke type arguments used to obtain such results. In order to treat indifferent fixed points for the full two dimensional map and obtain sharp polynomial rates of decay of correlations we apply operator renewal theory. We also use the operator renewal method to obtain limit theorems for both the summable and non-summable rates of decay of correlations.\par

Non-normal stable distributions posses a skewness parameter that ranges in $[-1,1]$. In most dynamical applications limit theorems exhibit convergence to a stable distribution with skewness parameter either equal to $1$ or $-1$. In this paper we obtain limit theorems that exhibit convergence to stable distributions with any skewness parameter in $[-1,1]$ and derive an explicit relationship between this parameter and properties of the IBT and the observable in question. We also obtain convergence to the normal distribution with both standard and non-standard normalizing sequences.\par

We will obtain the spectral decomposition required to apply operator renewal theory in \Cref{sec:Main Results}. In \Cref{sec:Rates} we recover the sharp polynomial rates of decay of correlations for Lipschitz functions. In \Cref{sec:Limit Theorems} we obtain limit theorems for IBTs, which is a new result. See \Cref{sec:Statement of Results} for statements of the theorems.

\subsection{Statement of results}
\label{sec:Statement of Results}
A function $\phi \colon [0,1] \to [0,1]$ is an intermittent cut function (ICF) if it is smooth, strictly decreasing, and there exist constants $\alpha_{0},\alpha_{1}>0$, $c_{0},c_{1}>0$, and differentiable functions $h_{0}$ and $h_{1}$ defined on a neighborhood of zero with $h_{j}(0)=0$ and $Dh_{j}(x) = o\left( x^{\alpha_{j}-1} \right)$, such that 
\begin{align}
  \label{eqn:Cut Function Expansion Left}
  1-\phi(x) & = c_{0}x^{\alpha_{0}} + h_{0}(x),\\
  \label{eqn:Cut Function Expansion Right}
  \phi(1-x) & = c_{1}x^{\alpha_{1}} + h_{1}(x).
\end{align} 
Every IBT is uniquely determined by an ICF. We refer to the constants $c_{j}$ and $\alpha_{j}$ above as the \emph{contact coefficients} and \emph{contact exponents} of $B$ respectively.\par
\begin{figure}[!h]
  \begin{center}
    \def\svgwidth{0.40\textwidth}
    \input 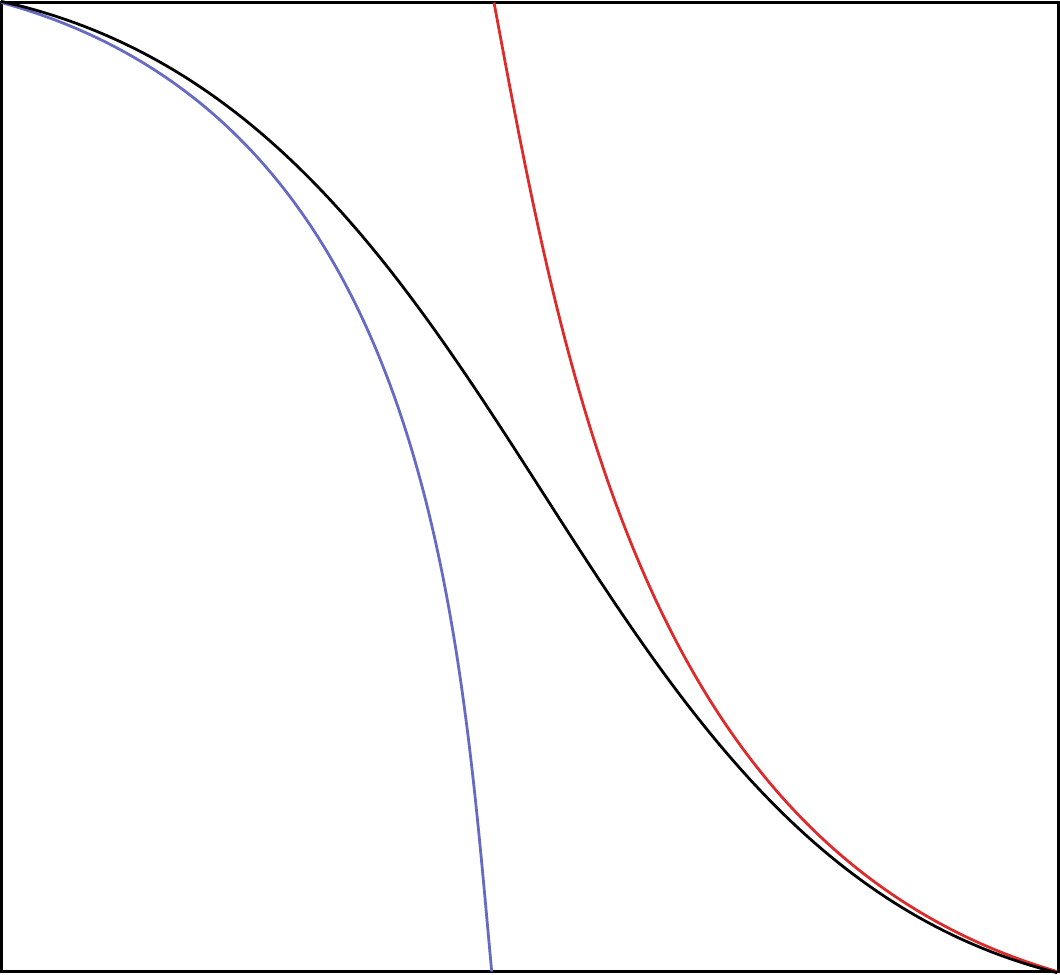_tex
    \caption{An intermittent cut function.}
    \label{fig:Cut Function}
  \end{center}
\end{figure}

Given an IBT $B$ we will induce on a subset $\Lambda$ of the unit square and apply operator renewal theory to obtain the following.
  \begin{thm}
  \label{thm:Main Correlation}
    Suppose that $B\colon [0,1]^2 \to  [0,1]^2$ is an Intermittent Baker's Transformation, as defined in \Cref{sec:Maps}, with contact exponents $\alpha_{j}>0$. Let $\alpha = \max\left\{ \alpha_{0},\alpha_{1} \right\}$. If $\eta$ and $\psi$ are Lipschitz functions on $\Lambda$, then $Cor(k;\psi,\eta,B)= O\left( k^{-\frac{1}{\alpha}} \right)$. If additionally $\int \eta \, d \Leb \neq 0$ and $\int \psi\, d\Leb \neq 0$, then $Cor(k;\psi,\eta,B) \asymp k^{-\frac{1}{\alpha}}$.
  \end{thm}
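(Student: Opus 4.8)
The plan is to present this as an application of operator renewal theory \cite{Sarig2002,Gouezel2004,Liverani&Terhesiu2015}, using the inducing scheme on $\Lambda$ and the spectral decomposition established in \Cref{sec:Main Results}. Let $\mathcal{L}$ be the transfer operator of $B$ with respect to Lebesgue measure, so that $\mathcal{L}\indf{[0,1]^2}=\indf{[0,1]^2}$ by area preservation. Writing $R$ for the first return time to $\Lambda$, let $T_{n}$ be the part of $\mathcal{L}^{n}$ carried by orbits whose first return to $\Lambda$ occurs at time exactly $n$, and put $R_{n}=\indf{\Lambda}\mathcal{L}^{n}\indf{\Lambda}$. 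These act boundedly on the anisotropic spaces $\Bs$ and $\Bw$; the induced transfer operator $\hat{\mathcal{L}}=T(1):=\sum_{n\geq1}T_{n}$ is, by \Cref{sec:Main Results}, quasi-compact on $\Bw$ with $1$ a simple leading eigenvalue, spectral projection $P$, and the rest of its spectrum inside a disc of radius $<1$; and the renewal identity $R(z):=\sum_{n\geq0}R_{n}z^{n}=\bigl(I-T(z)\bigr)^{-1}$, where $T(z):=\sum_{n\geq1}T_{n}z^{n}$, holds on $\D$. Since $\hat{B}=B^{R}$ preserves $\Leb|_{\Lambda}$ and $\int_{\Lambda}R\,d\Leb=1$ by Kac's formula, the projection $\bar{R}^{-1}P$ with $\bar{R}=\Leb(\Lambda)^{-1}$ is the map $\eta\mapsto\bigl(\int_{\Lambda}\eta\,d\Leb\bigr)\indf{\Lambda}$. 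For $\psi,\eta$ Lipschitz on $\Lambda$, extended by $0$ to the square, the duality $\int\psi\circ B^{k}\,\eta\,d\Leb=\int\psi\,\mathcal{L}^{k}\eta\,d\Leb$ together with $\operatorname{supp}\psi,\operatorname{supp}\eta\subseteq\Lambda$ then gives
\[
  Cor(k;\psi,\eta,B)=\abs{\int_{\Lambda}\psi\,\bigl(R_{k}-\bar{R}^{-1}P\bigr)\eta\,d\Leb},
\]
so everything reduces to the asymptotics of $R_{k}-\bar{R}^{-1}P$ as an operator from $\Bs$ to $\Bw$.

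Next I would carry out the perturbative analysis of $T(z)$ near $z=1$. The decisive input is the escape-time analysis of \Cref{sec:Escape From Fixed Points}: reading the local dynamics near the two lines of indifferent fixed points off the contact expansions \eqref{eqn:Cut Function Expansion Left}--\eqref{eqn:Cut Function Expansion Right}, one shows that $\Leb(\Lambda\cap\{R>n\})$ is regularly varying of index $-(1+\tfrac1\alpha)$, the slower line (that is, $\alpha=\max\{\alpha_{0},\alpha_{1}\}$) being dominant and the exponent consistent with $\int_{\Lambda}R\,d\Leb<\infty$, and correspondingly that $\sum_{n>N}T_{n}$ is regularly varying of the same index in $\Bs\to\Bw$ operator norm. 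This lets $T(z)$ extend to $\Dbar\setminus\{1\}$ with enough regularity that, near $z=1$, $T(z)=\lambda(z)P(z)+N(z)$ with $\sup_{n}\NBw{N(z)^{n}}\to0$, $P(z)$ continuous and $P(1)=P$, and $1-\lambda(z)=\bar{R}(1-z)+c(1-z)^{1+1/\alpha}+\cdots$; consequently $R(z)=(I-T(z))^{-1}$ has principal part $\bar{R}^{-1}(1-z)^{-1}P$ at $z=1$ and a leading correction of order $(1-z)^{1/\alpha-1}$, i.e.\ with Taylor coefficients of order $k^{-1/\alpha}$. One must also verify the aperiodicity condition that $I-T(z)$ is invertible for every $z$ on the unit circle with $z\neq1$; this follows from the spectral gap of $\hat{\mathcal{L}}$ and an argument excluding $1\in\operatorname{spec}T(z)$ for such $z$, i.e.\ from mixing of the return system.

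With this in hand I would invoke the operator renewal theorem of \cite{Gouezel2004} (cf.\ \cite{Sarig2002,Liverani&Terhesiu2015}) to obtain $R_{k}=\bar{R}^{-1}P+\mathcal{R}_{k}$ with $\NBw{\mathcal{R}_{k}}=O(k^{-1/\alpha})$ and, more precisely, $\mathcal{R}_{k}=k^{-1/\alpha}\bigl(d\,P+o(1)\bigr)$ for an explicit nonzero scalar $d$ built from the contact data via the escape asymptotics. The bound $Cor(k;\psi,\eta,B)=O(k^{-1/\alpha})$ follows at once from the reduction above and the continuous embedding of Lipschitz functions on $\Lambda$ into $\Bs$ together with the pairing $\Bw\times\Bs\to\R$ from \Cref{sec:Main Results}, since $\abs{\int_{\Lambda}\psi\,\mathcal{R}_{k}\eta\,d\Leb}\leq Ck^{-1/\alpha}$ with $C$ controlled by the Lipschitz norms. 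For sharpness, if $\int\psi\,d\Leb\neq0$ and $\int\eta\,d\Leb\neq0$ the leading term contributes $\int_{\Lambda}\psi\,(d\,k^{-1/\alpha}P)\eta\,d\Leb=d\,k^{-1/\alpha}\,\Leb(\Lambda)^{-1}\int\psi\,d\Leb\int\eta\,d\Leb$, a nonzero multiple of $k^{-1/\alpha}$; combined with the upper bound this gives $Cor(k;\psi,\eta,B)\asymp k^{-1/\alpha}$.

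I expect the main obstacle to be the analytic content of the second step: establishing, with the correct index and leading constant, the sharp regularly-varying asymptotics of $\Leb(\Lambda\cap\{R>n\})$ and of the $\Bs\to\Bw$ norms of the renewal operators $T_{n}$. This calls for a quantitative description of how orbits escape a neighbourhood of each line of indifferent fixed points — in particular control of the geometry of, and of the distortion of $\mathcal{L}^{n}$ along, the level sets $\{R=n\}$ — and careful bookkeeping of the error terms $h_{0},h_{1}$ and of the interaction between the two fixed lines, which is where $\alpha=\max\{\alpha_{0},\alpha_{1}\}$ enters. The aperiodicity check and the passage from the abstract $\Bs\to\Bw$ asymptotics back to $Cor(k;\psi,\eta,B)$ for Lipschitz observables are comparatively routine, the latter resting on the embedding results of \Cref{sec:Main Results}.
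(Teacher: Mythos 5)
Your plan follows essentially the same route as the paper: induce on $\Lambda$, bound the first-return operators on the anisotropic spaces using the tail asymptotics of \Cref{Lem:Period-2 Orbit Preimage Asymptotics}, verify the spectral gap and aperiodicity, and apply the operator renewal theorem of \cite{Gouezel2004}, with the sharp rate coming from the second-order term $\sum_{k>n}\Leb\left[ r>k \right]$ multiplying $\int\eta\,\int\psi$ and the error term decaying strictly faster; your reduction of $Cor(k;\psi,\eta,B)$ to the operator asymptotics via duality and the identification $\mu = \Leb(\Lambda)^{-1}$, $P\eta = \indf{\Lambda}\int_{\Lambda}\eta\,d\lambda$ match the paper's computation. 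The only differences are notational (your $T_{n}$, $R_{n}$ are the paper's $R_{n}$, $B_{n}$) and that you sketch the resolvent expansion near $z=1$ by hand before invoking \cite{Gouezel2004}, which the paper simply cites as a black box.
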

It is important to note that we obtain a sharp decay rate in \Cref{thm:Main Correlation}. If $\eta$ and $\psi$ are supported on $\Lambda$, $\int_{\Lambda}\eta \neq 0$, and $\int_{\Lambda} \psi \neq 0$, then \Cref{eqn:Mean Zero Identity} shows that the rate of decay of correlation is asymptotically in bounded ratio with $n^{-\frac{1}{\alpha}}$.\par

The following is a collection of limit theorems for IBTs. See \Cref{thm:Main Limit Theorem} for precise statements.
\begin{thm}
\label{thm:Main Limit Laws}
Suppose that $\psi\colon \left[ 0,1 \right]^{2} \to \R$ is $\gamma$-H\"older for some $\gamma\in(0,1]$ and $\int_{[0,1]^{2}} \psi \,d \Leb = 0$. Let $M_{0} =\int_{0}^{1} \psi(0,y^{1+\frac{1}{\alpha_{0}}})\,dy$ and $M_{1} =\int_{0}^{1} \psi(1,y^{1+\frac{1}{\alpha_{1}}})\,dy$.
\begin{enumerate}[i.]
    \item If\footnote{This hypotheses is weakened substantially in \Cref{sec:Limit Theorems}.} $\alpha_{0},\alpha_{1}<1$, then (\ref{eqn:Generic Limit Theorem}) is satisfied with $A_{n} = \sqrt{n}$ and $Z = N(0,\sigma^2)$ where $\sigma^2$ depends on $Cor(k;\psi,\psi,T)$ for all $k\ge 0$.
    \item If $\alpha_{0} > \alpha_{1}$, $\alpha_{0}>1$, and $M_{0} >0$, then (\ref{eqn:Generic Limit Theorem}) is satisfied with $A_{n} = n^{\frac{\alpha_{0}}{\alpha_{0}+1}}$ and $Z$ a stable law of index $1+\tfrac{1}{\alpha_{0}}$, and skewness parameter $1$.
    \item If $\alpha_{0} = \alpha_{1}=:\alpha$, $\alpha>1$, $M_{0}>0$ and $M_{1}<0$, then (\ref{eqn:Generic Limit Theorem}) is satisfied with $A_{n} =n^{\frac{\alpha}{\alpha+1}}$ and $Z$ a stable law of index $1+\tfrac{1}{\alpha}$, and skewness parameter determined by $M_{0}$ and $M_{1}$. Any skewness parameter in $[-1,1]$ is attainable.
    \item If $\alpha_{0} = \alpha_{1} = 1$, $M_{0} \neq 0$, and $M_{1} \neq 0$, then (\ref{eqn:Generic Limit Theorem}) is satisfied with $A_{n} =\sqrt{n\log(n)}$ and $Z=N(0,\sigma^2)$ where $\sigma^{2}$ is determined by $M_{0}$ and $M_{1}$.
  \end{enumerate}
\end{thm}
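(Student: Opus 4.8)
The plan is to reduce everything to an application of operator renewal theory, for which the essential input is the spectral decomposition of the transfer operator of $B$ on the anisotropic Banach space and the fine asymptotics of the "escape from the fixed-point neighborhood" described by the tail of the return-time function of the induced map. Concretely, I would fix the inducing set $\Lambda$ (the complement of neighborhoods of the two lines of indifferent fixed points), write $\hat{B}$ for the first-return map to $\Lambda$, and let $R\colon\Lambda\to\N$ be the return time. The ICF expansions (\ref{eqn:Cut Function Expansion Left}) and (\ref{eqn:Cut Function Expansion Right}) translate, via the skew-product structure, into precise asymptotics for $\Leb(R>n)$: the two lines of fixed points contribute tails of order $n^{-1/\alpha_0}$ and $n^{-1/\alpha_1}$ respectively, with constants governed by $c_0,c_1$ and hence (after the change of variables $x=y^{1+1/\alpha_j}$ implicit in the definitions of $M_0,M_1$) the quantities $M_j$ will be exactly the coefficients appearing in the truncated-sum asymptotics $\sum_{k<n}\psi\circ B^k$ along the fixed-point lines. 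This is the computation already packaged in \Cref{sec:Escape From Fixed Points}, so I would invoke it.

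Next I would treat the four regimes separately, each being a known operator-renewal dichotomy once the tail order is identified. In case (i), $\alpha_0,\alpha_1<1$ gives $1/\alpha_j>1$, so $R$ has finite expectation and in fact $\Leb(R>n)$ is summable against the weights needed for the Green–Kubo variance to converge; the Gou\"ezel–Liverani spectral picture then yields a spectral gap for the renewal series and the CLT with $A_n=\sqrt n$ follows from the Nagaev–Guivarc'h / martingale argument as in \cite{Liverani1996}, with $\sigma^2=\sum_k Cor(k;\psi,\psi,B)$ (suitably symmetrized). For cases (ii) and (iii) the tail is regularly varying of index $-1/\alpha$ with $1/\alpha\in(0,1)$, i.e.\ index of the stable law $1+1/\alpha\in(1,2)$; here I would apply Gou\"ezel's stable-limit theorem for the induced system together with the operator renewal equation to transfer the limit back to $B$, obtaining $A_n=n^{\alpha/(\alpha+1)}$. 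The skewness is pinned down by comparing the positive versus negative parts of the contributions: in (ii) only the $\alpha_0$-line matters (its tail dominates) and $M_0>0$ forces one-sided jumps, hence skewness $+1$; in (iii) both lines contribute comparably, the positive jumps scale like $M_0$ and the negative like $|M_1|$, so the skewness is $(M_0^{1/\alpha}-|M_1|^{1/\alpha})/(M_0^{1/\alpha}+|M_1|^{1/\alpha})$ or the analogous normalized difference, which sweeps all of $[-1,1]$ as the ratio $M_0/|M_1|$ varies. Case (iv) is the boundary $1/\alpha=1$: the tail is $\asymp 1/n$, $R$ is barely non-integrable, and one is in the domain of attraction of the normal law but with the classical logarithmic correction, giving $A_n=\sqrt{n\log n}$ and $\sigma^2$ proportional to $(|M_0|+|M_1|)$-type constants coming from the $1/n$ tail coefficient; this is again a standard operator-renewal output once the $1/n$ asymptotic is in hand.

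The two genuine technical points, and the place I expect the real work to be, are: (a) showing that the abstract hypotheses of the operator renewal theorems (the $R_n$ operators acting on the anisotropic space with the right summability/regular-variation of $\|R_n\|$, aperiodicity, and the isolated simple eigenvalue $1$ of $\sum R_n z^n$ at $z=1$) actually hold for the transfer operator of $B$ on the Banach spaces built in \Cref{sec:Main Results}; and (b) the "a posteriori" step of passing from a limit theorem for Birkhoff sums of the induced observable $\psi_R=\sum_{j<R}\psi\circ B^j$ over $\hat B$ to a limit theorem for Birkhoff sums of $\psi$ over $B$ — this requires controlling the last incomplete block and, crucially, identifying that the dominant fluctuations of $\psi_R$ come from orbit segments that dwell near the fixed-point lines, where $\psi_R$ is well-approximated by $R$ times the value of $\psi$ on the line, so that the H\"older modulus of $\psi$ lets us replace $\psi$ by its boundary values and the change of variables produces exactly $M_0,M_1$. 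Once (a) and (b) are done, each of (i)–(iv) is a citation to the appropriate theorem in \cite{Gouezel2004,Gouezel2004-Intermittent,Sarig2002} applied to the scalar sequence $\Leb(R>n)$ whose asymptotics \Cref{sec:Escape From Fixed Points} provides.
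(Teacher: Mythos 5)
Your overall architecture is the paper's: induce on $\Lambda$, control the renewal operators on the anisotropic space, extract the boundary averages $M_{0},M_{1}$ from the $y\mapsto y^{1+1/\alpha_{j}}$ excursion profile, and quote the operator renewal limit theorems. But there is a concrete quantitative error at the base of your treatment of cases (ii)--(iv): you assert that $\Leb(R>n)$ has tails of order $n^{-1/\alpha_{j}}$. The correct asymptotics from \Cref{sec:Escape From Fixed Points} (see \Cref{eqn:Measure Asymptotics}) are $\lambda[r=n]\asymp n^{-(2+1/\alpha)}$, hence $\lambda[r>n]\asymp n^{-(1+1/\alpha)}$; the exponent $n^{-1/\alpha}$ is the correlation-decay rate $\sum_{k>n}\lambda[r>k]$, not the return-time tail, and you have conflated the two. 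The inferences you build on it would fail if carried out honestly: a tail of index $-1/\alpha\in(-1,0)$ would put the induced observable in the domain of attraction of a $\tfrac{1}{\alpha}$-stable law (infinite mean, normalization $n^{\alpha}$), not the claimed $\bigl(1+\tfrac{1}{\alpha}\bigr)$-stable law with $A_{n}=n^{\alpha/(\alpha+1)}$; and in case (iv) a tail $\asymp 1/n$ does not yield a normal law with $\sqrt{n\log n}$ --- that normalization comes from a $t^{-2}$ tail. What actually drives the results is the tail of the induced observable $\xi=\sum_{k<r}\psi\circ B^{k}$: by \Cref{lem:Base Observable Expansion} one has $\xi\approx M_{j}\,r$ on the column passing the line $x=j$, so $\lambda[\pm\xi>t]\sim C_{j}\,t^{-(1+1/\alpha_{j})}$ as in \Cref{lem:Base Observable Tail Asymptotics}, with $C_{j}\propto\abs{M_{j}}^{1+1/\alpha_{j}}$; consequently the skewness in (iii) is the normalized difference of these constants (exponent $1+\tfrac1\alpha$, not $\tfrac1\alpha$), though your hedged formula still sweeps $[-1,1]$.

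On strategy, your step (a) is exactly \Cref{prop:Convergence and Renewal Equation,prop:Spectrial Gap and Aperiodicity}, but your step (b) is not how the paper closes the argument, and as stated it is the weakest link: transferring a non-normal stable limit from the induced system to $B$ needs more than control of the last incomplete block. The paper avoids this entirely by perturbing the renewal operators, $R_{n}(t)\eta=R_{n}\bigl(e^{it\xi}\eta\bigr)$, proving continuity in $t$ (\Cref{prop:Convergence and Continuity of Perturbations}) and the eigenvalue expansion $\chi(t)=1-(c+o(1))M(\abs{t})$ via the tail asymptotics and Aaronson--Denker (\Cref{prop:Expansion of Dominant Eigenvalue}), and then applying \Cref{thm:Gouezel2}, whose renewal-equation bookkeeping yields the limit law for Birkhoff sums of $\psi$ under $B$ directly. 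Either route can work, but yours would need a citable transfer theorem for stable laws in place of the last-block sketch, and in any case the tail exponents must be corrected as above.
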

\section{Maps}
\label{sec:Maps}
Generalized baker's transformations are area preserving maps of the unit square that generalize the classical baker's transformation. Roughly speaking a generalized baker's transformation is a map that realizes the following procedure. First, select a function $\phi\colon[0,1] \to [0,1]$ and let $A= \int \phi$. Second, slice the unit square along the line $x=A$. Third, press the left portion of the square under the graph of $\phi$. Fourth, press the right portion of the square over the graph of $\phi$. If the pressing is done so that area is preserved and every vertical line is mapped affinely to a vertical line, then this procedure determines a map $B\colon [0,1]^2 \to [0,1]^2$.\par

  \begin{figure}[h!]
    \begin{center}
      \def\svgwidth{0.80\textwidth}
      \input 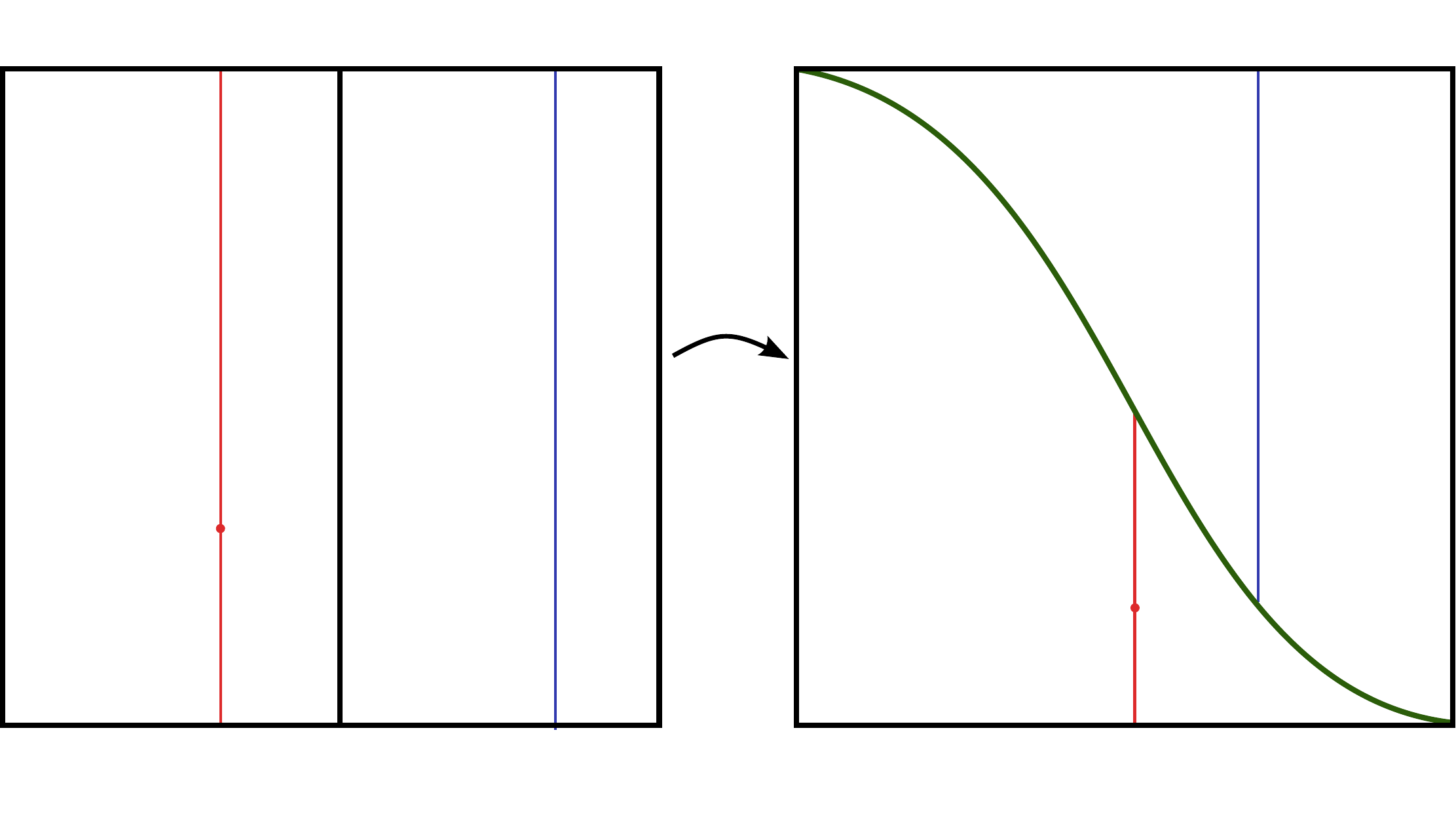_tex
      \caption{An intermittent baker's transformation.}
      \label{fig:Intermittent Baker's Transformation}
    \end{center}
  \end{figure}
We will make the rough description of the last paragraph percise in the case that the function $\phi$ is an ICF as defined in \Cref{sec:Statement of Results}. As before let $A = \int \phi$ denote the area of the region below the graph of $\phi$. The associated IBT $B$ can be defined in terms of an \emph{expanding factor map} $f\colon[0,1] \to [0,1]$ and \emph{fibre maps} $g_{x}\colon[0,1] \to [0,1]$, by the formula
\begin{align}
    \label{eqn:Skew Product}
    B(x,y)
    &=
    \left( f(x),g_{x}(y) \right).
  \end{align}
We define $f$ in \Cref{sec:Factor Map} below and note that the fibre maps are defined for each $x \in [0,1]$ by
  \begin{align}
    \label{eqn:Fiber Map}
    g_{x}(y) 
    &=
    \left\{\begin{array}[h]{ll}
      \phi\left(f(x)\right) y, & \text{ if } x \in [0,A);\\
        \left[1-\phi\left(f(x)\right)  \right] y + \phi\left( f(x) \right), & \text{ if } x \in [A,1].\\
      \end{array}
      \right.
    \end{align}

For convenience we introduce the following notation for iterates of $B$,
  \begin{align}
    g^{(0)}_{x}(y) &= y \notag;\\
    \label{eqn:Fiber Map Iterates Left}
    g^{(n+1)}_{x}(y) &= g_{f^{n}(x)}\left(g^{(n)}_{x}(y)\right), \; n\ge0;\\
    \label{eqn:Fiber Map Iterates Right}
    B^{n}(x,y) &= \left( f^{n}(x), g^{(n)}_{x}(y) \right).
  \end{align}
\subsection{Expanding Factor}
\label{sec:Factor Map}
\begin{figure}[!ht]
\begin{center}
  \def\svgwidth{0.40\textwidth}
  \input 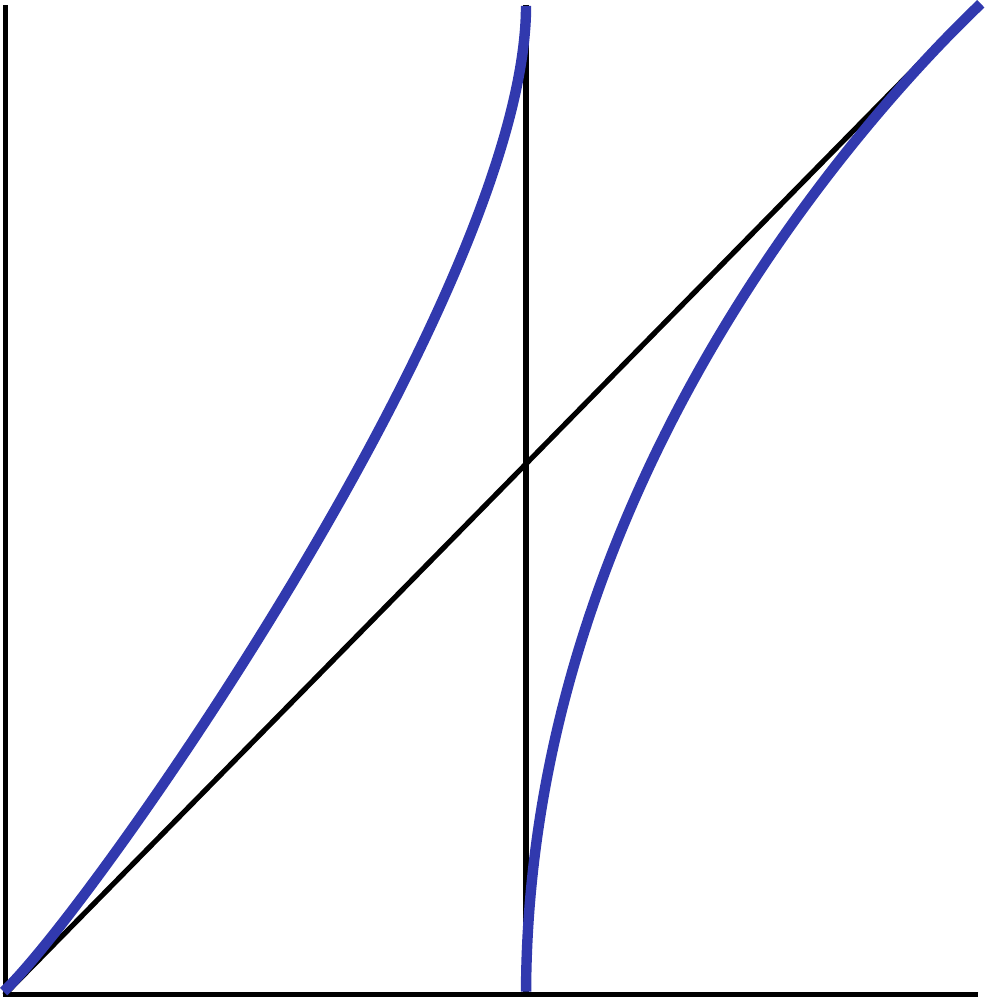_tex
  \caption{The expanding factor of an IBT.}
  \label{fig:Factor Map}
\end{center}
\end{figure}
We define $w_{0} \colon [0,1] \to [0,A]$ and $w_{1} \colon [0,1] \to [A,1]$ by
\begin{align}
\label{eqn:Factor Map Branch Left}
w_{0}(x) &= \int_{0}^{x} \phi(t) \, dt,\\
\label{eqn:Factor Map Branch Right}
w_{1}(x) &= A+\int_{0}^{x} 1-\phi(t) \, dt.
\end{align}
\par
Since $\phi(0) = 1$, $\phi(1) = 0$ and $\phi$ is strictly decreasing we have that $\phi$ is strictly positive on $[0,1)$ and hence the functions $w_{0}$ and $w_{1}$ are continuous and strictly increasing and thus are invertible. Define $f\colon [0,1]  \to  [0,1] $ by 
\begin{align}
  \label{eqn:Factor Map Definition}
  f(x) =
  \left\{ 
    \begin{array}[h]{ll}
      w_{0}^{-1}(x), & \text{ if } x \in [0,A);\\
      w_{1}^{-1}(x), & \text{ if } x \in [A,1].\\
    \end{array}
  \right.
\end{align}
\par
Using \Cref{eqn:Factor Map Branch Left,eqn:Factor Map Branch Right,eqn:Factor Map Definition} we compute
      \begin{align}
      \label{eqn:Factor Map Derivative}
      Df(x) 
      &=
      \left\{
        \begin{array}[h]{ll}
          \left[\phi\left( f(x) \right)\right]^{-1}, & \text{ if } x \in [0,A);\\
            \left[1-\phi\left( f(x) \right)\right]^{-1}, & \text{ if } x \in (A,1].
          \end{array}
          \right.\\
      \label{eqn:Factor Map Second Derivative}
      D^2f(x) 
      &=
      \left\{
        \begin{array}[h]{rl}
		-D\phi\left( f\left( x \right) \right)\left[ Df(x) \right]^3, &\text { if } x \in [0,A);\\
		D\phi\left( f\left( x \right) \right)\left[ Df(x) \right]^3, & \text{ if } x \in (A,1].
          \end{array}
          \right.
        \end{align}
The alternative representation of $g_{x}$ below follows from the displayed equation above and \Cref{eqn:Fiber Map}.
\begin{align}
  \label{eqn:Fiber Map Alternate}
    g_{x}(y) &=
    \left\{
      \begin{array}[h]{ll}
        \frac{y}{Df(x)}, &\text{if } x \in [0,A);\\
        1-\frac{1-y}{Df(x)}, &\text{if } x \in (A,1].
      \end{array}
    \right.
\end{align}
Taking partial derivatives of the displayed equation above we obtain the displayed equations below. To avoid confusion we write $g(x,y)$ instead of $g_{x}(y)$ to emphasise that $g\colon[0,1]^2\to [0,1]$.
\begin{align}
  \label{eqn:Fiber Map Partial_x}
  \partial_{x} g(x,y) &=
    \left\{
      \begin{array}[h]{ll}
        -y\frac{D^2f(x)}{\left[Df(x)\right]^2}, &\text{if } x \in [0,A);\\
        (1-y)\frac{D^{2}f(x)}{\left[Df(x)\right]^2}, &\text{if } x \in (A,1].
      \end{array}
    \right.\\
  \label{eqn:Fiber Map Partial_y}
  \partial_{y} g(x,y) &= \frac{1}{Df(x)}
\end{align}
\par
Note that $Df(x)$ approaches $\infty$ as $x$ approaches $A$ from the left or from the right. From \Cref{eqn:Factor Map Definition} we see that $f(0)=0$ and $f(1) =1$. From \Cref{eqn:Factor Map Derivative} we see that $Df(0) = Df(1) = 1$ and therefore $f$ has neutral fixed points at $0$ and $1$. It also follows from \Cref{eqn:Factor Map Derivative} that $Df(x)\ge 1$ for all $x\neq A$, therefore $f$ is an expanding map.\par
It should be noted that for $x$ near $0$, the expanding factor $f$ is approximately $x\mapsto x(1+cx^{\alpha_{0}})$, with similar behavior near $x=1$. From \cite{Pianigiani1980} Theorem 3 we might only expect a finite invariant measure for $\alpha >1$, however $f$ does not have bounded distortion near $x =A$ so the main theorem \cite{Pianigiani1980} from does not apply. Note that $f$ is the factor, by projection onto the first coordinate, of $B$ which preserves two-dimensional Lebesgue measure. It follows that $f$ must preserve one-diminsional Lebesgue measure. In these examples unbounded distortion near $x=A$ balances slow escape from the indifferent fixed points at $x=0$ and $x=1$. The map $f$ associated to an ICF with contact exponent $\alpha$ preserves one-dimnsional Lebesgue measure for any $\alpha>0$.
\subsection{Exact Rate of Escape from Indifferent Fixed Points}
\label{sec:Escape From Fixed Points}
In this section we are concerned with refining asymptotic estimates from \cite{Bose&Murray2013}. We begin by setting notation. 
\begin{dfn}
  Suppose that $f$ and $g$ are positive real valued functions. 
  \begin{itemize}
    \item We say that $f(x)\asymp g(x)$ as $x\to a$ if
      \[
        0<\liminf_{x\to a} \frac{f(x)}{g(x)} \le \limsup_{x\to a} \frac{f(x)}{g(x)} <\infty.
      \]
    \item We say that $f(x)\in O(g(x))$ as $x\to a$ if
      \[
        \limsup_{x\to a} \frac{f(x)}{g(x)} <\infty.
      \]
    \item We will say that $f(x)\sim g(x)$ as $x\to a$ if 
      \[
        \lim_{x\to a} \frac{f(x)}{g(x)} = 1.
      \]
    \item We will say that $f(x)\in o(g(x))$ as $x\to a$ if 
      \[
        \limsup_{x\to a} \frac{f(x)}{g(x)} = 0.
      \]
  \end{itemize}
  We will often abuse notation and let $O(g(x))$ (resp. $o(g(x))$) denote and arbitrary function $h$ such that $h(x) \in O(g(x))$ (resp. $h\in o(g(x))$) as $x \to a$. Note that $f(x)\asymp g(x)$ as $x \to a$ if and only if $f(x) \in O(g(x))$ and $g(x) \in O(f(x))$ as $x \to a$. Similarly $f(x) \sim g(x)$ as $x \to a$ if and only if $f(x) = g(x)\left( 1+o(1)\right)$.\par
\end{dfn}
In this section we refine asymptotic estimates of the form $f(x) \asymp g(x)$ as $x\to a$ from \cite{Bose&Murray2013} to obtain asymptotic estimates of the form $f(x) \sim g(x)$ as $x \to a$.\par
Throughout this section $f\colon [0,1]\to[0,1]$ will be the expanding factor map associated to an intermittent cut function with contact exponents $\alpha_{0}$ and $\alpha_{1}$, and contact constants $c_0$ and $c_{1}$.The results of this section are more precise versions of the results contained in \cite{Bose&Murray2013} Lemma 1. These refinements are need to prove limit theorems when the rate of decay of correlations is not summable.\par
We begin by setting notation and collecting a few facts. The map $f$ has two smooth onto branches and $Df(x) > 1$ for $x \in (0,A) \cup (A,1)$, therefore there exist a unique period-2 orbit $\left\{ p,q \right\}$ such that $0<p<A<q<1$, \emph{i.e.}
\begin{align}
\label{eqn:Period-2 Orbit}
f(p) = q,
& \quad f(q) = p.
\end{align}
For all $n\ge 0$ define,
\begin{align}
\label{eqn:Period-2 Orbit Preimages Left}
p_{n} &= w_{0}^{n}(p), 
\quad
q_{n} = w_{1}^{n}(q),\\
\label{eqn:Period-2 Orbit Preimages Right}
p_{n+1}^{\circ} &= w_{1}\left(p_{n}\right),
\quad
q_{n+1}^{\circ} = w_{0}\left(q_{n}\right).
\end{align}
\par
By \Cref{eqn:Factor Map Definition} $w_0$ and $w_1$ are inverses of the branches of $f$. For all $n \ge 0$,
\begin{eqnarray}
\label{eqn:Period-2 Orbit Structure Left}
f\left(p_{n+1}\right) = p_{n},
&
f\left(q_{n+1}\right) = q_{n},\\
\label{eqn:Period-2 Orbit Structure Right}
f\left(p_{n+1}^{\circ}\right) = p_{n},&
f\left(q_{n+1}^{\circ}\right) = q_{n}.
\end{eqnarray}
\par
For each $n \ge 0$, intervals are mapped onto one another by $f$ in the following pattern,
\begin{align}
\label{eqn:Interval Orbit Structure}
\left[p_{n+2}^{\circ} ,p_{n+1}^{\circ}\right] 
&\mapsto
\left[p_{n+1} ,p_{n}\right] 
\mapsto
\left[p_{n} ,p_{n-1}\right] 
\mapsto \dots \mapsto
\left[ p_1,p_0 \right]
\mapsto
\left[ p,q \right]\\
\left[q_{n+1}^{\circ} ,q_{n+2}^{\circ}\right] 
&\mapsto
\left[q_{n} ,q_{n+1}\right] 
\mapsto
\left[q_{n-1} ,q_{n}\right] 
\mapsto \dots \mapsto
\left[ q_0,q_1 \right]
\mapsto
\left[ p,q \right]\notag
\end{align}
\par
Using \Cref{eqn:Factor Map Branch Left,eqn:Factor Map Branch Right} it is easy to check that for all $n\ge0$,
\begin{eqnarray}
\label{eqn:Period-2 Orbit Order}
0<p_{n+1}<p_{n},
&
q_{n} < q_{n+1} < 1.
\end{eqnarray}
\par
Both of the maps $w_{0}$ and $w_{1}$ are increasing. For all $n\ge 1$,
\begin{eqnarray}
\label{eqn:Period-2 Orbit Order Interior}
A<p_{n+1}^{\circ}<p^{\circ}_{n}<q,
&
p<q_{n}^{\circ} < q_{n+1}^{\circ} < A.
\end{eqnarray}
\par
The following is a refinement of Lemma 1 from \cite{Bose&Murray2013}.
\begin{lem}
\label{Lem:Period-2 Orbit Preimage Asymptotics}
As $n \to \infty$,
\begin{align}
  \label{eqn:Period-2 Orbit Preimage Asymptotics Exterior Left}
  p_{n} 
  &\sim
  \left( \tfrac{\alpha_{0}+1}{c_{0}\alpha_{0}} \right)^{\frac{1}{\alpha_{0}}} \left( \tfrac{1}{n} \right)^{\frac{1}{\alpha_{0}}} \\
  \label{eqn:Period-2 Orbit Preimage Asymptotics Exterior Right}
  1-q_{n} 
  &\sim
  \left( \tfrac{\alpha_{1}+1}{c_{1}\alpha_{1}} \right)^{\frac{1}{\alpha_{1}}} \left( \tfrac{1}{n} \right)^{\frac{1}{\alpha_{1}}} \\
  \label{eqn:Period-2 Orbit Increment Asymptotics Exterior Left}
  p_{n} - p_{n+1}
  &\sim
  \tfrac{1}{\alpha_{0}}
  \left(
  \tfrac{\alpha_{0}+1}{c_{0}\alpha_{0}}
  \right)^{\tfrac{1}{\alpha_{0}}}
  \left(\tfrac{1}{n}\right)^{1+\tfrac{1}{\alpha_{0}}}\\
  \label{eqn:Period-2 Orbit Increment Asymptotics Exterior Right}
  q_{n+1}- q_{n}
  &\sim
  \tfrac{1}{\alpha_{1}}
  \left(
  \tfrac{\alpha_{1}+1}{c_{1}\alpha_{1}}
  \right)^{\tfrac{1}{\alpha_{1}}}
  \left(\tfrac{1}{n}\right)^{1+\tfrac{1}{\alpha_{1}}}\\
  \label{eqn:Period-2 Orbit Preimage Asymptotics Interior Right}
  p_{n}^{\circ} - A
  &\sim
  \tfrac{1}{\alpha}
  \left( \tfrac{\alpha+1}{c\alpha} \right)^{\frac{1}{\alpha_{0}}} 
  \left( \tfrac{1}{n} \right)^{1+\frac{1}{\alpha_{0}}}\\
  \label{eqn:Period-2 Orbit Preimage Asymptotics Interior Left}
  A-q_{n}^{\circ}
  &\sim
  \tfrac{1}{\alpha}
  \left( \tfrac{\alpha+1}{c\alpha} \right)^{\frac{1}{\alpha_{1}}} 
  \left( \tfrac{1}{n} \right)^{1+\frac{1}{\alpha_{1}}}\\
  \label{eqn:Period-2 Orbit Increment Asymptotics Interior Right}
  p_{n}^{\circ} - p_{n+1}^{\circ}
  &\sim
  \tfrac{c_{0}}{\alpha_{0}}
  \left(
  \tfrac{\alpha_{0}+1}{c_{0}\alpha_{0}}
  \right)^{1+\tfrac{1}{\alpha_{0}}}
  \left(\tfrac{1}{n}\right)^{2+\tfrac{1}{\alpha_{0}}}\\
  \label{eqn:Period-2 Orbit Increment Asymptotics Interior Left}
  q_{n+1}^{\circ} - q_{n}^{\circ}
  &\sim
  \tfrac{c_{0}}{\alpha_{0}}
  \left(
  \tfrac{\alpha_{0}+1}{c_{0}\alpha_{0}}
  \right)^{1+\tfrac{1}{\alpha_{0}}}
  \left(\tfrac{1}{n}\right)^{2+\tfrac{1}{\alpha_{1}}}
\end{align}
\end{lem}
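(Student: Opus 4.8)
The plan is to reduce every assertion to a single scalar recursion near an indifferent fixed point and then apply a Pomeau--Manneville type asymptotic lemma, keeping track of constants carefully enough to upgrade the $\asymp$ estimates of \cite{Bose&Murray2013} to genuine $\sim$ estimates. First I would record the local behaviour of the inverse branches. Since $h_{0}(0)=0$ and $Dh_{0}(x)=o(x^{\alpha_{0}-1})$, integration gives $h_{0}(x)=o(x^{\alpha_{0}})$, so plugging $1-\phi(x)=c_{0}x^{\alpha_{0}}+h_{0}(x)$ (from \Cref{eqn:Cut Function Expansion Left}) into \Cref{eqn:Factor Map Branch Left} yields
\[
  w_{0}(x)=x-\tfrac{c_{0}}{\alpha_{0}+1}x^{\alpha_{0}+1}+o\!\left(x^{\alpha_{0}+1}\right)\quad\text{as }x\to 0^{+},
\]
and, symmetrically, using \Cref{eqn:Cut Function Expansion Right} and \Cref{eqn:Factor Map Branch Right}, the quantity $1-w_{1}(x)$, viewed as a function of $1-x$, obeys the same expansion with $(c_{0},\alpha_{0})$ replaced by $(c_{1},\alpha_{1})$. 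The same computation also gives $w_{1}(x)-A=\int_{0}^{x}(1-\phi)=\tfrac{c_{0}}{\alpha_{0}+1}x^{\alpha_{0}+1}(1+o(1))$ and, near $x=1$, $A-w_{0}(x)=\int_{x}^{1}\phi$ in terms of $1-x$ with $(c_{1},\alpha_{1})$.

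Next, from \Cref{eqn:Period-2 Orbit Preimages Left} we have $p_{n+1}=w_{0}(p_{n})$, and since $\phi<1$ on $(0,1]$ the sequence $(p_{n})$ is strictly decreasing (consistent with \Cref{eqn:Period-2 Orbit Order}) and bounded below, so its limit is a fixed point of $w_{0}$, forcing $p_{n}\downarrow 0$; likewise $1-q_{n}\downarrow 0$. Substituting the expansion of $w_{0}$ into the recursion gives $p_{n+1}=p_{n}-\tfrac{c_{0}}{\alpha_{0}+1}p_{n}^{\alpha_{0}+1}(1+o(1))$, whence also $p_{n+1}/p_{n}\to 1$. Setting $u_{n}=p_{n}^{-\alpha_{0}}$ and expanding $(1-\varepsilon)^{-\alpha_{0}}=1+\alpha_{0}\varepsilon+O(\varepsilon^{2})$ gives $u_{n+1}-u_{n}\to\tfrac{c_{0}\alpha_{0}}{\alpha_{0}+1}$, so Stolz--Ces\`aro yields $u_{n}\sim\tfrac{c_{0}\alpha_{0}}{\alpha_{0}+1}\,n$, which is exactly \Cref{eqn:Period-2 Orbit Preimage Asymptotics Exterior Left}; the identical argument applied to $1-q_{n}$ gives \Cref{eqn:Period-2 Orbit Preimage Asymptotics Exterior Right}.

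The remaining six estimates follow by substitution. The exterior increments \Cref{eqn:Period-2 Orbit Increment Asymptotics Exterior Left,eqn:Period-2 Orbit Increment Asymptotics Exterior Right} come from $p_{n}-p_{n+1}=\tfrac{c_{0}}{\alpha_{0}+1}p_{n}^{\alpha_{0}+1}(1+o(1))$ and the asymptotics just proved, after collecting constants. Since $p_{n}^{\circ}-A=w_{1}(p_{n})-A=\int_{0}^{p_{n}}(1-\phi)=p_{n}-p_{n+1}$ (and symmetrically $A-q_{n}^{\circ}=\int_{q_{n}}^{1}\phi$), the interior preimage estimates \Cref{eqn:Period-2 Orbit Preimage Asymptotics Interior Right,eqn:Period-2 Orbit Preimage Asymptotics Interior Left} are immediate consequences of the exterior increment estimates. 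Finally, for the interior increments \Cref{eqn:Period-2 Orbit Increment Asymptotics Interior Right,eqn:Period-2 Orbit Increment Asymptotics Interior Left} write
\[
  p_{n}^{\circ}-p_{n+1}^{\circ}=\int_{p_{n+1}}^{p_{n}}(1-\phi(t))\,dt=\tfrac{c_{0}}{\alpha_{0}+1}\bigl(p_{n}^{\alpha_{0}+1}-p_{n+1}^{\alpha_{0}+1}\bigr)(1+o(1)),
\]
and apply the mean value theorem together with $p_{n+1}/p_{n}\to 1$ to see this is $\sim c_{0}\,p_{n}^{\alpha_{0}}(p_{n}-p_{n+1})$; inserting the asymptotics for $p_{n}$ and $p_{n}-p_{n+1}$ and simplifying gives the stated coefficient and exponent, and the $q$-side is symmetric.

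The only genuine work is the bookkeeping of error terms needed to get $\sim$ rather than merely $\asymp$: one must verify that the $o(1)$ in the recursion really is $o(1)$ (which relies on the monotonicity-plus-fixed-point argument that $p_{n}\to 0$), that Stolz--Ces\`aro applies to $(u_{n})$, and that the mean value approximation $\int_{p_{n+1}}^{p_{n}}(1-\phi)\sim c_{0}p_{n}^{\alpha_{0}}(p_{n}-p_{n+1})$ survives the $h_{0}$ correction. I expect this to be the main (though not deep) obstacle, and it is precisely where the refinement over \cite{Bose&Murray2013} Lemma~1 lies.
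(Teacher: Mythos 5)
Your proposal is correct and reaches all eight asymptotics, but it takes a genuinely different route at the central step. The paper, after the same expansion $w_{0}(x)=x-\tfrac{c_{0}}{\alpha_{0}+1}x^{\alpha_{0}+1}+o(x^{\alpha_{0}+1})$, compares $p_{k}$ directly with the explicit sequence $(y+jz)^{-1/\alpha_{0}}$ via an algebraic identity for $(1/y)^{1/\alpha_{0}}-(1/(y+z))^{1/\alpha_{0}}$ and an induction in $j$, then obtains \Cref{eqn:Period-2 Orbit Increment Asymptotics Exterior Left} by differencing the resulting asymptotic for $p_{n}$, and gets \Cref{eqn:Period-2 Orbit Preimage Asymptotics Interior Right} by integrating $1-\phi$ over $[0,p_{n-1}]$ (the remaining cases are declared similar). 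You instead conjugate by $u_{n}=p_{n}^{-\alpha_{0}}$ and apply Stolz--Ces\`aro, i.e.\ the classical Pomeau--Manneville/Thaler argument; this is cleaner, produces the constant $\tfrac{c_{0}\alpha_{0}}{\alpha_{0}+1}$ with less bookkeeping, and your derivation of the exterior increments directly from the one-step identity $p_{n}-p_{n+1}=\tfrac{c_{0}}{\alpha_{0}+1}p_{n}^{\alpha_{0}+1}(1+o(1))$ is in fact more airtight than differencing two $\sim$ statements, since asymptotic equivalence of sequences does not by itself pass to their first differences. Your treatment of the interior quantities (reduce to $\int(1-\phi)$ over the relevant interval, then use the mean value theorem and $p_{n+1}/p_{n}\to 1$) matches the paper's in spirit and yields the stated coefficients. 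One small bookkeeping point: with the paper's convention \Cref{eqn:Period-2 Orbit Preimages Right} one has $p_{n+1}^{\circ}=w_{1}(p_{n})$, so $p_{n}^{\circ}-A=\int_{0}^{p_{n-1}}(1-\phi)=p_{n-1}-p_{n}$ rather than $p_{n}-p_{n+1}$ as you wrote; this index shift is harmless for every stated asymptotic (replacing $n$ by $n-1$ changes nothing at leading order), but it should be fixed in a final write-up.
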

\begin{proof}
  We begin by proving \Cref{eqn:Period-2 Orbit Preimage Asymptotics Exterior Left}, the proof of \Cref{eqn:Period-2 Orbit Preimage Asymptotics Exterior Right} is similar.
  From the definition of $\phi$ and $w_0$ we have, as $x \to 0$,
  \[
    x-w_{0}(x) = \int_{0}^{x} 1-\phi(t)\, dt = \frac{c_{0}}{\alpha_{0}+1}x^{\alpha_{0} +1} + o\left( x^{\alpha_{0}+1} \right).
  \]
Note that, as $y \to \infty$,
\[
  \frac{1}{\left(\frac{1}{y}\right)^{\frac{1}{\alpha_{0}}}-\left(\frac{1}{y+z}\right)^{\frac{1}{\alpha_{0}}}}
  =
  \frac{y^{\frac{1}{\alpha_{0}}}}{1-\left(1+\frac{z}{y}\right)^{-\frac{1}{\alpha_{0}}}}
  =\frac{\alpha_{0} y^{\frac{1}{\alpha_{0}} +1}}{z} + o\left(y^{\frac{1}{\alpha_{0}}}\right).
\]
The second equality above is obtained by computing the MacLaurin series of the middle expression divided by its numerator in terms of the variable $\tfrac{1}{y}$. Using the last two displayed equations, we obtain, as $y \to \infty$,
\[
  \frac{\left( \frac{1}{y} \right)^{\frac{1}{\alpha_{0}}} - w_{0}\left( \left( \frac{1}{y} \right)^{\frac{1}{\alpha_{0}}} \right)}{\left(\frac{1}{y}\right)^{\frac{1}{\alpha_{0}}}-\left(\frac{1}{y+z}\right)^{\frac{1}{\alpha_{0}}}}
  =
  \frac{\alpha_{0} c_{0}}{\alpha_{0} +1} \frac{1}{z} + o(1)
\]
Setting $z = \frac{\alpha_{0} c_{0}}{\alpha_{0} +1}$ and $\left( \frac{1}{y} \right)^{\frac{1}{\alpha_{0}}} = p_{k}$, we obtain, as $k\to \infty$
\[
  \frac{p_{k}-p_{k+1}}{p_{k} - \left( \frac{1}{y+z} \right)^{\frac{1}{\alpha_{0}}}} = 1 + o(1).
\]
An induction argument shows that, as $k \to \infty$, for all $j\ge 1$,
\[
  \frac{p_{k}-p_{k+j}}{p_{k} - \left( \frac{1}{y+jz} \right)^{\frac{1}{\alpha_{0}}}} = 1 + o(1).
\]
Rearranging yields, $p_{k+j} \sim\left(y+jz \right)^{-\frac{1}{\alpha_{0}}}$ as $k \to \infty$.
Note that $z(k+j) \sim y+jz$ as $j \to \infty$, therefore $p_{k+j} \sim z^{-\frac{1}{\alpha_{0}}}\left(k+j\right)^{-\frac{1}{\alpha_{0}}}$ as $j\to \infty$ faster than $k\to \infty$. Letting $n = k+j$ we conclude that
\[
  p_{n} 
  \sim
  \left( 
  \tfrac{\alpha_{0}+1}{\alpha_{0} c_{0}} 
  \right)^{\frac{1}{\alpha_{0}}} 
  \left( 
  \tfrac{1}{n} 
  \right)^{\frac{1}{\alpha_{0}}}. 
\]
This completes the proof of \Cref{eqn:Period-2 Orbit Preimage Asymptotics Exterior Left}.\par

\Cref{eqn:Period-2 Orbit Increment Asymptotics Exterior Left} follows from \Cref{eqn:Period-2 Orbit Preimage Asymptotics Exterior Left} since 
  \[
    \left( \tfrac{1}{n} \right)^{\frac{1}{\alpha_0}}
      -
    \left( \tfrac{1}{n+1} \right)^{\frac{1}{\alpha_0}}
    \sim 
    \tfrac{1}{\alpha_0} \left( \tfrac{1}{n} \right)^{1+\frac{1}{\alpha_0}}.
  \]

  To Prove \Cref{eqn:Period-2 Orbit Preimage Asymptotics Interior Right} we note that by \Cref{eqn:Period-2 Orbit Preimages Right,eqn:Factor Map Branch Right,eqn:Cut Function Expansion Left} we have 
\begin{align*}
  p_{n}^{\circ} - A
  &= 
  w_{1}(p_{n-1}) - w_{1}\left( 0 \right)\\
  &=
  \int_{0}^{p_{n-1}} 1-\phi(t) \, dt\\
  &=
  \int_{0}^{p_{n-1}} c_{0}t^{\alpha_{0}} + h(t) \, dt\\
  &=
  \left(
  \tfrac{c_{0}}{\alpha_{0}+1}
  \right)
  \left( p_{n-1}^{\alpha_{0}+1} \right) 
  + o\left( \int_{0}^{p_{n-1}} t^{\alpha_{0}} \, dt \right)\\
  &\sim
  \tfrac{1}{\alpha_{0}}
  \left( \tfrac{\alpha_{0}+1}{c_{0}\alpha_{0}} \right)^{\frac{1}{\alpha_{0}}} 
  \left( \tfrac{1}{n} \right)^{1+\frac{1}{\alpha_{0}}}
\end{align*}
\end{proof}
\begin{lem}
\label{lem:Point Asymptotics}
Suppose $n\ge 0$ and that $(x,y) \in [p,q]\times[0,1]$.
For all $1\le k\le n+1$, let $(x_k,y_k) = B^{k}(x,y)$. 
\begin{enumerate}[i.]
  \item If $x \in \left[p_{n+2}^{\circ},p_{n+1}^{\circ}\right]$, then as $n-k \to \infty$,
    \begin{align}
      \label{eqn:Point Asymptotics Exterior Left}
      x_{k}
      &\sim
      \left( \tfrac{\alpha_{0} +1}{c_{0}\alpha_{0}} \right)^{\frac{1}{\alpha_{0}}}
      \left(\tfrac{1}{n-k+2}  \right)^{\frac{1}{\alpha_{0}}},\\
      y_{k}
      &\sim
      \left(1-\tfrac{k+1}{n}\right)^{1+\frac{1}{\alpha_{0}}}.\notag
    \end{align}
  \item If $x \in \left[q_{n+1}^{\circ},q_{n+2}^{\circ}\right]$, then as $n-k \to \infty$,
    \begin{align}
      \label{eqn:Point Asymptotics Exterior Right}
      1-x_{k}
      &\sim
      \left( \tfrac{\alpha_{1} +1}{c_{1}\alpha_{1}} \right)^{\frac{1}{\alpha_{1}}}
      \left(\tfrac{1}{n-k+2}  \right)^{\frac{1}{\alpha_{1}}},\\
      y_{k}
      &\sim
      \left(\tfrac{k+1}{n}\right)^{1+\frac{1}{\alpha_{1}}}.\notag
    \end{align}
\end{enumerate}
\end{lem}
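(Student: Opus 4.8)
The plan is to track the two coordinates of $(x_k,y_k)=B^k(x,y)$ separately, and I describe part i (part ii follows by interchanging $0\leftrightarrow 1$, $\alpha_0\leftrightarrow\alpha_1$, the two branches of $f$ and $g$, and $p\leftrightarrow q$). If $x=x_0\in[p_{n+2}^{\circ},p_{n+1}^{\circ}]$, then \eqref{eqn:Interval Orbit Structure} gives $x_1=f(x)\in[p_{n+1},p_n]$ and, inductively, $x_j\in[p_{n-j+2},p_{n-j+1}]$ for $1\le j\le k$; moreover $x_0\in(A,q)\subset(A,1]$ by \eqref{eqn:Period-2 Orbit Order Interior} while $x_1,\dots,x_k\in(0,p]\subset[0,A)$ by \eqref{eqn:Period-2 Orbit Order}. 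The asymptotic for $x_k$ is then immediate: \Cref{Lem:Period-2 Orbit Preimage Asymptotics} applies to both endpoints of $[p_{n-k+2},p_{n-k+1}]$, and since $n-k\to\infty$ forces the indices to infinity with $n-k+1\sim n-k+2$, both endpoints — and hence $x_k$, which is squeezed between them — are asymptotic to $\left(\tfrac{\alpha_0+1}{c_0\alpha_0}\right)^{1/\alpha_0}\left(\tfrac{1}{n-k+2}\right)^{1/\alpha_0}$.

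For the fibre coordinate, the branch assignment above and \eqref{eqn:Fiber Map Alternate} give $y_1=1-\frac{1-y}{Df(x)}$ (right branch) and $y_{j+1}=\frac{y_j}{Df(x_j)}$ for $1\le j\le k-1$ (left branch), so telescoping and the chain rule yield
\[
  y_k=\frac{y_1}{\prod_{j=1}^{k-1}Df(x_j)}=\frac{y_1}{D(f^{k-1})(x_1)},
\]
where $f^{k-1}$ maps $[p_{n+1},p_n]$ bijectively and monotonically onto $[p_{n-k+2},p_{n-k+1}]$. Since $x_1=f(x)\in[p_{n+1},p_n]$ and $p_n\to 0$ as $n\to\infty$, we have $Df(x)=[1-\phi(f(x))]^{-1}\to\infty$, hence $y_1\to 1$ uniformly in $y\in[0,1]$; thus $y_1\sim 1$ and the whole problem reduces to the asymptotics of $D(f^{k-1})(x_1)$.

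The step I expect to be the main obstacle is a distortion estimate for $f^{k-1}$ on $[p_{n+1},p_n]$ whose error tends to $0$, not merely stays bounded. Writing $\log Df(u)=-\log\phi(f(u))$ on $[0,A)$ and using the chain rule, for $s,t\in[p_{n+1},p_n]$ — so that $f^{j}s,f^{j}t\in[p_{n-j+1},p_{n-j}]$ for each $j$ — one obtains
\[
  \bigl|\log D(f^{k-1})(s)-\log D(f^{k-1})(t)\bigr|\le\sum_{j=1}^{k-1}\bigl(p_{n-j}-p_{n-j+1}\bigr)\sup_{[p_{n-j+1},p_{n-j}]}\Bigl|\tfrac{D\phi}{\phi}\Bigr|.
\]
Using the expansion of $\phi$ near $0$ (so $|D\phi/\phi|(\tau)\asymp\tau^{\alpha_0-1}$ for small $\tau$) together with \eqref{eqn:Period-2 Orbit Preimage Asymptotics Exterior Left} and \eqref{eqn:Period-2 Orbit Increment Asymptotics Exterior Left}, the $j$-th summand is $O\bigl((n-j)^{-(\alpha_0-1)/\alpha_0}(n-j)^{-(1+1/\alpha_0)}\bigr)=O\bigl((n-j)^{-2}\bigr)$ with constants uniform once $n$ is large; the exponent $2$ does not depend on $\alpha_0$, which is precisely what makes the sum summable with tail $O\bigl((n-k)^{-1}\bigr)\to 0$. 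Hence the distortion of $f^{k-1}$ on $[p_{n+1},p_n]$ tends to $1$ as $n-k\to\infty$.

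By the mean value theorem, $D(f^{k-1})(\xi)=\frac{p_{n-k+1}-p_{n-k+2}}{p_n-p_{n+1}}$ for some $\xi\in[p_{n+1},p_n]$, so the distortion bound gives $D(f^{k-1})(x_1)\sim\frac{p_{n-k+1}-p_{n-k+2}}{p_n-p_{n+1}}$; feeding in \eqref{eqn:Period-2 Orbit Increment Asymptotics Exterior Left} then yields $D(f^{k-1})(x_1)\sim\left(\tfrac{n}{n-k+1}\right)^{1+1/\alpha_0}$, and combining this with $y_1\sim 1$ and the elementary relation $\left(\tfrac{n-k+1}{n-k-1}\right)^{1+1/\alpha_0}\to 1$ as $n-k\to\infty$ gives $y_k\sim\left(1-\tfrac{k+1}{n}\right)^{1+1/\alpha_0}$, which is part i. For part ii the same scheme applies with the two branches interchanged: one finds $1-y_k=(1-y_1)/D(f^{k-1})(x_1)$ with $1-y_1\to 1$, and substituting the right-hand estimates \eqref{eqn:Period-2 Orbit Preimage Asymptotics Exterior Right} and \eqref{eqn:Period-2 Orbit Increment Asymptotics Exterior Right} into the same distortion-plus-mean-value argument completes the proof.
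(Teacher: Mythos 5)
For part i your argument is correct, and for the fibre coordinate it takes a genuinely different route from the paper's: the paper writes $y_k=\bigl[\phi(x_1)+(1-\phi(x_1))y\bigr]\prod_{j=2}^{k}\phi(x_j)$ and estimates $\sum_{j}\log\phi(x_j)$ directly from the asymptotics of $x_j$ (a harmonic-sum computation), whereas you rewrite the same product as $y_k=y_1/Df^{k-1}(x_1)$ and control $Df^{k-1}$ by a bounded-distortion estimate with error $O\bigl((n-k)^{-1}\bigr)$ plus the mean value theorem and \eqref{eqn:Period-2 Orbit Increment Asymptotics Exterior Left}. Your distortion sum is set up correctly ($\log Df=-\log\phi\circ f$ on $[0,A)$, summands $O\bigl((n-j)^{-2}\bigr)$ uniformly once $n-k$ is large), and this route has the mild advantage of making the uniformity behind "summing asymptotic equivalences" explicit; the $x_k$ part is the same squeeze as in the paper.

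Part ii, however, is not finished as written, and the problem is concrete: your own identity $1-y_k=(1-y_1)/Df^{k-1}(x_1)$ with $1-y_1\to 1$, fed through the same distortion-plus-MVT argument with \eqref{eqn:Period-2 Orbit Increment Asymptotics Exterior Right}, yields $1-y_k\sim\bigl(1-\tfrac{k+1}{n}\bigr)^{1+1/\alpha_1}$, i.e.\ the mirror image of part i. That is \emph{not} the printed conclusion $y_k\sim\bigl(\tfrac{k+1}{n}\bigr)^{1+1/\alpha_1}$ of \eqref{eqn:Point Asymptotics Exterior Right}, and the two are incompatible whenever $k/n$ stays away from $0$ and $1$ (for $k\approx n/2$ your formula gives $y_k\to 1-2^{-(1+1/\alpha_1)}$ while the statement gives $2^{-(1+1/\alpha_1)}$), so "completes the proof" is not justified. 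Note that your recursion is the faithful consequence of \eqref{eqn:Fiber Map}: for $x_0\in(p,A)$ the first fibre step is the left branch, so $y_1=\phi(x_1)y\to 0$, and the subsequent steps near $x=1$ are the orientation-preserving right branch, so the excursion carries $y_k$ from near $0$ up to near $1$ with $1-y_k\sim\bigl(1-\tfrac{k+1}{n}\bigr)^{1+1/\alpha_1}$; the paper only proves part i and declares part ii "similar", and the printed $y_k$-asymptotic there (and hence the constant $M_1$ used downstream) does not follow from the paper's own fibre-map formula. So you must either reconcile your displayed identity with the stated claim — which cannot be done — or state explicitly that what your method proves is the corrected, mirrored version of part ii; as it stands, the write-up asserts a conclusion that contradicts its own last formula.
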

\begin{proof}
We will only prove the asymptotic for $x \in \left[p_{n+2}^{\circ},p_{n+1}^{\circ}\right]$ the case of $x \in \left[q_{n+1}^{\circ},q_{n+2}^{\circ}\right]$ being similar. 
Throughout this proof we will suppress subscripts ($\alpha := \alpha_{0}$ and $c:=c_{0}$).
By \Cref{eqn:Interval Orbit Structure}, $x_{k} \in \left[ p_{n-k+2},p_{n-k+1} \right]$. By \Cref{eqn:Period-2 Orbit Preimage Asymptotics Exterior Left}, as $n-k \to \infty$,
\[
  p_{n-k+2} \sim  
  \left( \tfrac{\alpha+1}{c\alpha} \right)^{\frac{1}{\alpha}} \left( \tfrac{1}{n-k+2} \right)^{\frac{1}{\alpha}}.
\]
By \Cref{eqn:Period-2 Orbit Increment Asymptotics Exterior Left}, as $n-k \to \infty$,
\[
  x_{k} - p_{n-k+2} \le p_{n-k+1} - p_{n-k+2} = o\left( \tfrac{1}{n-k} \right)^{\frac{1}{\alpha}}.
\]
This verifies the claimed asymptotic behavior of $x_{k}$.\par

Recall \Cref{eqn:Fiber Map,eqn:Fiber Map Iterates Left}, and note that for $k\ge 2$
\[
  y_{k}
  =
  \left[ \phi(x_{1}) + \left( 1-\phi\left( x_{1} \right) \right)y\right]
  \prod_{j=2}^{k} \phi\left( x_{j} \right)
\]
and $y_{1}$ can be obtained by omitting the product in the equation above. Applying \Cref{eqn:Cut Function Expansion Left} and expanding $\log(1-t)$ about $t = 0$, we see that as $t \to 0$ 
\[
  \log\left( \phi\left( t \right) \right) = \log\left( 1-ct^{\alpha} + h(t) \right) \sim -ct^{\alpha}.
\]
Applying the asymptotic for $x_{k}$ from above we obtain, as $n-k \to \infty$,
\[
  \log\left( \phi\left( x_{j} \right) \right) \sim -\left( \tfrac{\alpha+1}{\alpha} \right)\left( \tfrac{1}{n-j+2} \right).
\]
It follows that, as $n-k \to \infty$ 
\[
  \sum_{j=2}^{k}
  \log\left( \phi\left( x_{j} \right) \right) 
  \sim 
  -\left( \tfrac{\alpha+1}{\alpha} \right)
  \sum_{j=2}^{k}
  \tfrac{1}{n-j+2}
  \sim
  \tfrac{\alpha+1}{\alpha} \log\left( \tfrac{n-k+1}{n} \right).
\]
Therefore,
\[
  \prod_{j=2}^{k} \phi\left( x_{j} \right) \sim \left( 1-\tfrac{k+1}{n} \right)^{1+\tfrac{1}{\alpha}}.
\]
Noting that $\phi(x_{1}) = 1 + o\left( \tfrac{1}{n} \right)$ we see that, as $n-k \to \infty$,
\[
  y_{k} \sim \left( 1-\tfrac{k+1}{n} \right)^{1+\frac{1}{\alpha}},
\]
as desired.
\end{proof}
\section{Induced Map}
\label{sec:Induced Map}
In this section we will construct an induced map that will enjoy uniform hyperbolicity and bounded distortion.\par
Consider an Intermittent Baker's Transformation $B\colon [0,1]^{2} \to  [0,1]^{2}$ as defined in \Cref{sec:Maps}. Let $f$ denote the expanding factor that was described in \Cref{sec:Factor Map} and let $\left\{ p,q \right\}$ denote the period-2 orbit described in \Cref{eqn:Period-2 Orbit}. Define the set
\begin{align}
  \label{eqn:Base Definition}
  \Lambda &= [p,q] \times [0,1].
\end{align}
We will refer to $\Lambda$ as the \emph{base} and consider first returns to $\Lambda$.\par
Define the \emph{return time function} $r\colon \Lambda \to \N \cup \left\{ \infty \right\}$ by
\begin{align}
  \label{eqn:Return Time Definition}
  r(x,y)  = \inf\left\{ n \in \N\cup\left\{ \infty \right\} : B^{n}(x,y) \in \Lambda \right\}.
\end{align}
\par
The \emph{induced map} $T \colon \Lambda  \to  \Lambda $, defined by
\begin{align}
  \label{eqn:Induced Map Definition}
  T(x,y) = B^{r(x,y)}(x,y),
\end{align}
maps a point in $\Lambda$ to the first point along its $B$-orbit that lands in $\Lambda$.\par
Given a point $(x,y)$ the first coordinate of a $B^{n}(x,y)$ is independent of $y$ for all $n\ge0$, similarly membership of $(x,y)$ in $\Lambda$ does not depend on $y$. We conclude that $r(x,y)$ does not depend on $y$. It follows that
\begin{align}
  \label{eqn:Induced Map Skew Product}
  T(x,y) = B^{r(x)}(x,y) = \left( f^{r(x)}(x), g^{\left( r(x) \right)}_{x}(y) \right).
\end{align}
We see that $T$ is a skew product and define a \emph{factor map} $u\colon [p,q]  \to  [p,q] $ and \emph{fibre maps} $v_{x}\colon[0,1] \to [0,1]$ for each $x \in [p,q]$ by,
\begin{align}
  \label{eqn:Induced Factor Map}
  u(x) &= f^{r(x)}(x),\\
  \label{eqn:Induced Fiber Map}
  v_{x}(y) & = g^{(r(x))}_{x}(y).
\end{align}
\par
Let $\lambda$ denote the conditional measure on $\Lambda$, defined by
\begin{equation}
  \label{eqn:Base Measure Definition}
  \lambda(E) = \frac{\Leb(E\cap\Lambda)}{\Leb(\Lambda)}.
\end{equation}
\par
Note that by \Cref{eqn:Interval Orbit Structure} we have, for each $n\ge0$,
\begin{equation}
  \label{eqn:Return Time Cells}
  \left[ r=n+2 \right] = \left(\left(q_{n+1}^{\circ},q_{n+2}^{\circ}\right]\cup\left[p_{n+2}^{\circ},p_{n+1}^{\circ}\right) \right)\times [0,1].
\end{equation}
It follows from \Cref{Lem:Period-2 Orbit Preimage Asymptotics} that,
\begin{equation}
  \label{eqn:Measure Asymptotics}
  \lambda\left[ r=n \right] \asymp \left( \frac{1}{n} \right)^{\frac{1}{\alpha} +2},
\end{equation}
where $\alpha = \max\left\{ \alpha_{0},\alpha_{1} \right\}$.
\par
\begin{lem}
  \label{lem:Induced Factor Map Derivative}
  If $x\in (A,q)$, then
  \begin{align}
    \label{eqn:Induced Factor Map Derivative Right}
    Du(x) &= \left[ 1-\phi\left( f(x) \right) \right]^{-1} \prod_{k=2}^{r(x)} \left[\phi\left( f^{k}(x) \right)\right]^{-1}
  \end{align}
  If $x\in(p,A)$, then
  \begin{align}
    \label{eqn:Induced Factor Map Derivative Left}
    Du(x) &= \left[\phi\left( f(x) \right) \right]^{-1} \prod_{k=2}^{r(x)} \left[1-\phi\left( f^{k}(x) \right)\right]^{-1}
  \end{align}
\end{lem}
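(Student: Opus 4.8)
The plan is to compute $Du$ directly from the chain rule applied to $u(x) = f^{r(x)}(x)$, exploiting the fact that $r$ is locally constant on each return-time cell $[r = n+2]$ described in \Cref{eqn:Return Time Cells}. Fix $x \in (A,q)$; then $x$ lies in the interior of some cell $[r = n] \cap [p,q]$ on which $r$ is constant, so on a neighborhood of $x$ we have $u = f^{r(x)}$ with $r(x)$ a fixed integer. By the chain rule, $Du(x) = \prod_{k=0}^{r(x)-1} Df\big(f^{k}(x)\big)$. The first factor is $Df(x)$; since $x \in (A,q) \subseteq (A,1]$, \Cref{eqn:Factor Map Derivative} gives $Df(x) = [1-\phi(f(x))]^{-1}$. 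For the remaining factors $Df(f^{k}(x))$ with $1 \le k \le r(x)-1$, I would reindex by noting $f^{k}(x) = f(f^{k-1}(x))$ and argue that each such iterate lies strictly to the left of $A$, so that \Cref{eqn:Factor Map Derivative} gives $Df(f^{k}(x)) = [\phi(f^{k+1}(x))]^{-1}$; after shifting the product index this yields exactly $\prod_{k=2}^{r(x)}[\phi(f^{k}(x))]^{-1}$, matching \Cref{eqn:Induced Factor Map Derivative Right}. The case $x \in (p,A)$ is symmetric, swapping the roles of the two branches and of $\phi$ with $1-\phi$, giving \Cref{eqn:Induced Factor Map Derivative Left}.

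The one genuine point requiring care — and the main obstacle — is the claim that the intermediate iterates $f^{k}(x)$ for $k = 1, \dots, r(x)-1$ all lie on a single branch of $f$ (the left branch $[0,A)$ when $x \in (A,q)$, the right branch when $x \in (p,A)$), so that a single uniform formula for $Df$ applies at each step; only the very first application of $f$ uses the other branch. This is precisely the content of the interval orbit structure in \Cref{eqn:Interval Orbit Structure}: if $x \in [p_{n+2}^{\circ}, p_{n+1}^{\circ}) \subseteq (A,q)$, then $f(x) \in [p_{n+1}, p_{n}) \subseteq [0,A)$ and the subsequent iterates march through $[p_{n},p_{n-1}), \dots, [p_{1},p_{0}), [p,q]$, all of which are contained in $[0,A)$ except possibly the endpoint behavior, which does not affect the derivative computation on the open cell. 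Thus by \Cref{eqn:Period-2 Orbit Order Interior,eqn:Period-2 Orbit Order} every iterate $f^{k}(x)$ for $1 \le k \le r(x)-1$ lies in $[0,A)$, and $f^{r(x)}(x) = u(x) \in [p,q]$ is the return. Similarly for $x \in (q_{n+1}^{\circ}, q_{n+2}^{\circ}] \subseteq (p,A)$, the orbit passes through intervals $[q_{n}, q_{n+1}), \dots$ all contained in $(A,1]$, justifying the second formula.

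With that structural fact in hand the remainder is a routine bookkeeping of indices in the telescoping product, together with the observation that $r$ is locally constant (hence $u$ is differentiable on the interior of each cell), so no delicate estimates are needed; the asymptotic refinements of \Cref{Lem:Period-2 Orbit Preimage Asymptotics} are not required here and will only be used downstream to control distortion.
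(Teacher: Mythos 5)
Your proposal is correct and follows essentially the same route as the paper: the paper's proof likewise observes via \Cref{eqn:Interval Orbit Structure} that for $x$ in a return-time cell on one side of $A$ the intermediate iterates $f^{k}(x)$, $1\le k\le r(x)-1$, all lie on the opposite branch, and then applies the chain rule to $u=f^{r(x)}$ using \Cref{eqn:Factor Map Derivative}. Your write-up simply makes explicit the index bookkeeping and the local constancy of $r$ that the paper leaves implicit.
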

\begin{proof}
  Suppose that $n\ge0$. 
  If $(x,y) \in \Lambda$ such that $x \in \left[p^{\circ}_{n+2},p^{\circ}_{n+1}\right)$, then $x \in [A,1]$, $\left\{B^{k}(x,y):1\le k \le n+1  \right\} \subset [0,p)\times[0,1]$, and $B^{n+2}(x,y) \in \Lambda$. 
  Using the relationship between in induced factor map $u$ and the factor map $f$ from \Cref{eqn:Induced Factor Map} and the derivative formulas from \Cref{eqn:Factor Map Derivative} we apply the chain rule to verify \Cref{eqn:Induced Factor Map Derivative Right}. A similar argument verifies \Cref{eqn:Induced Factor Map Derivative Left}
\end{proof}
\begin{lem}
  \label{lem:Induced Fiber Map Identity}
  If $x \in (A,q)$, then
  \begin{align}
  \label{eqn:Induced Fiber Map Identity right}
    v(x,y) = g(x,y) \frac{Df}{Du}(x).
  \end{align}
  If $x \in (p,A)$, then
  \begin{align}
  \label{eqn:Induced Fiber Map Identity left}
  1-v(x,y) = \left[1-g(x,y)\right] \frac{Df}{Du}(x).
  \end{align}
\end{lem}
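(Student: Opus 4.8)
The claim is an explicit formula relating the induced fibre map $v_x$ to the single-step fibre map $g_x$ via the ratio $Df/Du$. The natural route is to unwind the definition $v_x(y) = g^{(r(x))}_x(y)$ and track what happens to the $y$-coordinate under the $r(x)$ intermediate iterates of $B$, which by construction all land outside $\Lambda$. I would treat the case $x \in (A,q)$ first and keep $n+2 = r(x)$, so that $x \in [p^\circ_{n+2}, p^\circ_{n+1})$ and, as recorded in the proof of \Cref{lem:Induced Factor Map Derivative}, the orbit satisfies $B^k(x,y) \in [0,p)\times[0,1]$ for $1\le k\le n+1$. The key observation is that on the left-hand column $[0,A)$ the fibre map \Cref{eqn:Fiber Map Alternate} is the \emph{linear} map $y \mapsto y/Df$, i.e. it simply scales the $y$-coordinate by $\phi(f(\cdot))^{-1} = Df^{-1}$, whereas the very first step, from $x \in [A,1]$, is the affine map $y \mapsto 1 - (1-y)/Df(x)$.

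**Key steps.**
First, I would write out $v(x,y) = g^{(n+2)}_x(y)$ as the composition of one affine step $g_x$ (at the point $x$, which lies in $[A,1]$) followed by $n+1$ linear scalings $g_{f(x)}, g_{f^2(x)}, \dots, g_{f^{n+1}(x)}$, each of which multiplies its input by $[\phi(f^{k+1}(x))]^{-1} = [Df(f^k(x))]^{-1}$ for the appropriate index. Concretely, $g_x(y) = 1 - (1-y)/Df(x)$, and then applying the linear maps multiplies this by $\prod_{k=1}^{n+1}[Df(f^k(x))]^{-1}$. Second, I would observe that $g_x(y) = g(x,y)$ by definition, and that the accumulated product of derivatives is exactly $\prod_{k=2}^{r(x)}[\phi(f^k(x))]^{-1}$ after reindexing — but I must be careful: the chain rule for $u$ in \Cref{eqn:Induced Factor Map Derivative Right} gives $Du(x) = [1-\phi(f(x))]^{-1}\prod_{k=2}^{r(x)}[\phi(f^k(x))]^{-1}$, while the product of scalings applied after $g_x$ is $\prod_{k=1}^{n+1}[\phi(f^{k+1}(x))]^{-1} = \prod_{k=2}^{n+2}[\phi(f^{k}(x))]^{-1} = \prod_{k=2}^{r(x)}[\phi(f^k(x))]^{-1}$. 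Hence this product equals $Du(x)\,[1-\phi(f(x))] = Du(x)/Df(x)$, using \Cref{eqn:Factor Map Derivative} to identify $[1-\phi(f(x))] = Df(x)^{-1}$ since $x \in [A,1]$. Third, assembling: $v(x,y) = g(x,y)\cdot Df(x)/Du(x)$, which is \Cref{eqn:Induced Fiber Map Identity right}. Wait — I need to double-check the affine-versus-linear bookkeeping: because the first step is affine on $[A,1]$ it does \emph{not} simply scale, but all subsequent steps on $[0,A)$ \emph{are} linear and so do commute with the final rescaling; since $g(x,y)$ already denotes the output of that first affine step, the remaining $n+1$ linear scalings act on it by pure multiplication, and the formula closes. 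For the case $x \in (p,A)$, the roles swap: the first step $g_x$ is the linear map $y\mapsto y/Df$ and the subsequent $n+1$ steps are the affine maps on $[A,1]$ of the form $y \mapsto 1 - (1-y)/Df$; here it is the quantity $1-v(x,y)$ that gets purely multiplied, giving $1 - v(x,y) = [1-g(x,y)]\,Df(x)/Du(x)$ by the symmetric computation using \Cref{eqn:Induced Factor Map Derivative Left}.

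**Main obstacle.**
The only genuinely delicate point is the index-bookkeeping in the product of derivatives and making sure the lone affine step is correctly separated from the block of linear steps — that is, confirming that exactly one factor of $Df(x)$ is "used up" by the affine first step and that what remains matches $Du(x)$ up to that single factor. Once the telescoping of $\prod \phi(f^k(x))^{-1}$ against the chain-rule expression for $Du(x)$ from \Cref{lem:Induced Factor Map Derivative} is pinned down, the identity is immediate; there is no analytic difficulty, only the combinatorial care of aligning the ranges $1\le k\le n+1$ of the post-composition scalings with the range $2\le k\le r(x)$ in \Cref{eqn:Induced Factor Map Derivative Right}. I would also remark that the edge case $r(x)=2$ (empty product) is consistent, since then $u = f^2$ and the formula reduces to a direct two-step computation.
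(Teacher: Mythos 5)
Your proposal is correct and follows essentially the same route as the paper, whose proof simply instructs the reader to inspect \Cref{eqn:Factor Map Derivative,lem:Induced Factor Map Derivative,eqn:Induced Fiber Map} and remarks that $\tfrac{Df}{Du}$ collects all contractions applied after the first affine operation on the fibre; your unwinding of $v_x = g_{f^{r(x)-1}(x)}\circ\cdots\circ g_{f(x)}\circ g_x$ into one affine step followed by pure scalings, telescoped against the chain-rule formula for $Du$ from \Cref{lem:Induced Factor Map Derivative}, is exactly that computation made explicit. Note only the small notational slip in your second step: the post-composed scaling factor is $\prod_{k=2}^{r(x)}\phi\left(f^{k}(x)\right)=Df(x)/Du(x)$ rather than its inverse (so it equals $Df/Du$, not $Du/Df$), which is what your opening statement and your final assembled identity already use.
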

\begin{proof}
  This follows by inspecting \Cref{eqn:Factor Map Derivative,lem:Induced Factor Map Derivative,eqn:Induced Fiber Map}. Intuitively $\frac{Df}{Du}$ collects all of the contractions that are applied by the dynamics after the first affine operation on the fiber.
\end{proof}
\begin{lem}
  \label{lem:Induced Fiber Map Derivative}
  If $x \in (A,q)$, then 
  \begin{align}
    \label{eqn:Induced Fiber Map Derivative Right}
    \partial_{x}v(x,y) 
    =& 
    (1-y)
    \frac{D^2u(x)}{\left[ Du(x) \right]^2}
    \frac{Df(x)}{Du(x)}
    -
    g(x,y)
    \frac{D^2u(x)}{\left[ Du(x) \right]^2}
    Df(x)\\
    &
    +
    g(x,y)
    D\phi\left( f(x) \right)
    \frac{\left[ Df(x) \right]^3}{Du(x)}\notag
  \end{align}
  If $x \in (p,A)$, then 
  \begin{align}
    \label{eqn:Induced Fiber Map Derivative Left}
    \partial_{x}v(x,y) 
    &= 
    y
    \frac{D^2u(x)}{\left[ Du(x) \right]^2}
    \frac{Df(x)}{Du(x)}
    +
    [1-g(x,y)]
      \frac{D^2u(x)}{\left[ Du(x) \right]^2}
      Df(x)\\
    &+
    [1-g(x,y)]
    D\phi\left( f(x) \right)
    \frac{\left[ Df(x) \right]^3}{Du(x)}\notag
  \end{align}
\end{lem}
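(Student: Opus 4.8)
The plan is to prove both formulas by differentiating, in the $x$ variable, the closed expressions for $v$ supplied by \cref{lem:Induced Fiber Map Identity}. I would first record that this is legitimate branch by branch: by \cref{eqn:Return Time Cells} the return time $r$ is constant on the interior of each cell $\left(q_{n+1}^{\circ},q_{n+2}^{\circ}\right)$ and $\left(p_{n+2}^{\circ},p_{n+1}^{\circ}\right)$, so on such a cell $u(x)=f^{r}(x)$ and $v(x,\cdot)=g^{(r)}_{x}$ inherit the smoothness of $f$ and $g$; thus $Du$, $D^{2}u$, $Df$, $D^{2}f$ and $\partial_{x}g$ all exist and $\partial_{x}v$ may be computed cell by cell, yielding the stated identities at every $x\in(A,q)$ (resp. $(p,A)$) outside the discrete set of cell endpoints (at which $u$ is in fact discontinuous).

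For $x\in(A,q)$ I would begin from $v(x,y)=g(x,y)\,\tfrac{Df(x)}{Du(x)}$ (\cref{eqn:Induced Fiber Map Identity right}) and apply the product and quotient rules, using $D\!\bigl(\tfrac{1}{Du}\bigr)=-\tfrac{D^{2}u}{(Du)^{2}}$, to obtain
\[
  \partial_{x}v=(\partial_{x}g)\,\frac{Df}{Du}\;+\;g\,\frac{D^{2}f}{Du}\;-\;g\,Df\,\frac{D^{2}u}{(Du)^{2}}.
\]
Into the first summand I substitute $\partial_{x}g(x,y)=(1-y)\tfrac{D^{2}f(x)}{(Df(x))^{2}}$ from the $x\in(A,1]$ branch of \cref{eqn:Fiber Map Partial_x}, and into every surviving $D^{2}f$ I substitute $D^{2}f(x)=D\phi(f(x))\,(Df(x))^{3}$ from the $x\in(A,1]$ branch of \cref{eqn:Factor Map Second Derivative}; using $Df(x)=[1-\phi(f(x))]^{-1}$ (\cref{eqn:Factor Map Derivative}) to bring the remaining factors into the desired shape and reorganising the $(1-y)$-term then produces \cref{eqn:Induced Fiber Map Derivative Right}. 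The case $x\in(p,A)$ is the mirror image: one starts from $1-v(x,y)=[1-g(x,y)]\,\tfrac{Df(x)}{Du(x)}$ (\cref{eqn:Induced Fiber Map Identity left}), differentiates, and feeds in the $x\in[0,A)$ branches of \cref{eqn:Fiber Map Partial_x} and \cref{eqn:Factor Map Second Derivative}, namely $\partial_{x}g=-y\tfrac{D^{2}f}{(Df)^{2}}$ and $D^{2}f=-D\phi(f(x))(Df)^{3}$; the two sign reversals combine so that, after collecting terms, one arrives at \cref{eqn:Induced Fiber Map Derivative Left}.

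There is no real analytic content here, so the main obstacle is purely organisational: keeping the signs straight. Both \cref{eqn:Fiber Map Partial_x} and \cref{eqn:Factor Map Second Derivative} change sign between $[0,A)$ and $(A,1]$, and one must also remember that the roles of $g$ and $1-g$ — equivalently, of the two ends of the fibre — are swapped between the two cases, which is precisely the difference in structure between \cref{eqn:Induced Fiber Map Derivative Right} and \cref{eqn:Induced Fiber Map Derivative Left}. Beyond that, the only fussy point is matching the output of the product rule termwise to the displayed right-hand sides: deciding where to keep a factor in the form $D^{2}f/(Df)^{2}$, where to trade it for $D\phi(f(x))\,Df$, and where to absorb an extra $Df$ or $Du$.
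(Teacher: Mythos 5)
Your proposal is correct and is essentially the paper's own proof: the paper's argument is precisely to differentiate \cref{eqn:Induced Fiber Map Identity right,eqn:Induced Fiber Map Identity left} and substitute \cref{eqn:Fiber Map Partial_x,eqn:Factor Map Second Derivative}, which is exactly the computation you carry out, branch by branch on the cells where $r$ is constant. The additional bookkeeping you do (justifying cell-by-cell differentiability and tracking the sign flips between the $[0,A)$ and $(A,1]$ branches) is the only content beyond the paper's one-line proof.
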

\begin{proof}
  Differentiate \Cref{eqn:Induced Fiber Map Identity right,eqn:Induced Fiber Map Identity left} respectively and apply \Cref{eqn:Fiber Map Partial_x,eqn:Factor Map Second Derivative}.
\end{proof}
Define the projection $\mu$ of the measure $\lambda$ onto $[p.q]$, by 
\begin{equation}
  \label{eqn:Factor Measure Definition}
  \mu(E) = \lambda\left( E\times [0,1] \right).
\end{equation}
\par
Recall the usual transfer operator $T_{*}$ acting on measures is defined for measurable $E$ by $T_{*}\nu(E) = \nu\left( T^{-1}E \right)$. The transfer operator induces the Perron-Frobenius operator $P$ on $L^{1}(\lambda)$. Given $\eta \in L^{1}\left( \lambda \right)$ and a measurable set $E$, define $\nu(E) =\int_{E} \eta \,d\lambda$, then the Perron-Frobenius operator is defined by $P\eta = \frac{dT_{*}\nu}{d\lambda}$ where the right hand side is a Radon-Nikodym derivative. We note that $T$ is invertible and preserves $\lambda$, and therefore $\frac{dT_{*}\nu}{d\lambda} = \eta \circ T^{-1}$. For this reason we will abuse notation and use $T_{*}$ to denote both the transfer operator and the Perron-Frobenius operator associated to $T$, that is, for all $\eta \in L^{1}(\lambda)$,
\begin{equation}
  \label{eqn:Transfer Operator Definition}
  T_{*}\eta = \eta \circ T^{-1}.
\end{equation}
\par
When we refer to iterates of $T$ we will use the notation $v^{(k)}_{x}$ defined analogously to \Cref{eqn:Fiber Map Iterates Left} so that we have.
\begin{equation}
  \label{eqn:T-skew}
  T^{k}(x,y) = \left( u^k(x), v^{(k)}_{x}(y) \right)
\end{equation}
\par
The map $u$ preserves $\mu$.\par
In what follows it will be convenient to define the $k$-th return time $r^{(k)}\colon \Lambda \to \N \cup \left\{\infty\right\}$ by,
\begin{align}
  r^{(1)}(x,y) &= r(x,y)\notag\\
  \label{eqn:Return Time Recursive}
  r^{(k+1)}(x,y) &= r^{(k)}(x,y) + r\left( T^{k}(x,y) \right).
\end{align}
Note that if $n = r^{(k)}(x,y)$, then $n$ is the smallest positive integer so that the set $\left\{ B^j(x,y): j=1,\dots,n \right\}$ contains $k$ points in $\Lambda$.
\subsection{Dynamical Partitions}
\label{sec:Partitions}
Our anisotropic Banach spaces will be built with respect to stable and unstable curves for the IBT. 
Since $T$ is a skew product, it is easy to check that vertical lines form an equivariant family of stable curves for $T$. For convenience we introduce notation. For every $x\in[p,q]$, define 
\begin{equation}
  \label{eqn:Vertical Line Notation}
  \ell(x) = \left\{ x \right\}\times [0,1].
\end{equation}
With this notation equivariance takes the form
\begin{equation}
  \label{eqn:Vertical Line Equivariance}
  T\left(\ell(x) \right) \subset \ell(u(x)).
\end{equation}
It is routine to check that for every $x \in [p,q]$ the map $v_{x}\colon \ell(x) \to \ell(u(x))$ is an affine contraction by at least $\beta$.\par

The next lemma characterizes unstable curves for $T$.
\begin{lem}
  \label{lem:Unstable Curves}
  There is an equivariant family $\Gamma$ of unstable curves for $T$ such that, each curve is the graph of a function in $C^{1}\left( [p,q],[0,1] \right)$, the family is bounded in the $C^{1}$ norm, and the family forms a partition of $\Lambda$.
\end{lem}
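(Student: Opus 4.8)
The plan is to construct the family $\Gamma$ by pushing forward the family of horizontal lines under forward iterates of $T$ and taking a limit, exploiting the fact that $\partial_y g = 1/Df$ is uniformly contracting while the horizontal slopes introduced by $\partial_x g$ stay bounded. Concretely, start with the curves $y = \mathrm{const}$ on $\Lambda$, which are graphs of $C^1$ (indeed constant) functions over $[p,q]$. First I would record the key geometric estimate: on each branch of $u$ (i.e.\ on each return-time cell $[r = n]$) the map $T$ sends a graph $y = \psi(x)$ to a graph $y = \widetilde\psi(x)$ over the image interval, and by \Cref{lem:Induced Fiber Map Derivative} together with the skew-product structure \eqref{eqn:Induced Map Skew Product} the new slope satisfies a bound of the form $\Abs{D\widetilde\psi}_{\sup} \le \kappa \Abs{D\psi}_{\sup} + C$, where $\kappa < 1$ comes from the factor $Df/Du$ (a product of contractions $\phi(f^k(x))$ and $1-\phi(f^k(x))$ along the orbit) and $C$ is a uniform constant controlling the contribution of $D^2u/(Du)^2$ and $D\phi(f(x))[Df(x)]^3/Du(x)$. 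One must check these quantities are genuinely uniformly bounded across all infinitely many branches; this is where \Cref{lem:Induced Factor Map Derivative} and the distortion control built into the induced map are used, and it is the technical heart of the argument.

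Granting the slope bound, iterating gives $\Abs{D(T^n\text{-image of a horizontal line})}_{\sup} \le \kappa^n \cdot 0 + C(1 + \kappa + \cdots + \kappa^{n-1}) \le C/(1-\kappa) =: L$, so all forward images lie in the compact family $\mathcal{F}_L$ of graphs of functions in $C^1([p,q],[0,1])$ with derivative bounded by $L$. Next I would show the images stabilize: because the vertical fibre maps contract by at least $\beta < 1$ and the horizontal extent is fixed, the $n$-th preimage partition of $\Lambda$ into "$T^n$-cylinders" has fibres of height $O(\beta^n)$, so the piecewise-graph image of a horizontal line at stage $n$ and at stage $n+1$ differ by $O(\beta^n)$ in sup norm on each cylinder; a Cauchy/compactness argument in $\mathcal{F}_L$ (using Arzelà–Ascoli for the $C^0$ limit and the uniform derivative bound to keep the limit in $C^1$, with a standard argument that the derivatives converge as well, again via the contraction) produces, through each point of $\Lambda$, a well-defined limiting curve. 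Equivariance $T(\gamma)\subset\gamma'$ for $\gamma,\gamma'\in\Gamma$ is automatic from the construction since the family is obtained as $\bigcap_n T^n(\text{horizontal foliation})$; that the curves are disjoint and cover $\Lambda$ follows because distinct horizontal lines stay disjoint under $T^n$ (the $y$-fibres never cross, $\partial_y g > 0$) and every point has a preimage of every order.

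The main obstacle I anticipate is the uniformity of the constant $C$ (equivalently, bounded distortion of the induced factor map $u$ across the infinitely many branches indexed by return time $n\to\infty$): one needs $D^2u/(Du)^2$ and the term $D\phi(f(x))[Df(x)]^3/Du(x)$ in \Cref{eqn:Induced Fiber Map Derivative Right,eqn:Induced Fiber Map Derivative Left} to be bounded independently of $n$, which requires the telescoping product structure of $Du$ from \Cref{lem:Induced Factor Map Derivative} to dominate the blow-up of $Df$ near $x = A$ — precisely the "unbounded distortion near $x=A$ balances slow escape from the indifferent fixed points" phenomenon discussed after \Cref{eqn:Fiber Map Partial_y}. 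Once that uniform bound is in hand, the contraction constant $\kappa$ can be taken $<1$ uniformly (shrinking the base or passing to a higher power of $T$ if necessary, using that $\prod_{k=2}^{r(x)}$ has at least one genuinely contracting factor bounded away from $1$ on a definite portion of $\Lambda$), and the rest is the routine graph-transform fixed-point argument. I would also remark that the resulting family is genuinely a partition rather than merely a lamination because $T$ is invertible and every vertical fibre is mapped into a single vertical fibre, so no two unstable graphs can meet.
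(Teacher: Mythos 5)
Your proposal is essentially the paper's own approach: the paper does not write the argument out but simply cites a standard graph-transform construction (Robinson, Ch.~12; Chart 2016, Lemma 5.4.15), which is exactly the push-forward-of-horizontal-graphs, uniform-slope-bound, vertical-contraction/Cauchy-limit scheme you outline. The uniform estimate you identify as the technical heart---boundedness of $D^2u/[Du]^2$ and of $\partial_x v/Du$ across all infinitely many branches---is precisely what the paper establishes separately in \Cref{lem:Distortion Bound} and \Cref{lem:Skew Bound} (note the contraction acting on slopes is really $\partial_y v/Du=[Du]^{-2}\le\beta^2$ rather than $Df/Du$, but this does not affect your argument).
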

\begin{proof}
  The proof is a standard but involved application of graph transformations.
  See \cite{Robinson1999} Chapter 12 or \cite{Chart2016} Lemma 5.4.15.
\end{proof}
We define $\gamma \colon \Lambda \to \Gamma$ by,
\begin{equation}
  \label{eqn:Unstable Curve Notation}
  \gamma(x,y) \in \Gamma \text{ such that } (x,y) \in\gamma(x,y).
\end{equation}
Since $\Gamma$ is a partition $\gamma(x,y)$ is uniquely defined.\par

Note that by \Cref{eqn:Return Time Cells} the collection $\left\{ \left[ r=n \right]:n\ge 1  \right\}$ is a partition mod $ \lambda$ of $\Lambda$, as is $\left\{ \left( p,A)\times[0,1],(A,q \right)\times[0,1] \right\}$. For all $k\ge 1$ we define, 
\begin{align}
  \label{eqn:Stable Columns Partition}
  \Omega_{1} &= \left\{ \left[ r=n \right]:n\ge 1  \right\} \vee \left\{ \left( p,A)\times[0,1],(A,q \right)\times[0,1] \right\},\\
  \Omega_{k+1} &= \Omega_{1}\vee T^{-1}\Omega_{k}.\notag
\end{align}
All of these collections are partitions mod $ \lambda$ since $T$ is measure preserving. Every cell of $\Omega_{k}$ is a column of the form $[a,b)\times[0,1)$ or $(a,b]\times[0,1]$. 
We define $\omega_{k} \colon \Lambda \to \Omega_{k}$ by,
\begin{equation}
  \label{eqn:Stable Columns Notation}
  \omega_{k}(x,y) \in \Omega_{k}\text{ such that } (x,y)\in\omega_{k}(x,y).
\end{equation}
Since $\Omega_{k}$ is a partition mod $\lambda$, we have that $\omega_{k}(x,y)$ is uniquely defined for $\lambda$-a.e. $(x,y)$.
Note that $r^{(k)}$ is measurable with respect to $\Omega_{k}$.\par

Let $\hat{\Omega}_{k}$ denote the projection of $\Omega_{k}$ on to the interval $[p,q]$ by the map $(x,y) \mapsto x$. \par

Lastly we define measurable partitions $\Theta_{n}$ and maps $\theta_{n}\colon \Lambda \to \Theta_{n}$ by
\begin{align}
  \label{eqn:Unstable Strip Partition}
  \Theta_{n} &= T^{n} \Omega_{n}\\
  \label{eqn:Unstable Strip Notation}
  \theta_{n}(x,y) &\in \Theta_{n}\text{ such that } (x,y)\in\theta_{n}(x,y).
\end{align}
The cells of $\Theta_{n}$ are strips that are bounded above and below by curves in $\Gamma$ and extend across the full width of $\Lambda$.
\subsection{Derivative Bounds}
While an IBT is non-uniformly hyperbolic, the induced map introduced in the last section enjoys uniform hyperbolicity. 
For our purposes it suffices to show that the factor map $u$ of the induced map $T$ is a well behaved interval map meaning that it enjoys uniform expansion and bounded distortion. The following lemmas from \cite{Bose&Murray2013} provide the necessary bounds.
\begin{lem}[Lemma 2 from \cite{Bose&Murray2013}]
  \label{lem:Uniform Expansion}
  If
  \begin{equation}
    \label{eqn:beta Definition}
    \beta = \sup_{t \in [p,q]} \max\left\{ \phi(t),1-\phi(t) \right\},
  \end{equation}
  then
  \begin{equation}
    \label{eqn:Expansion Bound}
    \Ninfty{\left[  Du \right]^{-1}} \le \beta.
  \end{equation}
\end{lem}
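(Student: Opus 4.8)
The plan is to reduce the bound on $\Ninfty{[Du]^{-1}}$ to the two explicit product formulas for $Du$ supplied by \Cref{lem:Induced Factor Map Derivative} and then estimate each factor using the definition of $\beta$ together with the structure of the return-time cells. First I would fix a point $x \in [p,q]$ with $x \neq A$ and split into the two cases $x \in (A,q)$ and $x \in (p,A)$; by symmetry it suffices to treat one, say $x \in (A,q)$, where \Cref{eqn:Induced Factor Map Derivative Right} gives
\[
  Du(x) = \left[ 1-\phi\left( f(x) \right) \right]^{-1} \prod_{k=2}^{r(x)} \left[\phi\left( f^{k}(x) \right)\right]^{-1}.
\]
Hence
\[
  \left[ Du(x) \right]^{-1} = \left[ 1-\phi\left( f(x) \right) \right] \prod_{k=2}^{r(x)} \phi\left( f^{k}(x) \right).
\]

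The key observation is that $\beta < 1$. Indeed $\phi$ is continuous and strictly decreasing with $\phi(0)=1$, $\phi(1)=0$, so on the compact interval $[p,q] \subset (0,1)$ both $\phi(t)$ and $1-\phi(t)$ are strictly less than $1$; taking the maximum over $t$ and the sup over $[p,q]$ still yields a number $\beta \in (0,1)$. Now I would show that the arguments $f^{k}(x)$ appearing in the product all lie in $[p,q]$. By the definition of the return time, for $x \in (A,q)$ with $r(x) = n+2$ the orbit satisfies $f^{j}(x) \in [0,p) \cup (q,1]$ — actually, more to the point, the relevant factors: note $f(x) \in (q,1)$ (since $x > A$), and the iterates $f^{2}(x), \ldots, f^{r(x)}(x)$; here I must be a little careful. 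The cleanest route is: $f^{r(x)}(x) = u(x) \in [p,q]$ by definition, and one checks via \Cref{eqn:Interval Orbit Structure} which of the intermediate iterates land in $[p,q]$. In any case each factor $\phi(f^{k}(x))$ and the leading factor $[1-\phi(f(x))]$ is bounded above by $\beta$ whenever its argument lies in $[p,q]$, and is bounded by $1$ otherwise (since $\phi$ and $1-\phi$ take values in $[0,1]$). Crucially at least one factor has argument in $[p,q]$: for $x \in (A,q)$ one has $f^{r(x)}(x) \in [p,q]$ and when $r(x) \geq 2$ this is the factor $\phi(f^{r(x)}(x)) \leq \beta$; the case $r(x) = 1$ has to be handled by noting $f(x) \in [p,q]$ so $1-\phi(f(x)) \leq \beta$. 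Either way $[Du(x)]^{-1} \leq \beta$, and taking the supremum over $x$ gives \Cref{eqn:Expansion Bound}.

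The main obstacle is bookkeeping which iterates $f^{k}(x)$ fall inside the base $[p,q]$ versus outside it, so as to legitimately bound the whole product by a single factor of $\beta$ rather than by $\beta$ raised to a variable power (which would only be better). This is precisely where \Cref{eqn:Interval Orbit Structure} and the description \Cref{eqn:Return Time Cells} of $[r = n+2]$ are used: for $x \in [p_{n+2}^{\circ}, p_{n+1}^{\circ})$ the points $f(x), \ldots, f^{n+1}(x)$ run through the exterior intervals $[p_{j}, p_{j-1}]$ and only $f^{n+2}(x) \in [p,q]$, giving one guaranteed factor $\leq \beta$ and all other factors $\leq 1$. Since this is exactly the content of Lemma 2 of \cite{Bose&Murray2013}, I would present the argument concisely and cite that reference for the routine interval-chasing, as the excerpt's statement already does.
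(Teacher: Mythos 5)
Your proposal is correct and follows essentially the same route as the paper's (very brief) proof: write $\left[Du(x)\right]^{-1}$ via the product formulas of \Cref{lem:Induced Factor Map Derivative}, bound every factor by $1$ since $\phi$ takes values in $[0,1]$, and bound one factor by $\beta$ because $f^{r(x)}(x)=u(x)\in[p,q]$. The only blemishes are harmless: the parenthetical claim that $f(x)\in(q,1)$ for $x\in(A,q)$ is wrong (in fact $f(x)\in(0,p)$, near the indifferent fixed point at $0$), but you flag it and your final argument does not use it, and the case $r(x)=1$ you worry about never occurs since by \Cref{eqn:Return Time Cells} the return time is always at least $2$.
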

\begin{proof}
  This follows immediately from \Cref{eqn:Induced Factor Map Derivative Left,eqn:Induced Factor Map Derivative Right}. Note that every term in the product can be bounded above by 1 since $\phi$ takes values in $[0,1]$ and $f^{r(x)}(x) \in [p,q]$.
\end{proof}
\begin{lem}[Lemma 3 from \cite{Bose&Murray2013}]
  \label{lem:Distortion Bound}
  There exists $\kappa<\infty$ such that for all $k \ge 1$, if $w$ and $x$ lie in the same cell of $\hat{\Omega}_{k}$, then 
  \begin{equation}
    \label{eqn:Distortion Bound}
    \abs{
      \frac{Du^k(x)}{Du^{k}(w)} - 1
    }
    \le \kappa \abs{u^k(x) - u^k(w)}.
  \end{equation}
\end{lem}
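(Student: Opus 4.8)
The plan is to reduce the distortion estimate for $u^k$ to a telescoping sum of one-step distortions and then control each term using uniform expansion (\Cref{lem:Uniform Expansion}) together with the explicit derivative formulas of \Cref{lem:Induced Factor Map Derivative}. Write $\log Du^k(x) - \log Du^k(w) = \sum_{j=0}^{k-1}\left( \log Du(u^j x) - \log Du(u^j w)\right)$, which is valid since $x$ and $w$ lie in the same cell of $\hat\Omega_k$ and hence $u^j x, u^j w$ lie in the same cell of $\hat\Omega_{k-j}$, in particular in a common monotonicity domain of $u$ where $Du$ is smooth and positive. The standard bounded-distortion mechanism then requires two ingredients: a uniform bound on the ratio $D^2 u / Du$ (a "distortion constant" for the single map $u$), and the uniform contraction $\Ninfty{(Du)^{-1}}\le\beta<1$ from \Cref{lem:Uniform Expansion}, which forces $\abs{u^j x - u^j w}\le \beta^{\,k-j}\abs{u^k x - u^k w}$.

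Concretely, first I would establish that there is a constant $C<\infty$ with $\abs{D\log Du(t)} = \abs{D^2 u(t)/Du(t)} \le C$ for all $t$ in any monotonicity interval of $u$; since $u = f^{r(x)}$ on each such interval, this is a computation using $Du = \prod [\phi(f^k\cdot)]^{-1}$ (or the analogous product with $1-\phi$) from \Cref{lem:Induced Factor Map Derivative}, differentiating the logarithm termwise. The termwise derivatives involve $D\phi(f^k(x))\cdot Df^k(x)/\phi(f^k(x))$; using $Df(x) = [\phi(f(x))]^{-1}$ (\Cref{eqn:Factor Map Derivative}) the chain-rule factors telescope against the $\phi$-denominators, and the resulting series is dominated by a geometric series in $\beta$ because consecutive iterates $f^k(x)$ are separated by the uniform expansion of $f$ along the excursion. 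This is essentially the content already extracted in \cite{Bose&Murray2013} for the IBT factor map, so I would cite Lemma 3 there for the one-step bound and focus the argument on assembling the iterated estimate. Then, by the mean value theorem applied to $\log Du$ on the segment between $u^j x$ and $u^j w$,
\[
  \abs{\log Du(u^j x) - \log Du(u^j w)} \le C\,\abs{u^j x - u^j w} \le C\,\beta^{\,k-j}\,\abs{u^k x - u^k w},
\]
and summing over $j=0,\dots,k-1$ gives $\abs{\log Du^k(x) - \log Du^k(w)} \le \frac{C}{1-\beta}\abs{u^k x - u^k w}$. Finally, since $u^k$ maps a cell of $\hat\Omega_k$ onto $[p,q]$ its image points are at distance at most $q-p\le 1$, so $\log Du^k(x)-\log Du^k(w)$ is bounded, and on a bounded interval $\abs{e^s - 1}\le e^{\abs{s}}\abs{s}$ converts the logarithmic estimate into $\abs{Du^k(x)/Du^k(w) - 1}\le \kappa\abs{u^k x - u^k w}$ with $\kappa = \frac{C}{1-\beta}e^{C/(1-\beta)}$.

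The main obstacle is the single-step distortion bound for $u$, i.e. showing $D^2u/Du$ is uniformly bounded on monotonicity intervals despite the fact that $u = f^{r(x)}$ has unbounded return time $r(x)$ and $f$ itself has unbounded derivative near $x = A$ and only neutral (non-hyperbolic) fixed points at $0$ and $1$. The point is that all the danger is confined to the branch structure: along a single excursion of length $r(x)$ the orbit stays in a region where $Df \ge 1/\beta > 1$ uniformly (it only visits a neighborhood of the neutral fixed point, but the \emph{induced} map's branches are cut so that $f^{r(x)}$ has uniformly expanding, uniformly bounded-distortion branches — this is exactly the content of \cite{Bose&Murray2013} Lemma 3, which is why I would lean on that reference rather than reprove it). Once that uniform single-step constant $C$ is in hand, the remainder is the routine geometric-sum telescoping sketched above.
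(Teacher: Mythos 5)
The paper itself does not reprove this lemma; its proof is a bare citation to \cite{Bose&Murray2013}, and since the statement is exactly Lemma 3 of that reference, a straight citation (which you half make) would already match what the paper does. The problem is that the argument you add contains a false key step. Your single-step ingredient is the claim that $\abs{D\log Du} = \abs{D^{2}u/Du}$ admits a uniform bound $C$ on every monotonicity interval of $u$. This is not true: on the branch with return time $n$ the domain interval has length of order $n^{-(2+1/\alpha)}$ while $Du \approx n^{2+1/\alpha}$, and differentiating \Cref{eqn:Induced Factor Map Derivative Right} directly, already the first term equals $D\phi\left( f(x) \right)/\left[ 1-\phi\left( f(x) \right) \right]^{2}$, which with $f(x)\approx n^{-1/\alpha}$ is of size $n^{1+1/\alpha}$; so $\abs{D^{2}u/Du}$ grows with the return time of the branch. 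Consequently your mean value estimate $\abs{\log Du(u^{j}x)-\log Du(u^{j}w)}\le C\abs{u^{j}x-u^{j}w}$, and hence the per-term bound $C\beta^{k-j}\abs{u^{k}x-u^{k}w}$ with a branch-independent $C$, does not hold; the constant you would get is unbounded over the cells of $\hat{\Omega}_{1}$ containing $u^{j}x,u^{j}w$.

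The quantity that is uniformly bounded is the Adler ratio $\abs{D^{2}u}/(Du)^{2}$ (in the above scaling it is of order $n^{-1}$), and the standard way to exploit it replaces your mean value step by an integral that converts domain length into image length with no distortion input: on a single branch, since $Du>0$,
\[
  \abs{\log Du(a)-\log Du(b)}
  \le
  \int_{a}^{b} \frac{\abs{D^{2}u}}{(Du)^{2}}\, Du
  \le
  C\,\abs{u(a)-u(b)},
\]
so each telescoped term is bounded by $C\abs{u^{j+1}x-u^{j+1}w}\le C\beta^{k-j-1}\abs{u^{k}x-u^{k}w}$, after which your geometric summation and the exponentiation $\abs{e^{s}-1}\le e^{\abs{s}}\abs{s}$ go through verbatim. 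So the architecture (telescope, expansion factor $\beta^{k-j}$, exponentiate) is sound, but the single-step input must be the bound on $D^{2}u/(Du)^{2}$ — which is the genuinely nontrivial part, requiring control near the indifferent fixed points and near $x=A$, and is the actual content of the Bose--Murray lemma — not a bound on $D^{2}u/Du$. Note also the circularity in citing \cite{Bose&Murray2013} Lemma 3 ``for the one-step bound'': that lemma is the very statement being proved, so either cite it for the whole claim (as the paper does) or prove the one-step Adler bound from \Cref{eqn:Induced Factor Map Derivative Right,eqn:Induced Factor Map Derivative Left} and the asymptotics of \Cref{Lem:Period-2 Orbit Preimage Asymptotics}.
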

\begin{proof}
  See \cite{Bose&Murray2013}.
\end{proof}
\begin{lem}
  \label{lem:Skew Bound}
  There exists a constant $\tau>0$ such that
  \begin{align}
  \label{eqn:Skew Bound}
    \Ninfty{\frac{\partial_{x} v^{(k)}}{Du^{k}}} \le \tau.
  \end{align}
\end{lem}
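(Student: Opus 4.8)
The plan is to use the skew-product structure of $T$ to obtain a self-improving recursion for the quantities $\Phi_{k}:=\Ninfty{\partial_{x}v^{(k)}/Du^{k}}$, reducing the lemma to the single estimate $\Phi_{1}<\infty$, and then to establish $\Phi_{1}<\infty$ from the derivative formulas of \Cref{lem:Induced Fiber Map Derivative} together with the refined asymptotics of \Cref{Lem:Period-2 Orbit Preimage Asymptotics} and the distortion bound of \Cref{lem:Distortion Bound}.

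First I would set up the recursion. From $T^{k+1}=T\circ T^{k}$ and \Cref{eqn:T-skew} one gets $v^{(k+1)}(x,y)=v\big(u^{k}(x),v^{(k)}(x,y)\big)$, and since each fibre map $v_{x}$ is affine with slope $[Du(x)]^{-1}$ (by \Cref{eqn:Fiber Map Partial_y} and the chain rule, or by area preservation) we have $\partial_{y}v^{(k)}\equiv[Du^{k}]^{-1}$. Differentiating the composition in $x$ and dividing by $Du^{k+1}(x)=Du(u^{k}(x))\,Du^{k}(x)$ yields
\begin{align*}
\frac{\partial_{x}v^{(k+1)}(x,y)}{Du^{k+1}(x)}
&=\frac{\partial_{x}v\big(u^{k}(x),v^{(k)}(x,y)\big)}{Du(u^{k}(x))}
+\frac{1}{[Du(u^{k}(x))]^{2}}\cdot\frac{\partial_{x}v^{(k)}(x,y)}{Du^{k}(x)}.
\end{align*}
Since $(u^{k}(x),v^{(k)}(x,y))\in\Lambda$ and $\Ninfty{[Du]^{-1}}\le\beta<1$ by \Cref{lem:Uniform Expansion} (here $\beta<1$ because $0<p<q<1$ forces $\phi(p)<1$ and $1-\phi(q)<1$), this gives $\Phi_{k+1}\le\Phi_{1}+\beta^{2}\Phi_{k}$, hence by induction $\Phi_{k}\le\Phi_{1}/(1-\beta^{2})=:\tau$ for every $k\ge1$. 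So it remains to bound $\Phi_{1}$.

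Next I would reduce $\Phi_{1}$ to a single term. Fix a cell of $\Omega_{1}$, say with $x\in(A,q)$ (the case $x\in(p,A)$ being symmetric, with the roles of $0,\alpha_{0},c_{0}$ and $1,\alpha_{1},c_{1}$ interchanged). Dividing the formula of \Cref{lem:Induced Fiber Map Derivative} by $Du(x)$ and writing $R(x):=Df(x)/Du(x)=\prod_{k=2}^{r(x)}\phi(f^{k}(x))\in(0,1]$ (\Cref{lem:Induced Factor Map Derivative}), the first two resulting terms are bounded by $\kappa(1+\beta)$, using $|y|,|1-y|,|g|\le1$, the bound $\Ninfty{D^{2}u/[Du]^{2}}\le\kappa$ obtained from \Cref{lem:Distortion Bound} by letting $w\to x$, and $Df/[Du]^{2}=R/Du\le\beta$. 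Using $Df(x)=[1-\phi(f(x))]^{-1}$ (\Cref{eqn:Factor Map Derivative}) and $[Du(x)]^{-1}=[1-\phi(f(x))]\,R(x)$ (\Cref{lem:Induced Factor Map Derivative}), the remaining term simplifies to $g(x,y)\,\frac{D\phi(f(x))}{1-\phi(f(x))}\,R(x)^{2}$, so it suffices to bound $\big|\frac{D\phi(t)}{1-\phi(t)}\big|\,R(x)^{2}$ with $t=f(x)$.

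The hard part is this last estimate, since $D\phi(t)/(1-\phi(t))\sim-\alpha_{0}/t$ blows up as $t\to0$; the point is that $R(x)$ decays fast enough, which is precisely the balance between unbounded distortion near $x=A$ and slow escape from the indifferent fixed point that makes $f$ preserve Lebesgue measure. When $r(x)$ is bounded, $t$ ranges over a compact subset of $(0,1)$ and the bound is trivial, so suppose $x\in[p^{\circ}_{n+2},p^{\circ}_{n+1})$ with $n$ large; then $r(x)=n+2$ by \Cref{eqn:Return Time Cells} and $t=f(x)\in[p_{n+1},p_{n}]$ by \Cref{eqn:Interval Orbit Structure}. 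By \Cref{eqn:Cut Function Expansion Left}, $|D\phi(t)|/(1-\phi(t))=O(t^{-1})$ and $1-\phi(t)\gtrsim t^{\alpha_{0}}$. Since the branch of $u$ on $[p^{\circ}_{n+2},p^{\circ}_{n+1})$ maps that interval onto $[p,q]$, the distortion bound gives $\inf Du\ge(q-p)\big[(1+\kappa(q-p))(p^{\circ}_{n+1}-p^{\circ}_{n+2})\big]^{-1}\gtrsim n^{2+1/\alpha_{0}}$ by \Cref{Lem:Period-2 Orbit Preimage Asymptotics}, while $t\ge p_{n+1}\gtrsim n^{-1/\alpha_{0}}$, so $n\gtrsim t^{-\alpha_{0}}$ and hence $Du(x)\gtrsim t^{-2\alpha_{0}-1}$ and $R(x)=[(1-\phi(t))Du(x)]^{-1}=O(t^{\alpha_{0}+1})$. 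Therefore $\big|\frac{D\phi(t)}{1-\phi(t)}\big|R(x)^{2}=O(t^{-1})\,O(t^{2\alpha_{0}+2})=O(t^{2\alpha_{0}+1})=O(1)$. The symmetric estimate near $x=A^{-}$ finishes the bound on $\Phi_{1}$, and with it the lemma. The main obstacle, as indicated, is controlling $D^{2}f(x)/[Du(x)]^{2}$ near $x=A$, where the crude bounds that suffice elsewhere fail and one is forced to invoke the sharpened orbit asymptotics of \Cref{Lem:Period-2 Orbit Preimage Asymptotics}.
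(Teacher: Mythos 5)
Your proposal is correct and follows essentially the same route as the paper: the explicit formula of \Cref{lem:Induced Fiber Map Derivative} divided by $Du$, the bound $\Ninfty{D^{2}u/[Du]^{2}}\le\kappa$ from \Cref{lem:Distortion Bound}, control of the dangerous $D\phi\circ f$ term via the cell asymptotics of \Cref{Lem:Period-2 Orbit Preimage Asymptotics} (the paper phrases this in the cell index $n$, you in $t=f(x)$, which is equivalent), and then induction on $k$ through the triangular Jacobian giving a geometric series in $\beta^{2}$. The only cosmetic differences are that you compose as $T\circ T^{k}$ while the paper uses $T^{k}\circ T$, and you dispatch the first two terms by the crude bounds $R\le 1$, $[Du]^{-1}\le\beta$ rather than by asymptotics; both yield $\tau=\tau_{0}/(1-\beta^{2})$.
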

\begin{proof}
  Suppose that $k = 1$. By \Cref{eqn:Induced Fiber Map Derivative Right} we have the identity below for almost every $x \in (A,q)$.
  \begin{align*}
    \frac{\partial_{x} v(x,y)}{Du(x)} 
    &= 
    (1-y)
    \frac{D^2u(x)}{\left[ Du(x) \right]^2}
    \frac{Df(x)}{[Du(x)]^2}
    -
    g(x,y)
    \frac{D^2u(x)}{\left[ Du(x) \right]^2}
    \frac{Df(x)}{Du(x)}\\
    &
    +
    g(x,y)
    D\phi\left( f(x) \right)
    \frac{\left[ Df(x) \right]^3}{[Du(x)]^2}
  \end{align*}
  Similarly, by \Cref{eqn:Induced Fiber Map Derivative Left} we have the identity below for almost every $x\in (p,A)$.
  \begin{align*}
    \frac{\partial_{x}v(x,y)}{Du(x)} 
    &= 
    y
    \frac{D^2u(x)}{\left[ Du(x) \right]^2}
    \frac{Df(x)}{\left[Du(x)\right]^2}
    +
    [1-g(x,y)]
      \frac{D^2u(x)}{\left[ Du(x) \right]^2}
      \frac{Df(x)}{Du(x)}\\
    &+
    [1-g(x,y)]
    D\phi\left( f(x) \right)
    \frac{\left[ Df(x) \right]^3}{\left[Du(x)\right]^2}\notag
  \end{align*}
  Taking norms we obtain the bound below.
  \begin{align*}
    \Ninfty{
      \frac{\partial_{x} v}{Du} 
    }
    &\le
    \Ninfty{
      \frac{D^2u}{\left[ Du \right]^2}
    }
    \Ninfty{
    \frac{Df}{[Du]^2}
    }
    +
    \Ninfty{
     \frac{D^2u}{\left[ Du \right]^2}
    }
    \Ninfty{
      \frac{Df}{Du}
    }
    +
    \Ninfty{
	    \frac{\left[ D\phi\circ f \right]\left[ Df \right]^3}{[Du]^2}
    }
  \end{align*}
  By \Cref{lem:Distortion Bound} we have,
    \[
      \Ninfty{
        \frac{D^2u}{\left[ Du \right]^2}
      }
      \le \kappa.
    \]
  \par

  Suppose that $x \in \left(p^{\circ}_{n+2},p^{\circ}_{n+1}\right)$. Since $\phi$ is decreasing and $f(x)$ is increasing, $Df(x) = [1-\phi(f(x))]^{-1}$ is decreasing. Recall from \Cref{eqn:Interval Orbit Structure} that $f$ maps $\left(p^{\circ}_{n+2},p^{\circ}_{n+1}\right)$ onto $\left( p_{n+1},p_{n} \right)$. By the Mean Value Theorem there exists $\theta_{n} \in \left(p^{\circ}_{n+2},p^{\circ}_{n+1}\right)$ such that 
  \[
    Df(\theta_{n}) 
    = 
    \frac{p_{n} - p_{n+1}}{p^{\circ}_{n+1} - p^{\circ}_{n+2}}.
  \]
  Since $Df$ is decreasing for all $x \in \left(p^{\circ}_{n+2},p^{\circ}_{n+1}\right)$,
  \[
    Df(\theta_{n-1}) \le Df(x) \le Df(\theta_{n+1}).
  \]
  By \Cref{eqn:Period-2 Orbit Increment Asymptotics Exterior Left,eqn:Period-2 Orbit Increment Asymptotics Interior Right} we have 
  \[ 
    \frac{p_{n} - p_{n+1}}{p^{\circ}_{n+1} - p^{\circ}_{n+2}} \approx n
  \]
  We deduce that for all $x \in \left(p^{\circ}_{n+2},p^{\circ}_{n+1}\right)$,
  \[
    Df(x) \approx n.
  \]
  \par
  A similar argument shows that for all $x \in \left(p^{\circ}_{n+2},p^{\circ}_{n+1}\right)$,
  \[
    Du(x) \approx \frac{q-p}{p^{\circ}_{n+1} - p^{\circ}_{n+2}} \approx n^{2+\frac{1}{\alpha_{0}}}
  \]
  \par
  By \Cref{eqn:Cut Function Expansion Left}, $D\phi(t) = -c_0\alpha_0 t^{\alpha_{0} - 1} + o\left( t^{\alpha_{0}-1} \right)$ for $t$ near $0$. If $x \in \left(p^{\circ}_{n+2},p^{\circ}_{n+1}\right)$, then $f(x) \in \left( p_{n+1},p_{n} \right)$ by \Cref{eqn:Interval Orbit Structure}. From \Cref{eqn:Point Asymptotics Exterior Left} we see that $f(x)\approx \left( \tfrac{1}{n} \right)^{1/\alpha_{0}}$. Thus, for $x \in \left( p^{\circ}_{n+2}, p^{\circ}_{n+1} \right)$,
  \[
  D\phi\left( f\left( x \right) \right) \approx n^{1/\alpha_{0}-1}. 
  \]
  \par

  Similar arguments apply to $x \in \left( q^{\circ}_{n+1},q^{\circ}_{n+2} \right)$.
  \par

  Combining the last three displayed equations and their analogues for $x > A$ we see that $\Ninfty{\frac{Df}{Du}}$, $\Ninfty{\frac{Df}{[Du]^2}}$, and $\Ninfty{\frac{[D\phi\circ f][Df]^3}{[Du]^2}}$ are all finite. 
  We conclude that for $k=1$ \Cref{eqn:Skew Bound} holds with some constant $\tau_{0}>0$.\\

  We claim that \Cref{eqn:Skew Bound} holds with $\tau = \tau_{0}(1-\beta^2)^{-1}$. To verify this we first prove that 
  \[
    \Ninfty{\frac{\partial_{x}v^{k}}{Du^k}} \le \tau_{0} \sum_{m=0}^{k-1} \beta^{2m}.
  \]
  The claim follows by replacing the finite geometric series with the infinite geometric series.
  \par

  Suppose that the displayed inequality above holds for some $k\ge 1$. Let $DT^{k}$ denote the Jacobian of $T^{k}$. Note that 
  \[
    DT^{k} = 
    \begin{bmatrix}
      Du^k & 0\\
      \partial_{x}v & \left[ Du^k \right]^{-1}
    \end{bmatrix}.
  \]
  Since $DT^{k+1} = \left(DT^k\circ T\right)DT$ we have 
  \begin{align*}
    \partial_{x}v^{(k+1)} &=\left(\partial_{x}v^{(k)}\circ T\right)Du  + \frac{\partial_{x}v}{Du^k\circ u}\,,\\
    Du^{k+1} &= \left(Du^k\circ u\right) Du.
  \end{align*}
  Thus,
    \[
      \frac{\partial_{x}v^{(k+1)}}{Du^{k+1}}
      =
      \frac{\partial_{x}v^{(k)}}{Du^{k}}\circ T
      +
      \frac{1}{\left[Du^{k}\circ u\right]^2}
      \frac{\partial_{x}v}{Du}.
    \]
    Taking the norm of both sides of the identity above, applying the induction hypothesis, and applying \Cref{eqn:Expansion Bound} we obtain the bound below.
    \begin{align*}
      \Ninfty{
        \frac{\partial_{x}v^{(k+1)}}{Du^{k+1}}
      }
      &\le
      \Ninfty{
        \frac{\partial_{x}v^{(k)}}{Du^{k}}
      }
      +
      \Ninfty{
        \frac{1}{Du^{k}}
      }^2
      \Ninfty{
        \frac{\partial_{x}v}{Du}
      }\\
      &\le
      \tau_{0}
      \sum_{m=0}^{k-1} 
        \beta^{2m}
      +
      \tau_{0}\beta^{2k}
      =
      \tau_{0}\sum_{m=0}^{k}
        \beta^{2m}\\
      &\le \frac{\tau_{0}}{1-\beta^{2}}.
    \end{align*}

\end{proof}
\section{Adapted Banach Spaces}
\label{sec:Adapted Banach Spaces}
In this section we will define Banach spaces $\Bw$ and $\Bs$ with anisotropic norms that are adapted to the dynamics of the induced map $T$. These spaces were first introduced in \cite{Liverani&Terhesiu2015} and are a simplified version of the norms defined in \cite{Demers&Liverani2008}. 
\par
We begin by constructing a space $\ULip$ of bounded measurable functions that exhibit regularity along  unstable curves, which is one of the key properties that we will need in the space $\Bs$. This regularity is necessary in the proof of a Lasota-Yorke inequality (see \Cref{prop:Uniform Lasota-Yorke Inequality}).
\par
\begin{dfn}
  \label{def:Ulip Space}
  Given a bounded measurable function $\eta\colon \Lambda \to \R$, define
  \begin{align*}
    \MUL{\eta}
    &=
    \sup_{\gamma\in\Gamma} \sup_{(x,y)\neq (w,z) \in \gamma} \frac{\eta(x,y)-\eta(w,z)}{\abs{x-w}},\\
    \NUL{\eta} 
    &= \Nsup{\eta}+ \MUL{\eta},\\
    \ULip &= \left\{ \eta : \NUL{\eta}<\infty \right\}.
  \end{align*}
  Recall that $\Gamma$ is the partition of $\Lambda$ by unstable curves. 
\end{dfn}
While elements of the space $\Bs$ must exhibit regularity along unstable curves to satisfy a Lasota-Yorke inequality, they will also have a distributional quality along stable lines to facilitate the proof of a compact embedding. By restricting to a stable line we will view elements of $\Bs$ and $\Bw$ as functionals on spaces of H\"older functions.
\par
 \begin{dfn}
   Let $\Tl$ denote the space of real valued Lipschitz functions with domain $[0,1]$. 
   Fix $a \in (0,1)$ and let  $\Ta$ denote the space of real values  $a$-H\"older functions with domain $[0,1]$. Let $\NTl{\cdot}$ and $\NTa{\cdot}$ denote the norms of $\Tl$ and $\Ta$ respectively.
 \end{dfn}
 Next we define norms for the spaces $\Bs$ and $\Bw$, which should be viewed as being related to the strong operator norm on the dual spaces $\Ta^*$ and $\Tl^*$. Specifically, a bounded measurable function $\eta \colon \Lambda \to \R$ induces a functional for each vertical line $\ell(x)$ through integration. Given $\psi$ in $\Ta$ or $\Tl$ 
 \[
   \psi \mapsto \int_{0}^{1}\eta(x,y)\, \psi(y)\, dy
 \]
 defines a bounded linear functional. This motivates the following definition.
 \par
 \begin{dfn}
   For all bounded measurable functions $\eta \colon \Lambda \to \R$ define 
   \begin{align}
     \label{eqn:Weak Norm Defintion}
     \NBw{\eta}
     &=
     \sup\left\{\int_{0}^{1} \eta(x,y) \, \psi(y) \,dy : x\in [p,q], \NTl{\psi} \le 1  \right\},\\
     \label{eqn:Stable Norm Defintion}
     \Ns{\eta}
     &=
     \sup\left\{\int_{0}^{1} \eta(x,y) \, \psi(y) \,dy : x\in [p,q], \NTa{\psi} \le 1  \right\},\\
     \label{eqn:Unstable Modulus Defintion}
     \MBs{\eta}
     &=
     \sup\left\{\int_{0}^{1} \frac{\eta(w,y) -\eta(x,y)}{\abs{w - x}}  \psi(y)\,dy : w\neq x\in [p,q], \NTl{\psi} \le 1  \right\},\\
     \label{eqn:Strong Norm Defintion}
     \NBs{\eta} 
     &=
     \Ns{\eta} + \MBs{\eta}.
   \end{align}
 \end{dfn}
 Since $\Tl \subset \Ta$ we have $\NBw{\cdot} \le \Ns{\cdot} \le \NBs{\cdot}$. 
Both $\NBs{\cdot}$ and $\NBw{\cdot}$ are bounded semi-norms on the space of Lipschitz functions. By taking quotients, $\NBs{\cdot}$ and $\NBw{\cdot}$ induce norms on quotient spaces of $\ULip$. Completing these quotient spaces with respect to their norms produces Banach spaces $\Bs$ and $\Bw$.
\par
\section{Main Results}
\label{sec:Main Results}
In this section we apply operator renewal theory as described in \cite{Gouezel2004,Gouezel2004-Intermittent} to obtain the rate of decay of correlation (\Cref{thm:Main Correlation}) and limit theorems (\Cref{thm:Main Limit Theorem}) for an IBT $B$.
\par
For each $n\ge1$ and $k\ge1$ we define operators by
\begin{align}
\label{eqn:Exact Return Operators}
R_{n}^{(k)}\eta &= T_{*}^{k}\left( \indf{ \left\{ r^{(k)} = n \right\}} \eta \right),\\
\label{eqn:Return Operators}
B_{n}\eta &= \indf{\Lambda}B_{*}^{n}\left( \indf{\Lambda} \eta \right).
\end{align}
We will always abbreviate $R^{(1)}_{n}$ as $R_{n}$. 
The operators $R_{n}$ are a decomposition of $T_{*}$ by first return time. The operators $B_{n}$ can be viewed as a restriction of $B^{n}_{*}$ to an action on functions supported on $\Lambda$.
\par
A key technical observation in operator renewal theory is that the generating functions defined by \Cref{eqn:Return Generating Function,eqn:Exact Return Generating Function} are well defined and related by \Cref{eqn:Renewal Identity}.
\begin{align}
\label{eqn:Return Generating Function}
B(z) &= I + \sum_{n=1}^{\infty} z^{n} B_{n}\\
\label{eqn:Exact Return Generating Function}
R(z) &= \sum_{n=1}^{\infty}z^n R_{n}\\
\label{eqn:Renewal Identity}
B(z) &= \left[I - R(z)\right]^{-1}
\end{align}
\par
In what follows we will make use of the following identities, which are routine to check,
\begin{align}
\label{eqn:Trasfer Operator at One}
R(1) &= T_{*}\\
\label{eqn:Exact kth Return Generating Function}
\left[R(z)\right]^{k} &= \sum_{n = 1} ^{\infty} R_{n}^{(k)}z^n.
\end{align}
\par
\subsection{Decay of Correlations}
\label{sec:Rates}
Heuristically, if $\eta$ is supported on $\Lambda$ and $\int_{\Lambda} \eta \neq 0$, then the push forward distributions $B^{n}_{*}\eta$ must equilibrate to a multiple $\indf{\left[ 0,1 \right]^{2}}$, which is the density for the preserved measure.
The transfer operator $B_{*}$ sends all of the mass represented by $\eta$ outside of $\Lambda$. In order for $B_{*}^{n}\eta$ to attain its limiting value of $\int_{\left[ 0,1 \right]^{2}} \eta \,d\Leb$ inside of $\Lambda$, mass must return to $\Lambda$. The amount of mass that has failed to return after $n$ steps of the dynamics is $\Leb\left[ r>n \right]$, which provides a rough estimate for how quickly the convergence $B_{*}^{n}\eta \to \indf{\left[ 0,1 \right]^{2}} \int_{\left[ 0,1 \right]^{2}} \eta\, d\Leb$ can occur. \Cref{thm:Main Correlation} shows that this rough estimate is actually sharp.\par
In this section we will prove \Cref{thm:Main Correlation} by applying \cite{Gouezel2004} Theorem 1.1, which we reproduced below for the convenience of the reader. We have modified notation slightly to match the current setting.\par
\begin{thm}[Theorem 1.1 from \cite{Gouezel2004}]
  \label{thm:Gouezel}
  Let $B_n$ be bounded operators on $\Bs$ such that $B(z) = I+\sum_{n\ge 1} z^{n} B_{n}$ converges in $Hom(\Bs,\Bs)$\footnote{With the strong operator topology} for every $z \in \C$ with $\abs{z}<1$.
  Assume that:
  \begin{enumerate}
    \item {\bf Renewal equation:} for every $z\in \C$ with $\abs{z}<1$, $B(z) = \left( I-R(z) \right)^{-1}$ where $R(z) = \sum_{n\ge 1} z^{n} R_n$, $R_n \in Hom(\mathcal{L},\mathcal{L})$ and $\sum \|R_n\|< +\infty$.
    \item {\bf Spectral Gap:} $1$ is a simple isolated eigenvalue of $R(1)$.
    \item {\bf Aperiodicity:} for every $z\neq 1$ with $\abs{z}\le 1$, $I-R(z)$ is invertible.
  \end{enumerate}
  Let $P$ be the eigenprojection of $R(1)$ at $1$. If $\sum_{k>n} \|R_{k}\| = O\left( 1/n^{\beta} \right)$ for some $\beta>1$ and $PR'(1)P \neq 0$, then for all $n$
  \[
          B_n = \frac{1}{\mu} P + \frac{1}{\mu^2} \sum_{k=n+1}^{+\infty} P_k +E_n
  \]
  where $\mu$ is given\footnote{Here $R'(1)$ denotes the operator $\frac{d}{dz}R|_{z=1}$.} by $PR'(1)P = \mu P$, $P_n = \sum_{l>n} PR_lP$ and $E_n \in Hom(\mathcal{L},\mathcal{L})$ satisfy
  \[
    \|E_n\|
    =
    \left\{
      \begin{array}[h]{ll}
        O\left( 1/n^{\beta} \right), & \textnormal{ if } \beta>2;\\
        O\left( \log(n)/n^{2} \right), & \textnormal{ if } \beta=2;\\
        O\left( 1/n^{2\beta-2} \right), & \textnormal{ if } 2>\beta>1.\\
      \end{array}
    \right.
  \]
\end{thm}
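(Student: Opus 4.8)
The plan is to follow the operator renewal strategy of Sarig and Gou\"ezel: localize the singularity of $B(z)=(I-R(z))^{-1}$ at $z=1$, subtract an explicit principal part with a first-order pole, and recover $B_{n}$ as the $n$-th Taylor coefficient via a contour integral over the unit circle.

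First I would factor $(1-z)$ out of $I-R(z)$. With $\mathcal{T}_{j}=\sum_{l>j}R_{l}$ (so that $\|\mathcal{T}_{j}\|=O(j^{-\beta})$ by hypothesis) one has the identity $I-R(z)=(1-z)\,\mathcal{T}(z)$ with $\mathcal{T}(z)=\sum_{j\ge0}z^{j}\mathcal{T}_{j}$ and $\mathcal{T}(1)=\sum_{n\ge1}nR_{n}=R'(1)$. The tail bound makes $z\mapsto\mathcal{T}(z)$ norm-continuous on $\Dbar$, continuously differentiable there when $\beta>2$, and more generally with a modulus of continuity governed by $\beta$. Since $1$ is a simple isolated eigenvalue of $R(1)=T_{*}$ and $z\mapsto R(z)$ is norm-continuous on $\Dbar$, Kato perturbation theory provides a neighbourhood $U\ni1$ in $\Dbar$ on which $R(z)=\lambda(z)P(z)+N(z)$, with $\lambda(1)=1$, $P(z)$ a rank-one projection, $P(1)=P$, the spectral radius of $N(z)$ at most $\theta<1$ uniformly on $U$, and $\lambda,P$ inheriting the regularity of $R$; the first-order perturbation formula gives $\lambda'(1)P=PR'(1)P=\mu P$, so $1-\lambda(z)=(1-z)\,\ell(z)$ with $\ell(1)=\mu$. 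Here the hypothesis $PR'(1)P\ne0$ is essential: it guarantees $\mu\ne0$, so the singularity of $B(z)$ at $z=1$ is a genuine pole of order one.

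Next I would localize that singularity. On the compact set $\Dbar\setminus(U\cup\{1\})$ the aperiodicity hypothesis together with norm-continuity yields a uniform bound on $\|(I-R(z))^{-1}\|$; combined with the spectral decomposition on $U$ and a smooth cutoff near $1$, this produces a splitting $B(z)=\dfrac{P(z)}{(1-z)\,\ell(z)}+H(z)$ in which $H$ is holomorphic on $\D$, continuous on $\Dbar$, and of regularity matching $\beta$. A Fourier-coefficient estimate (summation by parts applied to $\widehat H(n)=\tfrac{1}{2\pi}\int_{-\pi}^{\pi}H(e^{i\vartheta})e^{-in\vartheta}\,d\vartheta$) then bounds $\|\widehat H(n)\|$ at the rate forced by the regularity of $H$; this contribution is absorbed into $E_{n}$.

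The heart of the matter is the principal part. Writing $\ell(z)=\mu\bigl(1-(1-z)\sigma(z)\bigr)$, where $\sigma$ is a scalar function built from the ``second tails'' $\sum_{j>n}\mathcal{T}_{j}=\sum_{j>n}\sum_{l>j}R_{l}$ and enjoying $\beta$-dependent estimates, the geometric series gives
\[
  \frac{P(z)}{(1-z)\,\ell(z)}
  =\frac{P(z)}{\mu(1-z)}\sum_{j\ge0}\bigl((1-z)\sigma(z)\bigr)^{j}
  =\frac{P(z)}{\mu(1-z)}+\frac{P(z)\sigma(z)}{\mu}+\frac{(1-z)P(z)\sigma(z)^{2}}{\mu}+\cdots.
\]
Since every Taylor coefficient of $(1-z)^{-1}$ equals $1$ and $P(z)-P$ vanishes at $z=1$ with extra regularity, the $j=0$ term yields $\tfrac1\mu P$ plus an $E_{n}$-error; expanding $P(z)=P+O(\lvert1-z\rvert)$ and matching $\mu\,\sigma(z)$ with the generating function of the second tails, the $j=1$ term yields $\tfrac{1}{\mu^{2}}\sum_{k>n}P_{k}$ plus lower order; and the terms $j\ge2$, which carry a factor $(1-z)^{j-1}\sigma(z)^{j}$, are estimated by a convolution bound on the tail sequence, contributing coefficients of size $O(n^{2-2\beta})$ when $1<\beta<2$, $O(\log n/n^{2})$ when $\beta=2$, and a quantity dominated by $\|\widehat H(n)\|$ when $\beta>2$ --- precisely the claimed trichotomy for $\|E_{n}\|$. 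Collecting the $j=0$ term, the $j=1$ term, the $j\ge2$ terms, and $\widehat H(n)$ completes the proof.

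I expect the main obstacle to be the quantitative regularity bookkeeping: propagating the single hypothesis $\sum_{k>n}\|R_{k}\|=O(n^{-\beta})$ through the perturbed eigendata $\lambda(z),P(z)$ (this needs a form of perturbation theory that tracks non-integer and boundary regularity, not merely analyticity) and then through the coefficient extraction, in particular the convolution estimate that produces the exponent $2\beta-2$ in the regime $1<\beta<2$ and the borderline logarithm at $\beta=2$. The other ingredients --- the algebraic factorization, the compactness argument for invertibility off $z=1$, and the geometric-series expansion --- are comparatively routine.
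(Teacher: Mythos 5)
This statement is not proved in the paper at all: it is Theorem 1.1 of Gou\"ezel's article \cite{Gouezel2004}, reproduced verbatim (with adapted notation) and used as a black box, so there is no internal proof to compare yours against. Measured against Gou\"ezel's actual argument, your outline does follow the right strategy: spectral decomposition of $R(z)$ near $z=1$ obtained from perturbation theory applied to the norm-continuous family $z\mapsto R(z)$, uniform invertibility of $I-R(z)$ away from $z=1$ from aperiodicity plus compactness, a splitting of $B(z)$ into the principal part $P(z)/(1-\lambda(z))$ and a remainder whose Fourier coefficients are controlled by the regularity coming from $\sum_{k>n}\|R_{k}\|=O(n^{-\beta})$, and an expansion of $1/(1-\lambda(z))$ producing the terms $\tfrac{1}{\mu}P$ and $\tfrac{1}{\mu^{2}}\sum_{k>n}P_{k}$ together with the stated trichotomy of error rates.

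There is, however, a genuine error at the start: with $\mathcal{T}_{j}=\sum_{l>j}R_{l}$ one has $(1-z)\sum_{j\ge0}z^{j}\mathcal{T}_{j}=\sum_{l}R_{l}(1-z^{l})=R(1)-R(z)$, \emph{not} $I-R(z)$; the factorization $1-R(z)=(1-z)\mathcal{T}(z)$ is a scalar renewal identity valid only when $R(1)=1$, whereas here $R(1)=T_{*}$ and $I-R(1)\neq 0$. The correct move, which you in effect make later, is to apply this factorization to the scalar $1-\lambda(z)$ after the spectral reduction, or to write $I-R(z)=(I-R(1))+(1-z)\mathcal{T}(z)$. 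Beyond that, the substantive weight of the theorem is in parts you acknowledge but do not carry out: (a) identifying the coefficients of your correction $\sigma(z)$ with the operators $P_{k}=\sum_{l>k}PR_{l}P$, which requires showing that $1-\lambda(z)$ agrees with $P(I-R(z))P$ up to errors that are quadratic in the relevant modulus of continuity, since $P(z)=P+O\left(\abs{1-z}\right)$ is only a first-order statement at a boundary point where nothing is analytic; and (b) the Fourier-coefficient and convolution lemmas for tail-controlled coefficient sequences, which are exactly what produce the exponent $2\beta-2$ for $1<\beta<2$, the $\log(n)/n^{2}$ at $\beta=2$, and $n^{-\beta}$ for $\beta>2$. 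As written, your proposal is a sound road map with one false identity and the quantitative core left open; for the full argument one must go to \cite{Gouezel2004}.
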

\par
The following two propositions verify that the hypotheses of \Cref{thm:Gouezel} are satisfied and will be proved later.
\par
\begin{prop}[Convergence and Renewal Equation]
  \label{prop:Convergence and Renewal Equation}\leavevmode
  \begin{itemize}
    \item For all $n \ge 1$, the operators $B_{n}$ and $R_{n}$ are bounded on $\Bs$. 
    \item For all $z$ in the open unit disk of $\C$, the operators $B(z)$ and $R(z)$ converge in $Hom\left( \Bs,\Bs \right)$ and satisfy $B(z) = \left( I-R(z) \right)^{-1}$.
    \item The operator $R(z)$ converges in $Hom\left( \Bs, \Bs \right)$ for $z$ in the closed unit disk of $\C$, that is $\sum_{n\ge1} \NBs{R_{n}} < \infty$.
    \item Let $\alpha = \max\left\{\alpha_{0},\alpha_{1}\right\}$. As $n \to \infty$, $\sum_{k>n} \NBs{R_{k}} = O\left(n^{-\left(1+\frac{1}{\alpha}\right)}\right)$. 
  \end{itemize}
\end{prop}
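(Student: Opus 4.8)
The entire proposition reduces to a single quantitative estimate: there is a constant $C<\infty$, independent of $n$, with $\NBs{R_n}\le C\,\lambda[r=n]$. Granting this, the last two bullets are immediate. Since $\{[r=n]:n\ge1\}$ is a partition of $\Lambda$ mod $\lambda$, we get $\sum_{n\ge1}\NBs{R_n}\le C\lambda(\Lambda)=C<\infty$, which is the third bullet and also shows that $R(z)=\sum_n z^nR_n$ converges in $Hom(\Bs,\Bs)$ on the whole closed unit disk; and by \Cref{eqn:Measure Asymptotics} together with comparison to $\int_n^\infty t^{-(2+1/\alpha)}\,dt$ we get $\sum_{k>n}\NBs{R_k}\le C\sum_{k>n}\lambda[r=k]=O\!\left(n^{-(1+1/\alpha)}\right)$, the fourth bullet. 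Boundedness of each individual $R_n$ on $\Bs$ is also contained in this, once one knows $R_n$ maps $\ULip$ into $\ULip$: by \Cref{lem:Unstable Curves} and \Cref{eqn:Return Time Cells} the strip $T([r=n])$ is a union of cells of the unstable partition $\Gamma$, so along every curve of $\Gamma$ the function $R_n\eta$ is either identically zero or is $\eta$ transported by a uniformly $C^1$ change of coordinates, hence Lipschitz; thus $R_n\ULip\subset\ULip$ and the $\NBs{\cdot}$ bound then extends to $\Bs$ by density.

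To prove the core estimate I would unwind the definition of $\NBs{\cdot}$. Fix $\eta\in\ULip$ with $\NBs{\eta}\le1$ and a vertical line $\ell(x)$. By \Cref{eqn:Return Time Cells} the set $[r=n]$ is the union of at most two full-height columns, and on each of them $T=B^n$ is a single branch whose factor $u=f^n$ maps the column's $x$-interval diffeomorphically onto $[p,q)$ with $Du\asymp n^{2+1/\alpha}$ uniformly (this is exactly the computation in the proof of \Cref{lem:Skew Bound}, via \Cref{lem:Induced Factor Map Derivative} and \Cref{Lem:Period-2 Orbit Preimage Asymptotics}), and whose fibre map $v_\xi$ is affine with slope $1/Du(\xi)$. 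Writing $W$ for a local inverse branch of $u$ on the column and changing variables along the fibre via $v_{W(x)}$, one gets
\[
  \int_0^1 (R_n\eta)(x,y)\,\psi(y)\,dy
  =\sum_{W}\frac{1}{Du(W(x))}\int_0^1 \eta(W(x),s)\,\bigl(\psi\circ v_{W(x)}\bigr)(s)\,ds .
\]
Since $\NTa{\psi\circ v_{W(x)}}\le\NTa{\psi}$ and $\NTl{\psi\circ v_{W(x)}}\le\NTl{\psi}$, the prefactor $1/Du(W(x))\asymp n^{-(2+1/\alpha)}\asymp\lambda[r=n]$ already gives $\Ns{R_n\eta}\le C\lambda[r=n]$. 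For the unstable modulus one differences the displayed expression at two points $x\ne w$ and splits into three contributions: (i) the variation of $1/Du(W(\cdot))$, controlled by $\abs{Du(W(x))/Du(W(w))-1}\le\kappa\abs{x-w}$ from \Cref{lem:Distortion Bound}; (ii) the variation of $\eta(W(\cdot),\cdot)$, controlled by $\MBs{\eta}$ together with $\Ninfty{W'}=\Ninfty{1/Du}\asymp n^{-(2+1/\alpha)}$; and (iii) the variation of $\psi\circ v_{W(\cdot)}$.

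The step I expect to be the crux is contribution (iii). If one bounds its pairing against $\psi$ by treating the two ends of each fibre $v_{W(x)}([0,1])$ separately — in effect using only $\Ninfty{\psi}$ and $\Ninfty{\partial_x v/Du}\lesssim n^{-1-1/\alpha}$ on these columns — one obtains only $\MBs{R_n\eta}=O(n^{-1-1/\alpha})$, whose tail $O(n^{-1/\alpha})$ is not strong enough for \Cref{thm:Gouezel} when $\alpha\ge1$ and would confine the method to $\alpha<1$. The missing factor $n^{-1}$ has to be recovered from a cancellation: a test function with $\NTl{\psi}\le1$ varies by at most $\Ninfty{1/Du}\asymp n^{-(2+1/\alpha)}$ across a fibre of length $\asymp n^{-(2+1/\alpha)}$, so its values at the two ends of the fibre nearly agree; dually, the two unstable curves bounding $T([r=n])$ have slopes differing only by $O(\Ninfty{D^2u/(Du)^3})=O(n^{-(2+1/\alpha)})$, so the symmetric difference of the fibres they cut off pairs against $\psi$ with opposite sign at top and bottom. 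Quantifying this cancellation, using \Cref{lem:Distortion Bound} and \Cref{lem:Skew Bound} to control $D^2u/(Du)^2$ and $\partial_x v$ on the columns, brings (iii) down to $C\,\abs{x-w}\,n^{-(2+1/\alpha)}$ and completes the core estimate.

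Finally, for the operators $B_n$ and the renewal identity: decomposing a density supported on $\Lambda$, pushed forward by $B^n$, and restricted to $\Lambda$ according to the successive times $0=s_0<s_1<\dots<s_k=n$ at which the orbit lies in $\Lambda$ yields the algebraic renewal identity $B_n=\sum_{k=1}^n\sum_{n_1+\dots+n_k=n}R_{n_k}\cdots R_{n_1}$ for $n\ge1$ (with $B_0=I$), equivalently the one--step relations $B_n=\sum_{j=1}^n R_jB_{n-j}=\sum_{j=1}^n B_{n-j}R_j$. Combined with the uniform Lasota--Yorke estimate for the return operators on $\Bs$ — the one used to establish quasi-compactness of $R(1)=T_*$, obtained from the compact embedding $\Bs\hookrightarrow\Bw$ and \Cref{prop:Uniform Lasota-Yorke Inequality} — these give $\sup_n\NBs{B_n}<\infty$, so $B(z)=I+\sum_{n\ge1}z^nB_n$ converges in $Hom(\Bs,\Bs)$ for $\abs{z}<1$; since both $\sum_n\NBs{R_n}\abs{z}^n$ and $\sum_n\NBs{B_n}\abs{z}^n$ are finite there, the one--step relations pass to the sums and give $(I-R(z))B(z)=I=B(z)(I-R(z))$, i.e.\ $B(z)=(I-R(z))^{-1}$, for every $\abs{z}<1$. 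This establishes all four bullets.
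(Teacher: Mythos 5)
Your skeleton is the same as the paper's: reduce everything to $\NBs{R_n}\le C\,\lambda[r=n]$ via the fibre-wise change of variables with prefactor $[Du]^{-1}$, control it with distortion (\Cref{lem:Distortion Bound}) and the skew bound (\Cref{lem:Skew Bound}), and then read off the third and fourth bullets from $\lambda[r=n]\asymp n^{-(2+1/\alpha)}$ (\Cref{eqn:Measure Asymptotics}); this is exactly \Cref{lem:Basic Norm Bounds} plus \Cref{sec:Convergence and Renewal Equation}. But the step you call the crux is a misdiagnosis, and because of it your core estimate is never actually proved. In your own displayed formula the prefactor $1/Du(W(\cdot))$, which by the integral mean value argument and \Cref{eqn:Distortion Bound} is at most $(\kappa+1)\mu(I_W)\asymp\lambda[r=n]$, multiplies contribution (iii) just as it multiplies (i) and (ii); and by \Cref{lem:Skew Bound}, $\sup_{t}\abs{v_{W(x)}(t)-v_{W(w)}(t)}\le\tau\abs{x-w}$, so for a test function with $\NTl{\psi}\le1$ the difference $\psi\circ v_{W(x)}-\psi\circ v_{W(w)}$ is pointwise at most $\tau\abs{x-w}$. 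Hence (iii) is already bounded by $(\kappa+1)\lambda[r=n]\,\tau\abs{x-w}$ times the relevant norm of $\eta$ --- the full $n^{-(2+1/\alpha)}$, with no cancellation between the two unstable curves bounding $T([r=n])$. The weaker $n^{-(1+1/\alpha)}$ you fear appears only if one discards the Lipschitz regularity of $\psi$ and estimates, in the image variable, the symmetric difference of the supports against $\Ninfty{\psi}$; the reason $\MBs{\cdot}$ pairs against $\Tl$ is precisely to avoid that loss. Since the cancellation argument you substitute is only sketched, the decisive estimate of your proposal is asserted rather than established, whereas the paper closes it in one line from \Cref{eqn:Basic Bound Vertical Contraction,eqn:Basic Bound Vertical Lipschitz}. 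The genuinely delicate point in that term is different: the pairing of the difference against $\eta$ must be expressed through $\NBw{\eta}$ or $\Ns{\eta}$, not $\Nsup{\eta}$, if the conclusion is to be an operator bound on $\Bs$; compare \Cref{eqn:Basic Bound Prebound-3}.

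A second gap concerns $B_n$ and convergence of $B(z)$ on all of $\abs{z}<1$. Each $B_n$ is bounded because it is a finite sum of finite products of the $R_m$, but to sum $\sum_n z^nB_n$ on the whole open disk you need subexponential bounds on $\NBs{B_n}$, and neither of your sources supplies them: composing one-step bounds along the convolution identity gives $\NBs{R_{n_k}\cdots R_{n_1}}\le C^{k}\prod_i\lambda[r=n_i]$ with $C\ge1$ raised to the number of factors, which only guarantees convergence where $C\sum_m\lambda[r=m]\abs{z}^m<1$, i.e.\ on a strictly smaller disk; and \Cref{prop:Uniform Lasota-Yorke Inequality} controls the full power $R(z)^k$, not its individual coefficients $R_n^{(k)}$, so it does not yield $\sup_n\NBs{B_n}<\infty$ (a Cauchy estimate costs a factor $\rho^{-n}$). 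The paper instead proves the basic bound directly for the $k$-fold return operators, $\NBs{R_n^{(k)}\eta}\le(\kappa+1)\lambda\{r^{(k)}=n\}(\cdots)$, with constants independent of $k$ because \Cref{lem:Distortion Bound} and \Cref{lem:Skew Bound} are uniform in $k$, and then uses disjointness of the sets $\{r^{(k)}=n\}$ over $1\le k\le n$ to get $\NBs{B_n}\le\kappa+1$ uniformly. Your column-by-column computation extends verbatim to $k$-fold returns (the cells of $\Omega_k$ are again full-height columns mapped by $u^k$ onto $[p,q]$), so this is fixable, but as written you treat only $k=1$. Your derivation of the renewal identity itself, once both series converge absolutely, matches the paper's appeal to \cite{Sarig2002}.
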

\begin{proof}
  See \Cref{sec:Convergence and Renewal Equation}.
\end{proof}
\par
\begin{prop}[Spectrial Gap and Aperiodicity]
  \label{prop:Spectrial Gap and Aperiodicity}\leavevmode
  \begin{itemize}
    \item $1$ is a simple isolated eigenvalue of $R(1)$. 
    \item For $z\neq 1$ with $\abs{z}\le1$, $I-R(z)$ is invertible. 
    \item For $\eta \in \ULip$, the spectral projector $P$ can be computed by the formula $P\eta = \indf{\Lambda} \int_{\Lambda} \eta\,d \lambda$.
  \end{itemize}
\end{prop}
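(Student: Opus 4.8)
The plan is to establish the three bullet points in order, leveraging the fact that $T$ is invertible and $\lambda$-preserving together with the Lasota--Yorke inequality for $T_*$ acting on $\Bs$ (announced as \Cref{prop:Uniform Lasota-Yorke Inequality}) and the compact embedding $\Bs \hookrightarrow \Bw$. First, since $R(1)=T_*$ by \Cref{eqn:Trasfer Operator at One}, a Lasota--Yorke inequality of the form $\NBs{T_*^n \eta} \le C\theta^n \NBs{\eta} + C\NBw{\eta}$ together with compactness of $\Bs\hookrightarrow\Bw$ yields, via Hennion's theorem, that $T_*$ is quasi-compact on $\Bs$ with essential spectral radius strictly less than $1$. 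So the spectrum of $T_*$ outside a disk of radius $\theta<1$ consists of finitely many eigenvalues of finite multiplicity. It remains to identify the peripheral spectrum: I would show that the only eigenvalue on the unit circle is $1$ and that it is simple. Since $T_*\indf{\Lambda} = \indf{\Lambda}$ (as $T$ preserves $\lambda$), $1$ is an eigenvalue with eigenfunction $\indf{\Lambda}$. For simplicity and the absence of other peripheral eigenvalues one argues, as is standard, that any eigenfunction $\eta$ with eigenvalue of modulus $1$ must be (a multiple of) an invariant density; because $T$ is exact/mixing with respect to $\lambda$ on $\Lambda$ (which follows from the mixing of the IBT, or can be checked directly from the bounded-distortion expanding factor $u$ plus the contracting fibre maps), the invariant density is unique up to scalars and equals $\indf{\Lambda}$, and no nontrivial peripheral eigenvalue survives. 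This gives the simple isolated eigenvalue at $1$.

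Second, for the aperiodicity statement I would show that for $\abs{z}\le 1$ with $z\neq 1$ the operator $I - R(z)$ is invertible. Write $R(z) = \sum_{n\ge1} z^n R_n$; by the last bullet of \Cref{prop:Convergence and Renewal Equation} this series converges absolutely in operator norm on $\Bs$ for $\abs{z}\le1$, so $R(z)$ is a norm-continuous family of operators on the closed disk, each of which is quasi-compact (being a norm limit of finite sums of the $R_n$, which inherit a Lasota--Yorke inequality from $T_*$, since $R_n\eta = T_*(\indf{\{r=n\}}\eta)$ and multiplication by the indicator of a column does not destroy the relevant bounds). Thus $I-R(z)$ is Fredholm of index zero, and invertibility is equivalent to $1\notin \mathrm{spec}(R(z))$, i.e. to the absence of a nonzero $\eta$ with $R(z)\eta=\eta$. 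Suppose $R(z)\eta = \eta$. Integrating against $\indf{\Lambda}$ and using $\int_\Lambda R_n\eta \,d\lambda = \int_{\{r=n\}}\eta\,d\lambda$ gives $\int_\Lambda \eta\,d\lambda = \sum_n z^n \int_{\{r=n\}}\eta\,d\lambda$. Comparing with $\int_\Lambda\eta\,d\lambda = \sum_n \int_{\{r=n\}}\eta\,d\lambda$ and using $\abs{z^n}\le 1$ with a strict inequality unless all the ``mass'' sits on a single return-time level --- which is impossible since $\gcd$ of return times is $1$ (return times $n+2$ occur for all $n\ge0$ by \Cref{eqn:Return Time Cells}), forcing a genuine cancellation whenever $z\neq 1$ --- one derives $\int_\Lambda\abs{\eta}\,d\lambda = 0$ unless $z=1$; pushing this through the positivity/distortion estimates for $T_*$ yields $\eta = 0$ in $\Bs$. (Equivalently: $R(z)\eta=\eta$ with $\abs{z}=1$ would make $z$ a peripheral eigenvalue of a twisted transfer operator, ruled out exactly by aperiodicity of the return-time function.)

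Third, for the formula $P\eta = \indf{\Lambda}\int_\Lambda \eta\,d\lambda$: since $1$ is simple and isolated, the spectral projector $P$ has one-dimensional range spanned by the eigenfunction $\indf{\Lambda}$, so $P\eta = c(\eta)\,\indf{\Lambda}$ for a bounded linear functional $c$. The adjoint $T_*^*$ fixes the functional $\eta\mapsto\int_\Lambda\eta\,d\lambda$ (because $\int_\Lambda T_*\eta\,d\lambda = \int_\Lambda \eta\circ T^{-1}\,d\lambda = \int_\Lambda\eta\,d\lambda$ by $\lambda$-invariance), and this functional lies in the one-dimensional range of $P^*$, hence must equal $c(\cdot)$ up to normalization; evaluating at $\eta=\indf{\Lambda}$ (for which $\int_\Lambda\indf{\Lambda}\,d\lambda = 1$ and $P\indf{\Lambda}=\indf{\Lambda}$) fixes the constant to be $1$. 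This gives $P\eta = \indf{\Lambda}\int_\Lambda \eta\,d\lambda$ for all $\eta\in\Bs$, in particular for $\eta\in\ULip$.

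The main obstacle is the peripheral spectrum analysis underlying all three bullets --- concretely, proving that $1$ is the \emph{only} eigenvalue of modulus $1$ and that it is \emph{simple}. The quasi-compactness from Lasota--Yorke plus compact embedding is routine once \Cref{prop:Uniform Lasota-Yorke Inequality} is in hand, but excluding other unit-modulus eigenvalues requires a genuine mixing/exactness input for $T$ on $\Lambda$, and the aperiodicity claim for $I-R(z)$ genuinely uses that the return-time function is not supported on a proper sublattice of $\N$. I would handle this by combining the known mixing of the IBT $B$ (from \cite{Bose&Murray2013}) with the observation that mixing of $B$ forces ergodicity, hence uniqueness of the invariant density, for the first-return map $T$, and then invoke the $\gcd = 1$ property of the return times read off from \Cref{eqn:Return Time Cells} to kill twisted eigenvalues.
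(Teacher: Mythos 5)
Your overall skeleton (quasi-compactness via Lasota--Yorke plus the compact embedding, the eigenfunction $\indf{\Lambda}$, and a gcd argument on the return times) matches the paper's strategy, but the two steps you treat as routine are exactly where the real work lies, and as written they contain genuine gaps. First, you pass from ``$\eta\in\Bs$ is a peripheral eigenvector'' to ``$\eta$ is an invariant density'' and then invoke mixing/exactness of $T$. Elements of $\Bs$ are completions of quotients of $\ULip$ with respect to a distribution-like norm, so a peripheral eigenvector is not a priori a function at all; the paper needs \Cref{lem:Auxiliary Operator Bound} and \Cref{lem:Peripheral Spectrum} (the uniform $\NUL{\cdot}$ bound for $R(z)$, power-boundedness, and finite-dimensionality of the peripheral spectral subspace) precisely to show that peripheral eigenvectors lie in $\ULip$ and that peripheral eigenvalues are semi-simple --- and semi-simplicity, which you never address, is part of proving that $1$ is a \emph{simple} eigenvalue. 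Moreover, mixing of $B$ does not imply mixing or exactness of the first-return map $T$; only ergodicity passes to induced maps, and ergodicity alone does not exclude unimodular Koopman eigenvalues, so your route to ``no nontrivial peripheral eigenvalue survives'' is not closed.

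Second, the aperiodicity argument is not valid as stated. Integrating $R(z)\eta=\eta$ against $\indf{\Lambda}$ gives $\sum_n z^n\int_{[r=n]}\eta\,d\lambda=\sum_n\int_{[r=n]}\eta\,d\lambda$, but the quantities $\int_{[r=n]}\eta\,d\lambda$ are complex numbers with no positivity, so no ``genuine cancellation'' follows and $\gcd\bigl(\image{r}\bigr)=1$ by itself rules out nothing. The actual obstruction is a twisted cohomological equation: one must first upgrade $R(z)\eta=\eta$ to the pointwise identity $z^{r}\,\eta\circ T=\eta$ (the paper does this by an $L^{2}$ computation, \Cref{eqn:Peripheral Spectrum Koopman Identity}, which again requires knowing $\eta\in\ULip$), then show $\eta$ is constant along stable fibres so that the equation descends to $z^{r}\,h\circ u=h$ for a Lipschitz $h$ on $[p,q]$, and only then invoke a rigidity theorem for Gibbs--Markov maps (\cite{Aaronson&Denker-2001}, Corollary 3.2) to force $h$ constant, hence $z^{r}\equiv 1$, after which the gcd argument finishes. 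Your parenthetical ``ruled out exactly by aperiodicity of the return-time function'' asserts the conclusion of this chain without the Livsic/Aaronson--Denker input that makes it true; that is the missing idea. The projector formula in your third paragraph is fine once the first two bullets are actually established.
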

\begin{proof}
  See \Cref{Spectral Gap and Aperiodicity}
\end{proof}
\par
\begin{proof}[Proof of \Cref{thm:Main Correlation}]
  Suppose that $\eta$ and $\psi$ are Lipschitz functions on $\Lambda$. Let $\alpha = \max\left\{ \alpha_{0},\alpha_{1} \right\}$.\par

  We start by identifying the parameter $\beta$ from \Cref{thm:Gouezel}. By \Cref{prop:Convergence and Renewal Equation} we have 
  \[
    \sum_{k>n} \NBs{R_{k}} = O\left(\left(\frac{1}{n}\right)^{1+\frac{1}{\alpha}}\right).
  \]
  
  Therefore, $\beta = 1+\tfrac{1}{\alpha}$.\par

Next we identify the parameter $\mu$ from \Cref{thm:Gouezel}. Note that since $\eta$ is Lipschitz on $\Lambda$ we have $\eta \in \ULip$. Applying the spectral projector formula from \Cref{prop:Spectrial Gap and Aperiodicity} we obtain,
\begin{align*}
  P\frac{dR}{dz}(1)P \eta 
  &=
  P\sum_{n=1}^{\infty} nR_{n}P\eta
  =
  P\sum_{n=1}^{\infty} n\, R_{n}\indf{\lambda}\int \eta \, d\lambda\\
  &=
  P\sum_{n=1}^{\infty} nT_{*}\indf{[r=n]}\int \eta \, d\lambda 
  =
  \sum_{n=1}^{\infty} nPT_{*}\indf{[r=n]} \int \eta \, d\lambda \\
  &=
  \sum_{n=1}^{\infty} n\lambda[r=n] \indf{\Lambda}\int \eta \, d\lambda 
  =
  \sum_{n=1}^{\infty} n\lambda[r=n] P\eta
\end{align*}
By Kac's Lemma $\sum_{n=1}^{\infty} n \lambda[r=n] = \frac{1}{\Leb(\lambda)}$. Therefore $\mu = \frac{1}{\Leb(\Lambda)}$.\par

Next we identify the operators $P_{k}$ from \Cref{thm:Gouezel}. By a calculation similar to the one above,
\[
  P_{k}\eta = \sum_{l>k}PR_{l}P \eta = P\eta \sum_{l>k} \lambda\left[ r=l \right]= \lambda\left[ r>k \right]P\eta.
\]
Therefore,
  \[
    P_{k} = \lambda[r>k]P. 
  \]

From \Cref{thm:Gouezel} we obtain the expansion
\[
  B_{n}  = \Leb(\Lambda) P + \Leb(\Lambda)^2 \sum_{k>n} P_{k} + E_{n}
\]
where
\[
  \left\|E_{n}\right\| = 
  \left\{
    \begin{array}[h]{ll}
      O\left( \left(\frac{1}{n}\right)^{1+\frac{1}{\alpha}} \right), & \textnormal{ if } \alpha>1;\\
      O\left( \frac{\log(n)}{n^{2}} \right), & \textnormal{ if } \alpha=1;\\
      O\left( \left(\frac{1}{n}\right)^{2/\alpha} \right), & \textnormal{ if } \alpha<1.\\
    \end{array}
    \right.
  \]
  Since $\lambda$ is the conditional measure obtained by restricting $\Leb$ to $\Lambda$, see \Cref{eqn:Base Measure Definition}, we have for any $\eta \in L^{1}\left( \Lambda, \lambda \right)$, $\Leb(\Lambda)\int_{\Lambda} \eta \, d\lambda = \int_{\Lambda} \eta \, d\Leb$. Applying the expansion of $B_n$ that we have just obtained we see that 
  \begin{align*}
    B_{n}\eta 
    &= 
    \Leb(\Lambda)P\eta + \Leb(\Lambda)^{2}\sum_{k>n}P_{k}\eta + E_{n}\eta\\
    &= 
    \indf{\Lambda}\Leb(\Lambda)\int_{\Lambda}\eta\,d\lambda + \sum_{k>n}\indf{[r>k]}\Leb(\Lambda)^{2}\lambda[r>k] \int\eta\,d\lambda + E_{n}\eta\\
    &=
    \indf{\Lambda}\int_{\Lambda} \eta \,d\Leb + \indf{\Lambda}\sum_{k>n} \Leb\left[ r>k \right]\int_{\Lambda} \eta\,d\Leb + E_n\eta.
  \end{align*}
  Since $\eta$ and $\psi$ are Lipschitz on the square, $\indf{\Lambda}\eta \in \ULip$ and we obtain
  \[
    \int B_{n}\eta \, \psi\, d\Leb 
    = 
    \int \indf{\Lambda} B_{*}^{n} \left( \indf{\Lambda}\eta \right)\, \psi \, d \Leb 
    =
    \int \indf{\Lambda} \eta \, \left( \indf{\Lambda}\psi \right)\circ B^{n}\, d\Leb
  \]
  If $\eta$ and $\psi$ are the restrictions to $\Lambda$ of Lipschitz functions on the square, then
  \begin{align*}
    \int_{\Lambda} \eta \, \psi \circ B^{n} \,d\Leb 
    &=
    \int_{\Lambda} \eta \,d\Leb\int_{\Lambda} \psi\,d\Leb 
    + 
    \sum_{k>n} \Leb\left[ r>k \right]
    \int_{\Lambda} \eta \,d\Leb\int_{\Lambda} \psi\,d\Leb\\
    &+\int_{\Lambda} E_{n}\eta \, \psi \, d\Leb.
  \end{align*}
  Note that $\sum_{k>n} \Leb\left[ r>k \right]\asymp \left( \frac{1}{n} \right)^{\frac{1}{\alpha}}$ and that regardless of the value of $\alpha$ this decays slower than $\|E_{n}\|$.
  If $\int \eta \neq 0$ and $\int \psi \neq 0$, then 
  \begin{align}
    \label{eqn:Mean Zero Identity}
    \int_{\Lambda} \eta \, \psi \circ B^{n} \,d\Leb 
    -
    \int_{\Lambda} \eta \,d\Leb\int_{\Lambda} \psi\,d\Leb 
    &= 
    \sum_{k>n} \Leb\left[ r>k \right]
    \int_{\Lambda} \eta \,d\Leb\int_{\Lambda} \psi\,d\Leb\\
    &+\int_{\Lambda} E_{n}\eta \, \psi \, d\Leb\notag\\
    &\asymp \left( \frac{1}{n} \right)^{\frac{1}{\alpha}}\notag.
  \end{align}
  For functions with integral zero the rate of decay may be faster than $\left( \frac{1}{n} \right)^{\frac{1}{\alpha}}$.
\end{proof}
\begin{cor}
  \label{cor:Mean Zero Summablity of Correlations}
  If the hypotheses of \Cref{thm:Main Correlation} are satisfied and additionally either $\int_{\Lambda}\psi = 0$ or $\int_{\Lambda} \eta = 0 $, then $Cor\left( k,\psi,\eta,B \right)$ is a summable sequence. 
\end{cor}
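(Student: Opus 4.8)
The plan is to leverage the asymptotic expansion for $B_n$ obtained in the proof of \Cref{thm:Main Correlation} and observe that, under the additional hypothesis that one of the integrals vanishes, the dominant (non-summable) term disappears. Concretely, recall from the proof of \Cref{thm:Main Correlation} that for Lipschitz $\eta$ and $\psi$ on the square,
\[
  \int_{\Lambda} \eta \, \psi \circ B^{n}\, d\Leb - \int_{\Lambda}\eta\,d\Leb\int_{\Lambda}\psi\,d\Leb
  =
  \Bigl(\sum_{k>n}\Leb[r>k]\Bigr)\int_{\Lambda}\eta\,d\Leb\int_{\Lambda}\psi\,d\Leb
  + \int_{\Lambda} E_{n}\eta\,\psi\,d\Leb.
\]
If either $\int_{\Lambda}\psi\,d\Leb = 0$ or $\int_{\Lambda}\eta\,d\Leb = 0$, the product $\int_{\Lambda}\eta\,d\Leb\int_{\Lambda}\psi\,d\Leb$ is zero, so the first term on the right-hand side vanishes identically and we are left with
\[
  Cor(n;\psi,\eta,B) = \Abs{\int_{\Lambda} E_{n}\eta\,\psi\,d\Leb} \le \Nsup{\psi}\,\Leb(\Lambda)\,\NUL{\indf{\Lambda}\eta}\,\|E_{n}\|.
\]

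The next step is to check that $\sum_{n} \|E_{n}\| < \infty$ regardless of the value of $\alpha = \max\{\alpha_0,\alpha_1\}$. From the error estimate in \Cref{thm:Gouezel} as applied in the proof of \Cref{thm:Main Correlation}, $\|E_{n}\| = O(n^{-(1+1/\alpha)})$ when $\alpha > 1$, $\|E_{n}\| = O(\log(n)/n^{2})$ when $\alpha = 1$, and $\|E_{n}\| = O(n^{-2/\alpha})$ when $\alpha < 1$. In the first case $1 + 1/\alpha > 1$; in the second case $\log(n)/n^{2}$ is summable; in the third case $\alpha < 1$ gives $2/\alpha > 2 > 1$. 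Hence in every regime $\|E_{n}\|$ is a summable sequence, and therefore $Cor(n;\psi,\eta,B)$ is dominated by a summable sequence and is itself summable.

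The only mild subtlety — and the step I would be most careful about — is making sure the pairing $\int_{\Lambda} E_{n}\eta\,\psi\,d\Leb$ is genuinely controlled by $\|E_{n}\|$ times a fixed constant depending only on $\eta$ and $\psi$. This is exactly the estimate used at the end of the proof of \Cref{thm:Main Correlation}: since $\eta$ is Lipschitz on the square we have $\indf{\Lambda}\eta \in \ULip$, so $E_{n}(\indf{\Lambda}\eta) \in \Bs$ with $\NBs{E_{n}(\indf{\Lambda}\eta)} \le \|E_{n}\|\,\NUL{\indf{\Lambda}\eta}$, and integrating against the Lipschitz function $\psi$ (which lies in $\Tl$) along each stable line and then across $\Lambda$ is a bounded operation by the definition of $\NBw{\cdot} \le \NBs{\cdot}$. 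Thus the corollary follows immediately by combining the vanishing of the leading term with the summability of $\|E_{n}\|$; no new dynamical input is required beyond what is already established in the proof of \Cref{thm:Main Correlation}.
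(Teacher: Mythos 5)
Your argument is precisely the paper's own (implicit) proof: the corollary is stated without a separate proof because, once one of the integrals vanishes, the leading term in \Cref{eqn:Mean Zero Identity} drops out and $Cor(n;\psi,\eta,B)$ is bounded by a fixed constant (depending on $\eta,\psi$) times $\left\|E_{n}\right\|$, whose summability is read off from the error rates recorded in the proof of \Cref{thm:Main Correlation}. One caution, which applies equally to the paper: the rates you quote inherit what appears to be a swap of the cases of \Cref{thm:Gouezel} (with $\beta=1+\tfrac{1}{\alpha}$ one should get $\left\|E_{n}\right\|=O\left(n^{-2/\alpha}\right)$ when $\alpha>1$ and $O\left(n^{-(1+1/\alpha)}\right)$ when $\alpha<1$), and with the corrected exponent the summability of $\left\|E_{n}\right\|$ is automatic only for $\alpha<2$, so for $\alpha\ge 2$ both your argument and the paper's implicit one would need an additional estimate.
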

\subsection{Limit Theorems}
\label{sec:Limit Theorems}
In this section we will select an observable $X\colon [0,1]^{2} \to \R$ with mean zero and deduce distributional limit behavior of the form
\begin{align}
  \label{eqn:General Limit Theorem}
  \frac{1}{A_{n}}\sum_{k=0}^{n-1} X\circ B^{k} 
  &\xrightarrow{dist} 
  Z, \quad \text{as } n\to \infty,
\end{align}
 where $A_{n}$ is a sequence of real numbers, and $Z$ is a real valued random variable and $B$ is an IBT with contact coefficients $c_{j}$ and contact exponents $\alpha_{j}$.\par
The random variables that can arise as the limits in \Cref{eqn:General Limit Theorem} are stable distributions. Stable distributions with mean zero can be parameterized as follows. Let $p \in (1,2]$, $a >0$ and $b \in [-1,1]$. Let $St(p,a,b)$ be the distribution such that if $Z\sim St(p,a,b)$, then
\[
E\left[ e^{itZ} \right] = e^{-a\abs{t}^{p}\left( 1-b\sign(t)\tan\left( \frac{p\pi}{2} \right) \right)}.
\]
Note that if $p = 2$, then $Z$ is normally distributed with mean zero and standard deviation $\sigma = \sqrt{2a}$.\par
Below we collect a precise technical version of \Cref{thm:Main Limit Laws}. In order to state the theorem we need to define several constants.
\begin{align*}
  M_{0} &=\int_{0}^{1} X(0,y^{1+\frac{1}{\alpha_{0}}})\,dy,\\
  M_{1} &=\int_{0}^{1} X(1,y^{1+\frac{1}{\alpha_{1}}})\,dy,\\
  C_{0} &=\tfrac{\abs{M_{0}}}{\alpha_{0}\Leb(\Lambda)}
  \left( \tfrac{\abs{M_{0}}\left( \alpha_{0}+1 \right)}{c_{0}\alpha_{0}} \right)^{\frac{1}{\alpha_{0}}},\\
  C_{1} &=\tfrac{\abs{M_{1}}}{\alpha_{1}\Leb(\Lambda)}
  \left( \tfrac{\abs{M_{1}}\left( \alpha_{1}+1 \right)}{c_{1}\alpha_{1}} \right)^{\frac{1}{\alpha_{1}}}.
\end{align*}
\par
\begin{thm}
  \label{thm:Main Limit Theorem}
  Suppose that $X\colon \left[ 0,1 \right]^{2} \to \R$ is $\gamma$-H\"older for some $\gamma\in(0,1]$ and $\int_{[0,1]^{2}} X \,d \Leb = 0$. 
  \begin{enumerate}[i.]
    \item Suppose that\footnote{In particular if the hypotheses of \Cref{lem:Base Observable is L2} are satisfied} $\xi \in L^{2}$ and that $\xi$ is not a coboundary. Then $\sigma^{2} = \int_{\Lambda} \abs{\xi}^{2} \, d \lambda + 2 \sum_{k=1}^{\infty} \int_{\Lambda} \xi\circ T^{k} \, \xi \, d \lambda$ converges, $\sigma^{2}>0$, and as $n\to \infty$,
      \begin{align*}
        \frac{1}{\sqrt{n}}\sum_{k=0}^{n-1} X \circ B^{k} 
        &\xrightarrow{dist} 
        N(0,\sigma^{2}).
      \end{align*}
    \item Suppose that $\alpha_{0} > \alpha_{1}$, $\alpha_{0}>1$, and $M_{0} >0$. 
      Let $p = 1+\frac{1}{\alpha_{0}}$, $a = C_{0}\Gamma(1-p)\cos\left(\frac{p\pi}{2}\right)$, and $b = 1$. As $n \to \infty$,
      \begin{align*}
        \frac{1}{n^{\frac{\alpha_{0}}{\alpha_{0}+1}}}\sum_{k=0}^{n-1} X \circ B^{k} 
        &\xrightarrow{dist} 
        St(p,a,b).
      \end{align*}
    \item Suppose that $\alpha_{0} = \alpha_{1}=:\alpha$, $\alpha>1$, $M_{0}>0$ and $M_{1}<0$. 
      Let $p = 1+\frac{1}{\alpha}$, $a = \left(C_{0} + C_{1}\right)\Gamma(1-p)\cos\left(\frac{p\pi}{2}\right)$, and $b = \frac{C_{0}-C_{1}}{C_{0}+C_{2}}$. As $n \to \infty$,
      \begin{align*}
        \frac{1}{n^{\frac{\alpha}{\alpha+1}}}\sum_{k=0}^{n-1} X \circ B^{k} 
        &\xrightarrow{dist} St(p,a,b).
      \end{align*}
    \item Suppose that $\alpha_{0} = \alpha_{1} = 1$, $M_{0} \neq 0$, and $M_{1} \neq 0$, then as $n \to \infty$,
      \begin{align*}
        \tfrac{1}{\sqrt{n\log(n)}}
        \sum_{k=0}^{n-1} X \circ B^{k} 
        &\xrightarrow{dist} 
        N(0,C_{0}+C_{1}).
      \end{align*}
  \end{enumerate}
\end{thm}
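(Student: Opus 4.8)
The plan is to reduce the two-dimensional limit theorem for $B$ to a limit theorem for the induced map $T$ on the base $\Lambda$, and then to apply operator renewal theory (the stable-law version due to Gou\"ezel) on the anisotropic Banach space $\Bs$. The first step is a \emph{lifting step}: since $X$ has mean zero with respect to Lebesgue measure on $[0,1]^2$, one defines the induced observable $\xi(x,y) = \sum_{j=0}^{r(x)-1} X\circ B^{j}(x,y)$ on $\Lambda$, so that Birkhoff sums of $X$ over $B$ along an orbit that makes $k$ returns to $\Lambda$ equal Birkhoff sums of $\xi$ over $T$ of length $k$. A standard argument (as in \cite{Gouezel2004-Intermittent} or Melbourne--T\"or\"ok) shows that the distributional limit of $\frac{1}{A_n}\sum_{k<n} X\circ B^k$ agrees with that of $\frac{1}{A_m}\sum_{k<m}\xi\circ T^k$ after the appropriate time change $m \approx n/\mu$ with $\mu = \Leb(\Lambda)^{-1}$, provided the tail $\Leb[r>n]$ and the observable $\xi$ have compatible regularly-varying behavior; one also needs that $X$ restricted to a boundedly-many-return region contributes negligibly, which follows from H\"older continuity of $X$ together with the measure estimate \Cref{eqn:Measure Asymptotics}.

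The second step is to compute the tail behavior of the induced observable $\xi$ on the cells $[r=n]$. Using \Cref{lem:Point Asymptotics}, on the cell $\left[p^{\circ}_{n+2},p^{\circ}_{n+1}\right]\times[0,1]$ the orbit $B^k(x,y)$ spends $n$ steps near the fixed point $(0,0)$, and the partial sums $\sum_{j} X\circ B^j$ are well-approximated by $\sum_{j} X(0, (1-\tfrac{j+1}{n})^{1+1/\alpha_0})$, which Riemann-sums to $n M_0$ up to lower order (after a change of variables matching the definition of $M_0$); the H\"older modulus of $X$ and the speed of escape control the error. Combined with the measure asymptotics $\lambda[r=n]\asymp n^{-(1/\alpha+2)}$ from \Cref{eqn:Measure Asymptotics}, this gives $\lambda\big(\{\xi \ge t\}\big) \sim C_0^{+} t^{-(1+1/\alpha_0)}$ and a symmetric statement on the right fixed point, with the constants $C_0, C_1$ exactly as defined before the theorem; the sign of $M_0$ (resp. $M_1$) determines whether the heavy tail is on the positive or negative side, which is the source of the skewness parameter $b$ in parts (ii)--(iii). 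In part (iv) the boundary case $\alpha_j=1$ produces tails that are barely non-$L^2$, i.e. $\lambda(\{|\xi|>t\}) \asymp t^{-2}$ with a logarithmic correction, explaining the $\sqrt{n\log n}$ normalization and the Gaussian (rather than stable) limit.

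The third step is the spectral input. One establishes that $z \mapsto R(z) = \sum_n z^n R_n$ extends continuously to $\overline{\D}$ on $\Bs$ with the regular-variation expansion $R(z) = T_* + (1-z)^{1/\alpha} L(1/(1-z)) \cdot E + \text{(smoother terms)}$ near $z=1$, where the leading coefficient is $PR'$-type and tracks the tail constants $C_j$ just computed; this is exactly the hypothesis feeding Gou\"ezel's stable-law theorem, which then yields, via the renewal identity \Cref{eqn:Renewal Identity} and a Nagaev--Guivarc'h perturbation of the leading eigenvalue $\lambda(z)$ of the twisted transfer operator $R_t(z)$, the convergence of characteristic functions $E[e^{it A_m^{-1} S_m\xi}] \to e^{-a|t|^p(1-b\,\sign(t)\tan(p\pi/2))}$. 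For part (i), when $\alpha_0,\alpha_1<1$ the tail exponent $1/\alpha+2>3$ so $\xi\in L^2$ (this is \Cref{lem:Base Observable is L2}), the rate of decay of correlations is summable by \Cref{cor:Mean Zero Summablity of Correlations}, and the classical CLT via the Green--Kac variance $\sigma^2$ applies; non-degeneracy $\sigma^2>0$ follows from the assumption that $\xi$ is not a coboundary.

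I expect the main obstacle to be the second step: transferring the precise asymptotic $\xi \approx n M_j$ into a genuine \emph{regularly varying tail} for $\lambda(\{\xi > t\})$ with the sharp constant, and controlling the error terms uniformly — both the contribution of $X$ near the fixed points beyond the leading Riemann-sum approximation, and the cross-terms coming from the two distinct fixed points when $\alpha_0 = \alpha_1$ (part (iii)), where one must show the two tails add without interference and the skewness is exactly $(C_0-C_1)/(C_0+C_1)$. A secondary but genuine difficulty is verifying the \emph{aperiodicity} condition needed for Gou\"ezel's theorem in the twisted setting (that $I - R_t(z)$ is invertible for the relevant $z$, $t\neq 0$), which typically requires a separate argument ruling out lattice-type degeneracies of $\xi$; here one uses that $X$ is H\"older and genuinely varies along the fibers over the fixed points. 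The reduction step and the spectral-perturbation step are, by contrast, fairly mechanical given the Banach-space framework already set up in \Cref{sec:Adapted Banach Spaces} and the propositions of \Cref{sec:Main Results}.
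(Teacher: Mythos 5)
For parts (ii)--(iv) your outline is essentially the proof in the paper: induce on $\Lambda$, show that on the cells $[r=n]$ the induced observable satisfies $\xi \approx n M_j$ with H\"older-controlled errors (this is \Cref{lem:Base Observable Expansion}, proved exactly by the Riemann-sum argument you describe, using \Cref{lem:Point Asymptotics}), convert this together with the cell-measure asymptotics into regularly varying tails of $\xi$ with the constants $C_0,C_1$ and with the signs of $M_0,M_1$ governing which side is heavy (\Cref{lem:Base Observable Tail Asymptotics}), feed the resulting Nagaev--Guivarc'h expansion of the perturbed eigenvalue $\chi(t)$ (\Cref{prop:Expansion of Dominant Eigenvalue}, which the paper obtains by applying Aaronson--Denker to the Gibbs--Markov induced factor rather than by a hands-on computation) into \Cref{thm:Gouezel2}, and finally pass from the induced characteristic-function estimates back to $(B,X,\Leb)$; the paper does this last step by the arguments of Gou\"ezel's Sections 4.3--4.4, and your Melbourne--T\"or\"ok time-change lift plays the same role. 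Two of your anticipated difficulties are, however, not real ones: in \Cref{thm:Gouezel2} the invertibility of $I-R(z,t)$ for small $t\neq 0$ is a \emph{conclusion}, derived from the unperturbed aperiodicity together with $\mathrm{Re}(c)>0$ in the eigenvalue expansion, so no lattice-type non-degeneracy of $\xi$ has to be ruled out for distributional convergence; and the singular expansion $R(z)=T_*+(1-z)^{1/\alpha}L(1/(1-z))E+\cdots$ you posit is neither needed nor correct (since $\Abs{R_n}=O(\lambda[r=n])=O(n^{-(2+1/\alpha)})$, the operator $R(z)$ is differentiable at $z=1$ and the correction to the linear term is of order $(1-z)^{1+1/\alpha}$); the input to the limit theorem is the expansion in $t$ of $\chi(1,t)$, not a singular expansion in $z$.

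The genuine gap is part (i). The hypothesis of (i) is that $\xi\in L^{2}$ and is not a coboundary --- by \Cref{lem:Base Observable is L2} this includes cases with $\alpha_j$ as large as $3$ provided $M_j=0$ --- and the asserted variance is the Green--Kubo series of the \emph{induced} pair $(T,\xi)$. Your argument covers only the sub-case $\alpha_0,\alpha_1<1$ and runs through ``summable decay of correlations plus the classical CLT,'' which does not prove the statement as given: \Cref{thm:Main Correlation} and \Cref{cor:Mean Zero Summablity of Correlations} concern Lipschitz observables on $\Lambda$, not H\"older functions on the square, so the summability you invoke is not available from the paper; and when $\alpha_j>1$ with $M_j=0$ the generic decay rate $n^{-1/\alpha}$ is not summable, so this route fails in precisely the cases the hypothesis of (i) is designed to include, even though the CLT with $\sqrt{n}$ normalization still holds there. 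Moreover, identifying the limiting variance with $\int_{\Lambda}\abs{\xi}^{2}\,d\lambda+2\sum_{k\ge1}\int_{\Lambda}\xi\circ T^{k}\,\xi\,d\lambda$ requires the inducing step in any case. The paper proves (i) with the same spectral machinery as the other parts, via the expansion $\chi(t)=1-\left(\tfrac12\sigma^{2}+o(1)\right)t^{2}$ in \Cref{prop:Expansion of Dominant Eigenvalue}(i) (where non-degeneracy $\sigma^{2}>0$ is exactly the non-coboundary assumption), followed by \Cref{thm:Gouezel2}; to repair your proposal you should treat (i) this way rather than by appealing to a correlation-summability CLT.
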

Note that by manipulating the values of $M_{0}$ and $M_{1}$ in the third limit theorem above on can obtain stable distributions with any skewness parameter $b \in [-1,1]$.\par
The choices of parameter ranges in the last three limit theorems above are motivated by the following lemma.
\par
\begin{lem}[Finite Variance Conditions]
  \label{lem:Base Observable is L2}
  Suppose that $X\colon \left[ 0,1 \right]^{2} \to \R$ is $\gamma$-H\"older for some $\gamma\in(0,1]$. If for $j =0$ one of the conditions below is satisfied, and similarly for $j = 1$ one of the conditions below is satisfied, then $\xi \in L^{2}$.
  \begin{enumerate}[i.]
    \item $\alpha_{j}<1$,
    \item $M_{j} = 0$ and $\alpha_{j}=1$,
    \item $M_{j}= 0$, $1< \alpha<3$, and $\gamma>\tfrac{\alpha-1}{2}$,
  \end{enumerate}
\end{lem}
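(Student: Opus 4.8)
The plan is to reduce the claim to a single estimate: the function $\xi$ (the induced observable on $\Lambda$, defined as $\xi(x,y) = \sum_{j=0}^{r(x,y)-1} X\bigl(B^j(x,y)\bigr)$, the Birkhoff sum of $X$ over one return to $\Lambda$) lies in $L^2(\lambda)$ precisely when its tail $\lambda\{|\xi| > t\}$ decays fast enough that $\int_{\Lambda} |\xi|^2 \, d\lambda = \int_0^\infty 2t\,\lambda\{|\xi|>t\}\,dt < \infty$. Since $X$ is bounded (being $\gamma$-H\"older on a compact set), on the return cell $[r = n]$ we have $|\xi| \le n \Ninfty{X}$, so the only way $\xi$ can fail to be square integrable is through the contribution of large return times, i.e.\ through orbits that spend a long time near one of the two lines of indifferent fixed points $\{x=0\}$ and $\{x=1\}$. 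Thus I would split $\xi = \xi \indf{x < A} + \xi \indf{x > A}$ and treat the two lines separately; by symmetry it suffices to handle the cell $[p_{n+2}^\circ, p_{n+1}^\circ) \times [0,1] = [r = n+2]$ associated with the fixed point at $0$, and the hypotheses are stated so that condition (i), (ii), or (iii) holds for $j=0$ exactly when that half of $\xi$ is $L^2$.

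On the cell $[r=n+2]$, the orbit $B^k(x,y)$ for $1 \le k \le n+1$ lies in $[0,p)\times[0,1]$, and \Cref{lem:Point Asymptotics} gives the precise asymptotics $x_k \sim (\tfrac{\alpha_0+1}{c_0\alpha_0})^{1/\alpha_0}(n-k+2)^{-1/\alpha_0}$ and $y_k \sim (1 - \tfrac{k+1}{n})^{1+1/\alpha_0}$. The key computation is to estimate
\[
  \xi(x,y) = X(x,y) + \sum_{k=1}^{n+1} X(x_k, y_k) + X\bigl(B^{n+2}(x,y)\bigr).
\]
Since $x_k \to 0$, H\"older continuity of $X$ in the first variable gives $X(x_k,y_k) = X(0, y_k) + O(x_k^\gamma) = X(0,y_k) + O\bigl((n-k+2)^{-\gamma/\alpha_0}\bigr)$. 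Summing the error term over $k$ contributes $O(n^{1 - \gamma/\alpha_0})$ when $\gamma < \alpha_0$ (and $O(\log n)$ or $O(1)$ otherwise), which is where the constraint $\gamma > \tfrac{\alpha-1}{2}$ in (iii) will enter. The main term is $\sum_{k=1}^{n+1} X(0, y_k)$; changing variables $y_k \approx (1 - t/n)^{1+1/\alpha_0}$ with $t = k+1$, a Riemann-sum argument turns this into $n \int_0^1 X\bigl(0, s^{1+1/\alpha_0}\bigr)\,ds + \text{error} = n M_0 + \text{error}$. So the leading behavior is $\xi \approx n M_0$ on $[r = n+2]$: when $M_0 \ne 0$ the dominant term is linear in $n$, and when $M_0 = 0$ it cancels, leaving only the lower-order terms.

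Combining this with the cell measure asymptotics $\lambda[r = n] \asymp n^{-(1/\alpha_0 + 2)}$ from \Cref{eqn:Measure Asymptotics} (restricted to the relevant half), one gets $\int_{[r=n+2], x<A} |\xi|^2 \, d\lambda \asymp (n M_0)^2 \cdot n^{-(1/\alpha_0 + 2)} = M_0^2\, n^{-1/\alpha_0}$ when $M_0 \ne 0$, which is summable in $n$ iff $1/\alpha_0 > 1$, i.e.\ $\alpha_0 < 1$ — this is case (i). When $M_0 = 0$, the leading term drops to the error bound: if $\alpha_0 = 1$ the surviving terms are $O(\log n)$ or bounded, giving $|\xi|^2 \lesssim (\log n)^2$ against $\lambda[r=n] \asymp n^{-3}$, which is summable — this is case (ii); if $1 < \alpha_0 < 3$ the surviving term is of order $n^{1 - \gamma/\alpha_0}$, so $\int |\xi|^2 \lesssim n^{2 - 2\gamma/\alpha_0} \cdot n^{-(1/\alpha_0 + 2)} = n^{-(2\gamma+1)/\alpha_0}$, summable iff $(2\gamma+1)/\alpha_0 > 1$, i.e.\ $\gamma > \tfrac{\alpha_0 - 1}{2}$ — this is case (iii). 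Doing the same analysis for $j = 1$ and adding the two contributions (plus the trivially bounded contribution from $X(x,y)$ and from the final return point) shows $\xi \in L^2(\lambda)$. The main obstacle I anticipate is controlling the error terms in the Riemann-sum approximation $\sum X(0, y_k) \approx n M_0$ uniformly in $n$: one must bound $\sum_k |X(0, y_k) - n\!\int\! X(0, s^{1+1/\alpha_0})\,ds|$ using the modulus of continuity of $X$ together with the exact increment asymptotics $y_{k+1} - y_k$ derivable from \Cref{lem:Point Asymptotics}, and ensure that for $k$ near $n$ (where $y_k$ is close to $0$ and the substitution degenerates) the H\"older bound on $X(0,\cdot)$ still gives a summable contribution — this near-endpoint regime is exactly what forces the $\gamma$-dependent threshold rather than a cruder power.
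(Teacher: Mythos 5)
Your proposal follows essentially the same route as the paper: expand $\xi$ on each return cell via the orbit asymptotics to get $\xi = nM_{j} + O\left(n^{1-\gamma}\right) + O\left(n^{1-\gamma/\alpha_{j}}\right)$ (this is exactly the content of \Cref{lem:Base Observable Expansion}), then pair the resulting sup bound for $|\xi|^{2}$ on $[r=n]$ with the cell-measure asymptotics $\lambda[r=n] \asymp n^{-(2+1/\alpha_{j})}$ and sum over $n$, splitting the two sides of $\ell_{A}$ as the paper does with the partition $\Omega_{1}$. The only slip is in case (ii): for $\alpha_{j}=1$ the surviving error is $O\left(n^{1-\gamma}\right)$, not $O(\log n)$ unless $\gamma = 1$, but this does not affect the conclusion since $n^{2-2\gamma}\cdot n^{-3} = n^{-(2\gamma+1)}$ is still summable, which is precisely the paper's estimate.
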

\begin{proof}
  See \Cref{sec:Finite Variance Conditions}
\end{proof}
The proof of this \Cref{thm:Main Limit Theorem} is an application of \cite{Gouezel2004-Intermittent} Theorem 2.1. For the convenience of the reader we reproduce the theorem here. We have modified the notation slightly to fit our setting.
\par
\begin{thm}[Theorem 2.1 from \cite{Gouezel2004-Intermittent}]
  \label{thm:Gouezel2}
  Let $\Bs$ be a Banach space and $R_{n}\in Hom(\Bs,\Bs)$ be operators on $\Bs$ with $\|R_{n}\|\le r_{n}$ for a sequence $r_{n}$ such that $a_{n} = \sum_{k>n} r_{k}$ is summable. Write $R(z) = \sum R_{n} z^{n}$ for $z\in \Dbar$. Assume that $1$ is a simple isolated eigenvalue of $R(1)$ and that $I-R(z)$ is invertible for $z\in \Dbar - \left\{ 1 \right\}$. Let $P$ denote the spectral projection of $R(1)$ for the eigenvalue $1$, and assume that $PR'(1)P = \mu P$ for some $\mu>0$. Let $R_{n}(t)$ be an operator depending on $t\in[-\delta_{0},\delta_{0}]$, continuous at $t=0$ with $R_{n}(0) = R_{n}$ and $\|R_{n}(t)\|\le Cr_{n}$ for all $t \in [-\delta_{0},\delta_{0}]$, for some constant $C>0$. For $z\in \Dbar$ and $t \in [-\delta_{0}, \delta_{0}]$ write 
  \[
    R(z,t) = \sum_{n=1}^{\infty} z^{n} R_{n}(t).
  \]
  This is a continuous perturbation of $R(z)$. For $t$ small and $z$ close to $1$, $R(z,t)$ is close to $R(1)$, whence it admits an eigenvalue $\chi(z,t)$ close to $1$. Assume that $\chi(1,t) = 1 - (c+o(1))M(\abs{t})$ for $c \in \C$ with $Re(c)>0$, and some continuous function $M\colon \R_{+} \to \R_{+}$ vanishing only at $0$. Then
  \begin{enumerate}
    \item There exists $\epsilon_{0}>0$ such that for all $\abs{t}<\epsilon_{0}$, $I-R(z,t)$ is invertible for all $z \in \D$. We can write $(I-R(z,t))^{-1} = \sum T_{n,t}z^{n}$.
    \item Furthermore, there exist functions $\epsilon(t)$ and $\delta(n)$ tending to $0$ when $t \to \infty$ and $n \to \infty$ such that for all $\abs{t}<\epsilon_{0}$, for all $n \in \N^{*}$, we have 
      \[
        \left\|
        T_{n,t} = \tfrac{1}{\mu}\left( 1-\tfrac{c}{\mu}M\left( \abs{t} \right)^{n}P \right) \le \epsilon(t) + \delta(n).
        \right\|
      \]
  \end{enumerate}
\end{thm}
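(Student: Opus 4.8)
This is a perturbed operator renewal theorem, and the plan is to combine Nagaev--Guivarc'h style spectral perturbation with the aperiodic renewal analysis that already sits behind \Cref{thm:Gouezel} (which is essentially the $t=0$, finite--mean shadow of the present statement). First I would pin down the spectral picture of $R(z,t)$ near the base point $(z,t)=(1,0)$. From $\|R_n(t)\|\le Cr_n$ with $\sum_n r_n<\infty$, the series $R(z,t)=\sum_n z^nR_n(t)$ defines a map holomorphic in $z\in\D$, continuous on $\Dbar$, and jointly continuous in $(z,t)$; moreover continuity of each $R_n(t)$ at $t=0$ together with dominated convergence (dominant $2Cr_n$) gives $\sup_{z\in\Dbar}\|R(z,t)-R(z,0)\|\to 0$ as $t\to 0$. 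Since $1$ is a simple isolated eigenvalue of $R(1,0)$, standard perturbation theory yields a neighbourhood $W$ of $1$ and $\epsilon_1>0$ on which $R(z,t)=\chi(z,t)\Pi(z,t)+N(z,t)$, with $\Pi(z,t)$ the rank--one spectral projection (continuous in $(z,t)$, holomorphic in $z$, $\Pi(1,0)=P$), $\chi(z,t)$ the leading eigenvalue with $\chi(1,0)=1$ and $\partial_z\chi(1,0)=\mu$ (the content of $PR'(1)P=\mu P$), and $N(z,t)$ of spectral radius $\le\theta<1$ uniformly.

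Next I would prove invertibility on $\D$, which is conclusion (1) and is what makes the coefficients $T_{n,t}$ meaningful. On the compact set $K=\Dbar\setminus W$ the hypothesis furnishes a norm--continuous, hence bounded, inverse of $I-R(z,0)$, and a Neumann--series perturbation using the uniform convergence above makes $I-R(z,t)$ invertible with uniformly bounded inverse for $(z,t)\in K\times(-\epsilon_2,\epsilon_2)$. On $W$ I would write $I-R(z,t)=(1-\chi(z,t))\Pi(z,t)+(I-N(z,t))(I-\Pi(z,t))$; the second summand is boundedly invertible on the complementary subspace, so non--invertibility forces $\chi(z,t)=1$. Expanding $1-\chi(z,t)=A(t)(1-z)+(1-\chi(1,t))+o(\abs{1-z})$ with $A(t)\to\mu$ and using $1-\chi(1,t)=(c+o(1))M(\abs t)$ with $\mathrm{Re}(c)>0$, the only zero of $z\mapsto 1-\chi(z,t)$ near $1$ is $z_*(t)=1+\tfrac c\mu M(\abs t)(1+o(1))$, whose real part exceeds $1$; hence $z_*(t)\notin\Dbar$, $I-R(z,t)$ is invertible throughout $\D$ for $\abs t<\epsilon_0:=\min(\epsilon_1,\epsilon_2)$, and $T_{n,t}$ is well defined as the $n$-th $z$-Taylor coefficient of $(I-R(z,t))^{-1}$.

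Then I would extract the asymptotics of $T_{n,t}$. Gluing the two local descriptions gives a decomposition $(I-R(z,t))^{-1}=\tfrac{1}{1-\chi(z,t)}\Pi(z,t)+H(z,t)$ with $H(\cdot,t)$ uniformly regular (its $z$-coefficients bounded by a null sequence independent of small $t$); replacing $\Pi(z,t)$ by $\Pi(1,t)=P+o_{t\to 0}(1)$ costs only a term whose coefficients are $O(M(\abs t))$ uniformly in $n$, and reduces everything to the scalar problem of the Taylor coefficients of $\tfrac{1}{1-\chi(z,t)}$. Using $1-\chi(z,t)=\mu(1-z)(1+o(1))+cM(\abs t)(1+o(1))$ near $(1,0)$, a Tauberian/contour argument (the relevant singularity being the simple pole at $z_*(t)$ with residue $\to\tfrac1\mu$) shows these coefficients equal $\tfrac1\mu\bigl(1-\tfrac c\mu M(\abs t)\bigr)^n$ up to a relative error that is $o(1)$ as $t\to 0$ as long as $nM(\abs t)$ stays bounded, while in the complementary range $nM(\abs t)\to\infty$ both $T_{n,t}$ and $\tfrac1\mu(1-\tfrac c\mu M(\abs t))^nP$ are dominated by a single null sequence $\delta(n)$. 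Packaging the two ranges gives $\|T_{n,t}-\tfrac1\mu(1-\tfrac c\mu M(\abs t))^nP\|\le\epsilon(t)+\delta(n)$.

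The routine parts are the local spectral decomposition and the transfer of aperiodicity under perturbation. The hard part will be the last step: the asymptotics must be uniform in a way that genuinely decouples into $\epsilon(t)+\delta(n)$, which forces careful bookkeeping of the three competing scales $1-z$, $M(\abs t)$, and $1/n$ near $(1,0)$ — in particular a uniform grip on $(I-R(z,t))^{-1}$ near the pole $z_*(t)$, which migrates toward the unit circle as $t\to 0$ and whose residue--type contribution is of size $M(\abs t)^{-1}$. A secondary wrinkle is that $t\mapsto R_n(t)$ is assumed only continuous, so all spectral data are merely continuous in $t$ and every uniformity in $t$ must be obtained from compactness rather than from analytic estimates.
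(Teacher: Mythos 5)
A point of context first: the paper does not prove this statement at all --- it is reproduced verbatim as Theorem 2.1 of \cite{Gouezel2004-Intermittent} and used as a black box, so the only proof to compare against is Gou\"ezel's original one. Your outline does follow the broad strategy of that proof: uniform invertibility of $I-R(z,t)$ on $\Dbar$ away from $z=1$ by continuity in $t$ and compactness, a Kato-type perturbation of the simple isolated eigenvalue near $(z,t)=(1,0)$, the observation that the zero $z_*(t)\approx 1+\tfrac{c}{\mu}M(\abs{t})$ of $1-\chi(z,t)$ lies outside $\Dbar$ because $Re(c)>0$ and $\mu>0$, and extraction of the coefficients $T_{n,t}$ after splitting off the rank-one singular part $\tfrac{1}{1-\chi(z,t)}\Pi(z,t)$.

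The gap is in the step you yourself flag as hard, and it is not merely bookkeeping. You assert that the regular remainder $H(z,t)$ has $z$-coefficients bounded by a null sequence independent of small $t$, and that the Taylor coefficients of $\tfrac{1}{1-\chi(z,t)}$ can be handled by ``a Tauberian/contour argument.'' Neither follows from continuity in $t$ plus compactness: $(I-R(z,t))^{-1}$ is of size $M(\abs{t})^{-1}$ near $z=1$, so pointwise bounds on the unit circle say nothing uniform about Taylor coefficients, and the additive decoupling $\epsilon(t)+\delta(n)$, uniform in both parameters, is precisely the content of the theorem rather than a packaging step. In Gou\"ezel's argument this is exactly where the hypothesis that $a_n=\sum_{k>n}r_k$ is summable enters: together with $\|R_n(t)\|\le Cr_n$ it places $z\mapsto R(z,t)$, uniformly in $t$, in a Banach algebra of operator power series with summable coefficients (with summable tails), and a Wiener-type lemma in that algebra plus an explicit comparison with the scalar quantity $\tfrac{1}{\mu}\bigl(1-\tfrac{c}{\mu}M(\abs{t})\bigr)^n$ produces the uniform coefficient estimates. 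Your sketch never invokes the summability of $a_n$ at all, which is the telltale sign that the decisive mechanism is missing; as written, the final estimate is asserted rather than derived.
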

Before we can apply \Cref{thm:Gouezel2} we must define the operators $R_{n}(t)$. First, we define an observable $\xi \colon \Lambda \to \R$ derived from the observable $X$ as follows,
\begin{equation}
  \label{eqn:Base Observable Definition}
  \xi(x,y) = \sum_{k=0}^{r(x)-1} (X\circ B^{k})(x,y).
\end{equation}
For all $t \in \R$ and $\eta \in \ULip$, let 
\begin{equation}
  \label{eqn:Perturbed Exact Return Operators}
  R_{n}(t)\eta = R_n\left[ \exp\left( it\xi \right)\eta \right].
\end{equation}
\par
Note that the hypotheses of \Cref{thm:Gouezel2} that pertain to the unperturbed operators have already been verified in \Cref{prop:Convergence and Renewal Equation,prop:Spectrial Gap and Aperiodicity}. Recall from the proof of \Cref{thm:Main Correlation} that $\mu = \frac{1}{\Leb(\Lambda)}$. The following two propositions verify the remaining hypotheses of \Cref{thm:Gouezel2} that pertain to the perturbed operator.
\par
\begin{prop}[Convergence and Continuity of Perturbations]
  \label{prop:Convergence and Continuity of Perturbations}\leavevmode
  \begin{itemize}
    \item The operators $R_{n}(t)$ are continuous at $t=0$.
    \item As $t \to 0$, $\|R(z,t) - R(z,0)\| = O \left( \abs{t} \right)$.
    \item There exists $\delta_{0}>0$ and $C>0$ such that for all $t \in [-\delta_{0}, \delta_{0}]$ and $n\in\N^{*}$, $\|R_{n}\| \le Cr_{n}$.
  \end{itemize}
\end{prop}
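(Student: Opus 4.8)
The three bullet points all follow once we control the multiplication operator $\eta \mapsto \exp(it\xi)\eta$ on the relevant function spaces and combine it with the bounds on $R_n$ already packaged in \Cref{prop:Convergence and Renewal Equation}. The plan is to proceed as follows. First I would establish that for $\eta \in \ULip$ and $t$ in a bounded interval, $\exp(it\xi)\eta \in \ULip$ with $\NUL{\exp(it\xi)\eta} \le C(t)\NUL{\eta}$, where $C(t)$ stays bounded as $t\to 0$; this requires knowing that $\xi$ restricted to an unstable curve has bounded Lipschitz constant in $x$. Since $X$ is $\gamma$-H\"older on $[0,1]^2$ and $\xi(x,y) = \sum_{k=0}^{r(x)-1}(X\circ B^k)(x,y)$, the key estimate is that on each cell $[r=n]$ the sum has $n$ terms but the iterates $B^k(x,y)$ for $1\le k\le n-1$ are confined near the indifferent fixed points with $x$-variation controlled by \Cref{lem:Point Asymptotics}; the H\"older modulus of $X$ then gives a bound on $\MUL{\xi \indf{[r=n]}}$ that is summable against $\lambda[r=n]$. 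This is essentially the same computation that will underlie \Cref{lem:Base Observable is L2}, so I expect to cite or parallel that argument.

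For the third bullet, once we know $\exp(it\xi)$ is a bounded multiplier on $\ULip$ (hence on $\Bs$) with norm uniformly bounded for $t\in[-\delta_0,\delta_0]$, we simply write $R_n(t)\eta = R_n[\exp(it\xi)\eta]$ and apply the bound $\NBs{R_n} \le r_n$ from \Cref{prop:Convergence and Renewal Equation}, with $r_n$ chosen so that $\sum_{k>n} r_k = O(n^{-(1+1/\alpha)})$. This yields $\|R_n(t)\| \le C r_n$ with $C = \sup_{|t|\le \delta_0} \|\exp(it\xi)\cdot\|_{\ULip\to\ULip}$, and the same estimate shows $R(z,t) = \sum z^n R_n(t)$ converges in $Hom(\Bs,\Bs)$ for $z \in \Dbar$.

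For the first two bullets (continuity at $t=0$ and the $O(|t|)$ rate), the idea is to estimate $R_n(t)\eta - R_n\eta = R_n[(\exp(it\xi)-1)\eta]$. Pointwise $|\exp(it\xi)-1| \le |t|\,|\xi|$, and on $\ULip$ the modulus $\MUL{(\exp(it\xi)-1)\eta}$ is controlled by $|t|$ times a combination of $\MUL{\xi}$, $\Nsup{\xi}$, and the corresponding norms of $\eta$ — again reducing to the curve-wise Lipschitz bound on $\xi$ from the first step. Summing over $n$ against $r_n$ gives $\|R(z,t) - R(z,0)\| \le |t|\, C' \sum_n r_n = O(|t|)$ uniformly in $z \in \Dbar$, which in particular gives continuity at $t=0$. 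The main obstacle, and the place where real work is needed, is the first step: showing that the long Birkhoff-type sum defining $\xi$ has its unstable ($x$-direction) Lipschitz seminorm under control on the high-return-time cells, since naively the sum has $n$ terms on $[r=n]$ and one must exploit the precise asymptotics of \Cref{lem:Point Asymptotics} together with $\lambda[r=n] \asymp n^{-(1/\alpha+2)}$ to see the contributions are summable — and one must check this uniformly for $t$ in a neighborhood of $0$, which is where the $\gamma$-H\"older (rather than merely Lipschitz) regularity of $X$ enters, mirroring the hypotheses of \Cref{lem:Base Observable is L2}.
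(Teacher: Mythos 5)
Your plan is built around the wrong seminorm, and the step that bridges it to the spaces actually in play is unjustified. You propose to show that multiplication by $\exp(it\xi)$ is bounded on $\ULip$ by controlling the unstable-curve Lipschitz constant $\MUL{\xi\,\indf{[r=n]}}$, and then say ``hence on $\Bs$.'' That inference does not follow: the norms $\NBw{\cdot}$, $\Ns{\cdot}$, $\MBs{\cdot}$ are defined by integrating along \emph{vertical (stable) lines} against Lipschitz/H\"older test functions, so for a multiplication operator what matters is the regularity of $\xi$ along stable fibres, i.e.\ $\NTl{\xi(x,\cdot)}$, not its regularity along unstable curves. The paper's proof never touches $\MUL{\xi}$: it writes $[R_n(t)-R_n(0)]\eta = T_{*}\bigl((e^{it\xi}-1)\indf{[r=n]}\eta\bigr)$, changes variables so that $(e^{it\xi}-1)$ is absorbed into the test function on the preimage line, uses $\abs{e^{ita}-1}\le \abs{at}$ to get the factor $\abs{t}\,\NTl{\xi(w,\cdot)}$, and then invokes \Cref{lem:Base Observable Expansion} to bound $\NTl{\xi(x,\cdot)}$ by $M(n)=O(n)$ on $[r=n]$ (easy, because the fibre maps contract), together with the distortion/contraction factor $(\kappa+1)\lambda[r=n]$. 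The ``hard'' estimate you single out as the main obstacle -- the unstable Lipschitz seminorm of the Birkhoff sum on high-return-time cells -- is thus neither needed nor the route the paper takes, and it is substantially harder than the vertical bound that is actually required (within a cell the horizontal separations are expanded by $u$, so that seminorm is large).

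A second, quantitative gap: your claimed bounds $\|\,e^{it\xi}\cdot\,\|_{\ULip\to\ULip}\le C$ uniformly for $\abs{t}\le\delta_0$, and $\|R_n(t)-R_n(0)\|\le \abs{t}\,C' r_n$ with the same summable $r_n$, cannot hold with cell-independent constants, because $\Nsup{\xi}\sim M_0\,n$ and the relevant Lipschitz constants of $\xi$ grow like $n$ on $[r=n]$. The correct accounting (as in the paper) is per cell: $\|R_n(t)-R_n(0)\| = O\bigl(\abs{t}\,n\,\lambda[r=n]\bigr) = O\bigl(\abs{t}\,n^{-1-\frac{1}{\alpha}}\bigr)$, which carries an extra factor of $n$ relative to $r_n=\lambda[r=n]$ and is still summable (essentially Kac's lemma), yielding $\|R(z,t)-R(z,0)\|=O(\abs{t})$. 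For the third bullet one drops the $\abs{t}\abs{\xi}$ bound and uses only $\abs{e^{it\xi}}\le 1$, giving $\|R_n(t)\|=O(\lambda[r=n])$ uniformly in $t$; your composition argument can recover this part, but only after the multiplier issue above is rerouted through the stable-line estimates rather than $\ULip$.
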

\begin{proof}
  See \Cref{sec:Convergence and Continuity of Purturbations}.
\end{proof}
\begin{prop}[Expansion of Dominant Eigenvalue]
  \label{prop:Expansion of Dominant Eigenvalue}
  Let $\chi(t)$ denote the eigenvalue near $1$ of the operator $R(1,t)$ for small $t$.
  Suppose that $X\colon \left[ 0,1 \right]^{2} \to \R$ is $\gamma$-H\"older for some $\gamma\in(0,1]$ and $\int_{[0,1]^{2}} X \,d \Leb = 0$. 
  \begin{enumerate}[i.]
    \item Suppose that\footnote{In particular if the hypotheses of \Cref{lem:Base Observable is L2} are satisfied} $\xi \in L^{2}$ and that $\xi$ is not a coboundary. Then $\sigma^{2} = \int_{\Lambda} \abs{\xi}^{2} \, d \lambda + 2 \sum_{k=1}^{\infty} \int_{\Lambda} \xi\circ T^{k} \, \xi \, d \lambda$ converges, $\sigma^{2}>0$, and as $t\to 0$,
      \begin{align*}
        \chi(t) 
        &\sim 1 - \left(\tfrac{1}{2}\sigma^{2} + o(1)\right)t^{2},
      \end{align*}
    \item Suppose that $\alpha_{0} > \alpha_{1}$, $\alpha_{0}>1$, and $M_{0} >0$. 
      Let $p = 1+\frac{1}{\alpha_{0}}$, $a = C_{0}\Gamma(1-p)\cos\left(\frac{p\pi}{2}\right)$, and $b = 1$. As $t \to 0$,
      \begin{align*}
        \chi(t)
        &=
        1 - \left( a\left(1-ib\sign(t)\tan\left( \frac{p\pi}{2} \right)\right) + o(1) \right)\abs{t}^{p}
      \end{align*}
    \item Suppose that $\alpha_{0} = \alpha_{1}=:\alpha$, $\alpha>1$, $M_{0}>0$ and $M_{1}<0$. 
      Let $p = 1+\frac{1}{\alpha}$, $a = \left(C_{0} + C_{1}\right)\Gamma(1-p)\cos\left(\frac{p\pi}{2}\right)$, and $b = \frac{C_{0}-C_{1}}{C_{0}+C_{2}}$. As $t \to 0$,
      \begin{align*}
        \chi(t)
        &=
        1 - \left( a\left(1-ib\sign(t)\tan\left( \frac{p\pi}{2} \right)\right) + o(1) \right)\abs{t}^{p}
      \end{align*}
    \item Suppose that $\alpha_{0} = \alpha_{1} = 1$, $M_{0} \neq 0$, and $M_{1} \neq 0$. As $t \to 0$,
      \begin{align*}
        \chi(t)
        &=
        1+\left( \tfrac{1}{2}\left(C_{0}+C_{1}\right) +o(1) \right)\abs{t}^{2}\log\abs{t}.
      \end{align*}
  \end{enumerate}
\end{prop}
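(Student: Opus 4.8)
The plan is to identify $R(1,t)=\sum_{n\ge1}R_n(t)$ with the transfer operator of the induced map $T$ twisted by $\xi$: since $R(1,0)=T_*$ and $R_n(t)\eta=R_n(e^{it\xi}\eta)$, we have $R(1,t)\eta=T_*(e^{it\xi}\eta)$. Because $T$ preserves $\lambda$, the functional $\eta\mapsto\int_\Lambda\eta\,d\lambda$ is a left eigenvector of $R(1,0)$ at the eigenvalue $1$; applying it to the equation $R(1,t)v_t=\chi(t)v_t$ for the dominant eigenvector $v_t$, normalized by $\int_\Lambda v_t\,d\lambda=1$ (so $v_0=\indf{\Lambda}$), yields the identity
\[
  \chi(t)-1=\int_\Lambda (e^{it\xi}-1)\,v_t\,d\lambda .
\]
Existence of $\chi(t)$ and $v_t$ and their continuity at $t=0$ follow from \Cref{prop:Convergence and Continuity of Perturbations} together with Keller--Liverani perturbation theory; everything then reduces to the asymptotics of the right-hand side, which I split as $\int_\Lambda(e^{it\xi}-1)\,d\lambda+\int_\Lambda(e^{it\xi}-1)(v_t-\indf{\Lambda})\,d\lambda$.

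First I would pin down the tail of $\xi$. By \Cref{eqn:Return Time Cells} the cell $[r=n+2]$ is the union of a ``left'' column $[p^\circ_{n+2},p^\circ_{n+1})\times[0,1]$ and a ``right'' column $(q^\circ_{n+1},q^\circ_{n+2}]\times[0,1]$; on the left column the $B$-orbit of a point runs near the fixed point $(0,\cdot)$ with $y_k\sim(1-\tfrac{k+1}{n})^{1+1/\alpha_0}$ by \Cref{lem:Point Asymptotics}, so $\xi=\sum_{k=0}^{n+1}X\circ B^k$ is a Riemann sum converging to $nM_0+o(n)$ uniformly on that column (the Hölder exponent of $X$ and, in cases (ii)--(iv), $\alpha_0>1$ absorb the approximation errors), and similarly $\xi=nM_1+o(n)$ uniformly on the right column. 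Combining this with the interior-increment asymptotics \Cref{eqn:Period-2 Orbit Increment Asymptotics Interior Right,eqn:Period-2 Orbit Increment Asymptotics Interior Left} for the $\lambda$-measure of the two columns gives $\lambda(\xi>x)\sim C_0 x^{-(1+1/\alpha_0)}$ and $\lambda(\xi<-x)\sim C_1 x^{-(1+1/\alpha_1)}$ with the constants $C_0,C_1$ exactly as defined before the theorem (with the roles of the two sides governed by the signs of $M_0,M_1$), while $\xi$ is bounded on the finitely many cells of bounded return time. In case (i) one uses instead the hypothesis $\xi\in L^2$.

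Next I would expand the main term. Using $\int_\Lambda\xi\,d\lambda=0$ (Kac), write $\int_\Lambda(e^{it\xi}-1)\,d\lambda=\sum_n\int_{[r=n]}(e^{it\xi}-1-it\xi)\,d\lambda$; the bounded-return-time cells contribute $O(t^2)$, and on each large-$n$ cell one replaces $\xi$ by $nM_0$ (resp.\ $nM_1$) at negligible cost, reducing the sum to
\[
  \sum_n\bigl[(e^{itnM_0}-1-itnM_0)\,\lambda(\mathrm{left}_n)+(e^{itnM_1}-1-itnM_1)\,\lambda(\mathrm{right}_n)\bigr].
\]
Comparing with an integral and using the classical evaluation $\int_0^\infty(e^{iu}-1-iu)\,u^{-1-p}\,du=\Gamma(-p)e^{-ip\pi/2}$, finite for $1<p<2$ and logarithmically divergent at $u=0$ when $p=2$, this produces: in (ii) a single term of order $t^{1+1/\alpha_0}$ (the heavier tail dominates since $\alpha_0>\alpha_1$), giving $a=C_0\Gamma(1-p)\cos\tfrac{p\pi}{2}$ and $b=1$; in (iii) two balanced terms of the same order $t^{1+1/\alpha}$ which, after collecting real and imaginary parts, give $a=(C_0+C_1)\Gamma(1-p)\cos\tfrac{p\pi}{2}$ and skewness $b=\tfrac{C_0-C_1}{C_0+C_1}$; in (iv) the $p=2$ logarithmic divergence replaces $t^p$ by $|t|^2\log|t|$, with coefficient read off from the truncated second moment $\int_\Lambda\xi^2\,\indf{\{|\xi|\le1/|t|\}}\,d\lambda\sim\mathrm{const}\cdot\log(1/|t|)$. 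In case (i), where $\xi\in L^2$ is not a coboundary, $\int_\Lambda(e^{it\xi}-1)\,d\lambda=-\tfrac{t^2}{2}\int_\Lambda\xi^2\,d\lambda+o(t^2)$ by dominated convergence, and the Green--Kubo correction $-t^2\sum_{k\ge1}\int_\Lambda\xi\circ T^k\,\xi\,d\lambda$ is recovered from a first-order expansion $v_t=\indf{\Lambda}+tv_1+o(t)$ of the eigenvector, with $(I-R(1))v_1$ read off from the eigen-equation and telescoping; convergence of the series and positivity of $\sigma^2$ come from \Cref{cor:Mean Zero Summablity of Correlations} and the non-coboundary hypothesis.

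The main obstacle is controlling the correction $\int_\Lambda(e^{it\xi}-1)(v_t-\indf{\Lambda})\,d\lambda$, which must be shown to be $o(t^{1+1/\alpha_j})$ in (ii)--(iii), $o(|t|^2\log|t|)$ in (iv), and $o(t^2)$ in (i). Since $\xi$ is unbounded, $R(1,t)-R(1,0)$ is \emph{not} small in the strong norm $\NBs{\cdot}$ (only in a weaker sense), so classical analytic perturbation theory is unavailable; instead I would combine Keller--Liverani stability with two geometric features already in hand: $\xi$ is essentially constant ($=nM_j+o(n)$) on each column $[r=n]$, so $e^{it\xi}$ is nearly constant there; and the columns on which $|\xi|$ is large have $\lambda$-measure of the same polynomial order as the main term, so the near-constant eigenvector $v_t\approx\indf{\Lambda}$ contributes against $e^{it\xi}-1$ only at lower order once the cancellation from $\int_\Lambda(v_t-\indf{\Lambda})\,d\lambda=0$ is used. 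Making this uniform, and in particular tracking the powers of $\log|t|$ in the boundary case (iv), is the delicate part; the tail estimate, the stable characteristic-function computation and the Green--Kubo identity are comparatively routine.
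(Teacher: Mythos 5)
Your overall strategy is, in substance, a from-scratch re-derivation of the results the paper cites rather than proves: after establishing the tail asymptotics of $\xi$ (your tail computation reproduces \Cref{lem:Base Observable Tail Asymptotics}, via \Cref{lem:Base Observable Expansion} and \Cref{eqn:Period-2 Orbit Preimage Asymptotics Interior Right,eqn:Period-2 Orbit Preimage Asymptotics Interior Left}), the paper invokes Aaronson--Denker Theorem 5.1 for (ii)--(iii), their Theorem 3.1 for (iv), and a Gou\"ezel Theorem 3.7-type argument for (i). Your identity $\chi(t)-1=\int_\Lambda(e^{it\xi}-1)\,v_t\,d\lambda$ and the stable characteristic-function computation for the main term are fine as a sketch, but the genuine gap is exactly the step you defer: showing that the eigenvector correction $\int_\Lambda(e^{it\xi}-1)(v_t-\indf{\Lambda})\,d\lambda$ is $o\left(\abs{t}^{p}\right)$ in (ii)--(iii), $o\left(\abs{t}^{2}\log\abs{t}\right)$ in (iv), and contributes precisely the Green--Kubo terms at order $t^{2}$ in (i). That estimate is the entire content of the eigenvalue-expansion theorems being replaced, and your proposal labels it ``the delicate part'' and stops, so the proposition is not established.

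Moreover, the premise you use to justify abandoning perturbation theory is false in this renewal setting, and it is what blocks you from closing the gap. You assert that since $\xi$ is unbounded, $R(1,t)-R(1,0)$ is not small in $\NBs{\cdot}$; but the first-return decomposition truncates $\xi$: on $[r=n]$ one has $\NTl{\xi(x,\cdot)}=O(n)$ by \Cref{lem:Base Observable Expansion}, while $\lambda[r=n]=O\left(n^{-2-\frac{1}{\alpha}}\right)$, so $\NBs{R_n(t)-R_n(0)}=O\left(\abs{t}\,n^{-1-\frac{1}{\alpha}}\right)$ is summable and $\NBs{R(z,t)-R(z,0)}=O\left(\abs{t}\right)$ --- this is precisely \Cref{prop:Convergence and Continuity of Perturbations}. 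That operator-norm bound yields $\NBs{e(t)-\indf{\Lambda}}=O\left(\abs{t}\right)$ by ordinary holomorphic perturbation of the simple isolated eigenvalue (\Cref{prop:Spectrial Gap and Aperiodicity}), which is the paper's route; with it your correction term is controlled cell by cell, since on a vertical line in $[r=n]$ the test function $e^{it\xi(x,\cdot)}-1$ has Lipschitz norm $O\left(\abs{t}\,n\right)$, so pairing against $v_t-\indf{\Lambda}$ through $\NBw{\cdot}$ and summing over cells gives $O\left(t^{2}\right)$, which is negligible in (ii)--(iv) and is exactly the order one must compute precisely in (i). In short, the fix is not Keller--Liverani stability but the $O\left(\abs{t}\right)$ strong-norm continuity you dismissed; as written, your proposal both denies that estimate and omits the argument it was meant to replace.
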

\begin{proof}
  See \Cref{sec:Expansion of Dominant Eigenvalue}
\end{proof}
\begin{proof}[Proof of \Cref{thm:Main Limit Theorem}]
  The results follow from arguments similar to those presented in \cite{Gouezel2004-Intermittent} Sections 4.3 and 4.4. For the proof of ($iv$) it is worth noting that
  \begin{align*}
    \chi\left( \tfrac{t}{\sqrt{n\log(n)}} \right) 
    &=
    1+(C_{0}+C_{1})t^{2}\tfrac{1}{n}\left[\tfrac{\log(t)}{\log(n)} - \tfrac{\log\left( \log(n) \right)}{2\log(n)} -\tfrac{1}{2}  \right]\\
    &=
    1-\tfrac{1}{2}(C_{0}+C_{1})t^{2}\tfrac{1}{n}\left[1-o(1)\right]\\
    &\sim
    1-\tfrac{1}{2}(C_{0}+C_{1})t^{2}\tfrac{1}{n}.
  \end{align*}
  Therefore,
  \[
    \lim_{n\to \infty}
    \left[
      \chi\left( t \sqrt{\tfrac{\log(n)}{n}} \right) 
    \right]^{n}
    =
    \exp\left( -\tfrac{1}{2}(C_{0}+C_{1})t^{2} \right).
  \]
\end{proof}
\section{Technical Results}
\label{sec:Technical Results}
In this section we verify \Cref{prop:Convergence and Renewal Equation,prop:Spectrial Gap and Aperiodicity,prop:Convergence and Continuity of Perturbations,prop:Expansion of Dominant Eigenvalue}.
\subsection{Compact Embedding}
\label{sec:Compactness}
In this section we will show that $\Bs$ is compactly embedded into $\Bw$. This is necessary for  us to apply Hennion's theorem in \Cref{sec:Essential Spectrum} to show that the operators $R(z)$ acting on $\Bs$ are quasi-compact.
\begin{prop}
  \label{prop:Compact Embedding}
  The inclusion of $\Bs$ into $\Bw$ is a compact embedding.
\end{prop}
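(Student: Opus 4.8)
The plan is a vector-valued Arzel\`a--Ascoli argument in which the two regularity exponents built into $\Bs$ and $\Bw$ play complementary roles: the strong fibrewise part $\Ns{\cdot}$ controls each vertical slice of a function as a bounded functional in the finer dual $\Ta^{*}$, while the modulus $\MBs{\cdot}$ gives uniform Lipschitz control, measured in the coarser dual $\Tl^{*}$, of how those slices vary with the base point $x\in[p,q]$. The single substantive input is that the inclusion $\Tl\hookrightarrow\Ta$ is compact. To prove this I would take a bounded sequence $(\psi_{m})$ in $\Tl$, extract by Arzel\`a--Ascoli a subsequence converging uniformly to a limit $\psi$ (again Lipschitz with the same constant), and split the H\"older difference quotient of $\psi_{m}-\psi$ according to whether its two arguments lie at distance greater or smaller than a threshold $\delta$: on the far range use $\Ninfty{\psi_{m}-\psi}\to0$, on the near range use the common Lipschitz bound together with the strict inequality $a<1$; sending $m\to\infty$ and then $\delta\to0$ gives $\NTa{\psi_{m}-\psi}\to0$. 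By Schauder's theorem the adjoint of this compact inclusion---the restriction map $L\mapsto L|_{\Tl}$---is then compact from $\Ta^{*}$ to $\Tl^{*}$; alternatively one applies Arzel\`a--Ascoli directly to a norm-bounded family of functionals, regarded as an equicontinuous family of functions on the compact image in $\Ta$ of the unit ball of $\Tl$.

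Granting this, let $(\eta_{n})\subset\ULip$ with $\NBs{\eta_{n}}\le1$. For each $x\in[p,q]$ the slice $\eta_{n}(x,\cdot)$, acting by $\psi\mapsto\int_{0}^{1}\eta_{n}(x,y)\psi(y)\,dy$, is (since $\Ninfty{\eta_{n}}<\infty$) an element of $\Ta^{*}$ of norm at most $\Ns{\eta_{n}}\le1$; let $F_{n}(x)\in\Tl^{*}$ denote its restriction to $\Tl$, so that $F_{n}\colon[p,q]\to\Tl^{*}$. Compactness of the restriction map shows that for each fixed $x$ the set $\{F_{n}(x):n\ge1\}$ is relatively compact in $\Tl^{*}$. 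Moreover, by the definition of $\MBs{\cdot}$, for $w\neq x$ and $\NTl{\psi}\le1$ we have $\abs{\int_{0}^{1}\bigl(\eta_{n}(w,y)-\eta_{n}(x,y)\bigr)\psi(y)\,dy}\le\abs{w-x}\,\MBs{\eta_{n}}\le\abs{w-x}$, hence $\Abs{F_{n}(w)-F_{n}(x)}_{\Tl^{*}}\le\abs{w-x}$; so the family $(F_{n})$ is uniformly Lipschitz, in particular equicontinuous, as maps $[p,q]\to\Tl^{*}$.

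The vector-valued Arzel\`a--Ascoli theorem then extracts a subsequence $(F_{n_{k}})$ converging uniformly on $[p,q]$ in $\Tl^{*}$. Since $\NBw{\eta-\zeta}=\sup_{x\in[p,q]}\Abs{(\eta-\zeta)(x,\cdot)}_{\Tl^{*}}$, uniform convergence of $(F_{n_{k}})$ is exactly the assertion that $(\eta_{n_{k}})$ is Cauchy for $\NBw{\cdot}$, hence converges in the Banach space $\Bw$. To pass from the $\ULip$-unit ball to the unit ball of $\Bs$, I would use that the identity map extends to a bounded embedding $\Bs\hookrightarrow\Bw$ (because $\NBw{\cdot}\le\NBs{\cdot}$) with dense image furnished by $\ULip$: given a sequence in the closed unit ball of $\Bs$, approximate each term to within $1/n$ in $\NBs{\cdot}$ by an element of $\ULip$ of strong norm at most $2$, apply the previous two paragraphs (after rescaling) to these approximants, and transfer the resulting $\Bw$-convergence back to the original sequence. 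This shows every bounded sequence in $\Bs$ has a $\Bw$-convergent subsequence, i.e.\ the embedding is compact.

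The main obstacle, and essentially the only step that is not bookkeeping, is the compactness of $\Tl\hookrightarrow\Ta$: this is precisely where it matters that the stable slices of elements of $\Bs$ are only required to lie in the dual of an $a$-H\"older space with $a<1$, strictly weaker than the Lipschitz regularity tested by the weak norm. A secondary, purely technical point is the final passage between $\ULip$ representatives and the abstract completions $\Bs$ and $\Bw$, where one must track approximants and norms carefully.
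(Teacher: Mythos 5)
Your argument is correct, and it rests on exactly the same two ingredients as the paper's proof --- the compactness of the inclusion $\Tl\hookrightarrow\Ta$ acting on the stable (fibre) direction, and the modulus $\MBs{\cdot}$ giving Lipschitz control of the slice functionals in the base variable --- but it packages them differently. The paper proves a small abstract criterion (if for every $\epsilon>0$ the norm $\NBw{\cdot}$ is approximated up to $\epsilon$ by finitely many bounded functionals, the embedding is compact) and then verifies it explicitly: it picks a finite $\tfrac{\epsilon}{2}$-dense set of base points $w_{i}\in[p,q]$ and a finite set $\xi_{j}$ that is $\tfrac{\epsilon}{2}$-dense in the $\Tl$-unit ball for $\NTa{\cdot}$ (this is where compactness of $\Tl\hookrightarrow\Ta$ enters), and splits $\int\eta(x,y)\psi(y)\,dy$ into three terms bounded by $\Ns{\eta}\NTa{\psi-\xi_{j}}$, $\MBs{\eta}\abs{x-w_{i}}$, and $\abs{\alpha_{ij}(\eta)}$, yielding total boundedness of the $\Bs$-unit ball in $\NBw{\cdot}$. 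You instead run the sequential version: each $\eta_{n}$ becomes a map $F_{n}\colon[p,q]\to\Tl^{*}$ whose pointwise values are relatively compact (Schauder applied to the restriction $\Ta^{*}\to\Tl^{*}$) and which are uniformly Lipschitz by the $\MBs{\cdot}$ bound, so a Banach-space-valued Arzel\`a--Ascoli argument extracts a uniformly (hence $\NBw{\cdot}$-) convergent subsequence; your final density step from the $\ULip$-unit ball to the $\Bs$-unit ball is the same move the paper makes when it extends its finite-functional bound from $\ULip$ to all of $\Bs$. The paper's route is more elementary and quantitative (no Schauder, no vector-valued Arzel\`a--Ascoli, an explicit finite net of functionals), while yours is shorter at the level of ideas and makes the ``equicontinuity in $x$ plus compact slices'' structure of the norms more transparent; both are complete proofs.
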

\begin{lem}
  \label{lem:Abstract Compactness}
    Let $U$ be a linear subspace of a Banach space with norm $\|\cdot\|$. Suppose that for all $\epsilon>0$ there exist a finite set of linear functionals $\left\{\alpha_{1},\dots,\alpha_{k}  \right\}$ defined on $U$ such that for all $\eta \in U$,
  \[
    \|\eta\| \le \max_{1\le i \le k} \abs{\alpha_{i}\left( \eta \right)} + \epsilon.
  \]
  Then $U$ is a compactly embedded subspace.
\end{lem}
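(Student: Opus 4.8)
The plan is to show that the unit ball $\mathcal{B}=\left\{\eta\in U:\Abs{\eta}_{U}\le 1\right\}$ of $U$ — here $\Abs{\cdot}_{U}$ denotes the stronger norm carried by $U$, with respect to which the functionals furnished for each $\epsilon$ are bounded on $\mathcal{B}$ (for instance $U=\Bs$, $\Abs{\cdot}_{U}=\NBs{\cdot}$, with the ambient Banach space being $\Bw$ and its norm $\|\cdot\|$) — is totally bounded in $\|\cdot\|$. Since the ambient space is complete, $\overline{\mathcal{B}}$ is then compact, which is precisely the assertion that the inclusion of $U$ is a compact embedding. (In the application one has the slightly stronger estimate $\|\eta\|\le\max_{i}\abs{\alpha_{i}(\eta)}+\epsilon\Abs{\eta}_{U}$ for all $\eta\in U$, which restricts to the stated inequality on $\mathcal{B}$; the argument below uses only this.)

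First I would fix $\epsilon>0$ and take the functionals $\alpha_{1},\dots,\alpha_{k}$ given by the hypothesis, so that $\|\eta\|\le\max_{i}\abs{\alpha_{i}(\eta)}+\epsilon$ for $\eta\in\mathcal{B}$. Because each $\alpha_{i}$ is finite on the bounded set $\mathcal{B}$, the linear coordinate map $A\eta=\left(\alpha_{1}(\eta),\dots,\alpha_{k}(\eta)\right)$ sends $\mathcal{B}$ into a bounded subset of $\R^{k}$ (or $\C^{k}$), which is totally bounded in the sup norm. I would then pick finitely many $\eta_{1},\dots,\eta_{N}\in\mathcal{B}$ so that every $A\eta$ with $\eta\in\mathcal{B}$ lies within sup-distance $\epsilon$ of some $A\eta_{j}$.

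Given $\eta\in\mathcal{B}$ I would choose such a $j$, so that $\max_{i}\abs{\alpha_{i}(\eta-\eta_{j})}\le\epsilon$. Since $U$ is a linear subspace, $\eta-\eta_{j}\in U$ with $\Abs{\eta-\eta_{j}}_{U}\le 2$, so applying the hypothesis to $\tfrac{1}{2}(\eta-\eta_{j})\in\mathcal{B}$ (equivalently, using the estimate with $\epsilon\Abs{\cdot}_{U}$) gives $\|\eta-\eta_{j}\|\le\max_{i}\abs{\alpha_{i}(\eta-\eta_{j})}+2\epsilon\le 3\epsilon$. Hence $\{\eta_{1},\dots,\eta_{N}\}$ is a finite $3\epsilon$-net for $\mathcal{B}$ in $\|\cdot\|$; as $\epsilon>0$ was arbitrary, $\mathcal{B}$ is totally bounded, and completeness of the ambient space makes $\overline{\mathcal{B}}$ compact, which completes the proof.

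I do not expect a genuine obstacle here: this is the standard reduction of precompactness to a finite-dimensional coordinate map together with total boundedness of bounded sets in $\R^{k}$. The only points needing care are bookkeeping ones — applying the $\epsilon$-dependent family $\{\alpha_{i}\}$ to the differences $\eta-\eta_{j}$ (which remain in the linear space $U$), tracking the harmless multiplicative constants in the radius of the net, and observing that the finiteness of each $\alpha_{i}$ on $\mathcal{B}$ is exactly what makes $A(\mathcal{B})$ precompact.
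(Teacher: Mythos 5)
Your argument is correct and follows essentially the same route as the paper's proof: map the strong-norm unit ball into $\R^{k}$ via the functionals, use total boundedness of bounded sets in $\left(\R^{k},\|\cdot\|_{\max}\right)$ to obtain a finite net, and convert coordinate-closeness into $\|\cdot\|$-closeness through the hypothesis, producing a finite $3\epsilon$-net (the paper phrases this via preimages of $\epsilon$-balls and radius-$4\epsilon$ balls, a cosmetic difference). Your side remark that in the application the estimate really takes the form $\|\eta\|\le\max_{i}\abs{\alpha_{i}(\eta)}+\epsilon\NBs{\eta}$, and that the functionals are bounded on the unit ball, matches the implicit normalization the paper uses when it asserts the coordinate map has operator norm $1$.
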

\begin{proof}
  Let $\alpha \colon U \to \R^{k}$, be the linear mapping with coordinate functions $\alpha_{1},\dots,\alpha_{k}$. We will view $\R^{k}$ as a normed linear space equipped with the max-norm. By the supposed bound, $\alpha$ has operator norm $1$. Let $U_{1}$ denote the unit ball of $U$ and note that $\alpha(U_1)$ is a subset of the unit ball of $\R^{k}$. Fix $\epsilon>0$ and let $\left\{ V_{1},\dots,V_{j} \right\}$ be a finite cover of the unit ball of $\R^k$ by balls of radius $\epsilon$.\par
  
  The collection $\left\{ \alpha^{-1}V_{1},\dots,\alpha^{-1}V_{k} \right\}$ is a cover of $U_{1}$. In fact, this collection is a cover by sets of diameter at most $3 \epsilon$. To verify this, fix $j \in \left\{ 1,\dots,k \right\}$ and suppose that $\eta$ and $\nu$ are in $\alpha^{-1}V_{j}$. Since $V_{j}$ is a max-norm ball of radius $\epsilon$ we have 
  \[
    \|\alpha(\eta - \nu)\|_{\max}
    =
    \|\alpha(\eta) - \alpha(\nu)\|_{\max}
    \le 
    \mathrm{diam}(V_{j})
    \le
    2\epsilon
  \]
  By the supposed bound, we obtain
  \[
    \|\eta - \nu\| 
    \le 
    \max_{1\le i \le k} \abs{\alpha_{i}(\eta-\nu)} + \epsilon 
    =
    \|\alpha(\eta - \nu)\|_{\max} + \epsilon
    \le
    3\epsilon.
  \]
  For each $j \in \left\{ 1,\dots,k \right\}$, select $\eta_{j} \in \alpha^{-1}V_{j}$ and let $B_{j}$ be the open $\|\cdot\|$-ball of radius $4\epsilon$ centered at $\eta_{j}$. By the choice of radius, we see that $\alpha^{-1}V_{j} \subset B_{j}$. Therefore, $\left\{ B_{1},\dots,B_{k} \right\}$ is an open cover of $U_{1}$ by balls of radius $4\epsilon$. Since $\epsilon>0$ was arbitrary, we conclude that $U_{1}$ is totally bounded with respect to the metric induced by $\|\cdot\|$. Therefore, $U$ is a compactly embedded subspace of the Banach space.
\end{proof}
\begin{lem}
  The space $\Tl$ is compactly embedded into $\Ta$.
\end{lem}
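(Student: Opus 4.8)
The plan is to invoke \Cref{lem:Abstract Compactness} with ambient space $\Ta$ and subspace $\Tl$: it suffices to exhibit, for each $\epsilon>0$, a finite family of bounded linear functionals on $\Tl$ whose pointwise maximum of absolute values dominates $\NTa{\cdot}$ up to $\epsilon$ on the unit ball of $\Tl$. The functionals will be point evaluations $\eta\mapsto\eta(t_j)$ and divided differences $\eta\mapsto\abs{t_i-t_{j'}}^{-a}\bigl(\eta(t_i)-\eta(t_{j'})\bigr)$ along a grid of small mesh. The mechanism is the elementary interpolation inequality: if $\NTl{\eta}\le 1$ then $\abs{\eta(y)-\eta(z)}\le\abs{y-z}$, hence $\abs{y-z}^{-a}\abs{\eta(y)-\eta(z)}\le\abs{y-z}^{1-a}$, which is small when $\abs{y-z}$ is small. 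Thus only increments over long intervals matter, and those are detected by a fine enough grid.

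Concretely, fix $\epsilon>0$, choose $\delta\in(0,1)$ with $\delta^{1-a}<\epsilon/2$, and then take the grid $t_j=j/N$, $0\le j\le N$, with $h=1/N$ small enough that $h<\delta/4$, $h<\epsilon/2$, and $2h\delta^{-a}+\bigl((1+2h\delta^{-1})^{a}-1\bigr)<\epsilon/2$. Write $\NTa{\eta}=\Nsup{\eta}+[\eta]_a$ with $[\eta]_a:=\sup_{y\neq z}\abs{y-z}^{-a}\abs{\eta(y)-\eta(z)}$, and assume $\NTl{\eta}\le 1$. Approximating any $x$ by its nearest grid point gives $\Nsup{\eta}\le\max_j\abs{\eta(t_j)}+h$. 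For $[\eta]_a$, split on $\abs{y-z}$: when $\abs{y-z}<\delta$ the interpolation bound yields a contribution below $\delta^{1-a}<\epsilon/2$; when $\abs{y-z}\ge\delta$, approximate $y,z$ by grid points $t_i,t_{j'}$ within $h$, so that $i\neq j'$ and $\abs{t_i-t_{j'}}\le\abs{y-z}(1+2h\delta^{-1})$, and then the key estimate is
\[
  \frac{\abs{\eta(y)-\eta(z)}}{\abs{y-z}^{a}}
  \le (1+2h\delta^{-1})^{a}\,\frac{\abs{\eta(t_i)-\eta(t_{j'})}}{\abs{t_i-t_{j'}}^{a}}+\frac{2h}{\delta^{a}}
  \le \frac{\abs{\eta(t_i)-\eta(t_{j'})}}{\abs{t_i-t_{j'}}^{a}}+\frac{2h}{\delta^{a}}+\bigl((1+2h\delta^{-1})^{a}-1\bigr),
\]
where the last step uses $\abs{t_i-t_{j'}}^{-a}\abs{\eta(t_i)-\eta(t_{j'})}\le\abs{t_i-t_{j'}}^{1-a}\le 1$. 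Taking suprema and adding, $\NTa{\eta}\le\max_j\abs{\eta(t_j)}+\max_{i\neq j'}\abs{t_i-t_{j'}}^{-a}\abs{\eta(t_i)-\eta(t_{j'})}+\epsilon$. Since a sum of two finite maxima of absolute values of linear functionals equals the maximum over the finite, sign-symmetric family $\mathcal{I}_0$ of functionals $\eta\mapsto\pm\,\eta(t_j)\pm\abs{t_i-t_{j'}}^{-a}\bigl(\eta(t_i)-\eta(t_{j'})\bigr)$, we obtain $\NTa{\eta}\le\max_{\ell\in\mathcal{I}_0}\abs{\ell(\eta)}+\epsilon$. Each $\ell\in\mathcal{I}_0$ is bounded on $\Tl$ (the divided difference is controlled by $\abs{t_i-t_{j'}}^{1-a}\NTl{\eta}$), so \Cref{lem:Abstract Compactness} applies and $\Tl$ is compactly embedded in $\Ta$.

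The one genuinely delicate point is the passage from the exact denominator $\abs{y-z}$ to its grid surrogate $\abs{t_i-t_{j'}}$, which introduces the multiplicative distortion $(1+2h\delta^{-1})^{a}$; restricting to the unit ball of $\Tl$ (so that the accompanying divided difference of $\eta$ is itself at most $1$) is exactly what converts this into the small \emph{additive} error $(1+2h\delta^{-1})^{a}-1\to 0$, everything else being routine bookkeeping. One could also bypass \Cref{lem:Abstract Compactness} entirely and argue directly via Arzel\`a--Ascoli: a $\NTl{\cdot}$-bounded set is uniformly bounded and equicontinuous, hence totally bounded in $\Nsup{\cdot}$, and the same interpolation inequality upgrades $\Nsup{\cdot}$-convergence to $\Ta$-convergence.
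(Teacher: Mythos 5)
Your argument is correct, but it is a genuinely different route from the paper's: the paper disposes of this lemma in one line by declaring the compactness of the inclusion of Lipschitz into $a$-H\"older functions on $[0,1]$ classical (in effect the Arzel\`a--Ascoli argument you sketch at the end), whereas you give a self-contained, quantitative proof by feeding explicit grid functionals --- point evaluations and divided differences --- into \Cref{lem:Abstract Compactness}. Your estimates check out: the interpolation bound $\abs{y-z}^{-a}\abs{\eta(y)-\eta(z)}\le\abs{y-z}^{1-a}$ kills short increments, the nearest-grid-point replacement controls long increments with additive error $2h\delta^{-a}$ plus the multiplicative distortion $(1+2h\delta^{-1})^{a}$, and your observation that the accompanying divided difference is itself at most $1$ on the unit ball of $\Tl$ correctly converts that distortion into a small additive error; the sign-symmetrization turning a sum of two maxima into a single maximum of absolute values of linear functionals is also fine. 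One point worth flagging: you only verify the hypothesis of \Cref{lem:Abstract Compactness} for $\NTl{\eta}\le 1$, while the lemma as stated asks for the bound for all $\eta$ in the subspace; this is an imprecision inherited from the paper itself (by homogeneity the literal hypothesis would force $\epsilon$ to be removable, and the paper's own application to $\Bs\subset\Bw$ likewise only establishes the bound on the strong unit ball), and the proof of the abstract lemma indeed only uses the inequality on the unit ball, so your usage matches the intended meaning. What your approach buys is a constructive proof in the same spirit as the paper's proof of \Cref{prop:Compact Embedding}, making the section uniform in method; what the classical citation buys is brevity, and your closing Arzel\`a--Ascoli remark is exactly the argument the paper is implicitly invoking.
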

\begin{proof}
  Recall that $\Tl$ and $\Ta$ are respectively Lipschitz and H\"older functions on $[0,1]$. The result is classical.
\end{proof}
\begin{proof}[Proof of \Cref{prop:Compact Embedding}]
  Fix $\epsilon >0$. Chose a set $\left\{ w_{1},\dots,w_{m} \right\}\subset [p,q]$ that is $\frac{\epsilon}{2}$-dense. Chose $\left\{ \xi_{1},\dots,\xi_{n} \right\} \subset \Tl$ that is $\frac{\epsilon}{2}$-dense in the unit ball of $\Tl$ with respect to $\NTa{\cdot}$. Note that for all $j\in \left\{1,\dots,n\right\}$, $\NTa{\xi_{j}}\le\NTl{\xi_{j}}\le1$.\par

  For $\eta \in \ULip$ with $\NBs{\eta}<\infty$, $i\in\left\{1,\dots,m\right\}$, and $j \in \left\{1,\dots,n\right\}$, define $\alpha_{ij}(\eta) = \int_{0}^{1} \eta(w_{i},y)\, \xi_{j}\, dy$. These linear functionals are in $\Bw^*$ with norm at most 1. Therefore, the functionals $\alpha_{ij}$ are defined on the linear subspace $\Bs\subset\Bw$. \par

  For all $\eta \in \ULip$ with $\NBs{\eta}<\infty$, $x \in [p,q]$, $\psi \in \Tl$, $i\in\{1,\dots,m\}$, and $j \in \left\{ 1,\dots,n \right\}$,
  \begin{align*}
    \int_{0}^{1} \eta(x,y) \, \psi(y) \, dy
    &=
    \int_{0}^{1} \eta(x,y) \, \left[\psi-\xi_{j}\right](y) \, dy\\
    &+
    \int_{0}^{1} \left[\eta(x,y)-\eta(w_i,y)\right] \, \xi_{j}(y) \, dy\\
    &+
    \int_{0}^{1} \eta(x_i,y) \, \xi_{j}(y) \, dy\\
    &\le\NBs{\eta}\NTa{\psi - \xi_{j}}
    +\MBs{\eta} \abs{x-w_{i}}
    +\abs{\alpha_{ij}(\eta)}
  \end{align*}
  By selecting $i$ so that $w_i$ and $x$ are close, and selecting $j$ so that $\psi$ and $\xi_{j}$ are close we see that 
  \[
    \int_{0}^{1} \eta(x,y) \, \psi(y) \, dy \le \abs{\alpha_{ij}(\eta)} + \epsilon.
  \]
  Take a maximum over $i\in\left\{ 1,\dots,m \right\}$ and $j\in\left\{ 1,\dots,n \right\}$ on the right hand side of the inequality above, and a supremum over $x\in[p,q]$ and $\psi\in \Tl$ with $\NTl{\psi}\le 1$ on the left hand side to obtain
  \begin{equation}
    \label{eqn:Finite Approximation of the Norm}
    \NBw{\eta} \le \max_{i,j} \abs{\alpha_{ij}\left( \eta \right)} +\epsilon.
  \end{equation}
  Since the set of $\eta\in\ULip$ with $\NBs{\eta}<\infty$ is dense in $\Bs$, the bound above extends to all $\eta \in \Bs$.
  We have shown that for all $\epsilon>0$ there exists a finite collection of bounded linear functionals $\alpha_{ij} \in \Bw^{*}$, such that for all $\eta \in \Bs$, we have \Cref{eqn:Finite Approximation of the Norm}. By \Cref{lem:Abstract Compactness}, the inclusion of $\Bs$ into $\Bw$ is compact.
\end{proof}
\subsection{Essential Spectrum}
\label{sec:Essential Spectrum}
In this section we show that the operators $R(z)$ are quasi-compact for $\abs{z}\le 1$ (see \Cref{prop:Quasi-Compactness}). This is the first step toward proving the finer spectral properties of $R(z)$ obtained in \Cref{prop:Spectrial Gap and Aperiodicity}. 
In the process of proving quasi-compactness we will produce bounds on the operators $R_{k}$ (see \Cref{lem:Basic Norm Bounds}) that will be sufficient to prove \Cref{prop:Convergence and Renewal Equation}.\par

Recall that $\beta$ is defined in \Cref{eqn:beta Definition} and is related to the unstable expansion of the map $T$ and that $a\in (0,1]$ is a parameter related to the norm on $\Ta$ and is fixed in \Cref{sec:Adapted Banach Spaces}.
\begin{prop}[Quasi-Compactness]
\label{prop:Quasi-Compactness}
For each $\abs{z}\le 1$ the operator $R(z)\colon \Bs  \to  \Bs $ is quasi-compact with spectral radius less than or equal to $|z|$ and essential spectral radius less than or equal to $\beta^{a}\abs{z}$.
\end{prop}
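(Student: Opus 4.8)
\subsection*{Proof strategy for \Cref{prop:Quasi-Compactness}}

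The plan is to deduce quasi-compactness of $R(z)$ on $\Bs$ from a uniform Lasota--Yorke inequality together with the compact embedding $\Bs\hookrightarrow\Bw$ from \Cref{prop:Compact Embedding}, via Hennion's theorem; along the way the same estimates will yield the norm bounds on the operators $R_{k}$ recorded in \Cref{lem:Basic Norm Bounds}.

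First I would record the representation that drives everything. Combining \Cref{eqn:Exact kth Return Generating Function} with $R_{n}^{(k)}\eta = T_{*}^{k}(\indf{\{r^{(k)}=n\}}\eta)$ and the fact that $T$ is invertible and $\lambda$-preserving (so $T_{*}\eta=\eta\circ T^{-1}$), one obtains, for every $n\ge1$, every $\abs{z}\le1$, and every $\eta$,
\[
  [R(z)]^{n}\eta = \left(z^{r^{(n)}}\eta\right)\circ T^{-n}.
\]
Restricting to a vertical line $\ell(x)$ and changing variables along the stable fibres through the affine fibre maps $v^{(n)}_{x'}$ (which send $[0,1]$ onto an interval of length $(Du^{n}(x'))^{-1}$, by \Cref{eqn:Fiber Map Partial_y}), the pairing of $[R(z)]^{n}\eta$ against a test function $\psi$ on $\ell(x)$ decomposes as
\[
  \int_{0}^{1}\left([R(z)]^{n}\eta\right)(x,y)\,\psi(y)\,dy
  =
  \sum_{x'\in u^{-n}(x)}\frac{z^{r^{(n)}(x')}}{Du^{n}(x')}\int_{0}^{1}\eta(x',y')\,\psi\!\left(v^{(n)}_{x'}(y')\right)dy',
\]
where $\abs{z^{r^{(n)}(x')}}\le\abs{z}^{n}$ because $r^{(n)}\ge n$, and $\sum_{x'\in u^{-n}(x)}(Du^{n}(x'))^{-1}=1$ because $u$ preserves Lebesgue measure on $[p,q]$.

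Second I would establish the Lasota--Yorke pair from this decomposition. Each $v^{(n)}_{x'}$ is an affine contraction with factor at most $\beta$, so $\NTl{\psi\circ v^{(n)}_{x'}}\le\NTl{\psi}$ and $\Ninfty{\partial_{y}v^{(n)}_{x'}}\le\beta^{n}$; hence the decomposition gives at once $\NBw{[R(z)]^{n}\eta}\le\abs{z}^{n}\NBw{\eta}$, and, after splitting $\psi\circ v^{(n)}_{x'}$ into its value at an endpoint plus a remainder of $\Ta$-norm at most $C\beta^{an}$, it gives $\Ns{[R(z)]^{n}\eta}\le C(\beta^{a}\abs{z})^{n}\Ns{\eta}+\abs{z}^{n}\NBw{\eta}$. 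For the unstable modulus one compares the branch decompositions over two lines $\ell(w),\ell(x)$: \Cref{lem:Distortion Bound} bounds the ratio $Du^{n}(x')/Du^{n}(w')$ and forces $\abs{x'-w'}\le C\abs{x-w}(Du^{n}(x'))^{-1}$; \Cref{lem:Skew Bound} bounds the fibrewise discrepancy between $v^{(n)}_{x'}$ and $v^{(n)}_{w'}$; and the residual terms (differences of the weights $(Du^{n})^{-1}$ and of the pulled-back test functions) are absorbed into $\NBw{\eta}$ with a constant independent of $n$, while the genuine $\MBs{\eta}$-contribution is contracted by $(Du^{n})^{-1}\le\beta^{n}$. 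Altogether, for $\eta\in\ULip$ and $\abs{z}\le1$,
\[
  \NBs{[R(z)]^{n}\eta}\le C(\beta^{a}\abs{z})^{n}\NBs{\eta}+C\abs{z}^{n}\NBw{\eta},
\]
with $C$ independent of $n$ and of $z$ in the closed unit disk. Applying the same computation to $R_{k}\eta=T_{*}(\indf{[r=k]}\eta)$, for which $Du$ restricted to $[r=k]$ is comparable to $\lambda[r=k]^{-1}$ by \Cref{lem:Distortion Bound} and \Cref{eqn:Measure Asymptotics}, yields the bounds on $R_{k}$ of \Cref{lem:Basic Norm Bounds}.

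Finally, the bound $\NBs{[R(z)]^{n}}\le 2C\abs{z}^{n}$ already gives that the spectral radius of $R(z)$ is at most $\abs{z}$, and the Lasota--Yorke pair together with the compact inclusion $\Bs\hookrightarrow\Bw$ (\Cref{prop:Compact Embedding}) places us exactly in the hypotheses of Hennion's theorem, which yields $r_{\mathrm{ess}}(R(z))\le\limsup_{n\to\infty}\left(C(\beta^{a}\abs{z})^{n}\right)^{1/n}=\beta^{a}\abs{z}$; this is the asserted quasi-compactness. The main obstacle is the unstable-modulus estimate: it is the only place where the regularity of elements of $\Bs$, which is measured along the curvilinear family $\Gamma$ of \Cref{lem:Unstable Curves}, must be reconciled with the transfer operator's most natural action, namely comparing preimages over two distinct \emph{vertical} lines, so one is forced to pass between unstable curves and vertical lines at a controlled stable-direction cost --- precisely what \Cref{lem:Skew Bound} and \Cref{lem:Distortion Bound} are designed to pay for.
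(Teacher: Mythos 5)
Your proposal is correct and follows essentially the same route as the paper: a uniform Lasota--Yorke pair obtained from the branch decomposition over preimages with the change of variables along fibres, controlled by the distortion bound (\Cref{lem:Distortion Bound}), uniform expansion (\Cref{lem:Uniform Expansion}) and the skew bound (\Cref{lem:Skew Bound}), combined with the compact embedding (\Cref{prop:Compact Embedding}) and Hennion's theorem, with the spectral radius bound from Gelfand's formula. The only difference is organizational --- you estimate $[R(z)]^{n}\eta = T_{*}^{n}\bigl(z^{r^{(n)}}\eta\bigr)$ in one stroke using that the weights $\left[Du^{n}\right]^{-1}$ sum to $1$, whereas the paper bounds each $R^{(k)}_{n}$ separately with the factor $\lambda\{r^{(k)}=n\}$ and then sums over $n$ --- which is the same computation.
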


The proof of \Cref{prop:Quasi-Compactness} is an application of the following theorem of Hennion.

\begin{thm}[Hennion \cite{Hennion-2001} via Liverani \cite{Liverani-2004}]
\label{thm:Hennion}
If $\Bs\subseteq \Bw$ are Banach spaces with norms $\NBs{\cdot}$ and $\NBw{\cdot}$ respectively, such that $\NBw{\cdot}\le \NBs{\cdot}$, and $L:\Bs \to \Bs$ is a bounded linear operator such that:
\begin{enumerate}
    \item $L:\Bs \to \Bw$ is a compact operator;
    \item There exists $\theta, A,B,C>0$ such that for all $k\in \N$ there exists $M_k>0$ such that for all $f\in \Bs$, we have
        \begin{enumerate}
          \item $\displaystyle \NBw{L^{k}f} \le CM_{k}\NBw{f}$,
          \item $\displaystyle \NBs{L^{k}f} \le A\theta^{k}\NBs{f}+ BM_{k} \NBw{f}$.
        \end{enumerate}
\end{enumerate}
Then $L\colon \Bs  \to  \Bs $ is quasi compact with essential spectral radius less than or equal to $\theta$.
We will refer to the second inequality above as the \emph{Lasota-Yorke inequality}.
\end{thm}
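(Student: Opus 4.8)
The statement is the Hennion--Liverani quasi-compactness criterion, and the route I would take is the classical one: estimate the Kuratowski measure of non-compactness of the iterates $L^{n}$ and feed the estimate into Nussbaum's formula for the essential spectral radius. Recall that for a bounded subset $S$ of a Banach space one sets $\varkappa(S)=\inf\{\delta>0 : S \text{ is covered by finitely many sets of diameter} \le \delta\}$, that $\varkappa(L):=\varkappa(L(\mathcal{B}_{\Bs}))$ (with $\mathcal{B}_{\Bs}$ the unit ball of $\Bs$) is submultiplicative in $L$, and that Nussbaum's theorem gives $r_{\mathrm{ess}}(L)=\lim_{n\to\infty}\varkappa(L^{n})^{1/n}=\inf_{n}\varkappa(L^{n})^{1/n}$. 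So the whole job reduces to producing one constant $A'$ with $\varkappa(L^{n})\le A'\theta^{n}$ for all large $n$.

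\textbf{Key estimate.} Fix $n\ge 2$. Since $L\colon\Bs\to\Bw$ is compact, $L(\mathcal{B}_{\Bs})$ is totally bounded in $\Bw$; so for each $\epsilon>0$ there are finitely many $\Bw$-balls of radius $\epsilon$, centred at points $g_{1},\dots,g_{N}\in L(\mathcal{B}_{\Bs})\subseteq\Bs$, whose union contains $L(\mathcal{B}_{\Bs})$. Writing $S_{i}=\{f\in\mathcal{B}_{\Bs} : \NBw{Lf-g_{i}}\le\epsilon\}$ we get $\mathcal{B}_{\Bs}=\bigcup_{i}S_{i}$, and for $f,f'\in S_{i}$ the element $h=Lf-Lf'\in\Bs$ satisfies $\NBw{h}\le 2\epsilon$ and $\NBs{h}\le 2\Abs{L}$ (operator norm $\Bs\to\Bs$). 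Applying the Lasota--Yorke inequality (b) with $k=n-1$ to $h$, and using $L^{n-1}h=L^{n}f-L^{n}f'$,
\[
  \NBs{L^{n}f-L^{n}f'}\le A\theta^{n-1}\NBs{h}+BM_{n-1}\NBw{h}\le 2A\Abs{L}\,\theta^{n-1}+2BM_{n-1}\,\epsilon .
\]
Thus $L^{n}(\mathcal{B}_{\Bs})=\bigcup_{i}L^{n}(S_{i})$ is a finite union of sets of $\Bs$-diameter at most $2A\Abs{L}\theta^{n-1}+2BM_{n-1}\epsilon$; letting $\epsilon\downarrow 0$ gives $\varkappa(L^{n})\le (2A\Abs{L}/\theta)\,\theta^{n}$, so by Nussbaum's formula $r_{\mathrm{ess}}(L)\le\theta$. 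Note that hypothesis (a) is not needed for this estimate; it is what controls the ordinary spectral radius of $L$, and hence secures a genuine quasi-compact splitting rather than merely the bound $r_{\mathrm{ess}}(L)\le\theta$, in the concrete applications such as \Cref{prop:Quasi-Compactness}.

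\textbf{From the spectral radius bound to quasi-compactness.} On the open region $\{|z|>\theta\}\supseteq\{|z|>r_{\mathrm{ess}}(L)\}$ the operator $z-L$ is Fredholm of index zero, so $z\mapsto(z-L)^{-1}$ is meromorphic there by analytic Fredholm theory; consequently every spectral value of $L$ with $|z|>\theta$ is an isolated eigenvalue of finite algebraic multiplicity, and these can accumulate only on the circle $|z|=r_{\mathrm{ess}}(L)$, so each annulus $\{\rho\le|z|\le\Abs{L}\}$ with $\rho>\theta$ contains only finitely many of them. Summing the associated finite-rank spectral projections yields the $L$-invariant splitting of $\Bs$ into a finite-dimensional subspace carrying the spectrum of modulus $>\theta$ and a complement on which $L$ has spectral radius $\le\theta$: this is exactly quasi-compactness with essential spectral radius at most $\theta$.

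\textbf{Main obstacle.} There is no conceptual difficulty, only bookkeeping: one must keep the two norms scrupulously separated --- differences are formed in $\Bs$, grouped via the $\Bw$-covering, and then re-measured in $\Bs$ after applying $L^{n-1}$ --- and one must ensure that the constant in $\varkappa(L^{n})\le A'\theta^{n}$ is genuinely independent of $n$, which works precisely because the $M_{n-1}$-term is multiplied by the $\Bw$-mesh $\epsilon$ and so disappears in the limit. The only external ingredient is Nussbaum's characterisation of $r_{\mathrm{ess}}$ through the measure of non-compactness of the iterates (equivalently, Liverani's rendering of Hennion's argument in \cite{Liverani-2004}); everything else is elementary covering combinatorics together with the hypothesised inequalities.
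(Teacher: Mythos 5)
Your argument is correct: the covering estimate $\varkappa(L^{n})\le 2A\Abs{L}\theta^{n-1}+2BM_{n-1}\epsilon$ with $\epsilon\downarrow 0$, combined with Nussbaum's formula, is exactly the Hennion--Liverani proof of this criterion. The paper itself states \Cref{thm:Hennion} without proof, citing \cite{Hennion-2001} and \cite{Liverani-2004}, and your write-up is essentially the same measure-of-non-compactness argument given in those references, so there is nothing further to reconcile.
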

In order to apply \Cref{thm:Hennion} to obtain \Cref{prop:Quasi-Compactness} we need the following inequalities. 
\begin{prop}[Uniform Lasota-Yorke Inequality]
\label{prop:Uniform Lasota-Yorke Inequality}
For all $\eta \in \ULip$ and $k\ge1$
\begin{align}
  \label{eqn:Lasota-Yorke Strong-Strong Inequality}
  \NBs{R(z)^{k}\eta} 
  &\le 
  \left[\kappa+1\right]
  \abs{z}^{k}
  \NBs{\eta},\\
  \label{eqn:Lasota-Yorke Weak-Weak Inequality}
  \NBw{R(z)^{k}\eta} 
  &\le 
  \left[\kappa+1\right]:
  \abs{z}^{k}
  \NBw{\eta},\\
  \label{eqn:Lasota-Yorke Main Inequalty}
  \NBs{R(z)^{k}\eta} 
  &\le 
  \left[\kappa+1\right]
  \abs{z}^{k}
  \left[
    \left( \beta^a \right)^{k} \NBs{\eta} 
    + 
    \left( \kappa +\tau +1 \right)\NBw{\eta}
  \right].
\end{align}
\end{prop}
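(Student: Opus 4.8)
The plan is to establish the three inequalities of \Cref{prop:Uniform Lasota-Yorke Inequality} by first proving the single-step ($k=1$) versions directly from the definitions of $R(z)$ and the anisotropic norms, and then bootstrapping to general $k$ using the semigroup structure $R(z)^{k} = \sum_{n} R_{n}^{(k)} z^{n}$ recorded in \Cref{eqn:Exact kth Return Generating Function}, together with the fact that $r^{(k)}$ is $\Omega_{k}$-measurable and $T^{k}$ contracts vertical fibres by at least $\beta^{k}$. Since $R_{n}\eta = T_{*}^{k}(\indf{\{r^{(k)}=n\}}\eta)$ and $T_{*}$ is just composition with $T^{-1}$ (see \Cref{eqn:Transfer Operator Definition}), the operator $R(z)^{k}$ acts on a test function $\psi \in \Tl$ or $\Ta$ supported on a vertical line $\ell(x)$ by pulling back along the inverse branches of $u^{k}$: concretely, for each $x \in [p,q]$,
\[
  \int_{0}^{1} \left( R(z)^{k}\eta \right)(x,y)\,\psi(y)\,dy
  = \sum_{\hat{\omega} \in \hat{\Omega}_{k}} z^{r^{(k)}|_{\hat\omega}} \int_{0}^{1} \eta(x_{\hat\omega}, y) \,\frac{\psi\bigl(v^{(k)}_{x_{\hat\omega}}(y)\bigr)}{Du^{k}(x_{\hat\omega})}\,dy,
\]
where $x_{\hat\omega} = (u^{k}|_{\hat\omega})^{-1}(x)$, and the sum is over the preimage cells meeting $\ell(x)$. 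The factor $[Du^{k}]^{-1}$ is the Jacobian change of variables and is bounded by $\beta^{k}$ by \Cref{lem:Uniform Expansion} (hence $\sum_{\hat\omega} [Du^{k}(x_{\hat\omega})]^{-1} = 1$ since $u$ preserves $\mu$), while $\abs{z}^{r^{(k)}} \le \abs{z}^{k}$ pulls out the factor $\abs{z}^{k}$.

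For the weak-weak bound \Cref{eqn:Lasota-Yorke Weak-Weak Inequality}: the pulled-back test function $\psi \circ v^{(k)}_{x_{\hat\omega}}$ has Lipschitz seminorm bounded by $\beta^{k}\NTl{\psi}$ since $v^{(k)}$ contracts fibres by $\beta^{k}$, so $\NTl{\psi\circ v^{(k)}_{x_{\hat\omega}}} \le \NTl{\psi}$, and summing the contributions with the Jacobian weights gives the factor $\kappa+1$ only through the distortion control needed to compare the sum of Jacobians to $1$ (here really just $1$, so the $\kappa+1$ is a generous constant). For the strong-strong bound \Cref{eqn:Lasota-Yorke Strong-Strong Inequality} one must also control the unstable modulus $\MBs{R(z)^{k}\eta}$: differencing the above expression at two points $x, w \in [p,q]$ in the same cell $\hat\omega$ produces two types of terms — one where $\eta$ is differenced along its unstable direction, contributing $\MBs{\eta}\abs{x_{\hat\omega}-w_{\hat\omega}} \le \MBs{\eta}\abs{x-w}$ by the contraction of inverse branches, and one where the Jacobian $[Du^{k}]^{-1}$ and the fibre map $v^{(k)}$ are differenced in $x$, which is exactly where \Cref{lem:Distortion Bound} (giving $\abs{Du^{k}(x)/Du^{k}(w)-1}\le\kappa\abs{u^{k}x-u^{k}w}$) and \Cref{lem:Skew Bound} (giving $\Ninfty{\partial_{x}v^{(k)}/Du^{k}}\le\tau$) enter. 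These produce the constants $\kappa$ and $\tau$.

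The main inequality \Cref{eqn:Lasota-Yorke Main Inequalty} is where the genuine Lasota--Yorke gain lives and is the heart of the proposition. To estimate $\MBs{R(z)^{k}\eta}$ I would split the test function: $\psi\circ v^{(k)}_{x_{\hat\omega}}$ can be written as a constant (its value at one point) plus a term of small sup-norm modulated by the $a$-H\"older modulus, and crucially the H\"older seminorm of $\psi\circ v^{(k)}_{x_{\hat\omega}}$ is bounded by $(\beta^{k})^{a}\NTa{\psi}$ because $v^{(k)}$ contracts by $\beta^{k}$ and the $a$-H\"older norm scales by the $a$-th power of the contraction — this is precisely the mechanism that produces the factor $(\beta^{a})^{k}$ in front of $\NBs{\eta}$. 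The leftover pieces (the constant-in-$y$ part of $\psi$, the distortion differences, and the $\partial_{x}v^{(k)}$ terms) are all controlled by the weak norm $\NBw{\eta}$ with a constant $\kappa+\tau+1$, since they only see $\eta$ integrated against Lipschitz test functions on fixed fibres, not its unstable regularity. The $\Ns{R(z)^{k}\eta}$ part of the strong norm is handled as in the strong-strong case and absorbed. The hard part will be bookkeeping the split of the test function cleanly so that the $(\beta^{a})^{k}$ multiplies only the $\NBs{\eta}$ term and everything else collapses into $\NBw{\eta}$; the analytic inputs (distortion, skew bound, fibre contraction) are all already in hand from \Cref{lem:Distortion Bound}, \Cref{lem:Skew Bound}, and \Cref{lem:Uniform Expansion}, so no new estimates are needed — the difficulty is purely in organizing the decomposition and verifying the constants are uniform in $k$ and in $\abs{z}\le 1$.
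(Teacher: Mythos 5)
Your overall machinery matches the paper's: the paper proves per-$n$ bounds for the operators $R_{n}^{(k)}$ in \Cref{lem:Basic Norm Bounds} by exactly the change of variables along inverse branches, the distortion estimate of \Cref{lem:Distortion Bound}, the skew bound of \Cref{lem:Skew Bound}, and the fibre contraction of \Cref{lem:Uniform Expansion} that you describe, and then obtains the proposition by summing over $n$ using $\min r^{(k)}\ge 2k$ and $\sum_{n}\lambda\{r^{(k)}=n\}\le 1$. Working with $R(z)^{k}$ directly, as you do, is a harmless cosmetic difference (though the paper's per-$n$ bounds are reused later, e.g.\ in \Cref{prop:Convergence and Renewal Equation}).

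The genuine gap is in your organization of the main inequality \Cref{eqn:Lasota-Yorke Main Inequalty}: you attach the ``constant plus contracted remainder'' splitting of the test function to the unstable modulus $\MBs{R(z)^{k}\eta}$ and propose to handle the stable part $\Ns{R(z)^{k}\eta}$ ``as in the strong-strong case and absorbed.'' This is backwards. The test functions in the definition of $\MBs{\cdot}$ are Lipschitz, and the contraction for that component comes not from H\"older scaling but from the inverse-branch contraction $\abs{s_{j}(x)-s_{j}(w)}\le\beta^{k}\abs{x-w}$ hitting the term paired with $\MBs{\eta}$ (this is \Cref{eqn:Basic Modulus Bound}; note that in your strong-strong paragraph you discarded precisely this factor $\beta^{k}$, and you must keep it, since $\beta^{k}\le\beta^{ak}$ is what feeds the modulus part of the main inequality). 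The splitting of $\psi\circ v^{(k)}$ into its value at a base point plus a remainder of $a$-H\"older norm $O\left( (\beta^{a})^{k} \right)$ is needed for the stable term $\Ns{R(z)^{k}\eta}$, where the test function is $a$-H\"older and no horizontal differencing is available to produce a gain; the constant part is Lipschitz, so its contribution is controlled by $\NBw{\eta}$ (this is \Cref{eqn:Basic Stable Lasota-Yorke Bound}). If instead you treat $\Ns{R(z)^{k}\eta}$ as in the strong-strong case, you only get $\left[\kappa+1\right]\abs{z}^{k}\Ns{\eta}$, with no factor $(\beta^{a})^{k}$ and no way to absorb it into the weak norm, because $\Ns{\cdot}$ is not dominated by $\NBw{\cdot}$ (the inequality runs the other way, $\NBw{\cdot}\le\Ns{\cdot}$). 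The resulting estimate is then not of Lasota--Yorke form, and the essential spectral radius bound needed for Hennion's theorem in \Cref{prop:Quasi-Compactness} would not follow. The fix is simply to swap the roles: apply the splitting in the $\Ns$ estimate and use the inverse-branch contraction in the $\MBs$ estimate, as in the paper.
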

\begin{proof}
	This proposition is proved in \Cref{sec:Proof of Lasota-Yorke Inequality}
\end{proof}

We are now in a position to prove \Cref{prop:Quasi-Compactness}.
\begin{proof}[Proof of \Cref{prop:Quasi-Compactness}]
  As is discussed in \Cref{sec:Adapted Banach Spaces}, $\NBw{\cdot}\le\NBs{\cdot}$ and as a result $\Bs$ can be viewed as a subset of $\Bw$. The operator $R(z)\colon \Bs  \to  \Bs $ is bounded by the first inequality from \Cref{prop:Uniform Lasota-Yorke Inequality}.\par
  
  The compactness of $R(z)\colon\Bs \to \Bw$ follows from \Cref{prop:Compact Embedding}, which states that the inclusion of $\Bs$ into $\Bw$ is compact. The transformation $R(z)\colon \Bs \to \Bw$ is the composition of the bounded operator $R(z)\colon \Bs \to \Bs$ followed by the compact inclusion from $\Bs$ into $\Bw$, and thus is compact. Therefore, the first hypothesis of \Cref{thm:Hennion} is satisfied.\par

  The second hypothesis of \Cref{thm:Hennion} is verified by the second and third inequalities obtained in \Cref{prop:Uniform Lasota-Yorke Inequality}. By inspecting the third inequality from \Cref{prop:Uniform Lasota-Yorke Inequality} we see that $\theta = \abs{z}\beta^{a}$ and we have verified the claimed bound on the essential spectral radius of $R(z)$.\par

  The claimed bound on the spectral radius of $R(z)$ follows from the first inequality in \Cref{prop:Uniform Lasota-Yorke Inequality} and the Gelfand spectral radius formula.
\end{proof}
\subsection{Basic Norm Bounds}
The following lemma provides the key estimates required to prove \Cref{prop:Uniform Lasota-Yorke Inequality,prop:Convergence and Renewal Equation}.
\begin{lem}[Basic Norm Bounds]
\label{lem:Basic Norm Bounds}
For all $k\ge 1$, $n\ge 1$, and $\eta \in \ULip$,
\begin{align}
  \label{eqn:Basic Weak-Weak Bound}
  \NBw{R^{(k)}_{n}\eta} 
  &\le 
  \left[ \kappa+1 \right]
  \lambda\left\{r^{(k)}=n\right\}
  \NBw{\eta},\\
  \label{eqn:Basic Strong-Strong Bound}
  \Ns{R^{(k)}_{n}\eta} 
  &\le 
  \left[ \kappa+1 \right]
  \lambda\left\{r^{(k)}=n\right\}
  \Ns{\eta},\\
  \label{eqn:Basic Modulus Bound}
  \MBs{R^{(k)}_{n}\eta} 
  &\le 
  \left[ \kappa+1 \right]
  \lambda\left\{r^{(k)}=n\right\}
  \left[
    \beta^{k}\MBs{\eta} 
    + 
    \left( \tau + \kappa \right)
    \NBw{\eta}
  \right],\\
  \label{eqn:Basic Stable Lasota-Yorke Bound}
  \Ns{R^{(k)}_{n}\eta}
  &\le
  \left[ \kappa+1 \right]
  \lambda\left\{r^{(k)}=n\right\}
  \left[ 2\left( \beta^a \right)^k\Ns{\eta} + \NBw{\eta} \right],\\
  \label{eqn:Basic Strong Lasota-Yorke Bound}
  \NBs{R^{(k)}_{n}\eta} 
  &\le 
  \left[ \kappa+1 \right]
  \lambda\left\{r^{(k)}=n\right\}
  \left[ 
    2\left( \beta^a \right)^{k} \NBs{\eta} + (\kappa +\tau+1)\NBw{\eta}
  \right].
\end{align}
\end{lem}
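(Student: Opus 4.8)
The plan is to derive all five inequalities from a single explicit formula for $R^{(k)}_{n}\eta$ restricted to vertical lines, tested against functions on $[0,1]$. Since $r^{(k)}$ is measurable with respect to $\Omega_{k}$, the set $\{r^{(k)}=n\}$ is a union of columns $C=I_{C}\times[0,1]$ of $\Omega_{k}$; by \eqref{eqn:Unstable Strip Partition} the map $T^{k}$ carries each such $C$ diffeomorphically onto a strip $T^{k}C\in\Theta_{k}$ bounded below and above by curves of $\Gamma$, and because the fibre maps are affine in $y$ the restriction of $T^{k}$ to $C$ has the form $(x',y')\mapsto\bigl(u^{k}(x'),A_{C}(x)y'+b_{C}(x)\bigr)$ with $x=u^{k}(x')$, where $\xi_{C}(x):=(u^{k}|_{I_{C}})^{-1}(x)$, $A_{C}(x)=1/Du^{k}(\xi_{C}(x))$ is the vertical contraction (equivalently the height of the strip over $x$), and $b_{C}$ is the lower boundary $\Gamma$-curve. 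A change of variables then gives, for $\eta\in\ULip$, $x\in[p,q]$ and $\psi$ on $[0,1]$,
\[
  \int_{0}^{1}(R^{(k)}_{n}\eta)(x,y)\,\psi(y)\,dy
  =
  \sum_{C\subset\{r^{(k)}=n\}}A_{C}(x)\int_{0}^{1}\eta(\xi_{C}(x),y')\,\psi\bigl(A_{C}(x)y'+b_{C}(x)\bigr)\,dy'.
\]
Two structural facts drive everything. First, each $u^{k}|_{I_{C}}$ is onto $[p,q]$, so $\int_{[p,q]}dx/Du^{k}(\xi_{C}(x))=|I_{C}|=(q-p)\mu(I_{C})$, and by \Cref{lem:Distortion Bound} each $1/Du^{k}(\xi_{C}(x))$ is within a factor $1+\kappa(q-p)\le\kappa+1$ of $\mu(I_{C})$; summing over $C$ and using that the $I_{C}\times[0,1]$ partition $\{r^{(k)}=n\}$ gives $\sum_{C}A_{C}(x)\le(\kappa+1)\,\lambda\{r^{(k)}=n\}$. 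Second, since the slope $A_{C}(x)\le\beta^{k}\le1$ by \Cref{lem:Uniform Expansion}, post-composition by $y'\mapsto A_{C}(x)y'+b_{C}(x)$ does not increase the $\sup$-norm and contracts the Lipschitz (resp.\ $a$-H\"older) seminorm, so $\NTl{\psi(A_{C}(x)\,\cdot\,+b_{C}(x))}\le\NTl{\psi}$ and $\NTa{\psi(A_{C}(x)\,\cdot\,+b_{C}(x))}\le\NTa{\psi}$.

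With these in hand, \eqref{eqn:Basic Weak-Weak Bound} and \eqref{eqn:Basic Strong-Strong Bound} are immediate: bound each inner integral in the display by $\NBw{\eta}$ (resp.\ $\Ns{\eta}$) via the second fact and sum via the first. For \eqref{eqn:Basic Stable Lasota-Yorke Bound} I would split $\psi(A_{C}(x)\,\cdot\,+b_{C}(x))=\rho_{C,x}+c_{C,x}$, with $c_{C,x}$ its value at $y'=0$ and $\rho_{C,x}$ the oscillation; since $A_{C}(x)\le\beta^{k}$ the $a$-H\"older seminorm of $\rho_{C,x}$ gains a factor $(A_{C}(x))^{a}\le(\beta^{a})^{k}$ and $\|\rho_{C,x}\|_{\infty}$ is bounded by that seminorm, so $\NTa{\rho_{C,x}}\le2(\beta^{a})^{k}\NTa{\psi}$, whereas $|c_{C,x}|\le\|\psi\|_{\infty}$ forces the constant part to pair against $\eta|_{\ell(\xi_{C}(x))}$ only through the weak norm; summing yields $2(\beta^{a})^{k}\Ns{\eta}+\NBw{\eta}$. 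Finally \eqref{eqn:Basic Strong Lasota-Yorke Bound} follows by adding \eqref{eqn:Basic Stable Lasota-Yorke Bound} to \eqref{eqn:Basic Modulus Bound} and using $\beta^{k}\le(\beta^{a})^{k}$.

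The heart of the matter is \eqref{eqn:Basic Modulus Bound}. Fix $w\ne x$ lying over a common cell base and $\NTl{\psi}\le1$, and form the difference of the two pairings; in the $C$-th summand replace, in turn, $\xi_{C}(w)$ by $\xi_{C}(x)$, then $A_{C}(w)$ by $A_{C}(x)$, then the affine map $\Phi_{C,w}(y')=A_{C}(w)y'+b_{C}(w)$ by $\Phi_{C,x}$. This produces three pieces: (a) $A_{C}(w)\bigl\langle\eta|_{\ell(\xi_{C}(w))}-\eta|_{\ell(\xi_{C}(x))},\,\psi\circ\Phi_{C,w}\bigr\rangle$, bounded by $\MBs{\eta}\,|\xi_{C}(w)-\xi_{C}(x)|\le\MBs{\eta}\,\beta^{k}|w-x|$ since the inverse branch of $u^{k}$ has derivative at most $\beta^{k}$ by \Cref{lem:Uniform Expansion}; (b) $\bigl(A_{C}(w)-A_{C}(x)\bigr)\bigl\langle\eta|_{\ell(\xi_{C}(x))},\,\psi\circ\Phi_{C,w}\bigr\rangle$, bounded by $\kappa A_{C}(x)|w-x|\,\NBw{\eta}$ because bounded distortion gives $|A_{C}'|\le\kappa A_{C}$; and (c) the shear term $A_{C}(x)\bigl\langle\eta|_{\ell(\xi_{C}(x))},\,\psi\circ\Phi_{C,w}-\psi\circ\Phi_{C,x}\bigr\rangle$, where one uses $|b_{C}'(x)|\le\tau$ — this derivative equals $\partial_{x}v^{(k)}/Du^{k}$ evaluated at $y'=0$, hence is controlled by \Cref{lem:Skew Bound} — together with the $C^{1}$-bound on $\Gamma$ from \Cref{lem:Unstable Curves} controlling the slope change $A_{C}(w)-A_{C}(x)$. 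Multiplying each piece by the appropriate $A_{C}$ and summing against $\sum_{C}A_{C}(x)\le(\kappa+1)\lambda\{r^{(k)}=n\}$ yields the $\beta^{k}\MBs{\eta}+(\tau+\kappa)\NBw{\eta}$ of \eqref{eqn:Basic Modulus Bound}. I expect piece (c) to be the main obstacle: the reparametrized test-function difference $\psi\circ\Phi_{C,w}-\psi\circ\Phi_{C,x}$ has $\sup$-norm of order $|w-x|$ but a Lipschitz norm that does not vanish with $|w-x|$, so one cannot simply divide by $|w-x|$ and pair through the weak norm; making this rigorous exploits essentially that the fibre maps are affine (so $\Phi_{C,x}$ is affine and $\Phi_{C,w}-\Phi_{C,x}$ has slope exactly $A_{C}(w)-A_{C}(x)$ and intercept $b_{C}(w)-b_{C}(x)$), the uniform $C^{1}$-geometry of the strips, and the fact that the discrepancy is concentrated near the strip boundaries — this is the portion of the argument closest to the techniques of \cite{Demers&Liverani2008,Liverani&Terhesiu2015}.
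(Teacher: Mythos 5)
Your proposal follows the paper's proof in all its essentials: the same change of variables onto the countably many preimage fibres of $\ell(x)$ inside $\{r^{(k)}=n\}$, the same use of the integral mean value theorem plus \Cref{lem:Distortion Bound} to replace each $[Du^{k}]^{-1}$ by $(\kappa+1)\mu(I_{j})$ and hence to produce the factor $(\kappa+1)\lambda\{r^{(k)}=n\}$, the same contraction argument for test functions composed with the affine fibre maps (giving \eqref{eqn:Basic Weak-Weak Bound} and \eqref{eqn:Basic Strong-Strong Bound}), the same constant-plus-oscillation splitting of $\psi\circ v^{(k)}$ for \eqref{eqn:Basic Stable Lasota-Yorke Bound} (the paper's $\Psi_{0},\Psi_{1}$), the same three-term telescoping for the modulus bound, and the same summation $\beta^{k}\le(\beta^{a})^{k}$ to get \eqref{eqn:Basic Strong Lasota-Yorke Bound}. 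Pieces (a) and (b) of your telescoping are exactly the paper's second and first terms in \eqref{eqn:Basic Bound Prebound-3}, with the same constants $\beta^{k}\MBs{\eta}$ and $\kappa\NBw{\eta}$.

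The genuine gap is your piece (c), the shear term in \eqref{eqn:Basic Modulus Bound}: you label it the ``main obstacle,'' list ingredients, and stop, so as written your argument does not prove the $(\tau+\kappa)\NBw{\eta}$ term. The paper disposes of it as follows: writing $\partial_{x}\left[v^{(k)}(s_{j}(x),t)\right]=\partial_{x}v^{(k)}/Du^{k}$ along the inverse branch, \Cref{lem:Skew Bound} gives $\sup_{0\le t\le 1}\abs{v^{(k)}_{s_{j}(x)}(t)-v^{(k)}_{s_{j}(w)}(t)}\le\tau\abs{x-w}$ \emph{uniformly in} $t$, so no separate appeal to the $C^{1}$ bound on $\Gamma$ or to slope changes $A_{C}(w)-A_{C}(x)$ is needed for this piece; the third telescoping term is then bounded in \eqref{eqn:Basic Bound Prebound-3} by $[Du^{k}(s_{j}(w))]^{-1}\,\NBw{\eta}\,Lip(\psi)\,\tau\abs{x-w}$ and summed via \eqref{eqn:Basic Bound Vertical Contraction}. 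Note that at this step the paper estimates the test-function difference $\psi\circ v^{(k)}_{s_{j}(x)}-\psi\circ v^{(k)}_{s_{j}(w)}$ only through its sup norm ($\le Lip(\psi)\tau\abs{x-w}$); your observation that its Lipschitz seminorm is of order $\beta^{k}Lip(\psi)$ and does \emph{not} shrink with $\abs{w-x}$ is correct, and it is precisely why this piece cannot be handled by bounding $\NTl{\cdot}$ of the difference and pairing through the weak norm. Your proposed escape route (``the discrepancy is concentrated near the strip boundaries'') is not the mechanism either: the two affine maps $\Phi_{C,x}$ and $\Phi_{C,w}$ differ by a small shear across the entire fibre, not just near its endpoints. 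So to complete your proof you must either adopt the paper's direct sup-norm estimate for this pairing, or actually supply the additional argument you allude to; the proposal in its current form contains neither.
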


\begin{proof}
  Recall the definitions of norms from \Cref{sec:Adapted Banach Spaces} and the definition of the operators $R_{n}^{(k)}$ from \Cref{eqn:Exact Return Operators}. All of the quantities that we wish to bound are defined through integrals of the form
  \begin{align}
    \label{eqn:Basic Bounds Integral Identity}
      \int_{0}^{1} 
        R^{(k)}_{n} \eta(x,y) \, 
        \psi(y)\, 
      dy
      &=
      \int_{0}^{1}
      T_{*}^{k}\left( \eta \indf{\left\{r^{(k)}=n \right\}} \right)(x,y)\,
      \psi(y)\,
      dy\\
      &=
      \int_{0}^{1}
      T_{*}^{k}\eta(x,y)\,
      \psi(y)\,
      \indf{T^{k}\left\{ r^{(k)}=n \right\}}\,
      dy.\notag
  \end{align}

  The set $ T^{k}\left\{ r^{(k)}=n \right\}$ is a countable collection of horizontal strips. Similarly the set $\left\{ r^{(k)}=n \right\}$ is a countable collection of vertical strips. That is, there exists a countable collection of intervals $\left\{ I_{j}\subseteq [p,q]:j\in\Z^+ \right\}$ such that 
  \[
    \left\{ r^{(k)}=n \right\} = \bigcup_{j\in\Z^+} I_{j}\times [0,1].
  \]
  It will be convenient to define $V_{j} = I_{j} \times [0,1]$. Note that for all $j \in \Z^{+}$, $T^{k}V_{j}$ is a horizontal strip and $T^{k}|_{V_{j}}$ is $C^1$. In the horizontal coordinate, $u^{k}|_{I_{j}}\colon I_{j} \to [p,q]$ is a $C^2$ bijection.\par

  The integration in \Cref{eqn:Basic Bounds Integral Identity} is over the set $T^{k}\left\{ r^{(k)}=n \right\} \cap \left( \left\{ x \right\}\times[0,1] \right)$. By the comments of the previous paragraph, for each $j \in \Z^+$, there exists $s_{j}\in I_{j}$ so that $u^{k}(s_{j}) = x$. Therefore, the preimage of the region of integration under the map $T^{k}$ is the countable union of full vertical lines as follows,
  \begin{align*}
    T^{-k}\left(  T^{k}\left\{ r^{(k)}=n \right\} \cap \left( \left\{ x \right\}\times[0,1] \right)\right)
    &=
    \left\{ r^{(k)}=n \right\} \cap T^{-k}\left( \left\{ x \right\}\times[0,1] \right)\\
    &=
    \bigcup_{j\in\Z^+} \left\{ s_{j} \right\}\times [0,1].
  \end{align*}

  Suppose that $(s_{j},t) \in V_{j}$ and that $(x,y) = T^{k}(s_{j},t) = \left( u^{k}(s_{j}), v_{s_{j}}^{(k)}(t) \right)$. Let $\eta \in \ULip$ and $\psi \in \Tl$ or $\Ta$. Suppose that $n \in \Z^+$ and $k\in\Z^+$ are numbers such that the set $\left\{ r^{(k)}=n \right\}$ is non-empty. An elementary change of variables on each line segment $\{s_{j}\}\times[0,1]$ shows that
    \begin{align*}
      \int_{0}^{1} 
        R^{(k)}_{n} \eta(x,y) \, 
        \psi(y)\, 
      dy 
      = 
      \sum_{j = 1}^{\infty}
      \int_{0}^{1} 
      \eta(s_{j},t) \, 
      \psi\left(v^{(k)}(s_{j},t)\right)\, 
      \partial_{t}v^{(k)}(s_{j},t)\,
      dt,
    \end{align*}
    where we have written $v^{(k)}(s,t)$ instead of $v^{(k)}_{s}(t)$, this helps to avoid nested subscripts and reduces confusion when taking partial derivatives of the map $v^{(k)}\colon \Lambda \to [0,1]$.\par

  Recall from \Cref{sec:Induced Map} that $T$ preserves the measure $\lambda$ on $\Lambda$, which is a probability measure obtained by restricting Lebesgue measure to $\Lambda$. From this we deduce that, $\lambda$ almost everywhere the Jacobian determinant of $T$ is $1$. Calculating the Jacobian determinant of $T^k$ using \Cref{eqn:T-skew} we obtain the identity below. 
    \begin{equation}
      \label{eqn:Basic Bound Jacobian Identity}
      \partial_{y} v^{(k)}(x,y) \, Du^{k}(x) = 1
    \end{equation}
    It follows that for $\mu$ almost every $x$,
    \begin{align}
      \label{eqn:Basic Bound Change of Variable}
      \int_{0}^{1} 
        R^{(k)}_{n} \eta(x,y) \, 
        \psi(y)\, 
      dy 
      = 
      \sum_{j = 1}^{\infty}
      \int_{0}^{1} 
      \eta(s_{j},t) \, 
      \psi\left(v^{(k)}(s_{j},t)\right)\, 
      \left[ Du^{k}\left( s_{j} \right) \right]^{-1}\,
      dt.
    \end{align}
  
    We apply the definitions of $\Ns{\cdot}$ and $\NBw{\cdot}$ (see \Cref{eqn:Weak Norm Defintion,eqn:Stable Norm Defintion}) to provide the following preliminary bounds.
  If $\psi \in \Tl$, then 
  \begin{align}
    \label{eqn:Basic Bound Prebound-1}
      \int_{0}^{1} 
        R^{(k)}_{n} \eta(x,y) \, 
        \psi(y)\, 
      dy 
      &\le 
      \sum_{j\in\Z^+}
      \left[ Du^{k}\left( s_{j} \right) \right]^{-1}
      \NBw{\eta} \NTl{\psi\circ v_{s_{j}}^{(k)}}.
  \end{align}
  If $\psi \in \Ta$, then 
  \begin{align}
    \label{eqn:Basic Bound Prebound-2}
      \int_{0}^{1} 
        R^{(k)}_{n} \eta(x,y) \, 
        \psi(y)\, 
      dy 
      &\le 
      \sum_{j = 1}^{\infty}
      \left[ Du^{k}\left( s_{j} \right) \right]^{-1}
      \Ns{\eta} \NTa{\psi\circ v_{s_{j}}^{(k)}}.
  \end{align}
  
  In order to bound $\MBs{R^{(k)}_{n}\eta}$ we must consider integrals of the form
  \[
      \int_{0}^{1} 
        \left[
          R^{(k)}_{n} \eta(x,y) -
          R^{(k)}_{n} \eta(w,y)
        \right]\,
        \psi(y)\, 
      dy,
  \]
  where $x,w \in [p,q]$ are fixed horizontal coordinates and $\psi \in \Tl$ is a test function. If we separate the integral above by linearity and apply \Cref{eqn:Basic Bound Change of Variable} to each term we obtain
  \begin{align*}
    \int_{0}^{1} 
    \left[
      R^{(k)}_{n} \eta(x,y) -
      R^{(k)}_{n} \eta(w,y)
    \right]\,
    \psi(y)\, 
    dy 
    &=
    \sum_{j = 1}^{\infty}
    \left[ Du^{k}(s_{j}(x)) \right]^{-1}
    \int_{0}^{1} 
    \eta\left(s_{j}(x),t\right) \, 
    \psi\left(v^{(k)}(s_{j}(x),t)\right)\, 
    dt\\ 
    &-
    \sum_{j = 1}^{\infty}
    \left[ Du^{k}(s_{j}(w)) \right]^{-1}
    \int_{0}^{1} 
    \eta\left(s_{j}(w),t\right) \, 
    \psi\left(v^{(k)}(s_{j}(w),t)\right)\, 
    dt\\ 
  \end{align*}
  where $s_{j}(x)$ and $s_{j}(w)$ are points in $I_{j}$ such that $u^{k}\left( s_{j}(x) \right)=x$ and $u^{k}\left( s_{j}(w) \right)=w$. We will fix $j$ and expand $j$-th term of right hand side of the identity above following the pattern
  \[
    A_x B_x C_x 
    - 
    A_w B_w C_w 
    =
    \left( A_x - A_w \right) B_x C_x
    +
    A_w \left( B_x - B_w \right) C_x
    +
    A_w B_w \left( C_x - C_w \right) 
  \]
  and apply the definitions of $\Ns{\cdot}$ and $\MBs{\cdot}$ (see \Cref{eqn:Unstable Modulus Defintion,eqn:Strong Norm Defintion}) to obtain the preliminary bound below.
  \begin{align}
    \label{eqn:Basic Bound Prebound-3}
    \int_{0}^{1} 
    \left[
      R^{(k)}_{n} \eta(x,y) -
      R^{(k)}_{n} \eta(w,y)
    \right]\,
    \psi(y)\, 
    dy 
    &\le
      \sum_{j = 1}^{\infty}
    \left[ Du^k(s_{j}(w)) \right]^{-1}
    \abs{
      1
      - 
      \frac{
      Du^k(s_{j}(w))
      }{
      Du^k(s_{j}(x))
      }
    }
    \NBw{\eta} 
    \NTl{\psi\circ v^{(k)}_{s_{j}(x)}}\notag\\
    &+
      \sum_{j = 1}^{\infty}
    \left[ Du^k(s_{j}(w)) \right]^{-1}
    \MBs{\eta}
    \abs{s_{j}(x)-s_{j}(w)}
    \NTl{\psi\circ v^{(k)}_{s_{j}(x)}}\notag\\
    &+
      \sum_{j = 1}^{\infty}
    \left[ Du^k(s_{j}(w)) \right]^{-1}
    \NBw{\eta} 
    Lip(\psi) 
    \sup_{0\le t\le 1} \abs{v^{(k)}_{s_{j}(x)}(t)-v^{(k)}_{s_{j}(w)}(t)}.
  \end{align}

  Having collected the preliminary bounds \Cref{eqn:Basic Bound Prebound-1,eqn:Basic Bound Prebound-2,eqn:Basic Bound Prebound-3} we wish to improve them to bounds that are uniform in the horizontal coordinates $x$ and $w$. To this end we collect the following uniform bounds.\par

    Fix $j \in \Z^{+}$, suppose that $s \in I_{j}$ and $u^{k}(s) = x$. Since $u^{k}$ maps $I_{j}$ onto $[p,q]$,
    \[
      \frac{1}{\mu(I_{j})}
      \int_{I_{j}} 
      Du^{k}(s) \, d\mu(s)
      =
      \frac{1}{\mu(I_{j})}
      \int_{p}^{q} 1 \,d\mu(x)
      =
      \frac{1}{\mu(I_{j})}.
    \]
  Since $u^{k}|_{I_{j}}$ is $C^2$, we can apply the Integral Mean Value Theorem to conclude that there exists $\theta \in I_{j}$ such that 
    \[
      Du^{k}(\theta) = \frac{1}{\mu\left( I_{j} \right)}.
    \]
    Since $s, \theta \in I_{j} \subset \left[ r^{(k)} = n \right]$ we may apply \Cref{eqn:Distortion Bound} to obtain
  \begin{align*}
    \abs{
      \frac{
        Du^{k}(\theta)
      }{
        Du^{k}(s)
      }
      -1
    }
    &\le
    \kappa \abs{u^{k}\left(s \right)-u^{k}\left( \theta \right)}
    \le
    \kappa
  \end{align*}
  Since $u^{k}|_{I_{j}}$ is order preserving, we have $Du^{k}(s) \ge 0$. In combination with the last two displayed equations this yields,
  \begin{align}
    \label{eqn:Basic Bound Vertical Contraction}
    0 \le \left[Du^{k}(s)\right]^{-1} \le \left( \kappa +1 \right)\mu\left(I_{j}\right).
  \end{align}
  By another application of \Cref{eqn:Distortion Bound}, for all $w,x\in[p,q]$,
  \begin{align}
    \label{eqn:Basic Bound Vertical Distortion}
    \abs{
      1
      - 
      \frac{
        Du^{k}(s_{j}(w))
      }{
        Du^{k}(s_{j}(x))
      }
    }
    &\le
    \kappa
    \abs{u(s_{j}(x)) - u(s_{j}(w))}
    =
    \kappa\abs{x-w}.
  \end{align}

  Since $v^{(k)}_{s(z)}$ viewed as a map of the interval $[0,1]$ into itself is a contraction for all $z \in [p,q]$, we have 
    \begin{equation}
      \label{eqn:Basic Bound psi Koopman}
      \NTl{\psi\circ v^{(k)}_{s(z)}} \le \NTl{\psi}.
    \end{equation}
    Let us view $x \mapsto s_{j}(x)$ as a local inverse of $u^{k}$ on $I_{j}$ and note that,
    \[
      \partial_{x}\left[ v^{(k)}\left( s_{j}(x),t \right) \right] = \frac{\partial_{x}v^{(k)}(s_{j}(x),t)}{Du^{k}\left( s_{j}(x) \right)}.
    \]
  By \Cref{lem:Skew Bound} we have 
    \[
      \Ninfty{\frac{\partial_{x}v^{(k)}_{x}}{Du^{k}}} \le \tau.
    \]
  From the previous two lines it follows that
  \begin{align}
    \sup_{0\le t\le 1} \abs{v^{(k)}_{s(x)}(t)-v^{(k)}_{s(w)}(t)}
    &\le
    \Ninfty{
      \frac{\partial_{x}v^{(k)}_{x}}{Du^{k}}
    }
    \abs{x - w}.\notag\\
    \label{eqn:Basic Bound Vertical Lipschitz}
    &\le \tau \abs{x-w}
  \end{align}

  Finally, note that $\abs{s_{j}(x)-s_{j}(w)} \le \beta^{k}\abs{x-w}$ by \Cref{eqn:Expansion Bound}.\\

  Having collected the uniform bounds above we refine \Cref{eqn:Basic Bound Prebound-1,eqn:Basic Bound Prebound-2,eqn:Basic Bound Prebound-3} as follows. If $\psi \in \Tl$ and $x \in [p,q]$, then 
  \begin{align*}
      \int_{0}^{1} 
        R^{(k)}_{n} \eta(x,y) \, 
        \psi(y)\, 
      dy 
      &\le 
      \sum_{j\in\Z^+}
      \left( \kappa +1 \right)\mu(I_{j})
      \NBw{\eta} \NTl{\psi}\\
      &=
      \left( \kappa +1  \right) \lambda\left\{ r^{(k)}=n \right\}
      \NBw{\eta} \NTl{\psi}.\\
  \end{align*}
  By taking a supremum over $\psi$ such that $\NTl{\psi}\le 1$ and $x \in [p,q]$ we obtain \Cref{eqn:Basic Weak-Weak Bound}.
  Simmilarly, if $\psi \in \Ta$ and $x \in [p,q]$, then
  \begin{align*}
      \int_{0}^{1} 
        R^{(k)}_{n} \eta(x,y) \, 
        \psi(y)\, 
      dy 
      &\le
      \left( \kappa +1  \right) \lambda\left\{ r^{(k)}=n \right\}
      \Ns{\eta} \NTa{\psi}.\\
  \end{align*}
  By taking a supremum over $\psi$ such that $\NTa{\psi}\le 1$ and $x \in [p,q]$ we obtain \Cref{eqn:Basic Strong-Strong Bound}.\par
  
  If $\psi\in\Tl$ and $w,x \in [p,q]$, then

  \begin{align*}
    \int_{0}^{1} 
    \left[
      R^{(k)}_{n} \eta(x,y) -
      R^{(k)}_{n} \eta(w,y)
    \right]\,
    \psi(y)\, 
    dy 
    &\le
      \left( \kappa+1 \right)\lambda\left\{ r^{(k)}=n \right\}
      \kappa\abs{w-x}
    \NBw{\eta} 
    \NTl{\psi}\notag\\
    &+
      \left( \kappa+1 \right)\lambda\left\{ r^{(k)}=n \right\}
    \MBs{\eta}
    \beta^k\abs{x-w}
    \NTl{\psi}\notag\\
    &+
      \left( \kappa+1 \right)\lambda\left\{ r^{(k)}=n \right\}
    \NBw{\eta} 
    Lip(\psi) 
    \tau
    \abs{x-w}
  \end{align*}
  By taking a supremum over $\psi$ such that $\NTl{\psi}\le 1$ and $w,x \in [p,q]$ we obtain \Cref{eqn:Basic Modulus Bound}.\par

  Finally, we must verify \Cref{eqn:Basic Stable Lasota-Yorke Bound}. Suppose that $\psi \in \Ta$ and $x \in [p,q]$. By \Cref{eqn:Basic Bound Change of Variable},
  \begin{align*}
      \int_{0}^{1} 
        R^{(k)}_{n} \eta(x,y) \, 
        \psi(y)\, 
      dy 
      &= 
      \sum_{j = 1}^{\infty}
      \int_{0}^{1} 
      \eta(s_{j},t) \, 
      \psi\left(v^{(k)}(s_{j},t)\right)\, 
      \left[ Du^{k}\left( s_{j} \right) \right]^{-1}\,
      dt\\
      &= 
      \sum_{j = 1}^{\infty}
      \int_{0}^{1} 
      \eta(s_{j},t) \, 
      \left[
      \psi\left(v^{(k)}(s_{j},t)\right) 
      -
      \psi\left(v^{(k)}(s_{j},0)\right)
    \right]\,
      \left[ Du^{k}\left( s_{j} \right) \right]^{-1}\,
      dt\\
      &+
      \sum_{j = 1}^{\infty}
      \int_{0}^{1} 
      \eta(s_{j},t) \, 
      \psi\left(v^{(k)}(s_{j},0)\right)\, 
      \left[ Du^{k}\left( s_{j} \right) \right]^{-1}\,
      dt\\
      &\le
      \left( \kappa + 1 \right)
      \lambda\left\{ r^{(k)}=n \right\}
      \Ns{\eta}
      \NTa{\Psi_1}\\
      &+
      \left( \kappa + 1 \right)
      \lambda\left\{ r^{(k)}=n \right\}
      \NBw{\eta}
      \NTl{\Psi_{0}}\\
      \Psi_{0}
      &:=
      \psi\left(v^{(k)}(s_{j},0)\right)\\
      \Psi_{1}
      &:=
      \psi\left(v^{(k)}(s_{j},t)\right) 
      -
      \psi\left(v^{(k)}(s_{j},0)\right)
  \end{align*}
  In the third line we have use the fact that $\Psi_{0}$ is constant and hence Lipschitz. An elementary calculation shows that
  \begin{align*}
    \NTl{\Psi_{0}} &\le \NTl{\psi} \le \NTa{\psi}\\
    \NTa{\Psi_{1}} &\le 2(\beta^a)^k \NTa{\psi}
  \end{align*}
  We conclude that, 
  \begin{align*}
      \int_{0}^{1} 
        R^{(k)}_{n} \eta(x,y) \, 
        \psi(y)\, 
      dy 
      &\le
      \left( \kappa +1 \right) 
      \lambda\left\{ r^{(k)}=n \right\}
      \NTa{\psi}
      \left[ 2\left( \beta^a \right)^k\Ns{\eta} + \NBw{\eta} \right].
    \end{align*}
  By taking a supremum over $\psi$ such that $\NTa{\psi}\le 1$ and $x \in [p,q]$ we obtain \Cref{eqn:Basic Stable Lasota-Yorke Bound}.\par

  Lastly \Cref{eqn:Basic Strong Lasota-Yorke Bound} follows by adding \Cref{eqn:Basic Modulus Bound} and \Cref{eqn:Basic Stable Lasota-Yorke Bound} and noting that $\beta^{k} <\beta^{ak}$.
\end{proof}

\subsection{Proof of Convergence and Renewal Equation}
\label{sec:Convergence and Renewal Equation}
In this section we use \Cref{lem:Basic Norm Bounds} to prove \Cref{prop:Convergence and Renewal Equation}.
\paragraph{Claim 1} For all $n \ge 1$, the operators $B_{n}$ and $R_{n}$ are bounded on $\Bs$. 
      \begin{proof}
        Boundedness of the $R_n$ is given by \Cref{eqn:Basic Strong-Strong Bound} from \Cref{lem:Basic Norm Bounds} with $k=1$.
        Note that $B_{n} = \sum_{k=1}^{n} R_{n}^{(k)}$, and that the collection of sets $\left\{ \left\{r^{(k)}=n\right\}:k=1,\dots,n \right\}$ are disjoint. It follows from \Cref{eqn:Basic Strong-Strong Bound} that 
          \begin{align*}
            \NBs{B_{n}\eta} 
            &\le 
            \sum_{k=1}^{n} \NBs{R_{n}^{(k)} \eta} 
            \le 
            \sum_{k=1}^{n} \left[ \kappa+1 \right]\lambda\left[ r^{(k)}=n \right] \NBs{\eta}\\
            &\le 
            \left[ \kappa +1 \right] \NBs{\eta}.
          \end{align*}
      Therefore, the operators $B_{n}$ are bounded on $\Bs$.
      \end{proof}
  \paragraph{Claim 2} For all $z$ in the open unit disk of $\C$, the operators $B(z)$ and $R(z)$ converge in $Hom\left( \Bs,\Bs \right)$ and satisfy $B(z) = \left( I-R(z) \right)^{-1}$.
    \begin{proof}
      Since the operators $R_{n}$ and $B_{n}$ are uniformly bounded in $n$, $R(z)$ and $B(z)$ converge for $\abs{z}<1$ as desired.\par

      The IBT $B$ is clearly conservative and non-singular with respect to Lebesgue measure and $\Lambda$ has positive measure. That $B(z)$ and $R(z)$ satisfy $B(z) = \left( I-R(z) \right)^{-1}$ can be verified by applying \cite{Sarig2002} Proposition 1.
    \end{proof}
  \paragraph{Claim 3} The operator $R(z)$ converges in $Hom\left( \Bs, \Bs \right)$ for $z$ in the closed unit disk of $\C$, that is $\sum_{n\ge1} \NBs{R_{n}} < \infty$.
  \begin{proof}
    Note that the sets $[r = n]$ partition $\Lambda$. By \Cref{eqn:Basic Strong-Strong Bound} with $k=1$, for any $\eta \in \ULip$,
    \begin{align*}
      \NBs{R(1)\eta} 
      &\le 
      \sum_{n=1}^{\infty} \NBs{R_{n}\eta}
      \le 
      \left[ \kappa +1  \right]\NBs{\eta} \sum_{n=1}^{\infty} \lambda\left[ r=n \right]\\
      &=
      \left[ \kappa +1  \right]\NBs{\eta} 
    \end{align*}
    So $R(z)$ is bounded on $\Bs$ for all $\abs{z}\le1$.
  \end{proof}
    \paragraph{Claim 4} Let $\alpha = \max\left\{\alpha_{0},\alpha_{1}\right\}$. As $n \to \infty$, $\sum_{k>n} \NBs{R_{k}} = O\left(n^{-\left(1+\frac{1}{\alpha}\right)}\right)$. 
    \begin{proof}
      It follows directly from the definition of $\lambda$ in \Cref{eqn:Base Measure Definition} and the relationship between $p_{n}^{\circ}$, $q_{n}^{\circ}$ and return time outlined in \Cref{eqn:Interval Orbit Structure} that for $n\ge2$, 
      \[
        \lambda[r=n] = \frac{p_{n-1}^{\circ} - p_{n}^{\circ} + q_{n}^{\circ} - q_{n-1}^{\circ}}{\Leb\left( \Lambda \right)}
      \]
      By the asymptotic behavior of $p_{n-1}^{\circ} - p_{n}^{\circ}$ and $q_{n-1}^{\circ}-q_{n}^{\circ}$ described in \Cref{eqn:Period-2 Orbit Increment Asymptotics Interior Right,eqn:Period-2 Orbit Increment Asymptotics Interior Left} we have 
      \[
        \lambda[r=n] \asymp \left(\tfrac{1}{n}\right)^{2+\frac{1}{\alpha}}.
      \]
      By the norm bound obtained in \Cref{eqn:Basic Strong-Strong Bound} with $k=1$, $\left\|R_{n}\right\| = O\left(\lambda[r=n]\right)$, therefore
      \[
        \sum_{k> n}\NBs{R_{k}} =O\left(\left( \frac{1}{n} \right)^{\frac{1}{\alpha} +1}\right).
      \]
    \end{proof}

\subsection{Proof of Lasota-Yorke inequality}
\label{sec:Proof of Lasota-Yorke Inequality}
In this section we will use \Cref{lem:Basic Norm Bounds} to prove \Cref{prop:Uniform Lasota-Yorke Inequality}. 

\begin{proof}[Proof of \Cref{prop:Uniform Lasota-Yorke Inequality}]
We will prove \Cref{eqn:Lasota-Yorke Main Inequalty}. The proofs of the other inequalities are similar.\par

Note that $\min r^{(k)} \ge 2k$ and apply \Cref{lem:Basic Norm Bounds} so that,
\begin{align*}
  \NBs{R(z)^{k}\eta}
  &\le
  \sum_{n=2k}^{\infty}
  \abs{z^{n}}
  \NBs{R_{n}^{(k)}\eta}\\
  &\le
  \abs{z}^{k}
  \sum_{n=2k}^{\infty}
  [\kappa +1] \lambda\left\{ r^{k}=n \right\}\left[2\left( \beta^a \right)^{k} \NBs{\eta} + (\kappa+\tau+1)\NBw{\eta}\right]\\
  &=
  \left[ \kappa+1 \right]\abs{z}^{k}\left[2\left( \beta^a \right)^{k} \NBs{\eta} + (\kappa+\tau+1)\NBw{\eta}\right].
\end{align*}
\end{proof}
Obviously we could have obtained $\abs{z}^{2k}$ as a multiplier in the inequalities above. We opt for the weaker bound as it makes no difference in what follows and is slightly less cumbersome.\par
\subsection{Proof of Spectral Gap and Aperiodicity}
\label{Spectral Gap and Aperiodicity}
In this section we will prove \Cref{prop:Spectrial Gap and Aperiodicity}. The next two lemmas will be useful in the proof.
\begin{lem}
\label{lem:Auxiliary Operator Bound}
If $\abs{z}\le 1$ and $\eta \in \ULip$, then 
\begin{align}
  \label{eqn:Auxiliary Operator Bound}
  \NUL{R(z)\eta} &\le \NUL{\eta}.
\end{align}
\end{lem}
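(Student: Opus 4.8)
The plan is to rewrite $R(z)\eta$ as a single pushed-forward function and then track how $T^{-1}$ interacts with the unstable foliation $\Gamma$. Since $T$ is invertible and $\lambda$-preserving, $T_{*}\xi = \xi\circ T^{-1}$ (see \Cref{eqn:Transfer Operator Definition}), so for each $n\ge 1$ we have $R_{n}\eta = T_{*}\!\left(\indf{[r=n]}\eta\right) = \indf{T[r=n]}\cdot(\eta\circ T^{-1})$. By \Cref{eqn:Interval Orbit Structure} and the half-open convention on the columns $[r=n]$, the sets $\{T[r=n]\}_{n\ge 1}$ partition $\Lambda$, so summing the geometric series and using $\indf{T[r=n]}(x,y)=\indf{[r=n]}(T^{-1}(x,y))$ gives the pointwise identity
\[
  R(z)\eta(x,y) = z^{\,r(T^{-1}(x,y))}\,\eta\!\left(T^{-1}(x,y)\right), \qquad (x,y)\in\Lambda .
\]
Since $\abs{z}\le 1$ and $r\ge 1$, this immediately yields $\Nsup{R(z)\eta}\le\Nsup{\eta}$.

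Next I would bound the unstable modulus $\MUL{R(z)\eta}$. Fix $\gamma'\in\Gamma$. Because the cells of $\Theta_{1}=T\Omega_{1}$ are full-width strips bounded above and below by curves of $\Gamma$ (see \Cref{eqn:Unstable Strip Partition} and the remark following it) and $\Gamma$ itself partitions $\Lambda$, the curve $\gamma'$ lies in a single cell $T\omega$ with $\omega\in\Omega_{1}$ (a boundary curve is assigned to exactly one strip by the half-open convention). On $T\omega$ the map $T^{-1}$ is the inverse branch carrying $T\omega$ homeomorphically onto the column $\omega=\hat\omega\times[0,1]$; the factor map $u$ restricts to a $C^{1}$ bijection $\hat\omega\to[p,q]$, whose inverse $h$ satisfies $\Ninfty{Dh}=\Ninfty{[Du]^{-1}}\le\beta\le 1$ by \Cref{lem:Uniform Expansion}. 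Moreover $\Omega_{1}$ refines $\{[r=n]\}$, so $r$ is constant (say equal to $n_{\omega}$) on $\omega$, and by equivariance of $\Gamma$ (\Cref{lem:Unstable Curves}), $T^{-1}\gamma'$ is contained in a single unstable curve $\gamma\in\Gamma$.

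Writing $\gamma'$ as a graph over $[p,q]$ and $\gamma$ restricted to $\hat\omega$ as a graph $x\mapsto(x,\psi(x))$, any two points of $\gamma'$ with distinct first coordinates $x\neq w$ are sent by $T^{-1}$ to $(h(x),\psi(h(x)))$ and $(h(w),\psi(h(w)))$, which both lie on $\gamma$. Hence the difference of $R(z)\eta$ at these two points equals $z^{\,n_{\omega}}$ times the difference of $\eta$ at two points of $\gamma$, so using $\abs{z}\le 1$, the definition of $\MUL{\eta}$, and $\abs{h(x)-h(w)}\le\abs{x-w}$,
\[
  \abs{R(z)\eta(x,\cdot)-R(z)\eta(w,\cdot)} \le \MUL{\eta}\,\abs{h(x)-h(w)} \le \MUL{\eta}\,\abs{x-w}.
\]
Dividing by $\abs{x-w}$ and taking suprema over pairs of points on $\gamma'$ and over $\gamma'\in\Gamma$ gives $\MUL{R(z)\eta}\le\MUL{\eta}$; adding the sup-norm estimate yields $\NUL{R(z)\eta}\le\NUL{\eta}$. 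The only real content here is the geometric bookkeeping—that each $\Gamma$-curve sits inside one $\Theta_{1}$-cell and that $T^{-1}$ carries it into one column of $\Omega_{1}$ while contracting horizontal distances—and this is exactly what the cited lemmas provide; the mildly delicate point is simply to invoke the half-open convention so that boundary curves cause no ambiguity.
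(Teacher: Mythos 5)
Your proposal is correct and follows essentially the same route as the paper: both reduce $R(z)\eta$ to the pointwise formula $z^{\,r(T^{-1}(x,y))}\eta(T^{-1}(x,y))$, bound the sup-norm using $\abs{z}\le 1$, and bound the unstable modulus using constancy of $r\circ T^{-1}$ on unstable curves together with the horizontal contraction of $T^{-1}$ along them (the paper keeps the sharper factor $\beta$ from \Cref{lem:Uniform Expansion}, i.e.\ $\MUL{R(z)\eta}\le\beta\MUL{\eta}$, while you settle for $\le\MUL{\eta}$, which still suffices). Your extra bookkeeping via $\Theta_{1}=T\Omega_{1}$ just makes explicit the facts the paper asserts directly, namely that $T^{-1}\gamma$ is a segment of a single curve of $\Gamma$ and that $r\circ T^{-1}$ is constant on it.
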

\begin{proof}
We begin by bounding the sup-norm term in $\NUL{\cdot}$,
\begin{align*}
  \Nsup{R(z)\eta}
  &=
  \sup_{(x,y)\in\Lambda}
  \abs{
    \sum_{n=1}^{\infty} z^{n}\, R_{n}\eta(x,y)
  }\\
  &=
  \sup_{(x,y)\in\Lambda}
  \abs{
    \sum_{n=1}^{\infty} z^{n} \,T_{*}\left( \indf{ \left\{ r=n \right\}}\,\eta \right)(x,y)
  }\\
  &=
  \sup_{(x,y)\in\Lambda}
  \abs{
    \sum_{n=1}^{\infty} z^{n}\, [\indf{ \left\{ r=n \right\}}\circ T^{-1}](x,y)\, [\eta\circ T^{-1}](x,y)
  }\\
  &=
  \sup_{(x,y)\in\Lambda}
  \abs{
    z^{r\left( T^{-1}(x,y) \right)}\, [\eta\circ T^{-1}](x,y)
  }\\
  &\le
  \sup_{(x,y)\in\Lambda}
  \abs{
    [\eta\circ T^{-1}](x,y)
  }\\
  &\le
  \Nsup{\eta}
\end{align*}
For the $\MUL{\cdot}$-term, fix $\gamma \in \Gamma$ and $(x,y), (w,z) \in \gamma$. Note that $T^{-1}\gamma$ is a segment of some unstable curve $\gamma' \in \Gamma$. Note that $r\circ T^{-1}$ is constant on unstable curves, that is $T_{*}r$ is $\mathcal{B}^{u}$ measurable. Let $N(\gamma) = r\left( T^{-1}(x,y)\right) = r\left( T^{-1}(w,z)\right)$. Let $x'$ and $w'$ denote the first coordinates of $T^{-1}(x,y)$ and $T^{-1}(w,z)$ respectively. Note that $\abs{x'-w'} \le \beta \abs{x-w}$, since $T$ is uniformly expanding along unstable curves. Computing as above we obtain,
\begin{align*}
  R(z)\eta(x,y) - R(z)\eta(w,z)
  &=
  z^{N(\gamma)}\eta\left( T^{-1}(x,y) \right)
  -
  z^{N(\gamma)}\eta\left( T^{-1}(w,z) \right)\\
  &\le
  \abs{z^{N(\gamma)}}\MUL{\eta}\abs{x'-w'}\\
  &\le
  \beta\MUL{\eta}\abs{x-w}.
\end{align*}	
Since $(x,y)$ and $(w,z)$ were arbitrary, $\MUL{R(z)\eta} \le \beta \MUL{\eta}$.
We conclude that,
\[
  \NUL{R(z)\eta} \le \Nsup{\eta} + \beta \MUL{\eta} \le \NUL{\eta}.
\]
\end{proof}
\begin{lem}
\label{lem:Peripheral Spectrum}
For each $z$ with $\abs{z}=1$,
\begin{enumerate}
  \item The peripheral spectrum of $R(z)$ consists of semi-simple\footnote{An eigenvalue is semi-simple if its algebraic and geometric multiplicities match.} eigenvalues.
  \item Every peripheral eigenvector of $R(z)$ is in $\ULip$.
\end{enumerate}
\end{lem}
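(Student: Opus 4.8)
The plan is to read both assertions off three facts already in hand: $R(z)$ is quasi-compact with essential spectral radius at most $\beta^{a}<1$ (\Cref{prop:Quasi-Compactness}); the Lasota--Yorke bound \Cref{eqn:Lasota-Yorke Strong-Strong Inequality} forces $R(z)$ to be power bounded on $\Bs$ when $\abs{z}=1$; and $R(z)$ is a weak contraction for $\NUL{\cdot}$ by \Cref{lem:Auxiliary Operator Bound}.

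For the first assertion I would argue as follows. With $\abs{z}=1$, \Cref{eqn:Lasota-Yorke Strong-Strong Inequality} gives $\NBs{R(z)^{k}\eta}\le(\kappa+1)\NBs{\eta}$ for every $\eta\in\ULip$ and every $k$; since $\ULip$ is dense in $\Bs$ and each $R(z)^{k}$ is bounded on $\Bs$, this yields $\sup_{k\ge 1}\|R(z)^{k}\|_{Hom(\Bs,\Bs)}\le\kappa+1$. Let $\lambda$ be a spectral value of $R(z)$ with $\abs{\lambda}=1$. Quasi-compactness makes $\lambda$ an isolated eigenvalue with finite-dimensional generalized eigenspace $V_{\lambda}=\mathrm{range}(\Pi_{\lambda})$, where $\Pi_{\lambda}$ is the Riesz projection. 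Write $R(z)|_{V_{\lambda}}=\lambda I+N$ with $N$ nilpotent. If $N\neq 0$ then $R(z)^{k}|_{V_{\lambda}}=\sum_{j\ge 0}\binom{k}{j}\lambda^{k-j}N^{j}$ contains a nonzero term of positive polynomial order in $k$ (here $\abs{\lambda}=1$ is used, since $\abs{\lambda}<1$ would make the whole sum decay and $\abs{\lambda}>1$ is impossible as the spectral radius is $\le 1$), so $\|R(z)^{k}|_{V_{\lambda}}\|$ is unbounded in $k$, contradicting $\|R(z)^{k}|_{V_{\lambda}}\|\le\|R(z)^{k}\|\le\kappa+1$. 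Hence $N=0$ and $\lambda$ is semi-simple.

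For the second assertion I would first represent the peripheral projections as Ces\`aro averages. Because $R(z)$ is power bounded and, by the first assertion, every unit-modulus eigenvalue is semi-simple, for each such $\lambda$ the averages $S_{n}=\frac{1}{n}\sum_{k=0}^{n-1}\lambda^{-k}R(z)^{k}$ converge in $Hom(\Bs,\Bs)$ to $\Pi_{\lambda}$ (on $V_{\lambda}$ they tend to the identity, on the other unit-modulus eigenspaces the geometric averages vanish, and on the rest of the spectrum, which lies in a disc of radius $<1$, they tend to $0$). For $\eta\in\ULip$, \Cref{lem:Auxiliary Operator Bound} gives $\NUL{S_{n}\eta}\le\NUL{\eta}$, so $\Pi_{\lambda}\eta=\lim_{n}S_{n}\eta$ is a $\NBs{\cdot}$-limit of elements of $K_{N}:=\{\zeta\in\ULip:\NUL{\zeta}\le N\}$ with $N=\NUL{\eta}$. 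Granting the \emph{closedness claim} below — that the closure of $K_{N}$ in $\Bs$ is again contained in $\ULip$ — we obtain $\Pi_{\lambda}(\ULip)\subseteq\ULip$. Since $\ULip$ is dense in $\Bs$ and $\Pi_{\lambda}$ is a bounded operator of $\Bs$ onto $V_{\lambda}$, the set $\Pi_{\lambda}(\ULip)$ is dense in $V_{\lambda}$; and $V_{\lambda}\cap\ULip=\bigcup_{N\ge 1}(V_{\lambda}\cap K_{N})$ is a linear subspace of the finite-dimensional space $V_{\lambda}$ (because $\NUL{\cdot}$ is a norm), so being dense it equals $V_{\lambda}$. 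Thus $V_{\lambda}\subseteq\ULip$, and since $\lambda$ is semi-simple every $\lambda$-eigenvector lies in $V_{\lambda}\subseteq\ULip$.

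The main obstacle is the closedness claim: if $\zeta_{n}\in\ULip$ with $\NUL{\zeta_{n}}\le N$ and $\NBs{\zeta_{n}-\zeta}\to 0$, then $\zeta$ has a representative in $K_{N}$. The approach I would take: the $\zeta_{n}$ are bounded in $L^{\infty}(\Lambda,\Leb)$, so a subsequence converges weak-$*$ to some $\bar\zeta$ with $\Nsup{\bar\zeta}\le N$; using the $C^{1}$-bounded partition $\Gamma$ of $\Lambda$ by unstable curves (\Cref{lem:Unstable Curves}) to straighten the unstable foliation, the restrictions $\zeta_{n}|_{\gamma}$ are, for each $\gamma\in\Gamma$, uniformly bounded and uniformly Lipschitz, so along a countable dense family of curves an Arzel\`a--Ascoli plus diagonal extraction produces a limit that is Lipschitz with constant $\le N$ along each of those curves; propagating this and reconciling it with $\bar\zeta$ shows $\MUL{\bar\zeta}\le N$ after modification on a $\Leb$-null set, and a final check shows $\bar\zeta$ and $\zeta$ induce the same functionals $\psi\mapsto\int_{0}^{1}\bar\zeta(x,y)\psi(y)\,dy$, hence the same element of $\Bs$. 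The delicate point is exactly this reconciliation of three limits — weak-$*$ in $L^{\infty}$, uniform along unstable curves, and in $\NBs{\cdot}$ — which test against different families of sets; it is the same mechanism underlying the compact embedding of \Cref{prop:Compact Embedding}, whose argument I would expect to reuse.
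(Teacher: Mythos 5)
Your treatment of semi-simplicity is fine and is essentially the paper's argument: the uniform bound \Cref{eqn:Lasota-Yorke Strong-Strong Inequality} makes $R(z)$ power bounded on $\Bs$ for $\abs{z}=1$, and quasi-compactness (\Cref{prop:Quasi-Compactness}) turns any nontrivial Jordan block on the unit circle into polynomial growth of $\|R(z)^{k}\|$, a contradiction.

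The second assertion is where there is a genuine gap. Your whole argument funnels through the ``closedness claim'' that a $\NBs{\cdot}$-limit of a $\NUL{\cdot}$-bounded sequence in $\ULip$ again has a representative in $\ULip$, and neither the paper nor your sketch establishes this. The sketch as written does not work: (i) the candidate limit you take is a weak-$*$ limit in $L^{\infty}(\Lambda,\Leb)$, but the functionals that define $\Ns{\cdot}$ and $\NBs{\cdot}$ are integrals over single vertical lines $\ell(x)$, which are Lebesgue-null in $\Lambda$, so weak-$*$ convergence in $L^{\infty}(\Leb)$ gives no control whatsoever on $\psi\mapsto\int_{0}^{1}\bar\zeta(x,y)\psi(y)\,dy$ for any fixed $x$, and the ``final check'' identifying $\bar\zeta$ with $\zeta$ in $\Bs$ cannot be run from that limit; (ii) membership in $\ULip$ imposes no regularity transverse to the unstable curves, so Arzel\`a--Ascoli along a countable family of curves in $\Gamma$ produces limits on those curves only --- there is no sense in which a countable subfamily is ``dense'' enough to determine or control the limit on the uncountably many remaining curves, so the uniform Lipschitz bound along unstable curves does not pass to the limit by this route; (iii) \Cref{prop:Compact Embedding} is a statement about the pair $\Bs\hookrightarrow\Bw$, and its finite-net-of-functionals proof does not yield closedness (or compactness) of $\ULip$-balls inside $\Bs$, so it cannot simply be ``reused''. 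Proving the closedness claim would amount to a Helly-type selection theorem for $\ULip$ in $\Bs$ (including a treatment of the unstable holonomies between vertical lines), which is a substantive piece of work the paper never does.

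The paper avoids this entirely by exploiting finite dimensionality: writing $R(z)=Q+F$ with $F$ of finite rank and $QF=FQ=0$, the range $F(\Bs)$ is finite dimensional hence closed, density of $\ULip$ in $\Bs$ gives $F(\ULip)=F(\Bs)$, and the $\ULip$-invariance coming from \Cref{lem:Auxiliary Operator Bound} places $F(\Bs)$ inside $\ULip$; peripheral eigenvectors lie in $F(\Bs)$. Note that you already use the ``dense subspace of a finite-dimensional space is everything'' trick at the end of your argument, but only after invoking the unproved closedness claim; to repair your proof you should either supply a genuine proof of that claim or restructure along the paper's lines, pushing the $\NUL{\cdot}$-contraction of \Cref{lem:Auxiliary Operator Bound} through the finite-rank part of the quasi-compact decomposition rather than through limits of Ces\`aro averages in $\Bs$.
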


\begin{proof}
This follows from \Cref{lem:Auxiliary Operator Bound} by a standard argument, and can be found in a slightly different setting in \cite{Baladi2000} Proposition 3.5. We will outline the proof for the convenience of the reader.\par

Note that by \Cref{prop:Quasi-Compactness} the operator $R(z)$ is quasi-compact and therefore admits a decomposition $R(z) = Q + F$ such that the spectral radius of $Q$ is the essential spectral radius of $R(z)$, $F$ is supported on a finite dimensional subspace of $\Bs$, and $QF = FQ = 0$. It follows directly that $R(z)^k = Q^k +F^k$ for $k\ge 1$.\par

It follows from \Cref{eqn:Lasota-Yorke Strong-Strong Inequality} that for all $k \ge 1$ and $\eta \in \ULip$, we have the uniform bound $\NBs{R(z)^{k}\eta} \le \abs{z}\left[ \kappa +1 \right]\NBs{\eta}$. From the decomposition in the last paragraph we have $\NBs{F^k\eta} \le \abs{z}\left[ \kappa +1 \right]\NBs{\eta}$.\par

The peripheral spectrum of $R(z)$ coincides with the peripheral spectrum of $F$ since the spectral radius of $Q$ is strictly less that that of $R(z)$. It follows that the peripheral spectrum of $R(z)$ consists of eigenvalues.\par

Recall that the spectral radius of $R(z)$ and hence $F$ is $\abs{z}$. If $F$ had a generalized eigenvector associated to a peripheral eigenvalue, then $\abs{z}^{-k}\|F^{k}\|_{op}$ would grow linearly in $k$. This cannot be the case since $\abs{z}^{-k}\|F^{k}\|_{op} \le \kappa+1$. Therefore, the peripheral spectrum of $R(z)$ consists of semi-simple eigenvalues.\par

Note that $F(\Bs)$ is finite dimensional and therefore closed in $\Bs$. By definition $\ULip$ is dense in $\Bs$, thus $F(\ULip)$ is dense in $F(\Bs)$. Since $F(\Bs)$ is closed $F(\ULip) = F(\Bs)$.\par

By \Cref{lem:Auxiliary Operator Bound}, $R(z)$ is bounded on $\ULip$ and thus $F$ is also. Therefore, $F(\Bs)=F(\ULip) \subset \ULip$. Every eigenvector of $F$ is contained in $F(\Bs)$. We conclude that every eigenvector associated to a peripheral eigenvalue of $R(z)$ is in $\ULip$.  
\end{proof}

We are now in a position to prove \Cref{prop:Spectrial Gap and Aperiodicity}.

\begin{proof}[Proof of \Cref{prop:Spectrial Gap and Aperiodicity}]
The proof of this lemma will be divided into several distinct parts.
\paragraph{Claim 1:} For all $\abs{z}\le 1$ the operator $R(z) - I$ is invertible if and only if $1$ is not an eigenvalue of $R(z)$.

\begin{subproof}[Proof of Claim 1]
  If $1$ is an eigenvalue of $R(z)$, then $R(z)-I$ is not invertible by the definition of an eigenvalue. Suppose that $R(z)-I$ is not invertible. Then $1$ is a point in the spectrum of $R(z)$. By \Cref{prop:Quasi-Compactness} the operator $R(z)$ is quasi-compact with essential spectral radius less than $\beta^{a}\abs{z}$, which is strictly less then $1$, therefore $1$ is a point in the spectrum of $R(z)$ that is outside the essential spectrum. It follows that $1$ is an eigenvalue of $R(z)$ and that any eigenvector associated to the eigenvalue $1$ lies in a finite dimensional $R(z)$ invariant subspace of $\Bs$.\par
\end{subproof}

\paragraph{Claim 2:} If $\abs{z}<1$, then $R(z)-I$ is invertible.\par

\begin{subproof}[Proof of Claim 2]
  Fix $z$ such that $\abs{z}<1$. It follows from \Cref{prop:Quasi-Compactness} that the spectral radius of $R(z)$ is at most $\abs{z}$. By assumption $\abs{z}<1$, so $1$ is not an eigenvalue of $R(z)$. By the previous claim $R(z)-I$ is invertible.\par
\end{subproof}

\paragraph{Claim 3:} If $\abs{z}=1$ and $z\neq 1$, then $I-R(z)$ is invertible. The operator $R(1)$ has a simple eigenvalue at $1$ and the associated eigenspace is $span\left\{ \indf{\Lambda} \right\}$.\par

\begin{subproof}[Proof of Claim 3]
  We will verify both parts of the claim simultaneously. Let $z$ be a complex number such that $\abs{z}=1$ and let $\eta\in\Bs$ be an eigenvector of $R(z)$ with eigenvalue $1$, that is
  \[
    R(z)\eta = \eta.
  \]
  The proof relies on two observations about $\eta$: 
\paragraph{Observation 1:} If $\abs{z} = 1$ and $\eta \in \Bs$ such that $R(z)\eta = \eta$, then for almost every $(x,y)\in\Lambda$,
\begin{equation}
  \label{eqn:Peripheral Spectrum Koopman Identity}
  [\eta\circ T](x,y)\, z^{r} = \eta(x,y).
\end{equation}
\paragraph{Observation 2:} If $\abs{z} = 1$ and $\eta\in \Bs$ so that $R(z)\eta = \eta$, then $\eta$ is a constant multiple of $\indf{\Lambda}$.\par

  We will verify both observations after completing the proof of Claim 3.\par

  We will show that, if $\eta \neq 0$, then $z=1$. By Observation 2, $\eta$ is constant. Since $T$ preserves Lebesgue measure $\eta\circ T = \eta$. It follows that \Cref{eqn:Peripheral Spectrum Koopman Identity} reduces to
  \[
    (z^{r(x)}-1)\eta = 0.
  \]
  The equation above is satisfied if $\eta = 0$ or if $z^{r(x)} =1$.\par

  The equation $z^{r(x)} = 1$ is satisfied if and only if for all $a \in \image{r} \subseteq \Z$,
  \[
    a \frac{\arg(z)}{2\pi} \in \Z. 
  \]
  The inclusion above can hold if and only if there exists a rational number $b/c$ such that $\frac{\arg(z)}{2\pi} = b/c$.
  Assuming that $b/c$ is reduced we see that $ab/c \in \Z$ and if and only if $c$ divides $a$.
  Therefore, $\frac{\arg(z)}{2\pi} = b/c$ and $c$ divides $a$ for all $a \in \image{r}$.
  From \Cref{sec:Escape From Fixed Points} it follows that $\image{r} = \left\{ n\in \N: n\ge2 \right\}$ and hence the greatest common divisor of $\image{r}$ is $1$ so that $c =1$ and hence $\frac{\arg(z)}{2\pi}\in\Z$.
  Therefore the principal value of the argument of $z$ is $0$ and hence $z=1$.\par

  $T$ preserves Lebesgue measure on $\Lambda$. By \Cref{eqn:Trasfer Operator at One} we have that $R(1)$ is the Frobenius-Perron operator of $T$. It follows that $R(1)\indf{\Lambda} =\indf{\Lambda}$. By Observation 2 any $\eta$ that satisfies the eigenvector equation $R(1)\eta =\eta$ is a multiple of $\indf{\Lambda}$. By \Cref{lem:Peripheral Spectrum} the eigenvalue $1$ is semi-simple. We have verified that $\indf{\Lambda}$ is a basis for the eigenspace associated to the eigenvalue $1$.  We conclude that $1$ is a simple eigenvalue of $R(1)$.

\end{subproof}

To complete the proof of the lemma it remains to verify Observation 1 and Observation 2 from the proof of the last claim.

\begin{subproof}[Proof of Observation 1]
  By \Cref{lem:Peripheral Spectrum} we have $\eta \in \ULip$.
  Since $\Ninfty{\eta} \le \Nsup{\eta} \le \NUL{\eta}$ we have $\eta \in \Linfty{\Lambda,\lambda}$.
  For all $\psi$ and  $\eta$ in $\ULip$ we have  
  \begin{align*}
    \int R(z) \eta \,\psi \, d\lambda 
    &=
    \int 
    \sum_{n=1}^{\infty}
    z^{n} R_{n}\eta\, \psi
    \,d\lambda
    =
    \sum_{n=1}^{\infty}
    \int 
    z^{n}T_{*}(\eta\,\indf{[r=n]}) \,\psi
    \,d\lambda\\
    &=
    \sum_{n=1}^{\infty}
    \int 
    \eta \,z^{n}\indf{[r=n]} \,\psi\circ T
    \,d\lambda
    =
    \int 
    \sum_{n=1}^{\infty}
    \eta\, z^{n}\indf{[r=n]} \,\psi\circ T
    \,d\lambda\\
    &=
    \int \eta \,z^{r} \psi\circ T \, d\lambda.
  \end{align*}
  Since $\eta \in \Linfty{\lambda}$ we have $\eta \in \Ltwo{\lambda}$.
  Define $W(z)$ on $\Linfty{\lambda}$ by $W(z) \psi= z^{r} \, \psi\circ T$.
  Now we compute as in \cite{Gouezel2004},
  \begin{align*}
    \Ntwo{W(z)\eta -\eta}^2 
    &= 
    \Ntwo{W(z)\eta}^2-2 Re\langle W(z)\eta , \eta \rangle +\Ntwo{\eta}^2 \\
    &= 
    \Ntwo{W(z)\eta}^2-2 Re\langle \eta , R(z)\eta \rangle +\Ntwo{\eta}^2\\
    &=
    \Ntwo{W(z)\eta}^2-2 Re\langle \eta , \eta \rangle +\Ntwo{\eta}^2\\
    &=
    \Ntwo{W(z)\eta}^2-\Ntwo{\eta}^2,
  \end{align*}
  and note that 
  \[
    \Ntwo{W(z)\eta}^{2} = \int |\eta|^2\circ T \, d\lambda = \int |\eta|^{2} \, d\lambda = \Ntwo{\eta}^{2},
  \]
  from which we conclude that $W(z)\eta = [\eta\circ T] \, z^{r} = \eta$ except possibly on a $\lambda$ null set. We have verified \Cref{eqn:Peripheral Spectrum Koopman Identity}.
\end{subproof}

\begin{subproof}[Proof of Observation 2]
  We begin by showing that $\eta$ is essentially constant along stable fibres. Recall that by \Cref{lem:Peripheral Spectrum} we have $\eta \in \ULip$.
  For each $j\ge1$ select $\tau_{j}\in C^{\infty}\left( \Lambda \right)$ such that $\None{\tau_{j}-\eta} <2^{-j}$.
  Note that $\None{W(\tau_{j}-\eta)} = \None{z^{r} (\tau_{j}-\eta)\circ T} = \None{\tau_{j}-\eta} <2^{-j}$.
  Let $\bar{\tau_{j}}(x,y) = \int \tau_{j}(x,y)\, dy$ and note that by the Mean Value Theorem there exists $s\in(0,1)$ and $t \in (y,s)$ such that 
  \[
    |\tau_{j}(x,y) - \bar{\tau}_{j}(x,y)| 
    = 
    |\tau_{j}(x,y) - \tau_{j}(x,s)| 
    =
    |\partial_y \tau_{j}(x,t)||y-s|
    \le 
    \Ninfty{\partial_y \tau_{j}}|y-s|.
  \]
  Further application of the Mean Value Theorem yields
  \[
    |W^n\tau_{j}(x,y) - W^n\bar{\tau}_{j}(x,y)|
    \le
    \Ninfty{\partial_y \tau_{j}}\Ninfty{\partial_y v^{(n)}_{x}}
    \le
    \Ninfty{\partial_y \tau_{j}} \beta^n.
  \]
  For each $j \ge 1$ select $n=n(j)$ such that $\Ninfty{\partial_{y}\tau_{j}}\beta^{n} + 2^{-j}< 10 \cdot 2^{-j}$ and note that 
  \[
    \None{\eta-\bar{\tau}_{j}} \le \None{W^{n}\eta- W^{n}\tau_{j}} + \None{W^{n}\tau_{j} - W^{n}\bar{\tau}_{j}} \le 10\cdot 2^{-j}.
  \]
  We see that $\eta$ is the $L^{1}$-limit of functions that are constant along stable fibres.
  It follows that for $\mu$-a.e. $x\in[p,q]$,
  \begin{equation}
    \label{eqn:Peripheral Spectrum Essentially Veriticaly Constant}
    \text{for $\Leb$-a.e. $y$, }\eta(x,y) = \int_{\ell(x)} \eta(x,z) \,d\Leb(z),
  \end{equation}
  Next we will use the unstable regularity of $\eta$ to show that Property \ref{eqn:Peripheral Spectrum Essentially Veriticaly Constant} holds for every $x \in [p,q]$. To verify this suppose that $x$ failed to satisfy Property \ref{eqn:Peripheral Spectrum Essentially Veriticaly Constant}. This can happen if and only if there exist sets $A_{x}, B_{x} \subset \ell(x)$ and $\epsilon>0$, such that $\Leb(A_x)>0$, $\Leb(B_x)>0$, and for all $y$ in $A_x$ and $z$ in $B_{x}$
  \begin{equation}
    \label{eqn:Peripheral Spectrum Separation of Values}
    \eta(x,y) - \eta(x,z)\ge \epsilon.
  \end{equation}
  For $w\neq x$ let $A_{w}\subset \ell(w)$ be the set obtained by sliding\footnote{By sliding along unstable curves we mean $(x,y) \mapsto \gamma(x,y) \cap \ell(w)$} $A_{x}$ along unstable curves into $\ell(w)$ and let $B_{w}$ be defined similarly. Note that $\Leb(A_{w})>0$ if and only if $\Leb(A_{x})>0$. Since $\eta$ is in $\ULip$ we have that 
  \[
    \abs{\eta(x,y) - \eta(\ell(w) \cap \gamma(x,y))} \le \MUL{\eta}\abs{x-w}.
  \]
  Choose $\delta>0$ so that $\MUL{\eta}\delta<\epsilon/3$.
  Fix $w\in [p,q]$ such that $\abs{w-x}<\delta$.
  Select $(w,y)\in A_{w}$ and $(w,z)\in B_{w}$ and let $(x,y')\in A_{x}$ and $(x,z')\in B_{x}$ denote the points obtained by sliding along unstable disks back to $\ell(x)$. We compute,
  \[
    \eta(w,y) - \eta(w,z) \ge \eta(x,y') - \eta(x,z') - 2\MUL{\eta}\abs{x-w} \ge \epsilon - 2\MUL{\eta}\delta \ge \frac{\epsilon}{3}. 
  \]
  We have just shown that for every $w \in [p,q]$ with $\abs{w-x}<\delta$ Property \ref{eqn:Peripheral Spectrum Separation of Values} holds at $w$, thus Property \ref{eqn:Peripheral Spectrum Essentially Veriticaly Constant} fails at $w$. This contradicts our observation that \Cref{eqn:Peripheral Spectrum Essentially Veriticaly Constant} holds for $\mu$-a.e. $x \in [p,q]$.
  We conclude that \Cref{eqn:Peripheral Spectrum Essentially Veriticaly Constant} holds for every $x\in[p,q]$.\par

  Define $h(x) =\int_{0}^{1} \eta(x,y) \, dy$.
  This function is Lipschitz. To verify this fix $x,w \in [p,q]$. Let $A_{x}\subset \ell(x)$ denote the set of points in $\ell(x)$ where \Cref{eqn:Peripheral Spectrum Essentially Veriticaly Constant} fails and let $A_{w}$ be defined similarly. By the previous paragraph both $A_{x}$ and $A_{w}$ are null sets. Let $B \subset \ell(x)$ be the set obtained by sliding $A_{w}$ along unstable disks into $\ell(x)$. The set $B$ is a null set, therefore the set $G=\ell(x) -(A_x \cup B)$ consisting of points in $\ell(x)$ where $\eta(x,y) = h(x)$ and $\eta(\gamma(x,y) \cap \ell(w)) = h(w)$ has full measure. Choose $(x,y) \in G$ and note that
  \[
    \abs{h(x) - h(w)} = \abs{\eta(x,y) - \eta\left(\gamma(x,y)\cap \ell(x)  \right)} \le \MUL{\eta}\abs{x-w},
  \]
  so $h$ is Lipschitz with Lipschitz constant at most $\MUL{\eta}$.\par

  Next we would like to verify $\int [W(z)\eta](x,y)\,dy = z^{r}\, [h\circ u](x)$.
  Note that $T$ maps $\ell(x)$ into $\ell(u(x))$ affinely. We will apply the change of variable $y' = v_{x}(y)$ noting that $dy' = \partial_{y}v_{x}(y) dy$ and that $\partial_{y} v_{x}(y)$ is constant and exactly equal to the length of the interval $T\ell(x) \subset \ell(u(x))$
  \begin{align*}
    \int_{0}^{1} z^{r(x)} (\eta\circ T)(x,y) \, dy
    =
    z^{r(x)} \frac{1}{\abs{T\ell(x)}}\int_{T\ell(x)} \eta(u(x),y')\, dy'
    =
    z^{r(x)}h\left( u(x) \right)
  \end{align*}
  Applying Observation 1 we obtain
  \begin{equation}
    \label{eqn:Peripheral Spectrum Factor Koopman Identity}
    z^{r}\, [h\circ u](x) = h(x)
  \end{equation}

  Next we deduce that $h$ is an essentially constant function. We will apply Corollary 3.2 from \cite{Aaronson&Denker-2001}. We reformulate the Corollary in our notation for the convenience of the reader.\par

  {\it
    Suppose that:
    \begin{itemize}
      \item $u\colon [p,q]  \to  [p,q] $ is a probability preserving, almost onto Gibbs-Markov map with respect to the partition $\alpha = \left\{ I_{j},I_{j}':j=2,\cdots,\infty \right\}$\footnote{see \Cref{sec:Escape From Fixed Points}}.
      \item $\varphi \colon [p,q] \to \left\{ z\in\C:\abs{z}=1 \right\}$ is $\alpha$-measurable.
      \item $h \colon [p,q] \to \left\{ z\in\C:\abs{z}=1 \right\}$ is Borel measurable and $\varphi(x) = h\cdot \bar{h}\circ u$
    \end{itemize}
  Then $h$ is essentially constant.}\par

  Let us verify that $u$ satisfies the first hypothesis of the Corollary. For each $a \in \alpha$ the map $u|_{a}$ is a homeomorphism onto $[p,q)$ with $C^{2}$ inverse $v_{a}\colon [p,q] \to a$.
  The map $u$ is uniformly expanding by \Cref{lem:Uniform Expansion} and satisfies Adler's bounded distortion property by \Cref{lem:Distortion Bound}. By Example 2 of \cite{Aaronson&Denker-2001} it follows that $u$ is a mixing Gibbs-Markov map. Since every branch of $u$ is onto, $u$ is almost onto as defined immediately after Theorem 3.1 of \cite{Aaronson&Denker-2001}.\par

  Since $u$ is a Gibbs-Markov map, $u$ is ergodic. Taking the complex modulus of \Cref{eqn:Peripheral Spectrum Factor Koopman Identity} yields $\abs{h} = \abs{h\circ u} = \abs{h}\circ u$, thus $\abs{h}$ is an essentially constant function. Since $h$ is Lipschitz, we have that $\abs{h}$ is Lipschitz and therefore pointwise constant. Without loss of generality assume that $\abs{h} = 1$.\par

  Since $h$ is a circle valued function we have $\bar{h} = 1/h$. Let $\varphi(x) = h\cdot\bar{h}\circ u$. By \Cref{eqn:Peripheral Spectrum Factor Koopman Identity} we have 
  \[
    \varphi(x) = h\cdot\bar{h}\circ u = \frac{h}{h\circ u} = z^{r(x)}.
  \]
  Since $r(x)$ is measurable with respect to the partition $\alpha$ we have that $\varphi$ is circle valued and $\alpha$-measurable. We have just verified that $\varphi$ satisfies the second hypothesis above and that $h$ and $\varphi$ are related as required in the third hypothesis by definition.\par

  Applying the Corollary we see that $h$ is essentially constant. Since $h$ is Lipschitz we conclude that $h$ is pointwise constant. Let $h_{0}$ denote the constant value of $h$. \par

  Define $H(x,y) = h_{0}$, this function is clearly in $\ULip$. On each vertical line the function $H$ agrees with $\eta$ except possibly on a set of one dimensional Lebesgue measure zero. It follows that for all $t \in [p,q]$ there exists a $\lambda$-null set $N_{t}$ such that for all $(x,y) \in \Lambda - N_{t}$ we have $\eta\left( \ell(t)\cap \gamma(x,y) \right) -H(x,y) =0 $. With this fact it follows directly from \Cref{eqn:Stable Norm Defintion,eqn:Unstable Modulus Defintion} that $\Ns{\eta -H} =0$ and $\MBs{\eta -H} = 0$, thus $\NBs{\eta - H} = 0$. We conclude that $\eta$ and $H$ are in the same $\Bs$-equivalence class.
\end{subproof}
Having verified Observation 1 and Observation 2 from the proof of Claim 3 we see that the lemma follows by combining Claim 2 and Claim 3.
\end{proof}

\subsection{Proof of Finite Variance Conditions}
\label{sec:Finite Variance Conditions}
In this section we prove \Cref{lem:Base Observable is L2}.
\begin{proof}[Proof of \Cref{lem:Base Observable is L2}]
  Recall that $\Omega_{1}$ is a partition mod $\lambda$ of $\Lambda$ that is the common refinement of the return time partition and the partition that splits $\Lambda$ along the vertical line $\ell_{A}$ (see \Cref{eqn:Stable Columns Partition}. Let $\omega_{1}(n,0)$ to be the cell of $\Omega_{1}$ with return time $n$ that lies to the right of $\ell_{A}$ and $\omega_{1}(n,1)$ be the cell of $\Omega_{1}$ that lies to the left of $\ell_{A}$. Note that $\omega_{1}(n,j)$ must pass near the fixed line at $\ell_{j}$ before returning to $\Lambda$.
  By the monotone convergence theorem and H\"older's inequality
  \begin{align*}
    \int_{\Lambda} \abs{\xi}^{2} \, d\lambda 
    &= 
    \sum_{j=0}^{1}
    \sum_{n=0}^{\infty} 
    \int_{\Lambda} 
    \indf{\omega_{1}(n,j)} \abs{\xi}^2\, d \lambda\\
    &\le
    \sum_{j=0}^{1}
    \sum_{n=0}^{\infty} 
    \Nsup{\indf{\omega_{1}(n,j)} \abs{\xi}^2} \lambda\left( \omega_{1}(n,j) \right).
  \end{align*}
  By \Cref{eqn:Interval Orbit Structure} we have $\lambda\left( \omega_{1}(n,0)\right)= p_{n+1}^{\circ} - p_{n+2}^{\circ}$ and $\lambda\left( \omega_{1}(n,0) \right)= q_{n+2}^{\circ} - q_{n+1}^{\circ}$. By \Cref{eqn:Period-2 Orbit Increment Asymptotics Interior Right,eqn:Period-2 Orbit Increment Asymptotics Interior Left}, as $n \to \infty$,
  \[
    \lambda\left( \omega_{1}(n,j)\right) 
    = 
    O\left( \left(\tfrac{1}{n} \right)^{2+\frac{1}{\alpha_{j}}}\right).
  \]
  We will show that 
  \[
    \sum_{n=0}^{\infty} 
    \Nsup{\indf{\omega_{1}(n,0)} \abs{\xi}^2} \lambda\left( \omega_{1}(n,0) \right),
    \tag{$\star$}
  \]
  converges for each of the stated conditions. The sum with $j=1$ converges by analogous arguments that are independent of the $j=0$ case.

  \begin{enumerate}[i.]
    \item By \Cref{lem:Base Observable Expansion}, as $n\to \infty$
      Suppose that $\alpha_{0}<1$
      \[
        \Nsup{\indf{\omega_{1}(n,0)}\abs{\xi}^2} =O(n^{2}).
      \]
      Therefore,
      \[
        \Nsup{\indf{\omega_{1}(n,0)} \abs{\xi}^2} \lambda\left( \omega_{1}(n,0) \right)
        =
        O\left( \left(\tfrac{1}{n} \right)^{\frac{1}{\alpha_{0}}}\right).
      \]
      and the terms in ($\star$) are summable.

    \item Suppose that $M_{0}=0$ and $\alpha_{0}=1$.
      By \Cref{lem:Base Observable Expansion}, as $n\to \infty$,
      \[
        \Nsup{\indf{\omega_{1}(n,0)}\abs{\xi}^2} =O(n^{2-2\gamma}).
      \]
      Therefore,
      \[
        \Nsup{\indf{\omega_{1}(n,0)} \abs{\xi}^2} \lambda\left( \omega_{1}(n,0) \right)
        =
        O\left( \left(\tfrac{1}{n} \right)^{2\gamma + 1}\right).
      \]
      Since $2\gamma+1 > 1$, the terms in ($\star$) are summable.
    \item Suppose that $M_{0} = 0$, $1< \alpha_{0}<3$, and $\gamma>\tfrac{\alpha_{0}-1}{2}$.
      By \Cref{lem:Base Observable Expansion}, as $n\to \infty$,
      \[
        \Nsup{\indf{\omega_{1}(n,0)}\abs{\xi}^2} =O\left(n^{2-2\frac{\gamma}{\alpha_{0}}}\right).
      \]
      Therefore,
      \[
        \Nsup{\indf{\omega_{1}(n,0)} \abs{\xi}^2} \lambda\left( \omega_{1}(n,0) \right)
        =
        O\left( \left(\tfrac{1}{n}\right)^{\frac{2\gamma +1}{\alpha_{0}}} \right)
      \]
      Since $\tfrac{2\gamma +1}{\alpha_{0}} > 1$, the terms in ($\star$) are summable.

  \end{enumerate}
\end{proof}
\subsection{Proof of Convergence and Continuity of Perturbations}
  \label{sec:Convergence and Continuity of Purturbations}
  In this section we prove \Cref{prop:Convergence and Continuity of Perturbations}
  \begin{proof}[Proof of \Cref{prop:Convergence and Continuity of Perturbations}]
  We will show that for all $\epsilon>0$ there exists $\delta>0$ such that for all $t \in (-\delta,\delta)$ and for all $\eta \in \ULip$,
  \begin{equation*}
    \NBs{\left[R_{n}(t) - R_{n}(0)\right]\eta} \le \epsilon \NBs{\eta}.
  \end{equation*}
  First note that,
  \begin{equation*}
    \left[ R_{n}(t)-R_{n}(0) \right]\eta = T_{*}\left( \left( \exp\left(it\xi\right) -1\right) \indf{[r=n]} \eta\right)
  \end{equation*}
  Second note that,
  \begin{equation*}
    \NBs{\left[R_{n}(t) - R_{n}(0)\right]\eta}
    =
    \Ns{\left[R_{n}(t) - R_{n}(0)\right]\eta}
    +
    \MBs{\left[R_{n}(t) - R_{n}(0)\right]\eta}.
  \end{equation*}
  Fix $\epsilon >0$. We will estimate the two terms on the right. Let $\psi \in \Ta$ be a test function with $\NTa{\psi}\le1$, $x$ be a point in $[p,q]$, and consider a typical integral from the definition of
  $\Ns{\left[R_{n}(t) - R_{n}(0)\right]\eta}$,
  \begin{equation*} 
    I=
    \int_{0}^{1}
    T_{*}\left(\left( \exp\left(it\xi  \right)-1 \right)\indf{[r=n]}\eta \right)(x,y)
    \, \psi(y)\, dy
  \end{equation*} 
  We will use the following facts to bound $I$.
  \begin{enumerate}[A.]
    \item For all $x \in \left[ p,q \right]$, $\frac{dv_{x}}{dy}$ is a constant and for all $x \in [r = n]$, $0< \frac{dv_{x}}{dy} \le \left[ \kappa +1 \right]\lambda\left[ r=n \right]$.
    \item The value of $\indf{\left[ r=n \right]}(x,y)$ is independent of $y$.
    \item For all $t,a \in \R$, $\abs{\exp\left( ita \right)-1} \le 2\abs{\sin\left(\frac{at}{2}\right)} \le \abs{at}$. 
      It follows that, for all $x \in [p,q]$, 
      \[
        \NTl{\exp\left( it\xi(x,\cdot) \right)-1} \le \abs{t} \NTl{\xi\left( x,\cdot \right)}.
      \]
  \end{enumerate}
  Applying the change of variables $(x,y) = T(w,z) = (u(x),v_{w}(z))$, we compute as follows.
  \begin{align*}
    I
    &= 
    \int_{0}^{1}
    T_{*}\left(\left( \exp\left(it\xi  \right)-1 \right)\indf{[r=n]}\eta \right)(x,y)
    \, \psi(y)\, dy\\
    &= 
    \int_{0}^{1}
    \left(\left( \exp\left(it\xi  \right)-1 \right)\indf{[r=n]}\eta \right)(w,z)
    \, \psi(v_{w}(z))\, \frac{dv_{w}}{dz} \, dz \\
    &\le 
    [\kappa +1] \lambda[r=n] 
    \int_{0}^{1}
    \left(\left( \exp\left(it\xi  \right)-1 \right)\indf{[r=n]}\eta \right)(w,z)
    \, \psi(v_{w}(z))\, dz\tag{by A}\\
    &= 
    [\kappa +1] \lambda[r=n] 
    \int_{0}^{1}
    \eta(w,z)
    \left( \exp\left(it\xi  \right)-1 \right)(w,z)
    \, \psi(v_{w}(z))\, dz \tag{by B}\\
    &\le
    [\kappa +1] \lambda[r=n] 
    \NBs{\eta}
    \NTl{
      \psi
      \left( \exp\left(it\xi  \right)-1 \right)(w,\cdot)
    }\\
    &\le
    2
    [\kappa +1] \lambda[r=n] 
    \NBs{\eta}
    \NTl{
      \psi
    }
    \NTl{
      \left( \exp\left(it\xi  \right)-1 \right)(w,\cdot)
    }\\
    &\le
    2\abs{t}
    [\kappa +1] \lambda[r=n] 
    \NBs{\eta}
    \NTl{
      \psi
    }
    \NTl{
      \xi(w,\cdot)
    }\tag{by C}
  \end{align*}
  Similarly, we consider a typical integral from the definition of $\MBs{\left(R_{n}(t) - R_{n}(0)\right)[\eta]}$. Let $x_1, x_2\in [p,q]$ such that $x_1 \neq x_2$ and $\psi_{1}$ and $\psi_{2}$ be test functions. We will apply changes of variable $(x_{1},y) = T(w_{1},z_{1})$ and $(x_{2},y) = T\left( w_{2},z_{2} \right)$. Define $\Psi_{j} = \psi_{j}\circ v_{w_{j}} \left( \exp(it\xi)-1 \right)$ and $\Phi_{j} = \left(\max_{j}\left\{\NTl{\Psi_j}\right\}\right)^{-1}\Psi_{j}$. We compute as follows.
  \begin{align*}
    II
    &= 
    \int_{0}^{1} \frac{R_{n}(t)\eta(x_1,y)\psi_{1}(y)-R_{n}(t)\eta(x_2,y)\psi_{2}(y)}{\abs{x_1-x_2}} \, dy\\
    &= 
    \tfrac{1}{\abs{x_{1}-x_{2}}} \int_{0}^{1} \eta(w_{1},z_{1})\Psi_{1}(z_{1})\, \frac{dv_{w_{1}}}{dz_{1}}\, dz_{1}\\
    &-
    \tfrac{1}{\abs{x_{1}-x_{2}}} \int_{0}^{1} \eta(w_{2},z_{2})\Psi_{2}(z_{2})\, \frac{dv_{w_{2}}}{dz_{2}}\, dz_{2}\\
    &\le
    \tfrac{[\kappa+1]\lambda[r=n]}{\abs{x_{1}-x_{2}}} \int_{0}^{1} \eta(w_{1},z_{1})\Psi_{1}(z_{1})\, dz_{1}\\
    &-
    \tfrac{[\kappa+1]\lambda[r=n]}{\abs{x_{1}-x_{2}}} \int_{0}^{1} \eta(w_{2},z_{2})\Psi_{2}(z_{2})\, dz_{2}\\
    &\le
    \tfrac{[\kappa+1]\lambda[r=n]\max_{j}\left\{\NTl{\Psi_j}\right\}}{\abs{x_{1}-x_{2}}} \int_{0}^{1} \eta(w_{1},z)\Phi_{1}(z) - \eta(w_{2},z)\Phi_{2}(z)\, dz\\
    &\le
    [\kappa+1]\lambda[r=n]\max_{j}\left\{\NTl{\Psi_j}\right\}\MBs{\eta}\\
    &\le
    2\abs{t}[\kappa+1]\lambda[r=n]\max_{j}\left\{\NTl{\psi_j}\NTl{\xi(w_{j},\cdot)}\right\}\MBs{\eta}
  \end{align*}
  It is necessarily the case that $w,w_{1},w_{2}\in[r=n]$ in the calculations above. 
  By \Cref{lem:Base Observable Expansion}, $\NTl{\xi(x,\cdot)}$ is uniformly bounded on $[r=n]$ by some constant $M(n)$. Chose $\delta>0$ such that $\delta(\kappa +1)M(n)\Ns{\eta}<\epsilon/3$. It follows from the calculations above that for $\abs{t}<\delta$ we have $I<\epsilon/3$ and $II<2\epsilon/3$, which completes the proof of continuity.\\

  By the estimates above, 
  \[
    \NBs{R_n(t) - R_{n}(0)} = O\left( \abs{t}\lambda\left[ r=n \right] \|\xi\|_{v_{1}}^{1}\right).
  \]

  As $n\to \infty$, $\NTl{\xi(x,\cdot)} = O(n)$ and $\lambda\left[ r=n \right] = O\left(n^{-2-\frac{1}{\alpha}}\right)$, where $\alpha = \max\left\{ \alpha_{0},\alpha_{1} \right\}$, thus
  \[
    \|R_n(t) - R_{n}(0)\|_{op} = O\left( \abs{t}n^{-1-\frac{1}{\alpha}}\right).
  \]
  The estimate above is summable in $n$. The result for $R(z,t)$ follows by an easy application of the triangle inequality and monotone convergence.\par

  By unpacking the definition of $\NBs{R_{n}(t)\eta}$ as was done above and choosing to use the bound$\abs{e^{-it\xi}}\le1$ yields
    \[
      \|R_n(t)\|_{op} = O\left( \lambda\left[ r=n \right] \right).
    \]
    Note that we could have obtained a similar bound in the argument above by taking $2\abs{\sin\left( \frac{t\xi}{2} \right)}<2$ instead of $2\abs{\sin\left( \frac{t\xi}{2} \right)}<\abs{t}\abs{\xi}$, however we would not have obtained the desired dependence on $\abs{t}$.
    Since $\lambda\left( [r=n] \right) = O \left( n^{-2-\frac{1}{\alpha}} \right)$, where $\alpha = \max\left\{\alpha_{0},\alpha_{1}\right\}$, the sequence $r_{n} = \lambda\left[ r=n \right]$ verifies the third claim.

\end{proof}
\subsection{Proof of Expansion of the Dominant Eigenvalue}
\label{sec:Expansion of Dominant Eigenvalue}
In this section we will prove \Cref{prop:Expansion of Dominant Eigenvalue}. Before we can complete the proof we will need the following two lemmas on the asymptotic behavior of $\xi$.
\begin{lem}
  \label{lem:Base Observable Expansion}
  Suppose that $X\colon \left[ 0,1 \right]^{2} \to \R$ is $\gamma$-H\"older for some $\gamma\in(0,1]$ and that $(x,y) \in \Lambda$ is a point such that $x\in [A,q]$ and $r(x,y) = n+2$ for some $n\ge0$. As $n \to \infty$,
  \[
    \xi(x,y) =  n\int_{0}^{1} X(0,y^{1+\frac{1}{\alpha_{0}}})\,dy + O\left(n^{1-\gamma}\right) + O\left(n^{1-\frac{\gamma}{\alpha_{0}}}\right).
  \]
  If $x\in [p,A]$, then as $n \to \infty$,
  \[
    \xi(x,y) =  n\int_{0}^{1} X(1,y^{1+\frac{1}{\alpha_{1}}})\,dy + O\left(n^{1-\gamma}\right) + O\left(n^{1-\frac{\gamma}{\alpha_{1}}}\right).
  \]
\end{lem}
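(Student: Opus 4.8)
The plan is to expand $\xi(x,y)=\sum_{k=0}^{n+1}(X\circ B^{k})(x,y)$ (valid since $r(x,y)=n+2$) along the orbit, and to show that, up to the allowed errors, the summands are samples of the profile $h(s):=X(0,s^{1+1/\alpha_{0}})$ at the points $\tfrac{n-k}{n}$, so that $\xi(x,y)$ is a Riemann sum for $n\int_{0}^{1}h$. Write $(x_{k},y_{k})=B^{k}(x,y)$. Since $x\in[A,q]$ and $r(x,y)=n+2$, \Cref{eqn:Return Time Cells} forces $x\in[p^{\circ}_{n+2},p^{\circ}_{n+1})$, and \Cref{eqn:Interval Orbit Structure} gives $x_{0}=x\in(A,q)$ while $x_{k}\in[p_{n-k+2},p_{n-k+1}]\subset[0,A)$ for $1\le k\le n+1$. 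First I would forget the $x$-coordinate of $X$: by $\gamma$-Hölder continuity and the estimate $p_{m}\asymp m^{-1/\alpha_{0}}$ from \Cref{eqn:Period-2 Orbit Preimage Asymptotics Exterior Left},
\[
  \Big|\xi(x,y)-\sum_{k=0}^{n+1}X(0,y_{k})\Big|\le[X]_{\gamma}\Big(|x_{0}|^{\gamma}+\sum_{k=1}^{n+1}x_{k}^{\gamma}\Big)=O\Big(\sum_{m=1}^{n}m^{-\gamma/\alpha_{0}}\Big)+O(1),
\]
which is $O(n^{1-\gamma/\alpha_{0}})+O(n^{1-\gamma})$ (the borderline $\gamma=\alpha_{0}$ only adds a $\log n$, immaterial for the applications, and the $O(1)$ is absorbed because $n^{1-\gamma}\ge1$).

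Next I would forget the $y$-coordinate. Since $x_{0}\in[A,1)$ and $x_{1},\dots,x_{n}\in[0,A)$, \Cref{eqn:Fiber Map} and \Cref{eqn:Fiber Map Iterates Left} give $y_{k}=\big([1-\phi(x_{1})]y+\phi(x_{1})\big)\,z_{k}$ for $1\le k\le n+1$, where $z_{k}:=\prod_{j=2}^{k}\phi(x_{j})\in[0,1]$ (empty product $=1$); writing $[1-\phi(x_{1})]y+\phi(x_{1})=1+\delta$ with $\delta=-(1-\phi(x_{1}))(1-y)$ and noting that $x_{1}\asymp n^{-1/\alpha_{0}}$ together with \Cref{eqn:Cut Function Expansion Left} gives $|\delta|\le1-\phi(x_{1})=O(1/n)$, hence $|y_{k}-z_{k}|\le z_{k}|\delta|=O(1/n)$ and $\big|\sum_{k}X(0,y_{k})-\sum_{k=1}^{n+1}X(0,z_{k})\big|\le O(1)+[X]_{\gamma}\,(n+1)\,O(n^{-\gamma})=O(n^{1-\gamma})$. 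Then I would pin down the profile of $z_{k}$: with $u_{k}:=z_{k}^{\alpha_{0}/(\alpha_{0}+1)}$ one has $u_{1}=1$ and $u_{k}=\phi(x_{k})^{\alpha_{0}/(\alpha_{0}+1)}u_{k-1}$; using $x_{k}\in[p_{n-k+2},p_{n-k+1}]$ together with the sharp $p_{m}$ and $p_{m}-p_{m+1}$ asymptotics of \Cref{Lem:Period-2 Orbit Preimage Asymptotics} and smoothness of $\phi$, one gets $1-\phi(x_{k})^{\alpha_{0}/(\alpha_{0}+1)}=\tfrac{1}{n-k+2}\big(1+O(\tfrac{1}{n-k+2})\big)$ for $k\le n-N$ ($N$ a large fixed constant), and solving the resulting linear error recursion for $\log u_{k}=\sum_{j=2}^{k}\log(1-\phi(x_{j})^{\alpha_{0}/(\alpha_{0}+1)})$ shows $u_{k}=\tfrac{n-k+2}{n}+O(1/n)$ uniformly for $k\le n-N$; the remaining $O(1)$ indices are handled crudely via $|X(0,z_{k})|\le\Ninfty{X}$. (This is a quantitative version of \Cref{lem:Point Asymptotics}(i).)

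Finally, $h(s)=X(0,s^{1+1/\alpha_{0}})$ is $\gamma$-Hölder on $[0,1]$ (composition of the $\gamma$-Hölder map $X(0,\cdot)$ with the Lipschitz map $s\mapsto s^{1+1/\alpha_{0}}$), so by the previous step $\sum_{k\le n-N}|X(0,z_{k})-h(\tfrac{n-k+2}{n})|\le[h]_{\gamma}\sum_{k}|u_{k}-\tfrac{n-k+2}{n}|^{\gamma}=O(n^{1-\gamma})$, and the standard Riemann-sum bound for $\gamma$-Hölder functions gives $\sum_{k=1}^{n+1}h(\tfrac{n-k+2}{n})=n\int_{0}^{1}h+O(n^{1-\gamma})=n\int_{0}^{1}X(0,s^{1+1/\alpha_{0}})\,ds+O(n^{1-\gamma})$. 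Adding the three error contributions proves the first formula. The case $x\in[p,A]$ is proved the same way near the right fixed line $x=1$: one uses \Cref{eqn:Cut Function Expansion Right}, \Cref{eqn:Period-2 Orbit Preimage Asymptotics Exterior Right} and the lower branch of \Cref{eqn:Interval Orbit Structure}, tracks $1-y_{k}$ in place of $y_{k}$, and obtains the stated formula with $\alpha_{0},c_{0},M_{0}$ replaced by $\alpha_{1},c_{1},M_{1}$.

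The hard part will be the profile step: \Cref{Lem:Period-2 Orbit Preimage Asymptotics} is stated only with ``$\sim$'', so one must first upgrade those asymptotics to estimates with a genuine (power-law) error term and then propagate that error through the product $\prod_{j=2}^{k}\phi(x_{j})$ so that the accumulated error is $O(n^{1-\gamma})+O(n^{1-\gamma/\alpha_{0}})$ and not merely $o(n)$; the subtle point is the last $O(1)$ steps of the orbit, where $x_{j}$ is bounded away from $0$ and $\phi(x_{j})$ bounded away from $1$, which have to be shown to contribute only at order $O(1)$ to the relevant sums.
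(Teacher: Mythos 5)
Your proposal follows essentially the same route as the paper's proof: expand $\xi(x,y)=\sum_{k=0}^{n+1}X(x_k,y_k)$ along the orbit, use the H\"older condition to replace $x_k$ by the fixed line $x=0$ at total cost $O\left(n^{1-\gamma/\alpha_0}\right)$, replace $y_k$ by the profile $\left(1-\tfrac{k+1}{n}\right)^{1+1/\alpha_0}$ at total cost $O\left(n^{1-\gamma}\right)$, and finish with a Riemann-sum approximation, the case $x\in[p,A]$ being symmetric. The only real divergence is bookkeeping: where you re-derive the behaviour of $\prod_{j\le k}\phi(x_j)$ quantitatively by hand, the paper simply cites the $\sim$-asymptotics of \Cref{lem:Point Asymptotics} and writes the per-term error as $O\left(n^{-\gamma}\right)$; the ``hard part'' you flag at the end --- upgrading \Cref{Lem:Period-2 Orbit Preimage Asymptotics} to an asymptotic with an explicit error rate, which the bare ICF hypothesis $Dh_j(x)=o\left(x^{\alpha_j-1}\right)$ does not obviously supply --- is exactly the point the paper's own proof glosses over, so your sketch is aligned with the paper's argument and, if anything, more explicit about what a fully quantitative proof of the stated remainder requires.
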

\begin{proof}
  We will prove the first asymptotic expansion, the proof of the second is similar. Through out this proof we will suppress the subscript on the contact parameters ($\alpha = \alpha_{0}$ and $c = c_{0}$). By \Cref{eqn:Base Observable Definition} 
  \[
    \xi(x,y)
    =
    \sum_{k=0}^{n+1}
    X\left(x_{k},y_{k}\right).
  \]
  Since $X$ is $\gamma$-H\"older,
  \begin{align*}
    \abs{
      X\left(x_{k},y_{k}\right)
      -
      X\left(0,y_{k}\right)
    }
    =
    O\left( x_{k}^{\gamma} \right)
    &= 
    O\left(n^{-\tfrac{\gamma}{\alpha}}\right).\\
    \abs{
      X\left(0,y_{k}\right)
      -
      X\left(0,\left(1- \tfrac{k+1}{n} \right)^{1+\frac{1}{\alpha}}\right)
    }
    &=
    O\left( n^{-\gamma} \right).
  \end{align*}
  An end point approximation to the Riemann sum shows that 
  \begin{align*}
    \abs{
      \int_{0}^{1}
      X\left(0,y^{1+\frac{1}{\alpha}}\right)
      \, dy
      -
      \frac{1}{n}
      \sum_{k=1}^{n-1}
      X\left(0,\left(1- \tfrac{k+1}{n} \right)^{1+\frac{1}{\alpha}}\right)
    }
    =O\left(n^{-\gamma}\right)
  \end{align*}
  A standard triangle inequality argument shows that 
  \[
    \xi(x,y) =  n\int_{0}^{1} X(0,y^{1+\frac{1}{\alpha}})\,dy + O\left(n^{1-\gamma}\right) + O\left(n^{1-\frac{\gamma}{\alpha}}\right)
  \]
  and therefore the claimed asymptotic holds.
\end{proof}
Next we investigate the cumulative distribution function of $\xi$.
\begin{lem}
  \label{lem:Base Observable Tail Asymptotics}
  Suppose that $X\colon \left[ 0,1 \right]^{2} \to \R$ is $\gamma$-H\"older for some $\gamma\in(0,1]$.
  \begin{itemize}
    \item If $M_0>0$, then for $t$ sufficiently large,
      \begin{align*}
        \lambda \left(\left[\xi >t \right] \cap [A,q]\right)
        &\sim 
        \tfrac{M_{0}}{\alpha_{0}\Leb(\Lambda)}
        \left( \tfrac{M_{0}\left( \alpha_{0}+1 \right)}{c_{0}\alpha_{0}} \right)^{\frac{1}{\alpha_{0}}}
        \left( \tfrac{1}{t} \right)^{1+\frac{1}{\alpha_{0}}},\\
        \lambda \left(\left[\xi <-t \right] \cap [A,q]\right)
        &=0.
      \end{align*}
    \item If $M_{0}<0$, then for $t$ sufficiently large,
      \begin{align*}
        \lambda \left(\left[\xi >t \right] \cap [A,q]\right)
        &=0,\\
        \lambda \left(\left[\xi <-t \right] \cap [A,q]\right)
        &\sim 
        \tfrac{\abs{M_{0}}}{\alpha_{0}\Leb(\Lambda)}
        \left( \tfrac{\abs{M_{0}}\left( \alpha_{0}+1 \right)}{c_{0}\alpha_{0}} \right)^{\frac{1}{\alpha_{0}}}
        \left( \tfrac{1}{t} \right)^{1+\frac{1}{\alpha_{0}}}.
      \end{align*}
    \item If $M_1>0$, then for $t$ sufficiently large,
      \begin{align*}
        \lambda \left(\left[\xi >t \right] \cap [p,A]\right)
        &\sim 
        \tfrac{M_{1}}{\alpha_{1}\Leb(\Lambda)}
        \left( \tfrac{M_{1}\left( \alpha_{1}+1 \right)}{c_{1}\alpha_{1}} \right)^{\frac{1}{\alpha_{1}}}
        \left( \tfrac{1}{t} \right)^{1+\frac{1}{\alpha_{1}}},\\
        \lambda \left(\left[\xi <-t \right] \cap [p,A]\right)
        &=0.
      \end{align*}
    \item If $M_{1}<0$, then for $t$ sufficiently large,
      \begin{align*}
        \lambda \left(\left[\xi >t \right] \cap [p,A]\right)
        &=0,\\
        \lambda \left(\left[\xi <-t \right] \cap [p,A]\right)
        &\sim 
        \tfrac{\abs{M_{1}}}{\alpha_{1}\Leb(\Lambda)}
        \left( \tfrac{\abs{M_{1}}\left( \alpha_{1}+1 \right)}{c_{1}\alpha_{1}} \right)^{\frac{1}{\alpha_{1}}}
        \left( \tfrac{1}{t} \right)^{1+\frac{1}{\alpha_{1}}}.
      \end{align*}
  \end{itemize}

\end{lem}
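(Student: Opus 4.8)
The plan is to reduce the computation of the tail distribution of $\xi$ to a count of return-time cells, using \Cref{lem:Base Observable Expansion}: that lemma says that $\xi$ is, up to an error of lower order in the return time, a function of the return time alone. Precisely, writing $[r=n+2]\cap([A,q]\times[0,1])$ for the level set of points to the right of the line $x=A$ with return time $n+2$, on that set $\xi = nM_0 + \delta_n$ where $\abs{\delta_n} = O(n^{1-\gamma}) + O(n^{1-\gamma/\alpha_0}) = o(n)$, uniformly over the set; the analogue to the left of $x=A$ holds with $M_1,\alpha_1$ in place of $M_0,\alpha_0$. Since $\lambda$ is normalised Lebesgue measure on $\Lambda$ (\Cref{eqn:Base Measure Definition}), the remaining ingredient is the $\lambda$-measure of a tail union of such cells.

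First I would record the combinatorics. By \Cref{eqn:Return Time Cells} and \Cref{eqn:Period-2 Orbit Order Interior}, $([A,q]\times[0,1])\cap\Lambda$ is, mod $\lambda$, the disjoint union over $n\ge0$ of the cells $\omega_1(n,0):=[p_{n+2}^\circ,p_{n+1}^\circ)\times[0,1]$ (the cells with $r=n+2$ lying to the right of $x=A$, as in the proof of \Cref{lem:Base Observable is L2}). The sequence $p_n^\circ$ decreases to $A$, so the tail unions telescope, $\bigcup_{n\ge N}\omega_1(n,0) = (A,p_{N+1}^\circ)\times[0,1]$, and hence, by \Cref{eqn:Period-2 Orbit Preimage Asymptotics Interior Right},
\[
  \lambda\!\left(\bigcup_{n\ge N}\omega_1(n,0)\right) \ =\ \frac{p_{N+1}^\circ - A}{\Leb(\Lambda)} \ \sim\ \frac{1}{\alpha_0\Leb(\Lambda)}\left(\frac{\alpha_0+1}{c_0\alpha_0}\right)^{1/\alpha_0}\frac{1}{N^{1+1/\alpha_0}}, \qquad N\to\infty .
\]

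Now suppose $M_0>0$ and fix $\epsilon\in(0,1)$. Since $\delta_n = o(n)$, there is $n_0$ with $\abs{\delta_n}\le\epsilon M_0 n$ for all $n\ge n_0$, so on $\omega_1(n,0)$ with $n\ge n_0$ one has $M_0(1-\epsilon)n \le \xi \le M_0(1+\epsilon)n$. Therefore, for $t$ large enough that $t > (n_0+2)\Ninfty{X}$, every cell with $n > t/(M_0(1-\epsilon))$ is contained in $[\xi>t]$, while every cell meeting $[\xi>t]$ has $n > t/(M_0(1+\epsilon))$; combining this with the telescoping identity above gives
\[
  \lambda\!\left(\bigcup_{n > t/(M_0(1-\epsilon))}\!\!\omega_1(n,0)\right) \ \le\ \lambda\!\left([\xi>t]\cap[A,q]\right) \ \le\ \lambda\!\left(\bigcup_{n > t/(M_0(1+\epsilon))}\!\!\omega_1(n,0)\right).
\]
Applying the measure asymptotic to both sides and letting $\epsilon\to0$ yields
\[
  \lambda\!\left([\xi>t]\cap[A,q]\right) \ \sim\ \frac{1}{\alpha_0\Leb(\Lambda)}\left(\frac{\alpha_0+1}{c_0\alpha_0}\right)^{1/\alpha_0}\left(\frac{M_0}{t}\right)^{1+1/\alpha_0} \ =\ \frac{M_0}{\alpha_0\Leb(\Lambda)}\left(\frac{M_0(\alpha_0+1)}{c_0\alpha_0}\right)^{1/\alpha_0}\frac{1}{t^{1+1/\alpha_0}},
\]
as claimed. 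For the other tail, $\xi \ge M_0(1-\epsilon)n > 0$ on $\omega_1(n,0)$ for $n\ge n_0$ and $\abs{\xi}\le(n+2)\Ninfty{X}$ on the finitely many remaining cells, so $\xi$ is bounded below on $([A,q]\times[0,1])\cap\Lambda$ and $\lambda([\xi<-t]\cap[A,q]) = 0$ for $t$ large. The case $M_0<0$ follows by replacing $\xi$ with $-\xi$ (equivalently $M_0$ with $\abs{M_0}$), and the two statements over $[p,A]$ are proved verbatim using the cells $\omega_1(n,1):=(q_{n+1}^\circ,q_{n+2}^\circ]\times[0,1]$, the second half of \Cref{lem:Base Observable Expansion}, and the asymptotic for $A - q_n^\circ$ from \Cref{eqn:Period-2 Orbit Preimage Asymptotics Interior Left}.

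The computations above are routine once the two cited lemmas are in hand; the only point that needs care is that the error term $\delta_n$ in \Cref{lem:Base Observable Expansion} be genuinely uniform over the whole level set $[r=n+2]\cap([A,q]\times[0,1])$, not merely for a fixed horizontal coordinate, so that on each cell $\xi$ really is pinned between the two constants $M_0(1\mp\epsilon)n$; this is what makes the $\epsilon$-squeeze legitimate. (Inspecting the proof of \Cref{lem:Base Observable Expansion}, the error estimates there are indeed uniform, since they depend on $(x,y)$ only through $x_k\asymp n^{-1/\alpha_0}$ and $y_k\sim(1-\tfrac{k+1}{n})^{1+1/\alpha_0}$.)
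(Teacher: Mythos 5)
Your argument is correct and follows essentially the same route as the paper's proof: both rest on \Cref{lem:Base Observable Expansion} to reduce the tail of $\xi$ to the tail of $M_0\,r$ (resp.\ $M_1\,r$) via an $\epsilon$-sandwich, and on \Cref{eqn:Period-2 Orbit Preimage Asymptotics Interior Right,eqn:Period-2 Orbit Preimage Asymptotics Interior Left} for the measure of the tail return-time cells, the paper merely phrasing the squeeze as $\lambda[Mr>t(1\pm\epsilon)]\mp\lambda[\abs{\xi-Mr}>\epsilon t]$ rather than cell by cell. Your treatment of the vanishing tail (bounding $\abs{\xi}\le(n+2)\Ninfty{X}$ on the finitely many low cells) is if anything slightly cleaner than the paper's appeal to continuity of $\xi$ on those cells.
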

\begin{proof}
  We will prove the first asymptotic, the proofs of the others are similar. We will suppress subscripts through out this proof ($M = M_{0}$, $\alpha=\alpha_{0}$, and $c = c_{0}$). 
  For convenience define for any function $f$ on $\Lambda$ and real number $t$, $U(f,t) = \left[ f>t \right]\cap [A,q]$.
  Note that by \Cref{eqn:Interval Orbit Structure}, $\lambda\left(U(r,n)\right) =  \frac{p_{n}^{\circ} - A}{q-p} = \frac{p_{n}^{\circ} - A}{\Leb(\Lambda)}$, thus by \Cref{eqn:Period-2 Orbit Preimage Asymptotics Interior Right}
  \[
    \lambda\left(U(r,t)\right)
    \sim \tfrac{1}{\alpha\Leb(\Lambda)}
    \left( \tfrac{\left( \alpha+1 \right)}{c\alpha} \right)^{\frac{1}{\alpha}}
    \left( \tfrac{t}{\floor{t}} \right)^{1+\frac{1}{\alpha}}
    \left( \tfrac{1}{t} \right)^{1+\frac{1}{\alpha}}.
  \]
  Let $g(x,y) = \xi(x,y) - M r(x,y)$, then
  Fix $\epsilon>0$ and note that,
  \begin{align*}
    \lambda\left( U(\xi,t) \right)
    &\ge
    \lambda\left( U(Mr,t(1+\epsilon)) \right)
    -
    \lambda\left( U(\abs{g},\epsilon t) \right),\\
    \lambda\left( U(\xi,t) \right)
    &\le
    \lambda\left( U(Mr,t(1-\epsilon)) \right)
    +
    \lambda\left( U(\abs{g},\epsilon t) \right).
  \end{align*}
  Note that $\abs{g} > \epsilon t$ iff $r > \tfrac{r}{\abs{g}}\epsilon t$. By \Cref{lem:Base Observable Expansion} $\abs{g} = o\left( r(x,y) \right)$, thus the quantity $\tfrac{r}{\abs{g}} \epsilon$ is unbounded as $r \to \infty$. We conclude that as $t\to \infty$,
  \begin{equation*}
    \lambda\left( U(\abs{g},\epsilon t) \right) = o\left( \lambda\left( U(r,t) \right) \right).
  \end{equation*}
  Therefore as $t\to\infty$,
  \[
    \abs{
      \lambda\left( U(\xi,t) \right)
      -
      \lambda\left( U\left(r,\tfrac{t}{M}\right) \right)
    }
    =
    o\left(
    \lambda\left[ r>t \right]
    \right).
  \]
  The claimed asymptotic for $\lambda\left( U(\xi,t) \right)$ follows, since 
  \[
    \left(\frac{t}{M} \floor{\frac{M}{t}}\right)^{1+\frac{1}{\alpha}} = 1+o(1)
  \]
  as $t \to \infty$.\par

  It is not hard to check that $\xi$ is continuous on each set $\left[ r=n+2 \right] \cap [A,q]$ for $n\ge 0$. By \Cref{lem:Base Observable Expansion}, for $(x,y) \in \left[ r=n+2 \right] \cap [A,q]$, 
  \[
    \xi(x,y) = Mn + O\left( n^{1-\gamma} \right) + O\left( n^{1-\frac{\gamma}{\alpha}} \right).
  \]
  For $n$ sufficiently large the first term dominates the last two and $\xi$ is strictly positive on $\left[ r=n+2 \right] \cap [A,q]$. This leaves finitely many sets where $\xi$ may be negative, on each $\xi$ is continuous, therefore $\xi$ is bounded below. We conclude that, for $t$ sufficiently large,
  \[
    \lambda \left(\left[\xi <-t \right] \cap [A,q]\right)
    =0.
  \]

\end{proof}

We are now in a position to prove \Cref{prop:Expansion of Dominant Eigenvalue}.
\begin{proof}[Proof of \Cref{prop:Expansion of Dominant Eigenvalue}]
  By \Cref{prop:Convergence and Continuity of Perturbations} we have
  \[
    \NBs{R(z,t) - R(z,0)} = O\left( \abs{t} \right).
  \]
  If $e(t)$ is the eigenfunction of $R(1,t)$ associated to the eigenvalue $\chi(t)$ with integral $1$, then because eigenvectors depend holomorphicaly on operators
  \[
    \NBs{e(t)-1} 
    = 
    O\left( 
    \NBs{R(z,t) - R(z,0)} 
    \right)
    =
    O\left( \abs{t} \right).
  \]
  With this estimate in place the claimed expansions follow directly from \Cref{lem:Base Observable Tail Asymptotics} and the following theorems.
  \begin{enumerate}[i.]
    \item By arguments similar to \cite{Gouezel2004-Intermittent} Theorem 3.7 we obtain the claimed expansion. Since $T_{*}$ has a spectral gap the series in the definition of $\sigma^2$ converges.  
    \item The estimate above is sufficient to apply \cite{Aaronson&Denker-2001} Theorem 5.1, which yields the desired expansion of the eigenvalue $\chi(t)$ for $t$ near $0$.
    \item The estimate above is sufficient to apply \cite{Aaronson&Denker-2001} Theorem 5.1, which yields the desired expansion of the eigenvalue $\chi(t)$ for $t$ near $0$.
    \item Similarly we apply \cite{Aaronson&Denker-2001-normal} Theorem 3.1 to obtain the claimed expansion.
  \end{enumerate}
\end{proof}

\bibliographystyle{plain}
\bibliography{main_bibliography}
\end{document}